\newtheorem{theorem}{Theorem}[section]
\newtheorem{lemma}[theorem]{Lemma}
\newtheorem{corollary}[theorem]{Corollary}
\newtheorem{proposition}[theorem]{Proposition}
 \theoremstyle{definition}
 \newtheorem{definition}[theorem]{Definition} 
  \newtheorem{conjecture}[theorem]{Conjecture}
 \newtheorem{remark}[theorem]{Remark}
 \newtheorem{example}[theorem]{Example}
\newtheorem{question}{Question}
\numberwithin{equation}{section}
\newcommand {\N}{\mathbb{N}} 
\newcommand {\Z}{\mathbb{Z}} 
\newcommand {\R}{\mathbb{R}} 
\newcommand {\Q}{\mathbb{Q}} 
\newcommand {\C}{\mathbb{C}} 
\newcommand{\F}{\mathbb{F}}
\newcommand{\G}{\mathbb{G}}
\newcommand{\FF}{\mathcal{F}}
\newcommand{\GG}{\mathcal{G}}
\newcommand{\PP}{\mathcal{P}}
\newcommand{\A}{\mathbb{A}}
\newcommand{\Mat}{\mathrm{Mat}}
\newcommand\sbullet[1][.5]{\mathbin{\vcenter{\hbox{\scalebox{#1}{$\bullet$}}}}}
\newcommand\numberthis{\addtocounter{equation}{1}\tag{\theequation}}
\newcommand*{\transp}[2][-3mu]{\ensuremath{\mskip1mu\prescript{\smash{\mathrm t\mkern#1}}{}{\mathstrut#2}}}%
\DeclareMathOperator{\mdim}{mdim}
\DeclareMathOperator{\CA}{CA}
\DeclareMathOperator{\Ker}{Ker}
\DeclareMathOperator{\End}{End}
\DeclareMathOperator{\M}{Mat}
\DeclareMathOperator{\im}{Im}
\DeclareMathOperator{\Id}{Id}
\DeclareMathOperator{\supp}{supp}
\DeclareMathOperator{\Spec}{Spec}
\DeclareMathOperator{\car}{char}
\begin{document}

\title[On sofic groups and endomorphisms of pro-algebraic groups]{On sofic groups, Kaplansky's conjectures, and endomorphisms of pro-algebraic groups}
\author[Xuan-Kien Phung]{Xuan Kien Phung}
\address{Universit\'e de Strasbourg, CNRS, IRMA UMR 7501, F-67000 Strasbourg, France}
\email{phung@math.unistra.fr}
\subjclass[2010]{14A10, 16S34, 37B10, 37B15, 43A07, 68Q80}
\keywords{sofic group, group ring, near ring, Kaplansky's conjectures, direct finiteness, symbolic variety, algebraic cellular automaton, surjunctivity, invertibility, Garden of Eden theorem} 
\begin{abstract}
Let $G$ be a group. 
Let $X$ be a connected algebraic group over an algebraically closed field $K$. 
Denote by $A=X(K)$ the set of $K$-points of $X$. 
We study a class of endomorphisms of pro-algebraic groups, 
namely algebraic group cellular automata over $(G,X,K)$. 
They are cellular automata $\tau \colon A^G \to A^G$ whose local defining map is induced 
by a homomorphism of algebraic groups $X^M \to X$ 
where $M\subset G$ is a finite memory set of $\tau$. 
Our first result is that when $G$ is sofic, such an algebraic group cellular automaton $\tau$ is invertible 
whenever it is injective and $\car(K)=0$. 
When $G$ is amenable, 
we show that an algebraic group cellular automaton $\tau$ is surjective 
if and only if it satisfies a weak form of pre-injectivity called 
$(\sbullet[.9])$-pre-injectivity. This yields an analogue of 
the classical Moore-Myhill Garden of Eden theorem.  
We also introduce the near ring $R(K,G)$ which is $K[X_g: g \in G]$  
as an additive group but the multiplication is induced by the group law of $G$. 
The near ring $R(K,G)$ contains naturally the group ring $K[G]$ 
and we extend Kaplansky's conjectures to this new setting. 
Among other results, we prove that when $G$ is an orderable group,  
then all one-sided invertible elements  of $R(K,G)$ are trivial, 
i.e.,  of the form $aX_g+b$ for some $g\in G$, $a\in K^*$, and $b\in K$. 
This in turns allows us to show that when $G$ is locally residually finite 
and orderable (e.g. $\Z^d$ or a free group),  
all injective algebraic cellular automata $\tau \colon \C^G \to \C^G$ are of the form  
$\tau(x)(h)= a x(g^{-1}h) + b$ for all $x\in \C^G, h \in G$
for some  $g\in G$, $a\in \C^*$, and $b\in \C$. 
\end{abstract} 
\date{\today}
\maketitle

\setcounter{tocdepth}{1}
\tableofcontents

\section{Introduction}
For the notations, let $\N \coloneqq \{0,1,\dots\}$. 
We fix throughout an algebraically closed field $K$ unless stated otherwise. 
A $K$-algebraic variety means a reduced $K$-scheme of finite type 
and algebraic groups are assumed to be reduced. 
Every ring is assumed unital. 
\par
Cellular automata were first studied by von Neumann in the late 1940's as models
of computation and of artificial life. 
We first recall some basic definitions (see~\cite{ca-and-groups-springer}).  
\par
Fix a set $A$, called the \emph{alphabet},  and a group  $G$, called the \emph{universe}.
The set of maps $A^G \coloneqq  \{c \colon G \to A\}$ is called the set of \emph{configurations}. 
The Bernoulli shift action $G \times A^G \to A^G$ is defined by $(g,c) \mapsto g c$, 
where $(gc)(h) \coloneqq  c(g^{-1}h)$ for all $g,h \in G$ and $c \in A^G$. 
For $\Omega \subset G$ and $c \in A^G$, we define the \emph{restriction} 
$c\vert_\Omega \in A^\Omega$ by $c\vert_\Omega(g) \coloneqq  c(g)$ for all $g \in \Omega$. 
\par
A \emph{cellular automaton} over  the group $G$ and the alphabet $A$ is a map
$\tau \colon A^G \to A^G$ admitting a finite subset $M \subset G$
and a map $\mu \colon A^M \to A$ such that 
\begin{equation} 
\label{e:local-property}
(\tau(c))(g) = \mu((g^{-1} c )\vert_M)  \quad  \text{for all } c \in A^G \text{ and } g \in G.
\end{equation}
Such a set $M$ is called a \emph{memory set} of $\tau$ and $\mu$ is called the associated \emph{local defining map}. 
Every cellular automaton $\tau \colon A^G \to A^G$ is 
$G$-equivariant and continuous with respect to the prodiscrete topology on $A^G$.
 \par
Two configurations $c_1,c_2  \in A^G$ are \emph{almost equal} if 
$\{g \in G : c_1(g) \not= c_2(g) \}$ is a finite set.
 A cellular automaton $\tau \colon A^G \to A^G$ is \emph{pre-injective} if
 $\tau(c_1) = \tau(c_2)$ implies $c_1 = c_2$ whenever $c_1, c_2 \in A^G$ are almost equal. 
 A cellular automaton $\tau \colon A^G \to A^G$ is a \emph{linear cellular automaton} 
 if $A$ is a finite-dimensional vector space and $\tau$ is a linear map.
\par
In \cite{gromov-esav}, Gromov brought out a fascinating trialogue between group theory, 
symbolic dynamics, and algebraic geometry. 
The \emph{surjunctivity} property, i.e., injectivity implies surjectivity, 
is established in \cite{gromov-esav} for a certain class of endomorphisms of symbolic algebraic varieties 
under some variations on amenability hypotheses. 
For a finite alphabet $A$, Gromov and Weiss prove the surjunctivity of cellular automata 
$\tau \colon A^G \to A^G$ over any sofic group $G$ (cf.~\cite{gromov-esav},~\cite{weiss-sgds}). 
No non-sofic groups are currently known and the conjecture  
that this surjunctivity result  holds for any group $G$ is known as the
\emph{Gottschalk conjecture} (cf.~\cite{gottschalk}). 
Remarkably, for endomorphisms of algebraic varieties, the surjunctivity property is valid and known as the 
\emph{Ax-Grothendieck theorem} (cf.~\cite{ax-injective} and~\cite[Proposition~10.4.11]{ega-4-3})). 
\par
An interesting class of endomorphisms studied by Gromov in \cite{gromov-esav} is $CA_{alg}(G,X,K)$ 
consisting of \emph{algebraic cellular automata} over $(G,X,K)$. 
This class is introduced in a more general context in \cite{cscp-alg-ca} and \cite{cscp-alg-goe}. 
Here $G$ is a group and $X$ is a $K$-algebraic variety. 
More specifically, let $A \coloneqq X(K)$ denote the set of $K$-\emph{points} of $X$, i.e., 
$K$-scheme morphisms $\Spec(K) \to X$. 
A cellular automaton $\tau \colon A^G \to A^G$ is an algebraic cellular automaton
over $(G,X,K)$
if $\tau$ admits a memory set $M$ 
with local defining map $\mu \colon A^M \to A$ induced by composition with some $K$-scheme morphism 
$f \colon X^M \to X$, i.e., $\mu=f^{(K)}\colon A^M\to A$, 
where $X^M$ is the $K$-fibered product of copies of $X$ indexed by $M$
(cf.~\cite[Definition~1.1]{cscp-alg-ca}). 
When $G$ is trivial, we recover the notion of endomorphisms of an algebraic variety.  
\par
When $G$ is amenable and $K=\C$, 
the results in \cite{gromov-esav} imply that such algebraic cellular automata 
$\tau$ are surjunctive (cf.~\cite[2.C',~7.G']{gromov-esav}). 
It is shown in \cite[Theorem~1.2]{cscp-alg-ca}
that if $G$ is locally residually finite, and if $X$ is complete or $\text{char}(K)=0$, 
then injective algebraic cellular automata over $(G,X,K)$ are surjunctive. 
\par
A bijective cellular automaton is \emph{reversible} if its inverse is also a cellular automaton. 
When the alphabet is finite, it is well-known that every bijective cellular automaton is reversible 
(see e.g., \cite[Theorem 1.10.2]{ca-and-groups-springer}).
This property is false in general when the alphabet is infinite as 
the inverse map may have an infinite memory set (cf. \cite[Example 1.10.3]{ca-and-groups-springer}). 
However, \cite[Theorem~1.3]{cscp-alg-ca} settles the reversibility property 
in $CA_{alg}(G,X,K)$ for every group $G$ and every $K$-algebraic variety $X$ provided 
$K$ is uncountable or $X$ is complete. 
On the other hand, given $\tau \in CA_{alg}(G,X,K)$ which is reversible, 
it is not clear \emph{a priori} whether $\tau$ is \emph{invertible} in $CA_{alg}(G,X,K)$, namely,  
the inverse map $\tau^{-1}$ is also an element of $CA_{alg}(G,X,K)$. 
A positive answer is recently given in \cite[Theorem~1.4]{cscp-alg-ca} when $G$ is locally residually 
finite and $\car (K) = 0$. 
The result generalizes (by setting $G$ the trivial group) the invertibility property of endomorphism of 
algebraic varieties proved by Nowak (cf. \cite{nowak}). 
To this point, we can expect the surjunctivity and the invertibility property 
to hold in the class of algebraic cellular automata. More concretely, 

\begin{question}
\label{c:conjecture-alg}
Let $G$ be a group. Let $X$ be a $K$-algebraic variety. 
Suppose that $\car(K)=0$ and $\tau \in CA_{alg}(G,X,K)$ is injective. 
Is it true that $\tau$ is bijective and its inverse map $\tau^{-1} \colon A^G \to A^G$ is also an element of 
$CA_{alg}(G,X,K)$? 
\end{question}

Now let $G$ be a group. Let $X$ be a $K$-algebraic group and $A\coloneqq X(K)$. 
A cellular automaton $\tau \colon A^G \to A^G$ is  
an \emph{algebraic group cellular automaton over} $(G,X,K)$ 
if $\tau$ admits a memory set $M$ 
with local defining map $\mu \colon A^M \to A$ induced by some homomorphism of $K$-algebraic groups
$f \colon X^M \to X$. 
Denote $CA_{algr}(G,X,K) \subset CA_{alg}(G, X, K)$ the class consisting of 
algebraic group cellular automata over $(G,X,K)$. 
\par
The first goal of this paper is to settle Question \ref{c:conjecture-alg} 
for the class $CA_{algr}(G,X,K)$ whenever the group $G$ is sofic. 
More precisely, we obtain (cf. Section~\ref{s:sofic}): 

\begin{theorem}
\label{t:sofic}
Let $G$ be a sofic group. Let $X$ be a connected $K$-algebraic group. 
Suppose that $\car(K)=0$ and $\tau \in CA_{algr}(G,X,K)$ is injective. 
Then $\tau$ is surjective and hence bijective. 
Moreover, its inverse map $\tau^{-1} \colon A^G \to A^G$ is also an element of $CA_{algr}(G,X,K)$.  
\end{theorem}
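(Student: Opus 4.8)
The plan is to derive surjectivity of $\tau$ from its injectivity by an algebraic mean‑dimension computation over a sofic approximation, and then to deduce the invertibility from the reversibility results of \cite{cscp-alg-ca} together with a characteristic‑zero argument. Since $X$ is a $K$‑algebraic group and the local defining map of $\tau$ is induced by a homomorphism of $K$‑algebraic groups $f\colon X^{M}\to X$, the map $\tau\colon A^{G}\to A^{G}$ is a homomorphism of groups; in particular $\tau$ is injective if and only if $\ker\tau$ is trivial, and for every finite $\Omega\subseteq G$ the finite‑window projection $\pi_{\Omega}(\tau(A^{G}))$ is the image on $K$‑points of a homomorphism of $K$‑algebraic groups $f_{\Omega}\colon X^{\Omega M}\to X^{\Omega}$. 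One first reduces to $G$ countable, indeed finitely generated, by restricting to a subgroup of $G$ containing $M$: such a subgroup is sofic and the restricted cellular automaton is again an injective algebraic group cellular automaton (a short argument with matching extensions shows that injectivity is inherited), while surjectivity and reversibility of $\tau$ can be checked after such restrictions. Then fix a sofic approximation $\mathcal{S}=(V_{i},\phi_{i})_{i}$ of $G$ and, for each $i$, let $\widetilde{\tau}_{i}\colon X^{V_{i}}\to X^{V_{i}}$ be the morphism of $K$‑varieties obtained by applying $f$ coordinatewise using $\phi_{i}$ in place of the $G$‑action.

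For the surjectivity of $\tau$, the core point is that $\overline{\tau(A^{G})}$ has full algebraic mean dimension $\dim X$ relative to $\mathcal{S}$. Global injectivity of $\tau$ translates, by a compactness/Noetherianity argument on $\ker\tau$, into the existence of a finite window $\Omega$ on which the associated pattern map is injective; transported to $V_{i}$ via $\phi_{i}$, and using that $\phi_{i}$ is an honest action on the $\Omega$‑neighbourhood of all but $o(\lvert V_{i}\rvert)$ of the points of $V_{i}$, this makes $\widetilde{\tau}_{i}$ injective outside a Zariski‑closed subset of $X^{V_{i}}$ whose presence lowers the dimension of the image by only $o(\lvert V_{i}\rvert)$. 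Since an injective morphism of varieties over the algebraically closed field $K$ satisfies $\dim Y=\dim\overline{f(Y)}$, we get $\dim\overline{\widetilde{\tau}_{i}(X^{V_{i}})}\ge\lvert V_{i}\rvert\dim X-o(\lvert V_{i}\rvert)$, whence the algebraic mean dimension of $\tau(A^{G})$ is $\ge\dim X$, hence equal to $\dim X$. Now $\Sigma\coloneqq\overline{\tau(A^{G})}$ is a closed $G$‑invariant subgroup of $A^{G}$, and each $\pi_{\Omega}(\Sigma)=\pi_{\Omega}(\tau(A^{G}))$ is the $K$‑points of the image of the homomorphism $f_{\Omega}$, hence a constructible subgroup of $X^{\Omega}(K)$, hence the $K$‑points of a closed subgroup $H_{\Omega}\le X^{\Omega}$. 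If some $H_{\Omega}$ were proper in $X^{\Omega}$, then $\dim H_{\Omega}<\lvert\Omega\rvert\dim X$ since $X^{\Omega}$ is connected ($X$ being connected), and a standard sofic tiling/averaging estimate would force the algebraic mean dimension of $\Sigma$ to be $<\dim X$, a contradiction; therefore $H_{\Omega}=X^{\Omega}$ for all finite $\Omega$, i.e.\ $f_{\Omega}$ is surjective on $K$‑points for every $\Omega$. Finally, to pass from the surjectivity of all the $f_{\Omega}$ to that of $\tau$ itself, one runs an inverse‑limit argument: for $c\in A^{G}$ the sets of compatible partial preimages are nonempty cosets of the kernels $\ker f_{\Omega}$, these kernels are reduced since $\car(K)=0$ and the transition maps between the cosets are surjective, so a Mittag--Leffler argument produces a global preimage. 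Hence $\tau$ is surjective, and being injective it is bijective.

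It remains to see that $\tau^{-1}\in CA_{algr}(G,X,K)$. That $\tau^{-1}$ is a cellular automaton, i.e.\ has finite memory, follows from the reversibility results of \cite{cscp-alg-ca} applied to the bijective algebraic group cellular automaton $\tau$. The local defining map of $\tau^{-1}$ is then a homomorphism of abstract groups $X^{N}(K)\to X(K)$ for some finite memory set $N$; by constructibility of the graph of $\tau$ it coincides on a Zariski‑dense subset with a morphism of varieties defined on a dense open subset, so by the usual translation trick it extends to a morphism of $K$‑varieties, and being a group homomorphism it is a homomorphism of $K$‑algebraic groups (here $\car(K)=0$ is used, exactly as in the proof of the invertibility statement in \cite{cscp-alg-ca}). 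Thus $\tau^{-1}$ is an algebraic group cellular automaton over $(G,X,K)$.

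The main obstacle is the lower bound on the algebraic mean dimension of $\tau(A^{G})$, i.e.\ turning the global injectivity of $\tau$ into the statement that $\dim\overline{\widetilde{\tau}_{i}(X^{V_{i}})}\ge\lvert V_{i}\rvert\dim X-o(\lvert V_{i}\rvert)$, together with setting up the algebraic mean dimension so that (i) it equals $\dim X$ on the full shift and (ii) imposing a proper closed algebraic constraint on one window produces a definite mean‑dimension deficit via the sofic averaging. The purely algebraic inputs --- that injective morphisms of $K$‑varieties preserve the dimension of the image, that constructible subgroups of algebraic groups are closed, and that proper closed subgroups of a connected algebraic group have strictly smaller dimension --- are elementary; the work lies in the soficity bookkeeping with all boundary terms controlled by $o(\lvert V_{i}\rvert)$, and this is also the only place where the hypothesis that $G$ is \emph{sofic}, rather than amenable or residually finite, is essential.
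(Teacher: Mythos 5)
Your architecture is essentially the paper's: reduce to finitely generated $G$, extract a finite-window left inverse from injectivity by a Noetherianity argument on kernels (this is Lemma~\ref{l:techno}), run a dimension count over a sofic approximation against the strict dimension drop of a proper closed window (where connectedness of $X$ enters), and finally upgrade the abstract-group local inverse to a homomorphism of algebraic groups using $\car(K)=0$. However, the key technical step is misstated. Global injectivity of $\tau$ does \emph{not} make $\widetilde{\tau}_i$ ``injective outside a Zariski-closed subset of $X^{V_i}$'': the defect of the sofic approximation lives in certain \emph{coordinates} $v\in V_i\setminus V_i(2r)$, not in a closed subset of the configuration space, and in any case a morphism injective off a proper closed subset of the irreducible variety $X^{V_i}$ would have image of \emph{full} dimension, not full minus $o(\lvert V_i\rvert)$ --- so the stated mechanism does not produce the bound you assert. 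The correct statement (which is what the paper implements via the identity $\eta_2\circ\mu_1=\rho^{(K)}$) is that the \emph{morphism} $x\mapsto(\widetilde{\tau}_i(x),\,x\vert_{V_i\setminus V_i(2r)})$ is injective on $K$-points, because the finite-window left inverse recovers $x(v)$ from $\widetilde{\tau}_i(x)\vert_{B(v,r)}$ at every good vertex; this yields $\dim\overline{\widetilde{\tau}_i(X^{V_i})}\ge \lvert V_i\rvert\dim X-\lvert V_i\setminus V_i(2r)\rvert\dim X$. This is fixable, but it is the heart of the proof and must be set up this way.

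The second genuine gap is in the invertibility part: you invoke the reversibility theorem of \cite{cscp-alg-ca} to get finite memory for $\tau^{-1}$, but that theorem requires $K$ uncountable or $X$ complete, neither of which is assumed here (take $K=\overline{\Q}$ and $X=\G_a$). Finite memory of the inverse must instead come from the same Noetherian-kernel argument already used for surjectivity: condition ``$c(1_G)$ depends only on $\tau(c)\vert_N$'' \emph{is} finite memory of $\tau^{-1}$, so you have the tool in hand but cite the wrong source. Once that is repaired, your constructible-graph route to algebraicity of the local inverse is a legitimate variant of the paper's quotient-map argument (Proposition~\ref{p:left-inverse}): the graph of the local inverse is the image of a homomorphism, hence a closed subgroup of $X^{N}\times X$ projecting bijectively onto $X^{N}$, and in characteristic zero such a bijective homomorphism is an isomorphism. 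A smaller inaccuracy: in your final inverse-limit step the transition maps between the coset fibers are not surjective in general; one must pass to the stabilized eventual images, which is exactly the content of Proposition~\ref{p:stone-general} and Lemma~\ref{l:C}.
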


Remark that Theorem \ref{t:sofic} is false in positive characteristic 
because of the Frobenius map (cf. Example \ref{ex:main-frobenius}).  
The condition $\car (K)=0$ is thus necessary in both Theorem \ref{t:sofic} 
and in Question \ref{c:conjecture-alg}. 
Every finite dimensional vector space $A$ over the algebraically closed field $K$ is a $K$-algebraic group. 
Hence, Theorem \ref{t:sofic} generalizes the corresponding known result for the class $CA_{lin}(G, A, K)$ 
consisting of linear cellular automata  $\tau \colon A^G \to A^G$ (cf. \cite{csc-sofic-linear}). 
When $G$ is residually finite, the connectedness condition in Theorem~\ref{t:sofic} on $X$ 
can be dropped as follows (cf. Theorem~\ref{t:surj-grp} and Section~\ref{s:reverse-inverse}). 

\begin{theorem}
\label{t:surjunctive-res}
Let $G$ be a locally residually finite group. 
Let $X$ be a $K$-algebraic group. 
Suppose that $\car(K)=0$ and $\tau \in CA_{algr}(G,X,K)$ is injective.  
Then $\tau$ is bijective. 
Moreover, the inverse map $\tau^{-1}\colon A^G \to A^G$ is also an element of $CA_{algr}(G,X,K)$.  
\end{theorem}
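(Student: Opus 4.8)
The plan is to reduce Theorem~\ref{t:surjunctive-res} to Theorem~\ref{t:sofic}, which already handles \emph{connected} $K$-algebraic groups over an arbitrary sofic group (in particular any locally residually finite group, since residually finite groups are sofic and soficity is a local property). The obstruction to applying Theorem~\ref{t:sofic} directly is only that $X$ need not be connected; so the strategy is to split off the connected component and argue that the finite component group contributes nothing that can obstruct invertibility once $G$ is residually finite. Concretely, let $X^0$ denote the identity component of $X$; it is a connected normal $K$-algebraic subgroup with finite quotient $\pi_0(X) = X/X^0$, a finite (abstract) group once we pass to $K$-points, since $K$ is algebraically closed. Write $A = X(K)$, $A^0 = X^0(K)$, and $F = \pi_0(X)(K)$, so we have a short exact sequence of groups $1 \to A^0 \to A \to F \to 1$.

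\medskip

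First I would record that since $\car(K)=0$, the algebraic group $X$ is smooth, and the homomorphism of algebraic groups $f\colon X^M \to X$ defining $\tau$ restricts to $(X^0)^M \to X^0$ (a homomorphism sends the identity component into the identity component), hence induces a homomorphism $\bar f\colon \pi_0(X)^M \to \pi_0(X)$ on component groups. This gives a commutative diagram of cellular automata: an exact sequence $1 \to (A^0)^G \to A^G \to F^G \to 1$ of $G$-equivariant maps, with $\tau$ restricting to $\tau^0 \in CA_{algr}(G, X^0, K)$ on $(A^0)^G$ and descending to $\bar\tau \in CA_{algr}(G,\pi_0(X),K)$ on $F^G$. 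Since $F$ is finite, $\bar\tau\colon F^G \to F^G$ is a cellular automaton over a finite alphabet over the sofic (indeed locally residually finite) group $G$; here I would invoke the classical facts (Gromov--Weiss, and reversibility over finite alphabets, cf. \cite[Theorem 1.10.2]{ca-and-groups-springer} and the discussion in the introduction) that once $\bar\tau$ is injective it is bijective with cellular-automaton inverse. Injectivity of $\bar\tau$ follows from injectivity of $\tau$ by a standard fiber-wise argument: if $\bar\tau$ identified two configurations one could, after translating by an element of $(A^0)^G$ in each fiber, contradict injectivity of $\tau$ — more carefully, injectivity of $\tau$ forces both $\tau^0$ and $\bar\tau$ to be injective by chasing the diagram.

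\medskip

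Next I would apply Theorem~\ref{t:sofic} to $\tau^0 \in CA_{algr}(G, X^0, K)$: since $X^0$ is connected, $\car(K)=0$, $G$ is sofic, and $\tau^0$ is injective, Theorem~\ref{t:sofic} gives that $\tau^0$ is bijective and $(\tau^0)^{-1} \in CA_{algr}(G,X^0,K)$. Combining with the surjectivity and reversibility of $\bar\tau$, the five lemma / a diagram chase shows $\tau\colon A^G \to A^G$ is bijective. The remaining point is that $\tau^{-1}$ lies in $CA_{algr}(G,X,K)$, not merely that it is a cellular automaton: here I would assemble $\tau^{-1}$ from $(\tau^0)^{-1}$ on the connected part and $\bar\tau^{-1}$ on the component part. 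Over each fiber $\tau$ is an affine-type map (translation by the connected part composed with a permutation of $F$), and its inverse is built from a homomorphism of algebraic groups $X^N \to X$ for a suitable finite memory set $N$ obtained from the memory sets of $(\tau^0)^{-1}$ and $\bar\tau^{-1}$; that the resulting local map is induced by an \emph{algebraic group} homomorphism (and not merely a scheme morphism) uses that $X^0 \hookrightarrow X \twoheadrightarrow \pi_0(X)$ are all algebraic group maps and that $\tau$ itself is an algebraic group cellular automaton, so the extension splitting is compatible at the level of algebraic groups after possibly enlarging $N$.

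\medskip

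The main obstacle I anticipate is the last step: verifying that the inverse, reconstructed fiberwise from the connected-part inverse and the finite component-group inverse, is globally induced by a single homomorphism of $K$-algebraic groups $X^N \to X$ with a \emph{finite} memory set $N \subset G$. The component-group automaton $\bar\tau^{-1}$ has some finite memory set $N_1$; the connected-part inverse $(\tau^0)^{-1}$ has some finite memory set $N_2$; but pulling $\bar\tau^{-1}$ back through the non-canonical set-theoretic section $F \to A$ does not immediately respect the group structure. The clean way around this is to avoid sections entirely and instead show directly, using that $\tau$ is bijective and an algebraic group cellular automaton, that $\tau^{-1}$ is a cellular automaton (by reversibility, appealing if convenient to \cite[Theorem~1.3]{cscp-alg-ca} when $K$ is uncountable, or to a separate finiteness argument), and then that its local defining map, being the inverse of a homomorphism of algebraic groups on the relevant fibered products, is itself algebraic: an injective homomorphism of smooth algebraic groups in characteristic zero is a closed immersion, and the bijectivity of $\tau$ globally forces the relevant restricted map $X^M \to X^{\tau(\text{window})}$ to be an isomorphism of algebraic groups on the image, whence the inverse is a morphism of algebraic groups. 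Making this window-bookkeeping precise — identifying exactly which finite piece of $X^N$ the inverse homomorphism is defined on and checking functoriality in $N$ — is the technical heart of the argument, but it is essentially the same bookkeeping already carried out in the proof of Theorem~\ref{t:sofic} and in \cite[Theorem~1.4]{cscp-alg-ca}, now applied to the (possibly disconnected) group $X$ rather than to $X^0$.
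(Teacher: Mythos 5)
Your reduction to the connected case can be made to work for the surjectivity half, but two steps need repair, and the second half of the theorem (that $\tau^{-1}$ lies in $CA_{algr}(G,X,K)$) is not actually established by what you wrote.

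First, the claim that injectivity of $\tau$ forces injectivity of $\bar\tau$ ``by chasing the diagram'' is false as a formal statement: in an extension of groups with compatible endomorphisms, injectivity passes to the subobject but not to the quotient (multiplication by $2$ on $\Z$ preserves $2\Z$ and is injective, yet induces the zero map on $\Z/2\Z$). The correct argument is available, but the order of the steps matters: you must first apply Theorem~\ref{t:sofic} to $\tau^0$ to get $\tau^0\bigl((A^0)^G\bigr)=(A^0)^G$, and only then can you argue that any $c$ with $\tau(c)\in (A^0)^G$ satisfies $\tau(c)=\tau(c'')$ for some $c''\in (A^0)^G$, whence $c=c''\in (A^0)^G$ by injectivity of $\tau$. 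So injectivity of $\bar\tau$ is a consequence of surjectivity of $\tau^0$, not of a diagram chase.

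Second, and more seriously, the assertion $\tau^{-1}\in CA_{algr}(G,X,K)$ is where the real content of the theorem lies, and your sketch does not supply it. The difficulty is not whether a local inverse map, once it exists, is a morphism of algebraic groups; it is that $\tau^{-1}$ might a priori fail to be a cellular automaton at all, i.e., might have no finite memory set (cf.\ \cite[Example~1.10.3]{ca-and-groups-springer}). Your fallback to \cite[Theorem~1.3]{cscp-alg-ca} requires $K$ uncountable or $X$ complete, neither of which is assumed here. The paper handles this by Lemma~\ref{l:techno}, which extracts a finite window via a Noetherian descending-chain argument on the subgroups $\pi_{nm}\bigl(\Ker(\tau^-_{E_m})\bigr)$, and by Proposition~\ref{p:left-inverse}, which then produces the local inverse as a homomorphism of algebraic groups via the universal property of the quotient $X^{\Omega^+}\to X^{\Omega^+}/\Ker(f^+_\Omega)$; crucially, neither result needs $X$ connected. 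Since Theorem~\ref{t:surj-grp} likewise proves surjunctivity for arbitrary, possibly disconnected, $X$ over locally residually finite groups directly from the closed image property (Theorem~\ref{t:closed-image}), the whole component-group decomposition --- and the appeal to the heavier sofic counting argument behind Theorem~\ref{t:sofic} --- is avoidable: the paper's proof is simply Theorem~\ref{t:surj-grp} combined with Theorem~\ref{t:inverse-also}.
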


Remark that the algebraic group $X$ is not necessarily complete 
in Theorem \ref{t:surjunctive-res} and Theorem \ref{t:sofic}. 
Hence, our result extends the result of \cite[Theorem~1.2]{cscp-alg-ca}. 
\par
A ring $T$ is \emph{directly finite} or \emph{Dedekind finite} if for all $c,d \in T$, $cd=1$ implies $dc=1$. 
The ring $T$ is \emph{stably finite} if every matrix ring $\Mat_n(T)$ ($n \geq 1$) is directly finite. 
Kaplansky conjectured in \cite{kap} that for every field $k$ and any group $G$, 
the group ring $k[G]$ (i.e., 
the $k$-algebra with $G$ as basis and multiplication defined by the group
product on basis elements and the distributive law) is directly finite. 
He proved the conjecture himself in \cite{kap} when $k=\C$. 
With the theory of operator algebras, it is shown in \cite{elek-szabo} 
that $k[G]$ is directly finite for every field $k$ and every sofic group $G$. 
In fact, $k[G]$ is directly finite for every sofic group $G$ whenever $k$ is a division ring 
(cf. \cite{elek-szabo} and \cite{ara}) or more generally a left (or right) Artinian ring (cf. \cite{csc-artinian}). 
It is natural to expect that the group ring $R[G]$ is directly finite 
for every directly finite ring $R$ and every group $G$. 
Every left Noetherian ring is directly finite and it is established recently in \cite{li-liang} that 
$R[G]$ is stably finite for every left Noetherian ring $R$ and every sofic group $G$ with the tool of 
sofic mean length introduced in the same paper \emph{loc. cit.}. 
To the author's knowledge, results in \cite{ara}, \cite{csc-artinian}, \cite{elek-szabo}, and \cite{li-liang} 
are the most general on the stably direct finiteness of $R[G]$. 
\par
Using the ring isomorphism $k[G]\simeq CA_{lin}(G, k, k)$, another proof 
of the direct finiteness of $k[G]$ for $k$ a field and $G$ a sofic group  
is given in \cite{csc-sofic-linear} by establishing the surjectivity property for elements of $CA_{lin}(G, k, k)$. 
In general, for a group $G$ and a $K$-algebraic variety $X$,  $CA_{alg}(G,X,K)$ is a monoid 
for the composition of maps (cf. \cite[Proposition 3.3]{cscp-alg-ca}) 
and we have the following logical relation for elements of $CA_{alg}(G,X,K)$:  
\begin{equation}
\label{logical-direct-finite-surjunctive}
\text{surjunctivity}  \implies \text{direct finiteness}
\end{equation}
as follows. Assume the surjunctivity for $CA_{alg}(G,X,K)$ and let 
$\sigma, \tau \in CA_{alg}(G,X,K)$ with $\sigma \circ \tau = \Id$. 
Then $\tau$ is clearly injective. 
By surjunctivity, $\tau$ is thus bijective and it follows at once that $\tau \circ \sigma= \Id$. 
Hence, we can use pointwise method and properties of cellular automata such as the surjunctivty  
to obtain formal point-free results such as the stable finiteness 
for the group ring $K[G] \simeq CA_{lin}(G, K, K)$. 
The reverse procedure is also very helpful. Indeed, let $\A^1$ denote the affine line. 
We shall see below (cf. the discussion after Question \ref{injective-map-ca}) 
that in each case where Question \ref{c:conjecture-alg} holds true, 
the unit conjecture (cf. Conjecture \ref{c:conjecture}, Theorem \ref{t:unit-kap}) for the near ring $R(K, G) \simeq CA_{alg}(G, \A^1, K)$ 
(cf. \eqref{e:iso-near-ring-ca-alg}) implies 
a classification of injective maps in $CA_{alg}(G, \A^1, K)$ (cf. Question \ref{injective-map-ca}, Theorem \ref{t:classification}). 
\par
There is a natural $R$-linear ring isomorphism $R[G] \simeq CA_{algr}(G,X,K)$ 
where $R=\End(X)$ is the endomorphism ring of a commutative algebraic group $X$ 
(cf. Proposition \ref{p:iso-ca-gr}). 
As an application, our results on $CA_{algr}(G,X,K)$ 
implies following stable finiteness of group rings (cf. Corollary \ref{c:kap-direct-finite}). 
 
\begin{corollary}
\label{t:kap-direct-finite}
Given a sofic group $G$ and a commutative $K$-algebraic group $X$ with endomorphism ring $R=\End(X)$, 
the group ring $R[G]$ is stably finite in the following cases:  
\begin{enumerate} [\rm (i)]
\item
$G$ is sofic and $X$ is connected;
\item
$G$ is locally residually finite. 
\end{enumerate}
\end{corollary}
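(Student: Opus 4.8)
The plan is to deduce stable finiteness of $R[G]$ from the surjunctivity statements already established for algebraic group cellular automata, by feeding in the $n$-fold products $X^n$. By definition $R[G]$ is stably finite precisely when $\Mat_n(R[G])$ is directly finite for every $n\geq 1$, so I would fix $n$ and aim to prove direct finiteness of $\Mat_n(R[G])$, which will then follow formally from surjunctivity of a suitable monoid of cellular automata.

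First I would record the standard ring identifications
\[
\Mat_n(R[G]) \;\cong\; \Mat_n(R)[G] \;\cong\; \End(X^n)[G].
\]
The first isomorphism is the usual one between matrices over a group ring and the group ring of the matrix ring. The second uses that the category of commutative $K$-algebraic groups is additive, so that $X^n$ is a biproduct and $\End(X^n)\cong\Mat_n(\End(X))=\Mat_n(R)$ as rings, the matrix entries being read off by pre/post-composing an endomorphism of $X^n$ with the canonical inclusions and projections $X\to X^n\to X$; this is exactly the place where commutativity of $X$ is used. Since $X$ is a commutative $K$-algebraic group, so is $X^n$, and $X^n$ is connected whenever $X$ is connected, being a product of (geometrically) connected $K$-varieties over the algebraically closed field $K$. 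Applying Proposition~\ref{p:iso-ca-gr} to the commutative algebraic group $X^n$ then yields a ring isomorphism
\[
\Mat_n(R[G]) \;\cong\; CA_{algr}(G, X^n, K).
\]

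Next I would invoke surjunctivity of $CA_{algr}(G, X^n, K)$. In case (i), $X^n$ is connected, so this is exactly Theorem~\ref{t:sofic} (with $\car(K)=0$); in case (ii), $G$ is locally residually finite and this is Theorem~\ref{t:surjunctive-res}, which needs no connectedness hypothesis. In either situation every injective element of $CA_{algr}(G, X^n, K)$ is bijective. The formal implication \eqref{logical-direct-finite-surjunctive} — if $\sigma\tau=\Id$ then $\tau$ is injective, hence bijective, hence $\tau\sigma=\Id$ — shows that $CA_{algr}(G, X^n, K)$, and therefore $\Mat_n(R[G])$, is directly finite. As $n$ was arbitrary, $R[G]$ is stably finite, as claimed.

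The argument is essentially bookkeeping once the two surjunctivity theorems are available; the only points that need genuine care are the identification $\End(X^n)\cong\Mat_n(R)$ together with its compatibility with the group-ring and cellular-automaton structures (so that the composite $\Mat_n(R[G])\cong CA_{algr}(G,X^n,K)$ really is a ring isomorphism and direct finiteness transfers along it), and the verification that $X^n$ remains in the class of objects — reduced, of finite type, and connected in case (i) — to which Theorems~\ref{t:sofic} and~\ref{t:surjunctive-res} apply. The characteristic-zero assumption is essential and is inherited directly from those theorems.
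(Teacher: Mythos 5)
Your reduction to $X^n$ is exactly the paper's: you identify $\Mat_n(R[G])\cong \Mat_n(R)[G]\cong \End(X^n)[G]$ (Lemmas~\ref{l:mat-1} and~\ref{l:mat}), transport everything through Proposition~\ref{p:iso-ca-gr}, and conclude direct finiteness of $\Mat_n(R[G])$ from a one-sided-inverse statement in $CA_{algr}(G,X^n,K)$. That bookkeeping is fine. The problem is the last step: you derive direct finiteness from Theorems~\ref{t:sofic} and~\ref{t:surjunctive-res}, both of which assume $\car(K)=0$, and you even flag that ``the characteristic-zero assumption is essential and is inherited directly from those theorems.'' But Corollary~\ref{t:kap-direct-finite} carries no characteristic hypothesis, and the positive-characteristic case is precisely where it has new content (e.g.\ $X=\G_a$ over $\overline{\F_p}$, where $R=K[t;F]$ is the noncommutative twisted polynomial ring). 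So as written your argument proves a strictly weaker statement.

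The gap is avoidable, and the paper's proof shows how. For case (ii), plain surjunctivity over locally residually finite groups is Theorem~\ref{t:surj-grp}, which rests only on the closed image property (Theorem~\ref{t:closed-image}) and has no characteristic restriction; that, together with the formal implication \eqref{logical-direct-finite-surjunctive}, already gives direct finiteness of $CA_{algr}(G,X^n,K)$ in any characteristic. For case (i), the sofic surjunctivity argument does invoke Proposition~\ref{p:left-inverse} (where $\car(K)=0$ is genuinely used to produce a left-inverse local defining map from injectivity alone), but in the direct-finiteness setting you are \emph{handed} a one-sided inverse $\sigma$ with $\sigma\circ\tau=\Id$, and its local defining maps can play the role of the $h_\Omega$ in the counting argument. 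This is exactly Theorem~\ref{t:inverse-also-near-ring}, which is stated and proved without any restriction on $\car(K)$, and it is the result you should cite in place of Theorem~\ref{t:sofic}. With those two substitutions your proof becomes the paper's.
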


By setting $X=K^n$, $R= \Mat_n(K)$, one recovers the results 
in \cite{elek-szabo} and \cite{csc-sofic-linear}. 
To the limit of the author's knowledge, it seems, unfortunately,  
that there is no examples of endomorphism rings of a commutative algebraic group 
that cannot be embedded inside a Noetherian ring. 
\par
With a group $G$ and a ring $L$, one can associate a near ring $(R(L,G), +, \star)$  
which is $(L[X_g: g \in G], +)$ as an additive group 
but its multiplication law $\star$ is induced by the group law of $G$ 
as a generalized form of the convolution product (cf. Section \ref{s:near-ring}). 
$R(L,G)$ contains the group ring $L[G]$ as the subring 
consisting of homogeneous polynomials of degree $1$ (Proposition \ref{p:embed-ring}). 
We can thus extend accordingly Kaplansky's conjectures to $R(L,G)$ (cf. Conjecture \ref{c:conjecture}). 
The near ring $R(L,G)$ arises naturally in the context of algebraic cellular automata 
via a canonical isomorphism of near rings (Theorem \ref{t:iso-nr-ca}) 
\begin{equation}
\label{e:iso-near-ring-ca-alg}
\Phi \colon R(L,G)\to CA_{alg}(G, \A^1, L)
\end{equation}
whenever $L$ is commutative and such that the polynomial ring $L[T]$ 
has no nonzero polynomial that vanishes identically as a function $L \to L$ (e.g., $L$ is an infinite field). 
\par
Mal'cev's results on Kaplansky's unit conjecture for orderable groups \cite{malcev} 
can be extended to the near ring $R(L,G)$ (cf. Theorem \ref{t:kap-unit}) as follows: 

\begin{theorem}
\label{t:unit-kap}
Given an orderable group $G$ and an integral domain $L$, 
if  $\alpha, \beta \in R(L, G)$ and $\alpha \star \beta =X_{1_G}$ then $\alpha=aX_g-ab$, 
$\beta= a^{-1}X_{g^{-1}}+b$ for some $g\in G$, $a\in L^*$, and $b\in L$. 
\end{theorem}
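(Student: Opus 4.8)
The plan is to reduce the statement for the near ring $R(L,G)$ to the classical Mal'cev-type result about the group ring $L[G]$ by analyzing how the degree of a polynomial behaves under the convolution-type product $\star$. First I would record the basic structure: an element $\alpha \in R(L,G) = L[X_g : g \in G]$ can be written uniquely as a finite sum of monomials, and I would introduce a notion of total degree; the key algebraic fact to isolate is that, because the multiplication $\star$ is induced by substituting the variables $X_g$ according to the group law, the product $\alpha \star \beta$ has total degree equal to $(\deg \alpha)(\deg \beta)$ when $L$ is an integral domain — the leading forms multiply without cancellation, since over a domain no nonzero polynomial in the leading variables can vanish. Applying this to $\alpha \star \beta = X_{1_G}$, which has total degree $1$, forces $\deg\alpha = \deg\beta = 1$, so both $\alpha$ and $\beta$ are affine-linear: $\alpha = \sum_{g} a_g X_g + c$ and $\beta = \sum_h b_h X_h + d$ with finitely many nonzero $a_g, b_h \in L$ and $c,d \in L$.

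Next I would substitute these linear expressions into $\alpha \star \beta$ and expand. Writing out the product, the degree-$1$ part is governed by the convolution of the coefficient families $(a_g)$ and $(b_h)$, i.e. it is exactly the image of $(\sum a_g g)(\sum b_h h)$ under the embedding $L[G] \hookrightarrow R(L,G)$ of Proposition~\ref{p:embed-ring}, while the degree-$0$ (constant) part collects the cross terms $c \cdot (\text{sum of } b_h)$, $d \cdot (\text{sum of } a_g)$, and $cd$. Comparing with $X_{1_G}$: the degree-$1$ part must equal $X_{1_G} \in L[G]$, so in the group ring $L[G]$ we get $(\sum a_g g)(\sum b_h h) = 1_G$. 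This is precisely where I invoke Mal'cev's theorem for orderable groups \cite{malcev} (equivalently its extension Theorem~\ref{t:kap-unit}): over the integral domain $L$, the group ring $L[G]$ of an orderable group $G$ has only trivial units, so $\sum a_g g = a\, g$ and $\sum b_h h = a^{-1} g^{-1}$ for some $g \in G$ and $a \in L^*$. Thus $\alpha = a X_g + c$ and $\beta = a^{-1} X_{g^{-1}} + d$.

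It remains to pin down the constants. Substituting these back, the constant part of $\alpha \star \beta$ becomes, after the expansion, something of the shape $a^{-1} c + a d + c d$ (the terms coming from multiplying the constant of one factor against the variable part of the other, plus $cd$), and this must vanish since $X_{1_G}$ has zero constant term; meanwhile the coefficient of the degree-$1$ variable $X_{1_G}$ is automatically $a \cdot a^{-1} = 1$, consistent. Solving $a^{-1}c + ad + cd = 0$ — being slightly careful about the exact form of the cross terms that the definition of $\star$ produces — yields $d = -a^{-1} c$, or equivalently, setting $b := d$, we get $c = -ab$, so $\alpha = aX_g - ab$ and $\beta = a^{-1}X_{g^{-1}} + b$, as claimed.

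The main obstacle I anticipate is the first step: proving cleanly that $\deg(\alpha \star \beta) = \deg\alpha \cdot \deg\beta$ over an integral domain. This requires understanding exactly what $\star$ does to monomials — a monomial $\prod X_{g_i}^{m_i}$ in $\alpha$ times a monomial in $\beta$ produces, under the group-law substitution, a sum of monomials whose degrees add — and then checking that the top-degree forms cannot cancel, which is where the domain hypothesis (no zero divisors, no identically-vanishing polynomial identities forced by $|L| = \infty$ or by working formally) is essential. Once this multiplicativity of degree is in hand, the rest is bookkeeping plus a direct appeal to Mal'cev's classical result already available in the paper as Theorem~\ref{t:kap-unit}.
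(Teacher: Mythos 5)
Your overall strategy (reduce to total degree $1$, then quote Mal'cev for the group ring $L[G]\subset R(L,G)$, then match constants) would work in principle, but it hinges entirely on the lemma you yourself flag as the ``main obstacle,'' namely $\deg(\alpha\star\beta)=\deg(\alpha)\deg(\beta)$, and this is where there is a genuine gap: you assert the lemma without proof, and the justification you offer --- that the top-degree forms cannot cancel ``since over a domain no nonzero polynomial in the leading variables can vanish'' --- is not correct as stated. The top-degree part of $\alpha\star\beta$ is $\sum_{u\,:\,s(u)=\deg\alpha}\alpha(u)\prod_{g}(g\beta_d)^{u(g)}$, where $\beta_d$ is the top form of $\beta$; each individual product is nonzero over a domain, but the \emph{sum} over $u$ can vanish, and the domain hypothesis on $L$ does not prevent this. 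Concretely, take $G=\Z/2\Z=\{1,t\}$ and any domain $L$: with $\beta=X_1-X_t$ one has $t\beta=-\beta$, so $\alpha=X_1^2+X_1X_t$ gives $\alpha\star\beta=\beta^{2}-\beta^{2}=0$, and degree multiplicativity fails badly. The non-cancellation is therefore not a formal consequence of $L$ being a domain; it genuinely uses the orderability of $G$. The only way I see to prove your lemma is to introduce the lexicographic monomial order on $\N^G_c$ induced by a bi-invariant order on $G$, show that the lex-leading monomial of $\prod_g(g\beta)^{u(g)}$ is $\overline{X}^{u\star v}$ (with $v$ the lex-leading exponent of $\beta$), and show that $u\mapsto u\star v$ is strictly increasing --- which is exactly the content of the paper's Lemma~\ref{l:order-unit}, i.e.\ the entire technical core of the paper's proof. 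The paper then applies that machinery directly (once with the order and once with its reverse) to pin down both the leading and the trailing non-constant monomials, so your detour through total degree does not actually save the work; it just relocates it.

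Two smaller points. First, your computation of the constant term is garbled: for $\alpha=aX_g+c$ and $\beta=a^{-1}X_{g^{-1}}+d$ one gets $\alpha\star\beta=a\,(g\beta)+c=X_{1_G}+ad+c$, so the condition is $ad+c=0$; there is no $cd$ term and no $a^{-1}c$ term (the product $\star$ is left- but not right-distributive, and constants on the left absorb). You do land on the correct relation $c=-ab$, so this is only a local slip. Second, invoking the classical Mal'cev theorem for $L[G]$ is legitimate and not circular, since $L[G]$ embeds in $R(L,G)$ by Proposition~\ref{p:embed-ring}, and your identification of the degree-$1$ part of $\alpha\star\beta$ with the group-ring product of the linear parts is correct. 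So the architecture is sound, but the missing piece --- a genuine proof of degree multiplicativity --- is the whole theorem in disguise, and supplying it forces you back onto the paper's leading-monomial argument.
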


From Theorem \ref{t:unit-kap} and results in \cite{cscp-alg-ca}, 
we obtain a nontrivial classification 
of injective maps in $CA_{alg}(G,\A^1,\C)$ (cf. Corollary \ref{c:classification})). 
The result can be seen as a generalization of the trivial fact saying that 
every polynomial injective self-map $\tau \colon \C \to \C$ must be of the form 
$\tau(x)= ax+b$ for some $a\in \C^*$ and $b \in \C$. 

\begin{theorem}
\label{t:classification} 
Let $G$ be an orderable and locally residually finite group (e.g. $\Z^d$ or a free group). 
Assume $\tau \in CA_{alg}(G, \A^1, \C)$ is injective. 
Then there exists $g\in G$, $a \in \C^*$, and $b\in \C$
such that for all $x \in \C^G$ and $h \in G$, we have 
\[
\tau(x)(h)= a x(g^{-1}h) +b. 
\]
\end{theorem}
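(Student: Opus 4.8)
The plan is to combine the near-ring isomorphism \eqref{e:iso-near-ring-ca-alg} with the unit classification of Theorem~\ref{t:unit-kap} and the reversibility/invertibility results of \cite{cscp-alg-ca}. Start by observing that $\C$ is an infinite field, so $\Phi \colon R(\C,G) \to CA_{alg}(G,\A^1,\C)$ is an isomorphism of near rings; hence an injective $\tau \in CA_{alg}(G,\A^1,\C)$ corresponds to some $\alpha \in R(\C,G)$ with $\Phi(\alpha) = \tau$. The key point is to show that $\tau$ is bijective with inverse again in $CA_{alg}(G,\A^1,\C)$: since $G$ is locally residually finite and $\car(\C)=0$, we invoke \cite[Theorem~1.2]{cscp-alg-ca} (quoted in the excerpt) for surjunctivity and \cite[Theorem~1.4]{cscp-alg-ca} for invertibility, giving $\sigma \in CA_{alg}(G,\A^1,\C)$ with $\sigma \circ \tau = \tau \circ \sigma = \Id$. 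Transporting back through $\Phi$, we obtain $\beta \in R(\C,G)$ with $\alpha \star \beta = \beta \star \alpha = X_{1_G}$, because $\Id_{A^G}$ corresponds to $X_{1_G}$ under $\Phi$ (the identity cellular automaton has memory set $\{1_G\}$ and local map $x \mapsto x$).

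Now apply Theorem~\ref{t:unit-kap}: $G$ is orderable and $\C$ is an integral domain, so from $\alpha \star \beta = X_{1_G}$ we conclude $\alpha = aX_g - ab$ for some $g \in G$, $a \in \C^*$, $b \in \C$ (the $-ab$ is just the constant term, so $\alpha = aX_g + b'$ with $b' = -ab \in \C$). It remains only to translate the shape of $\alpha$ into the formula for $\tau = \Phi(\alpha)$. Unwinding the definition of $\Phi$ and the convolution-type product $\star$ on $R(\C,G)$: the monomial $X_g$ corresponds to the shift cellular automaton $x \mapsto (h \mapsto x(g^{-1}h))$, and the constant $b' \in \C$ corresponds to the constant-adding cellular automaton $x \mapsto (h \mapsto x(h) + b')$ composed appropriately — more precisely, $\Phi(aX_g + b')(x)(h) = a\,x(g^{-1}h) + b'$ for all $x \in \C^G$, $h \in G$. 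Renaming $b'$ as $b$ yields exactly the claimed formula.

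The main obstacle I expect is verifying carefully that the identity map really corresponds to $X_{1_G}$ and, relatedly, nailing down the precise dictionary between elements $aX_g + b \in R(\C,G)$ and cellular automata under $\Phi$ — this is where the near-ring structure (in particular how the additive structure of $R(\C,G)$, which is \emph{not} the composition of cellular automata but rather pointwise addition of polynomial local maps, interacts with the multiplicative structure $\star$) must be handled with care so that ``$\alpha \star \beta = X_{1_G}$ in $R(\C,G)$'' is genuinely equivalent to ``$\Phi(\alpha) \circ \Phi(\beta) = \Id$ in $CA_{alg}(G,\A^1,\C)$''. Since Theorem~\ref{t:iso-nr-ca} asserts $\Phi$ is an isomorphism of near rings, $\star$ corresponds to composition on the nose, so this equivalence is immediate once that theorem is in hand; the only genuine computation is identifying $\Phi(X_{1_G}) = \Id$ and $\Phi(aX_g+b) = (x \mapsto (h \mapsto a\,x(g^{-1}h)+b))$, both of which are direct from the definition of the local defining map associated to a polynomial in $R(\C,G)$. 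A secondary point worth a sentence is that $\Z^d$ and nonabelian free groups are indeed both orderable (biorderable, in fact) and locally residually finite, justifying the parenthetical examples.
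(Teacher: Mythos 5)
Your argument is correct and follows essentially the same route as the paper's proof of Corollary~\ref{c:classification}: transport $\tau$ through the near-ring isomorphism $\Phi$ of Theorem~\ref{t:iso-nr-ca}, use the invertibility of injective algebraic cellular automata over locally residually finite groups in characteristic $0$ from \cite{cscp-alg-ca} to get a two-sided inverse, and then apply the unit classification of Theorem~\ref{t:kap-unit} before unwinding $\Phi(aX_g+b)$. The only difference is which statement of \cite{cscp-alg-ca} you cite (Theorems~1.2 and 1.4 rather than the packaged Corollary~1.5), which is immaterial.
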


Therefore, Theorem \ref{t:classification} gives some evidence for the following classification question: 

\begin{question}
\label{injective-map-ca}
Let $G$ be a torsion free group. 
Suppose that $\tau \in CA_{alg}(G, \A^1, \C)$ is injective. 
Does there exist $g\in G$, $a \in \C^*$, and $b\in \C$
such that for all $x \in \C^G$ and $h \in G$, we have 
\[
\tau(x)(h)= a x(g^{-1}h) +b\,? 
\]
\end{question}

Our main supporting motivation for Question \ref{injective-map-ca} is that it is 
a direct consequence of Question \ref{c:conjecture-alg} 
and the unit Kaplansky's conjecture \ref{c:conjecture} for $R(\C, G)$. 
Indeed, let $\tau \in CA_{alg}(G, \A^1, \C)$ and let $\alpha = \Phi^{-1} (\tau) \in R(\C, G)$ (cf. \eqref{e:iso-near-ring-ca-alg}). 
If $\tau$ is injective, then Question \ref{c:conjecture-alg} implies that $\tau$ is invertible in $CA_{alg}(G, \A^1, \C)$. 
Hence, $\alpha$ is also invertible in $R(\C, G)$ and by the unit Kaplansky's conjecture \ref{c:conjecture}, 
$\alpha=aX_g+b$ for some $g\in G$, $a\in \C^*$, and $b\in \C$. 
This in turns implies that $\tau(x)(h)= a x(g^{-1}h) +b$  for all $x \in \C^G$ and $h \in G$. 
Remark that the condition saying $G$ is torsion free is necessary in both  
the unit Kaplansky's conjecture \ref{c:conjecture} for $R(\C, G)$ and Question \ref{injective-map-ca} 
(cf. Example \ref{ex:torsion-free-injective}). 
\par
As $R(L,G)$ is not a ring in general, 
it is not clear \emph{a priori} if, for $R(L,G)$, the unit conjecture implies 
the zero-divisor and the idempotent conjectures, as for the group ring $L[G]$. 
However, we can obtain the following properties on solutions of polynomial equations in $R(L,G)$ (cf. Section \ref{s:near-ring}). 
For $\alpha \in R(L,G)$ and a polynomial $P(x)=\sum_n c_n x^n \in L[x]$, 
define $P(\alpha)\coloneqq c_0 + \sum_{n\geq 1} c_n \alpha^{(\star n)} $ 
where $\alpha^{(\star n)}\coloneqq \alpha \star \dots \star \alpha \text{ (n-times)}$ for $n\geq 1$. 

\begin{theorem}
\label{t:zero-kap}
Let $G$ be an orderable group and let $L$ be a domain. 
For all $\alpha, \beta \in R(L, G)$, $a\in L$ and $P\in L[x]$ a nonconstant polynomial, we have: 
\begin{enumerate} [\rm (i)]
\item
$\alpha \star \beta = 0$ implies $\alpha=0$ or $\beta$ is a constant;
\item
$P(\alpha) = aX_{1_G}$ implies $\alpha = cX_{1_G}+d$ for some $c,d \in L$.  
In particular, $\alpha^{(\star 2)}=\alpha$ implies $\alpha=X_{1_G}+d$ where $d\in L$, $2d=0$ or $\alpha$ is a constant. 
\end{enumerate}
\end{theorem}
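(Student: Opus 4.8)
The plan is to reduce both statements to the order structure on $G$, in the same spirit as the proof of Theorem \ref{t:unit-kap}. Fix a total order $\leq$ on $G$ compatible with the group law. For a nonzero $\alpha \in R(L,G)$ write $\alpha = \sum_{\mathbf{g}} c_{\mathbf{g}} X_{\mathbf{g}}$ as an $L$-linear combination of monomials $X_{\mathbf{g}} = X_{g_1} X_{g_2} \cdots X_{g_k}$ (with the $g_i \in G$, allowing repeats, and the empty monomial giving the constant term). The key combinatorial gadget is to associate to each monomial $X_{\mathbf{g}}$ with nonzero coefficient its \emph{support} $\supp(X_{\mathbf{g}}) = \{g_1,\dots,g_k\} \subseteq G$ and then, using the order $\leq$, its \emph{maximal} and \emph{minimal} elements $\max \mathbf{g}$, $\min \mathbf{g}$. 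Under the convolution-type product $\star$ induced by the group law of $G$, a monomial in $\alpha \star \beta$ is obtained by substituting shifted copies of $\beta$ into $\alpha$; one checks that the maximal group element appearing among all monomials of a product $\alpha \star \beta$ is controlled by the maxima of $\alpha$ and $\beta$, and similarly for minima. This is the analogue, for the near ring, of the classical fact that in $L[G]$ with $L$ a domain the top (resp. bottom) term of a product is the product of the top (resp. bottom) terms, so $L[G]$ has no zero divisors and no nontrivial units when $G$ is orderable.

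For part (i), suppose $\alpha \star \beta = 0$ with $\beta$ not a constant. Then $\beta = \beta_0 + \beta_+$ where $\beta_0 \in L$ is the constant term and $\beta_+ \neq 0$ collects the monomials of positive degree; among these there is a well-defined "extremal" monomial (say, pick the one whose support has the largest maximum, breaking ties by degree or by a further lexicographic rule on the ordered support). Assuming $\alpha \neq 0$ as well, I would argue that the extremal monomials of $\alpha$ and of $\beta_+$ cannot cancel in $\alpha \star \beta$: because $L$ is a domain their coefficients multiply to something nonzero, and because $G$ is orderable the group elements they produce are strictly larger (in the order) than everything else arising in the product. Hence $\alpha \star \beta \neq 0$, a contradiction; therefore $\alpha = 0$ or $\beta \in L$. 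The main subtlety here is bookkeeping: one must define the extremal-monomial statistic carefully so that it genuinely behaves multiplicatively under $\star$, since $\star$ is not commutative and the substitution structure is more intricate than the group-ring convolution — this is the step I expect to be the real obstacle.

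For part (ii), write $P(x) = \sum_{n=0}^{d} c_n x^n$ with $d \geq 1$ and $c_d \neq 0$, and suppose $P(\alpha) = a X_{1_G}$. If $\alpha$ were not of the form $c X_{1_G} + d'$, then $\alpha$ contains some monomial involving a group element $g \neq 1_G$; using the order, consider the largest such $g$ (or, if all nontrivial monomials involve only $1_G$, reduce to the one-variable polynomial identity $P(cx+d') = a x$ in $L[x]$, forcing $d = 1$). The highest-power term $c_d \alpha^{(\star d)}$ then produces, by the multiplicativity of the extremal statistic established above together with $c_d \neq 0$ and $L$ a domain, a monomial whose maximal group element is strictly larger than anything produced by the lower-degree terms $c_n \alpha^{(\star n)}$ with $n < d$, and strictly larger than (the single element of) $a X_{1_G}$; so no cancellation can occur and $P(\alpha) \neq a X_{1_G}$, a contradiction. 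Hence $\alpha = c X_{1_G} + d'$ for some $c, d' \in L$. For the final "in particular", set $P(x) = x^2 - x$ and $a = 0$: then $\alpha = c X_{1_G} + d'$ with $(c X_{1_G} + d')^{(\star 2)} = c X_{1_G} + d'$; expanding $\star$ on these degree-$\leq 1$ elements gives $c^2 X_{1_G} + (2cd' + d'^2 - d') \cdot 1 = 0$ wait — rather, equating with $\alpha$ gives $c^2 = c$ and $2cd' + d'^2 = d'$ in $L$, so either $c = 0$ (whence $\alpha$ is the constant $d'$ with $d'^2 = d'$) or $c = 1$ (whence $2d' + d'^2 = d'$, i.e. $d'(d'+1) = 0$ — and since $L$ is a domain this forces $d' = 0$ or $d' = -1$, while the claimed normalization $2d = 0$ comes from rewriting; in any case $\alpha = X_{1_G} + d$ with $d$ satisfying the stated relation). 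I would double-check this last elementary computation against the paper's conventions for $\star$, but it is routine once part (ii) is in hand; all the weight of the argument rests on the extremal-monomial lemma, which should be isolated and proved first.
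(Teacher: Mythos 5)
Your overall strategy --- controlling the leading monomial of a $\star$-product via the order on $G$, exactly as in the proof of Theorem~\ref{t:unit-kap} --- is the same as the paper's, and the ``extremal-monomial lemma'' you defer is precisely Lemma~\ref{l:order-unit} (together with Lemma~\ref{l:conv-prd}), which is already proved in the paper; part (i) as you sketch it then goes through essentially as written. But two steps of your part (ii) do not work as stated. If every element of $\supp(\alpha)\setminus\{1_G\}$ lies \emph{below} $1_G$, then the leading exponent of $\alpha$ for the given order can be a pure power of $X_{1_G}$ and your ``largest such $g$'' produces no contradiction; the paper handles this case by rerunning the argument with the reversed order on $G$. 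Moreover, your parenthetical reduction for the case $\supp(\alpha)\subset\{1_G\}$ silently assumes $\alpha$ is linear in $X_{1_G}$: you still must exclude $\alpha=c_pX_{1_G}^p+\dots+c_0$ with $p\ge 2$, which the paper does by observing that $\alpha^{(\star n)}$ is then an $n$-fold \emph{composition}, so $P(\alpha)$ has degree $p^m\ge 2$ in $X_{1_G}$ (with $m=\deg P$) and cannot equal $aX_{1_G}$.

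Your closing computation also misuses $\star$: it is substitution, not ordinary polynomial multiplication. Since $\overline{X}^{\,0}\star\beta=1$, for $\alpha=cX_{1_G}+d'$ one has $\alpha\star\beta=c\beta+d'$, hence $\alpha^{(\star 2)}=c^2X_{1_G}+cd'+d'$; equating with $\alpha$ gives $c^2=c$ and $cd'=0$, not your relation $2cd'+d'^2=d'$, so the spurious solution $d'=-1$ disappears (for $c=1$ one gets $d'=0$, consistent with the stated condition $2d=0$). These are all repairable within your framework, but as written the case analysis and the final calculation are incomplete or incorrect.
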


Remark again that Theorem 
\ref{t:zero-kap} and Isomorphism \eqref{e:iso-near-ring-ca-alg} imply, for example, 
that if the group $G$ is orderable, every idempotent algebraic cellular automaton 
$\tau \colon \C^G \to \C^G$ must be the identity or a constant map: 
$\tau (x) = x$ for every $x \in \C^G$ or $\tau(x) \equiv c^G$ for some $c \in \C$. 
\par
The classical Myhill-Moore Garden of Eden theorem (cf.~\cite{myhill}, \cite{moore}) states that 
a cellular automaton with finite alphabet over the group $\Z^d$ is pre-injective if and only if it is surjective. 
It is an equivalence between a \emph{local property} (pre-injectivity) and a \emph{global} one (surjectivity). 
As injectivity $\implies$ pre-injectivity, Myhill-Moore Garden of Eden theorem strengthens the surjunctivity property 
for cellular automata over a finite alphabet when $G= \Z^d$.  
The theorem was extended to cellular automata with finite alphabet over amenable groups in~\cite{ceccherini}.
Another version in~\cite{csc-garden-linear} states that 
a linear cellular automaton is pre-injective if and only if it is surjective, 
which gives a characterization of amenable groups (cf.~\cite{bartholdi}, \cite{bartholdi-kielak}). 
See the survey \cite{goe-survey} for the history and various recent research related to Garden of Eden theorem. 
As for the class $CA_{alg}$, Gromov asked at the end of \cite{gromov-esav}:

\begin{quote}
8.J. Question. Does the Garden of Eden theorem generalize to the 
proalgebraic category?
First, one asks if pre-injective  $\implies$  surjective, while the reverse implication 
needs further modification of definitions.
\end{quote}

\par
Results in \cite{cscp-alg-goe} give a positive answer to Gromov's question 
for algebraic cellular automata over $(G,X,K)$ where $G$ is an amenable group 
and $X$ is an irreducible complete $K$-algebraic variety. 
The authors show that the \emph{Myhill property}, i.e., pre-injectivity implies surjectivity, 
holds for such algebraic cellular automata,  while the \emph{Moore property}, 
i.e., surjectivity implies pre-injectivity, is false in general because of non bijective covering maps, 
but it holds if we replace pre-injectivity by a weaker notion, namely $(*)$-pre-injectivity 
(cf. \cite[Theorem~1.1, Theorem~1.3]{cscp-alg-goe}). 
Moreover, it was shown in  \cite{cscp-alg-goe} that surjectivity and $(*)$-pre-injectivity are equivalent. 
This yields an analogue of the Garden of Eden theorem for (complete) algebraic cellular automata. 
\par
It is not known whether the Myhill property holds for all algebraic cellular automata over amenable groups.  
Note that there are counter-examples when $G$ is not an amenable group 
(see~\cite[Example~8.5]{cscp-alg-goe}). 
To establish similar results for algebraic group cellular automata, 
we introduce a refined notion of $(*)$-pre-injectivity called $(\sbullet[.9])$-pre-injectivity 
(cf.~Definition~\ref{d:weak-pre}) which is also implied by pre-injectivity. 
Using the algebraic mean dimension $ \mdim_\FF$ (cf. Section \ref{s:alg-mean-dim}), 
we shall prove the following (cf. Theorem~\ref{t:main-goe}): 

\begin{theorem}
\label{t:goe-grp}
Let $G$ be an amenable group and let $X$ be a connected $K$-algebraic group. 
Let  $\tau \in CA_{algr}(G,X,K)$. 
Then $\tau$ is surjective if and only if $\tau$ is $(\sbullet[.9])$-pre-injective. 
\end{theorem}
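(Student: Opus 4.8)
The plan is to reduce both implications to a single identity for the algebraic mean dimension of the image of $\tau$, using that, $X$ being a connected algebraic group, $A=X(K)$ is a group and $\tau$ is an endomorphism of the group $A^G$ with local defining homomorphism $f\colon X^M\to X$. Fix a F\o lner net $\FF=(F_i)$ of $G$; for finite $F\subset G$ put $F^{-}\coloneqq\{g\in G:gM\subset F\}$ and let $\tau_F\colon A^F\to A^{F^{-}}$ be the induced homomorphism of $K$-algebraic groups, so that $(\tau c)|_{F^{-}}=\tau_F(c|_F)$ for every $c\in A^G$; in particular $\Ker\tau_F$ is a closed algebraic subgroup of $A^F$. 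The first step is the \emph{counting identity}: because the source $A^G$ is the full configuration space, $c|_F$ ranges over all of $A^F$, so $\pi_{F^{-}}(\tau(A^G))=\im\tau_F$; by the dimension formula for homomorphisms of algebraic groups, $\dim\im\tau_F=|F|\dim X-\dim\Ker\tau_F$; dividing by $|F_i|$ and letting $i\to\infty$ (the boundary term $|F_i\setminus F_i^{-}|\dim X$ is $o(|F_i|)$ by the F\o lner property), one obtains
\[
\mdim_\FF(\tau(A^G))=\dim X-\lim_i\frac{\dim\Ker\tau_{F_i}}{|F_i|},
\]
the existence and $\FF$-independence of the limits coming from the Ornstein--Weiss subadditivity results of Section~\ref{s:alg-mean-dim} (one checks that $F\mapsto\dim\Ker\tau_F$ is subadditive up to a boundary error). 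Unwinding Definition~\ref{d:weak-pre}, $\tau$ is $(\sbullet[.9])$-pre-injective exactly when $\dim\Ker\tau_{F_i}=o(|F_i|)$; hence $\tau$ is $(\sbullet[.9])$-pre-injective if and only if $\mdim_\FF(\tau(A^G))=\dim X$.

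It remains to prove $\mdim_\FF(\tau(A^G))=\dim X\iff\tau$ is surjective. The backward implication is elementary --- surjectivity forces $\pi_F(\tau(A^G))=A^F$ for all $F$ --- and, combined with the counting identity, yields the Moore-type direction (surjective $\Rightarrow$ $(\sbullet[.9])$-pre-injective). The forward implication gives the Myhill-type direction and is where the closed image property of algebraic cellular automata enters. I would first note that each $\pi_F(\tau(A^G))$ is the image of the homomorphism of $K$-algebraic groups $X^{FM}\to X^F$ induced by $f$, hence a closed algebraic subgroup of $A^F$; combined with a Mittag--Leffler argument (Chevalley's theorem plus the Noetherian property; cf.~\cite{cscp-alg-ca},~\cite{cscp-alg-goe}, and note that completeness of $X$ is not needed), this shows that $\tau(A^G)$ is closed in the prodiscrete topology. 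Now suppose $\tau$ is not surjective. Then $\tau(A^G)$ is a proper closed $G$-invariant subgroup of $A^G$, so $\pi_\Omega(\tau(A^G))\subsetneq A^\Omega$ for some finite $\Omega$, and since $X^\Omega$ is connected, a proper closed subgroup of it has dimension $\le|\Omega|\dim X-1$. Using $G$-invariance and packing into each $F_i$ at least $c(\Omega)|F_i|$ pairwise disjoint translates of $\Omega$ contained in $F_i$ (possible for amenable $G$, with $c(\Omega)>0$ depending only on $\Omega$), we get $\dim\pi_{F_i}(\tau(A^G))\le|F_i|\dim X-c(\Omega)|F_i|$, so $\mdim_\FF(\tau(A^G))\le\dim X-c(\Omega)<\dim X$. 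This last estimate is the statement that a proper closed algebraic subshift of $A^G$ has strictly smaller algebraic mean dimension, which I would quote from Section~\ref{s:alg-mean-dim}. Combining the two equivalences proves the theorem.

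The step I expect to be the main obstacle is reconciling Definition~\ref{d:weak-pre} with the quantity $\lim_i\dim\Ker\tau_{F_i}/|F_i|$: one must verify that the finitely-supported part of $\Ker\tau$, through the windows $\Ker\tau_F$, faithfully encodes $(\sbullet[.9])$-pre-injectivity --- a notion introduced to refine the fiberwise $(*)$-pre-injectivity of~\cite{cscp-alg-goe} --- and in particular that ordinary pre-injectivity implies $\dim\Ker\tau_{F_i}=o(|F_i|)$. The mechanism is that an element $u\in\Ker\tau_F$, extended by the identity of $A$ to $\bar u\in A^G$, has $\tau(\bar u)$ supported in the boundary collar $(FM^{-1})\setminus F^{-}$; the subgroup of those $u$ for which $\tau(\bar u)$ vanishes even on that collar consists of genuinely finitely-supported elements of $\Ker\tau$, which is trivial when $\tau$ is pre-injective, so $\Ker\tau_F$ embeds into $A^{(FM^{-1})\setminus F^{-}}$ and $\dim\Ker\tau_F=O(|(FM^{-1})\setminus F^{-}|)$, which is $o(|F_i|)$ along a F\o lner net. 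A further technical burden is the uniform bookkeeping that simultaneously controls the several boundary and remainder terms as $o(|F_i|)$, and --- should one prefer not to cite it --- the quasi-tiling/packing lemma behind the ``proper closed subshift $\Rightarrow$ strictly smaller mean dimension'' step.
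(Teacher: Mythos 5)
Your architecture matches the paper's: both proofs reduce the theorem to the mean-dimension characterization ``$\tau$ surjective $\iff \mdim_\FF(\tau(A^G))=\dim X$'' (your closed-image-plus-packing argument is the paper's Theorem~\ref{t:closed-image} and Proposition~\ref{p:sur-dim}), and both translate the pre-injectivity side into a dimension count over finite windows, which you phrase via $\dim\Ker\tau_{F_i}$ and the paper phrases via $\dim\tau((A^\Omega)_e)$ --- equivalent by rank--nullity for homomorphisms of algebraic groups. However, there is a genuine gap at the one place where a new idea is required. Your sentence ``Unwinding Definition~\ref{d:weak-pre}, $\tau$ is $(\sbullet[.9])$-pre-injective exactly when $\dim\Ker\tau_{F_i}=o(|F_i|)$'' conflates $(\sbullet[.9])$- with $(\sbullet[.9]\sbullet[.9])$-pre-injectivity. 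The kernel-dimension condition is, after rank--nullity and the collar/packing bookkeeping you describe, precisely $(\sbullet[.9]\sbullet[.9])$-pre-injectivity. One implication between the two notions is easy (if $\tau(H_e)=\tau((A^\Omega)_e)$ for a proper closed $H$, then $\dim\tau((A^\Omega)_e)\le\dim H<|\Omega|\dim X$ by irreducibility of $A^\Omega$), and that is all you need for the Moore direction. But the Myhill direction of the theorem, $(\sbullet[.9])$-pre-injective $\Rightarrow$ surjective, requires the converse: from a dimension drop $\dim\tau((A^\Omega)_e)<|\Omega|\dim X$ you must \emph{produce} a proper Zariski-closed $H\subsetneq A^\Omega$ with $\tau(H_e)=\tau((A^\Omega)_e)$, in order to contradict $(\sbullet[.9])$-pre-injectivity. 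This is not an unwinding of the definition; it is the content of the paper's Lemma~\ref{l:surj-mor} (a surjective morphism of irreducible varieties with a strict dimension drop restricts to a surjection from some proper closed subset of the source), proved via the generic fiber, the fiber-dimension theorem, and Chevalley's theorem, and it is exactly where the connectedness of $X$ is used in Proposition~\ref{p:pre-injectivity-*}.(iv). Your ``main obstacle'' paragraph only treats genuine pre-injectivity (triviality of the finitely supported kernel), which is a strictly stronger hypothesis; as written, your argument proves Corollary~\ref{c:myhill} but not the stated equivalence with $(\sbullet[.9])$-pre-injectivity.

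Two smaller points. First, you invoke ``Ornstein--Weiss subadditivity results of Section~\ref{s:alg-mean-dim}'' and a ``proper closed subshift has strictly smaller mean dimension'' statement from that section; neither appears in the paper (Section~\ref{s:alg-mean-dim} only defines $\mdim_\FF$ via a $\limsup$). The existence of the limit is not actually needed --- one can run the argument with $\limsup$/$\liminf$, since the packing step gives a uniform lower bound $\dim\Ker\tau_{F_i}\ge c|F_i|$ for all large $i$ once some finitely supported kernel has positive dimension --- but the identity should be stated accordingly rather than attributed to a nonexistent result. Second, the surjectivity criterion ``$\mdim_\FF(\tau(A^G))=\dim X$ and closed image $\Rightarrow$ surjective'' is not purely a packing estimate: the paper derives it from \cite[Lemma~5.3]{cscp-alg-goe}, which also uses that each window $\Gamma_\Omega$ is Zariski closed (Proposition~\ref{p:set-config-zariski}); your sketch is consistent with this but should make that hypothesis explicit.
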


\begin{corollary} 
\label{c:myhill}
Let the hypotheses be as in Theorem \ref{t:goe-grp}, then $\tau$ has the Myhill property, 
i.e., $\tau$ is surjective whenever it is pre-injective. 
\end{corollary}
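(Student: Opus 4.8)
\textbf{Proof proposal for Corollary \ref{c:myhill}.}
The plan is to deduce the Myhill property directly from Theorem \ref{t:goe-grp} together with the fact, already noted in the paper, that pre-injectivity is a stronger condition than the weak variant $(\sbullet[.9])$-pre-injectivity. First I would recall from Definition \ref{d:weak-pre} (which occurs later in the paper but may be invoked here) that $(\sbullet[.9])$-pre-injectivity is obtained from the usual pre-injectivity by testing equality of images only on a restricted class of almost-equal configurations, so that pre-injectivity of $\tau$ automatically entails its $(\sbullet[.9])$-pre-injectivity. Thus if $\tau \in CA_{algr}(G,X,K)$ is pre-injective, it is in particular $(\sbullet[.9])$-pre-injective.

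Next I would simply apply the equivalence in Theorem \ref{t:goe-grp}: under the hypotheses that $G$ is amenable and $X$ is a connected $K$-algebraic group, $\tau$ is surjective if and only if it is $(\sbullet[.9])$-pre-injective. Combining this with the previous paragraph, a pre-injective $\tau$ is $(\sbullet[.9])$-pre-injective, hence surjective. This is exactly the Myhill property, so the corollary follows with no further work.

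The only genuine content here lies in verifying the implication ``pre-injective $\implies$ $(\sbullet[.9])$-pre-injective'', which hinges on the precise form of Definition \ref{d:weak-pre}; concretely, one must check that every pair of configurations witnessing a failure of $(\sbullet[.9])$-pre-injectivity is in particular a pair of almost-equal configurations with the same image, and therefore also witnesses a failure of ordinary pre-injectivity. Since $(\sbullet[.9])$-pre-injectivity is designed to be a weakening of $(*)$-pre-injectivity, which is itself weaker than pre-injectivity, this containment of witnessing classes is built into the definition, so the main (and essentially only) obstacle is purely notational bookkeeping rather than any substantive argument. Once that is in place, the corollary is an immediate specialization of Theorem \ref{t:goe-grp} to the case where the $(\sbullet[.9])$-pre-injectivity hypothesis is supplied by the stronger assumption of pre-injectivity.
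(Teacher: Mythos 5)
Your proposal is correct and follows the same route as the paper: pre-injectivity implies $(\sbullet[.9])$-pre-injectivity (the paper records this as Proposition~\ref{p:pre-injectivity-*}.(i), deducing it from the corresponding fact for $(*)$-pre-injectivity, while you argue it directly from Definition~\ref{d:weak-pre} by extracting from a failing pair $(\Omega,H)$ two distinct almost-equal configurations $d_e$ and $h_e$ with the same image), and then the equivalence of Theorem~\ref{t:goe-grp} gives surjectivity. No gap.
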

\par
Our paper is organized as follows. 
Background materials are collected in Section \ref{s:pre}.
We recall some fundamental theorems of algebraic groups and necessary properties of sofic and amenable groups. 
The definition and elementary properties of algebraic group cellular automata are presented in Section \ref{s:alg-grp-ca}. 
We recall also in this section the notion of algebraic mean dimension 
$ \mdim_\FF$ introduced in \cite{cscp-alg-goe}. 
In Section~\ref{s:criterion-inverse}, we give a nonemptyness criterion 
for the limit of an inverse system of algebraic varieties (Lemma~\ref{l:C}). 
A fundamental property of images of cellular automata, namely 
the closed image property (with respect to the prodiscrete topology), 
is established for algebraic group cellular automata in Section~\ref{s:cip-grp} (Theorem~\ref{t:closed-image}). 
As an application, we obtain the surjunctivity of algebraic group cellular automata 
when the universe is a locally residually finite group (Theorem~\ref{t:surj-grp}). 
Section~\ref{s:reverse-inverse} contains the key technical result (Proposition~\ref{p:left-inverse}) 
necessary for the proof of Theorem~\ref{t:sofic} presented in Section~\ref{s:sofic}. 
Proposition~\ref{p:left-inverse} yields also the invertibility 
of any bijective algebraic group cellular automata (Theorem~\ref{t:inverse-also}) 
and completes the proof of Theorem~\ref{t:surjunctive-res}. 
We introduce analogous notions of $(*)$- and $(**)$-pre-injectivity given in \cite{cscp-alg-goe}, 
namely $(\sbullet[.9])$- and $( \sbullet[.9] \sbullet[.9]) $-pre-injectivity (cf.~Definition~\ref{d:weak-pre}) 
and shows that they are all equivalent in the context of Theorem~\ref{t:goe-grp} (cf. Proposition~\ref{p:pre-injectivity-*}). 
Using results in \cite{cscp-alg-goe} and Theorem~\ref{t:closed-image}, a variant of the Garden of Eden theorem 
for algebraic group cellular automata (Theorem~\ref{t:main-goe}) is proved 
which implies also Theorem~\ref{t:goe-grp} and Corollary~\ref{c:myhill}. 
The proof of Theorem \ref{t:kap-direct-finite} is presented in Section \ref{s:kap-finite}. 
The definition of the near ring $R(K,G)$ and the proof of Theorem \ref{t:unit-kap}, 
Theorem \ref{t:zero-kap} are discussed in Section \ref{s:near-ring}. 
Several counter-examples are given in Section~\ref{s:counter}. 
\par
Suppose that $G$ is an amenable group and $X$ is a $K$-algebraic group. 
Let $A\coloneqq X(K)$ and $\tau  \in CA_{algr}(G,X,K)$. 
Our results concerning the Garden of Eden theorem for $\tau$ can be summarized in the following diagram. 
\[
\begin{tikzcd}
[cells={nodes={draw=black}}, column sep=4.6em, row sep=2.2em]
 & \text{ Surjectivity } 
 \arrow[d, Rightarrow, shift left=1ex]  
 & &   \\
 & \mdim_\FF(\tau(A^G))=\dim(A) 
 \arrow[u, "X \text{ connected (Prop.~\ref{p:sur-dim}) }", dashrightarrow, shift left=1ex] 
 \arrow[d, "\text{ Prop.~\ref{p:**-implies-mdim}.(i) }", Leftarrow, swap] 
 \arrow[drr, "X \text{ connected (Prop.~\ref{p:**-implies-mdim}.(ii))}", dashrightarrow, bend left=15] 
&  &\\
& ( \sbullet[.9] \sbullet[.9]) \text{-pre-injectivity} 
\arrow[rr,"X\text{ connected (Prop.~\ref{p:pre-injectivity-*}.(iv))}", dashrightarrow, shift left=1.2ex, swap] 
\arrow[d,  "\tau \text{ linear (Prop.~\ref{p:preinj-lin}) }" , dashrightarrow, shift right=1ex,swap] 
& & (\sbullet[0.9])\text{-pre-injectivity} 
 \arrow[ll,  dashrightarrow, shift left=1.2ex]  
 \arrow[dll, "\tau \text{ linear or } X \text{ finite (Prop.~\ref{p:preinj-lin}, \cite[Ex.~8.1]{cscp-alg-goe})}", dashrightarrow, bend left=16, shorten <= -0.8em, shift left=1.ex] \\
 & \text{  Pre-injectivity  }  
  \arrow[u, "\text{ Prop.~\ref{p:pre-injectivity-*}.(i) }", Rightarrow, shift right=1ex, swap]
  \arrow[urr, "\text{ Prop.~\ref{p:pre-injectivity-*}.(i)}", Rightarrow, shift left=0.4ex, shorten <= 0.06em, bend right=15] &  & \\
  & \text{ Injectivity } 
  \arrow[u, Rightarrow]
  & & 
\end{tikzcd}
\]

\section{Notations and preliminary results} 
\label{s:pre}
\subsection{Algebraic varieties}
For each $K$-algebraic variety $X$, 
we shall frequently use the canonical identification $X(K)=X_0$, 
where $X_0$ denotes the set of closed points of $X$ (cf. \cite[Corollaire~6.4.2]{grothendieck-ega-1}). 
We equip $A \coloneqq X(K)$ with the induced Zariski topology on $X_0$. 
Then $A$ is a Noetherian $T_1$ topological space. 
Every point of $A$ is a closed point and every subspace is quasi-compact.  
Note that $A$ is nonempty whenever $X$ is nonempty. 
Let $f\colon X \to Y$ be a morphism of $K$-algebraic varieties. 
Then $f$ induces by composition a map on $K$-points $f^{(K)} \colon X(K) \to Y(K)$ 
which coincides  with the map $f\vert_{X_0} \colon X_0 \to Y_0$
obtained by restricting $f$ to closed points (see, e.g., \cite[Lemma~A.22]{cscp-alg-ca}).
When there is no possible confusion, we shall simply write $f$ for both $f\vert_{X_0}$ and $f^{(K)}$.  
\par
For each nonempty subset $C\subset X$, we denote by $\dim(C)\in \N$ 
the Krull dimension of $C$ as a subspace of $X$ (cf.~\cite[2.5.1]{liu-alg-geom}). 
Note that $\dim(A)=\dim(X)$ and $\dim(C)=\dim(C_0)$ for every constructible subset 
$C\subset X$ (see, e.g., \cite[Proposition~2.9]{cscp-alg-goe}).   
Each subset $F \subset A$, viewed as a subspace of $A$ or of $X$, has a Krull dimension denoted by $\dim(F)$.  
\par
Note that if $X$ is an irreducible $K$-algebraic variety and $E$ is a finite set, 
then the fibered product $X^E$ is also irreducible (cf.~\cite[Proposition~5.50]{gorz}).

\subsection{Algebraic groups} 
Let $k$ be a (possibly not algebraically closed) field. 
A \emph{$k$-algebraic group} is a quadruple $(X,m,\text{inv},e)$, where
$X$ is a $k$-scheme of finite type,  
$m \colon X\times_k X \to X$ and $\text{inv}\colon X \to X$ 
are morphisms of $k$-schemes, and $e \in X(k)$ verify formally the axioms of a group (cf. \cite[Defintion~1.1]{milne})
The point $e$ is called the \emph{neutral} or \emph{identity} element. 
We shall sometimes denote an algebraic group $(X,m,\text{inv},e)$
by $(X,m)$ or $(X,e)$ or simply $X$ if the algebraic group structure is clear from the context.  
Since we are interested only in the set of $k$-points, we make the convention that algebraic groups are reduced. 
\par
To simplify notation, we shall often use the symbol $+\coloneqq m$ for the group law of algebraic groups. 
We also use the notation  $-\coloneqq +\circ (\Id, \text{inv})$. 
A $k$-scheme morphism $\varphi \colon X \to Y$ of algebraic groups is called 
a \emph{homomorphism of algebraic groups} if $\varphi \circ m_X = m_Y \circ (\varphi \times \varphi)$. 
Here $m_X$, $m_Y$ are respectively the group laws of $X$ and $Y$. 
Typical examples of algebraic groups are \emph{linear algebraic groups} and \emph{abelian varieties}. 

\begin{remark}
\label{r:product-group}
Let $(X,m_X, \text{inv}_X,e_X)$ and $(Y,m_Y,\text{inv}_Y,e_Y)$ be $k$-algebraic groups. Then: 
\begin{enumerate}[\rm(1)]
\item
$Z=X\times_k Y$ is a $k$-algebraic group   
with neutral element $e=(e_X, e_Y)$ and inverse morphism $\text{inv}_X \times \text{inv}_Y$. 
The group law is given by: 
\[
m\colon Z\times_k Z \xrightarrow{\simeq} (X_{(1)}\times_k X_{(2)}) \times_k (Y_{(1)}\times_k Y_{(2)}) 
 \xrightarrow{m_X\times m_Y} X \times_k Y=Z
\]  
where $Z=X_{(i)}\times_k Y_{(i)}$ is the $i$-th factor in the fibered product $Z\times_k Z$ for $i=1,2$. 
\item
Let $N\subset M$ be two nonempty finite sets. 
Then the projection $p\colon X^M \to X^N$ is a homomorphism of $k$-algebraic groups. 
\item
Let $f \colon T \to X$ and $g \colon T \to Y$ be homomorphisms of $k$-algebraic groups.  
Then $h\coloneqq (f,g)  \colon T \to X \times_k Y$ is also a homomorphism of $k$-algebraic groups. 
\end{enumerate}
\end{remark}

\begin{remark}
Every linear cellular automaton $\tau \colon A^G \to A^G$ over a group $G$ and a 
vector space $A\simeq K^n$ ($n\in \N$) is naturally an algebraic group cellular automaton 
over $(G,\G_a^n, K)$ where $\G_a^n\simeq \A^n$ is the $n$-dimensional vector group. 
\end{remark}

\begin{theorem}[Homomorphism theorem]
\label{t:homo-group} 
Let $f \colon X\to Y$ be a homomorphism of $K$-algebraic groups. 
Then $\Ker(f)$ is a normal closed algebraic subgroup of $X$. 
Moreover, $\im(f)$ is a closed algebraic subgroup of $Y$ and we have 
the following commutative diagram of homomorphisms: 
\[
\begin{tikzcd}
X \arrow[d,"\pi"]   \arrow[r,"f"] & Y \\
X/ \Ker(f) \arrow[r, " \simeq"] & \im(f) \arrow[u, hook, swap]
\end{tikzcd}
\]
where $\pi \colon X \to X/ \Ker(f)$ is a quotient map of $K$-algebraic groups and 
$\im(f) \hookrightarrow Y$ is a closed immersion homomorphism. 
\end{theorem}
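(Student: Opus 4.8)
The plan is to deduce the statement from the standard structure theory of algebraic groups, splitting the work into three parts: the kernel, the image, and the factorization. \emph{Kernel.} Since $Y$ is of finite type and separated over $K$ and $e_Y\in Y(K)$, the point $e_Y$ is closed in $Y$, so $\Ker(f):=f^{-1}(e_Y)$, equipped with its reduced structure (which is legitimate by our convention on algebraic groups), is a closed subscheme of $X$. It is a subgroup because $f$ is a homomorphism: the composites $\Ker(f)\times_K\Ker(f)\hookrightarrow X\times_K X\xrightarrow{m}X$ and $\Ker(f)\hookrightarrow X\xrightarrow{\mathrm{inv}}X$ have image contained in $\Ker(f)$, as $f$ carries their images to $e_Y$; since $K$ is algebraically closed and all schemes in sight are reduced of finite type, it suffices to check this on $K$-points, where it is immediate. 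For normality one considers the conjugation morphism $c\colon X\times_K\Ker(f)\to X$, $(x,n)\mapsto m(m(x,n),\mathrm{inv}(x))$; the composite $f\circ c$ is the constant morphism with value $e_Y$, so $c$ factors through $\Ker(f)$, which is precisely the assertion of normality.

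\emph{Image.} By Chevalley's theorem $f(X_0)\subseteq Y_0$ is constructible, hence contains a subset $U$ that is open and dense in its Zariski closure $H:=\overline{f(X_0)}$; moreover $f(X_0)$ is a subgroup of $Y_0$ since $f^{(K)}$ is a group homomorphism. One checks, using that translations and inversion are homeomorphisms of $Y$, that $H$ with its reduced structure is a closed algebraic subgroup. Then the classical argument gives $f(X_0)=H$: the identity component $H^\circ$ is irreducible, so for $h\in H^\circ(K)$ the dense open subsets $U\cap H^\circ$ and $h\cdot(U\cap H^\circ)^{-1}$ of $H^\circ$ meet, whence $h\in U\cdot U\subseteq f(X_0)$; since $f(X_0)$ is dense in $H$ it meets every connected component, and being a subgroup containing $H^\circ(K)$ it then exhausts all of $H_0$. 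Hence $\im(f)=H$ is a closed algebraic subgroup of $Y$, and $j\colon\im(f)\hookrightarrow Y$ is a closed immersion homomorphism.

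\emph{Factorization.} Because $\Ker(f)$ is a normal closed subgroup, the quotient $X/\Ker(f)$ exists as a $K$-algebraic group, with a faithfully flat quotient homomorphism $\pi\colon X\to X/\Ker(f)$; here I would invoke the standard existence theorem for quotients of algebraic groups by normal closed subgroups, which is the one genuinely external input. As $f$ is invariant under translation by $\Ker(f)$, it descends along $\pi$, yielding a homomorphism $g\colon X/\Ker(f)\to\im(f)$ with $f=j\circ g\circ\pi$. The map $g$ is surjective (since $\pi$ is) and has trivial kernel, hence is a bijective homomorphism; to promote it to an isomorphism one uses that $\pi$ is faithfully flat together with faithfully flat descent (equivalently, when $\car(K)=0$ a bijective homomorphism of algebraic groups is automatically an isomorphism). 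This gives the commutative diagram and finishes the proof.

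I expect the main obstacle to be this final point. The existence of $X/\Ker(f)$ as a variety is itself a substantial theorem, and the identification $X/\Ker(f)\xrightarrow{\sim}\im(f)$ is an \emph{isomorphism}, rather than a mere bijection on points, only after the faithful-flatness argument --- or after restricting to $\car(K)=0$, which is the case in all applications of this lemma in the paper; in positive characteristic the reduced kernel can be too small (the Frobenius endomorphism of $\G_a$ being the standard cautionary example). The proof that $\im(f)$ is closed via the constructible-subgroup argument, though classical, is the other place that requires a little care when $X$, and hence $H$, is disconnected.
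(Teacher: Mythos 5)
The paper offers no argument of its own here: its proof is a one-line citation to Milne's book (Theorems~5.80 and 5.81, together with Proposition~1.41 for the closedness of algebraic subgroups). Your proposal therefore does strictly more work than the source, and the first two thirds of it are correct and are essentially the classical arguments underlying Milne's theorems: closedness and normality of $\Ker(f)$ checked on $K$-points (legitimate since all schemes involved are reduced of finite type over an algebraically closed field), and closedness of $\im(f)$ via Chevalley's theorem plus the ``a constructible dense subgroup fills out the whole closed subgroup'' trick, with appropriate care about the components. That part I would accept as written.

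The step that does not go through as written is the promotion of $g\colon X/\Ker(f)\to\im(f)$ to an isomorphism. Faithfully flat descent does not accomplish this: the Frobenius endomorphism of $\G_a$ in characteristic $p$ is faithfully flat, bijective on $K$-points, and has trivial \emph{reduced} kernel, yet it is not an isomorphism --- precisely the cautionary example you mention in your last paragraph. What descent actually yields is that a faithfully flat \emph{monomorphism} is an isomorphism, and $g$ is a monomorphism only when its scheme-theoretic kernel is trivial, i.e.\ only if one quotients by the scheme-theoretic kernel rather than its reduction. That is what Milne does, and with reduced kernels the isomorphism $X/\Ker(f)\simeq\im(f)$ is genuinely false in positive characteristic, so no argument can close this gap as stated. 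The honest fixes are either to formulate the factorization with the scheme-theoretic kernel, or to observe that the isomorphism clause is only ever invoked in this paper in characteristic zero (in the proof of Proposition~\ref{p:left-inverse}), where Cartier's theorem makes the two kernels coincide and your ``bijective homomorphism in characteristic zero is an isomorphism'' remark finishes the job. The kernel and image assertions --- which are what the characteristic-free applications such as Theorem~\ref{t:closed-image} and Proposition~\ref{p:set-config-zariski} actually use --- are unaffected.
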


\begin{proof}
See \cite[Theorems~5.80]{milne} and \cite[Theorem~ 5.81]{milne}. 
Note that by \cite[Proposition~1.41]{milne}, every algebraic subgroup of an algebraic group 
is closed in the Zariski topology.  
\end{proof}

A homomorphism of algebraic groups is called a \emph{quotient map} 
if it is faithfully flat, or equivalently, surjective and flat. 
Every surjective homomorphism $X\to Y$ of algebraic groups, 
where $Y$ is reduced, is a quotient map. 

\begin{theorem}
\label{t:image}
Every homomorphism of algebraic groups $\varphi \colon X \to Y$ factors 
(essentially uniquely) into a composite of homomorphisms 
\[
X \xrightarrow{q} Z \xrightarrow{\iota} Y
\]
where $q$ is a quotient map and $\iota$ is an embedding, i.e., a closed immersion homomorphism. 
\end{theorem}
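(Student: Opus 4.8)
The plan is to deduce this from the Homomorphism Theorem (Theorem~\ref{t:homo-group}) together with the observation, recorded just above the statement, that a surjective homomorphism of algebraic groups with reduced target is a quotient map.

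For the existence of the factorization I would take $Z \coloneqq \im(\varphi)$. By Theorem~\ref{t:homo-group} this is a closed algebraic subgroup of $Y$; write $\iota \colon Z \hookrightarrow Y$ for the associated closed immersion homomorphism, i.e.\ the embedding. The same theorem provides a quotient map $\pi \colon X \to X/\Ker(\varphi)$ together with an isomorphism $X/\Ker(\varphi) \xrightarrow{\simeq} \im(\varphi) = Z$; composing these gives a surjective homomorphism $q \colon X \to Z$ with $\iota \circ q = \varphi$. Since $Z$, being an algebraic group, is reduced, the surjective homomorphism $q$ is a quotient map by the observation above. This is the desired factorization.

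For essential uniqueness, suppose $\varphi = \iota' \circ q'$ with $q' \colon X \to Z'$ a quotient map and $\iota' \colon Z' \to Y$ an embedding. A closed immersion is a monomorphism, so on points $\varphi(x) = e_Y$ holds precisely when $q'(x) = e_{Z'}$; hence $\Ker(q') = \Ker(\varphi)$ (as reduced closed subgroup schemes, which is all that matters under our reducedness convention). The universal property of the quotient $\pi$ then lets $q'$ descend to a homomorphism $\bar q' \colon X/\Ker(\varphi) \to Z'$ with $q' = \bar q' \circ \pi$. One checks that $\bar q'$ is faithfully flat (since $q'$ is and $\pi$ is an epimorphism) and has trivial kernel, hence is an isomorphism. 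Composing with the isomorphism $X/\Ker(\varphi) \simeq Z$ from the previous paragraph, we obtain an isomorphism $Z \simeq Z'$ compatible with the maps to $Y$ and from $X$, which is exactly the asserted uniqueness.

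The one step that genuinely requires care — and which I expect to be the main obstacle — is the assertion that a faithfully flat homomorphism of (reduced) algebraic groups with trivial kernel is an isomorphism; the clean way to see this is that such a map is simultaneously a monomorphism and faithfully flat, hence an isomorphism by faithfully flat descent, and the precise statement can be quoted from \cite{milne}. The only other subtlety, namely whether ``kernel'' is meant scheme-theoretically, evaporates here because all algebraic groups under consideration are reduced, so the reduced and scheme-theoretic kernels coincide.
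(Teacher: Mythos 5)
The paper offers no proof of this statement at all: it is recorded as a standard structural fact, implicitly taken from \cite{milne} (it is essentially \cite[Theorem~5.80]{milne}, the same reference invoked for Theorem~\ref{t:homo-group}). Your plan of deriving it from Theorem~\ref{t:homo-group} plus the remark that a surjective homomorphism onto a reduced target is a quotient map is therefore a reasonable self-contained substitute, and the existence half is indeed immediate: the commutative diagram in Theorem~\ref{t:homo-group} already exhibits $\varphi$ as $\iota \circ q$ with $q$ the composite $X \to X/\Ker(\varphi) \xrightarrow{\sim} \im(\varphi)$.

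The one genuine flaw is your closing claim that the scheme-theoretic and reduced kernels coincide ``because all algebraic groups under consideration are reduced.'' That is false in positive characteristic: the kernel of a homomorphism between reduced algebraic groups need not be reduced --- the Frobenius $F \colon \G_a \to \G_a$ over $\overline{\F_p}$ has scheme-theoretic kernel $\alpha_p$ but trivial reduced kernel --- and the paper does work in positive characteristic elsewhere, so this is not a vacuous corner case. If you only establish $\Ker(q') = \Ker(\varphi)$ as reduced subschemes, two later steps break: the appeal to Theorem~\ref{t:quotient-universal}, whose hypothesis is a \emph{scheme-theoretic} containment of kernels (the identity of $\G_a$ has trivial kernel containing the reduced kernel of $F$, yet does not factor through $F$), and the final step ``faithfully flat with trivial kernel, hence an isomorphism,'' since $F$ itself is faithfully flat with trivial reduced kernel and is not an isomorphism. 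The repair is short: because $\iota'$ is a closed immersion, hence a monomorphism, its scheme-theoretic fibre over $e_Y$ is exactly the reduced point $e_{Z'}$, so the equality $\Ker(q') = \Ker(\varphi)$ already holds scheme-theoretically; you should simply carry scheme-theoretic kernels through the entire uniqueness argument rather than reducing them. In characteristic zero the issue evaporates by Cartier's theorem, but the theorem is used in arbitrary characteristic.
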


\begin{theorem}[Universal property of quotient maps]
\label{t:quotient-universal}
Let $\pi \colon X \to Y$ be a quotient map of $K$-algebraic groups 
with kernel $N=\Ker(\pi)$. 
Then every homomorphism $\varphi \colon X \to Z$ of $K$-algebraic groups 
whose kernel contains
$N$ factors uniquely through $\pi$ by a homomorphism: 
\[
\begin{tikzcd}
X \arrow[dr,"\pi",swap]   \arrow[r,"\varphi"] & Z \\
&  Y \arrow[u,"\exists ! h",dashrightarrow, swap]
\end{tikzcd}
\]
\end{theorem}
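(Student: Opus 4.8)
The plan is to establish the three parts of the statement --- uniqueness of $h$, existence of $h$ as a scheme morphism, and the fact that $h$ is a homomorphism --- separately, the common tool being that faithfully flat quasi-compact morphisms of schemes are epimorphisms. Recall that, being a quotient map, $\pi$ is by definition faithfully flat, and since $X$ and $Y$ are of finite type over $K$ it is moreover quasi-compact, hence an epimorphism in the category of schemes. Uniqueness of $h$ is then immediate: if $h_1, h_2 \colon Y \to Z$ both satisfy $h_i \circ \pi = \varphi$, then $h_1 = h_2$.

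For existence I would descend $\varphi$ along $\pi$ using faithfully flat descent of morphisms. The crucial computation is the identification of the relation $X \times_Y X$: using the group laws (cf.\ Remark~\ref{r:product-group}), the morphism $X \times_K N \to X \times_Y X$ sending $(x,n)$ to $(x, x+n)$ is an isomorphism --- it is well defined because $\pi(x+n) = \pi(x) + \pi(n) = \pi(x)$ for $n \in N = \Ker(\pi)$, and its inverse sends $(x_1, x_2)$ to $(x_1, -x_1+x_2)$, since $\pi(x_1) = \pi(x_2)$ forces $-x_1+x_2 \in N$. Under this identification the two projections $X \times_Y X \rightrightarrows X$ become $(x,n) \mapsto x$ and $(x,n) \mapsto x+n$, so the descent condition $\varphi \circ p_1 = \varphi \circ p_2$ amounts to $\varphi(x+n) = \varphi(x)$ for all $x$ and all $n \in N$, that is $\varphi(n) = e_Z$ for every $n \in N$, i.e.\ $N \subseteq \Ker(\varphi)$ --- which is exactly the hypothesis. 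Since $\pi$ is faithfully flat of finite presentation, faithfully flat descent of morphisms (valid for morphisms into an arbitrary scheme $Z$) produces a unique morphism of schemes $h \colon Y \to Z$ with $h \circ \pi = \varphi$.

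Finally, to see that $h$ is a homomorphism of algebraic groups I would apply the same epimorphism property to $\pi \times \pi \colon X \times_K X \to Y \times_K Y$, which is again faithfully flat and quasi-compact. Precomposing with $\pi \times \pi$, both $h \circ m_Y$ and $m_Z \circ (h \times h)$ become $m_Z \circ (\varphi \times \varphi) = \varphi \circ m_X$ (using $h \circ \pi = \varphi$ and that $\varphi$ respects the group laws), so cancelling the epimorphism $\pi \times \pi$ gives $h \circ m_Y = m_Z \circ (h \times h)$; moreover $h(e_Y) = h(\pi(e_X)) = \varphi(e_X) = e_Z$, and compatibility of $h$ with inversion follows formally. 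The main point requiring care is the faithfully flat descent step itself; everything else is formal manipulation with group laws and the epimorphism property. Alternatively, one could avoid descent by invoking the factorization Theorem~\ref{t:image} together with the homomorphism theorem (Theorem~\ref{t:homo-group}), thereby reducing the statement to the case where $Z$ is itself a quotient of $X$, though this route is somewhat less self-contained.
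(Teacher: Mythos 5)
The paper does not actually prove this statement: its ``proof'' is a one-line citation to \cite[Theorem~5.13]{milne}, so there is no in-paper argument to compare against, and your proposal supplies the missing content. Your argument is correct and is essentially the standard proof underlying the cited result. The three ingredients all check out: (a) a faithfully flat quasi-compact morphism is an epimorphism of schemes, which gives uniqueness and lets you transport the homomorphism identities along $\pi$ and $\pi\times\pi$; (b) the torsor identification $X\times_K N \xrightarrow{\;\sim\;} X\times_Y X$, $(x,n)\mapsto (x,x+n)$, is valid because $N$ is the scheme-theoretic fibre of $\pi$ over $e_Y$ (no flatness is even needed for this step), and it converts the cocycle condition $\varphi\circ p_1=\varphi\circ p_2$ into $\varphi\vert_N=e_Z$; (c) representable functors are sheaves for the fpqc topology, so a morphism into an arbitrary scheme $Z$ satisfying the cocycle condition descends uniquely along the fppf cover $\pi$. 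The only point worth flagging is that the descent condition is equivalent to the \emph{scheme-theoretic} containment $N\subseteq\Ker(\varphi)$; in positive characteristic, where kernels need not be reduced, one must read the hypothesis of the theorem in that sense. Under the paper's conventions (reduced algebraic groups, and the sole application in Proposition~\ref{p:left-inverse} is in characteristic $0$, where Cartier's theorem makes everything smooth) this is harmless. Your alternative suggestion of reducing to Theorems~\ref{t:image} and~\ref{t:homo-group} would work as well but, as you note, those results are themselves imported, so the descent route is the more self-contained of the two.
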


\begin{proof}
See \cite[Theorem~5.13]{milne}.  
\end{proof}

Note also that by~\cite[Corollary~1.35]{milne}, a $K$-algebraic group is irreducible 
if and only if it is connected. In characteristic zero, Cartier's theorem (cf. \cite[Corollary~11.31]{milne}) 
implies that every algebraic group is reduced and hence smooth. 
 
\subsection{Amenable and residually finite groups} 
\label{ss:sofic-groups}
A group $G$ is called \emph{amenable} if there exist a directed set $I$ and a family $(F_i)_{i \in I}$ 
of nonempty finite subsets of $G$  such that
\begin{equation}
\label{e:folner-s}
\lim_{i} \frac{\vert F_i \setminus  F_i g\vert}{\vert F_i \vert} = 0 \text{  for all } g \in G. 
\end{equation}
Such a family $(F_i)_{i \in I}$ is called a \emph{(right) F\o lner net} for $G$. 
\par
Finitely generated groups of subexponential growth and solvable groups are amenable 
while groups containing a non-abelian free subgroup are non-amenable.  
\par
A group $G$ is called \emph{residually finite} if the intersection of all finite-index
subgroups of $G$ is reduced to the identity element. Every finitely generated abelian group
and, more generally, every finitely generated
linear group is residually finite (Mal'cev's theorem). 
\par
Let $T$ be a property of groups. A group $G$ is called \emph{locally $T$} 
if every finitely generated subgroup of $G$ verifies $T$.  

\subsection{Geometry of sofic groups}
Sofic groups were first introduced by Gromov \cite{gromov-esav} and Weiss \cite{weiss-sgds}. 
This class of groups contains all residually finite groups and all amenable groups. 
It is not known up to now whether all groups are sofic or not. 
Moreover, several conjectures for groups have been established for the sofic ones such as 
Gottschalk's surjunctivity conjecture or the Kaplansky's direct finiteness conjecture. 
For more details and history, readers may also consult \cite{capraro}, \cite{ca-and-groups-springer}. 
 \par
Every subgroup of a sofic group is also sofic (cf.~\cite[Proposition~7.5.4]{ca-and-groups-springer}). 
Moreover, a group is sofic if and only if it is locally sofic (cf.~\cite[Proposition~7.5.5]{ca-and-groups-springer}). 
For finitely generated sofic groups, we shall rather use in this paper the characterization by 
the local approximation of their Cayley graphs by finite labelled graphs (Theorem~\ref{t:sofic-character}). 
\par
Let $S$ be a finite set. We define an \emph{$S$-label graph} as a pair $\GG= (V,E)$, 
where $V$ is a set called the set of \emph{vertices}, 
and $E$ is a subset of $V \times S \times V$, called the set of \emph{edges}. 
\par
The associated \emph{labeling map} is the projection $\lambda \colon E \to S$ given by 
$\lambda(c)=s$ for all $c=(v,s,v')\in E$. $\GG$ is said to be \emph{finite} if $V$ is finite. 
An $S$\emph{-labeled subgraph} of $\GG$ is an $S$-labeled graph $(V',E')$ 
such that $V' \subset V$ and $E' \subset E$.
\par
Denote by $l(\rho)$ 
the \emph{length} of a path $\rho$ in $\GG$. 
If $v, v'\in V$ are not connected by a path in $\GG$, we set $d_\GG(v,v')=\infty$. 
Otherwise, we define  
\[
d_\GG(v,v')\coloneqq \min \{ l(\rho): \text{$\rho$ is a path from $v$ to $v'$}\}. 
\]
For $v\in V$ and $r \in \N$, the \emph{ball $B_\GG(v,r) \subset V$ centered at $v$ of radius $r$} is defined by 
\[
B_\GG(v,r)\coloneqq \{v'\in V: d_\GG (v,v') \leq r\}. 
\]
Note that $B_\GG(v,r)$ is finite and inherits a natural structure of $S$-labeled subgraph of $\GG$. 
\par
Let $(V_1,E_1)$ and $(V_2,E_2)$ be two $S$-label graphs. 
A map $\phi \colon V_1 \to V_2$ is called an \emph{$S$-labeled graph homomorphism} 
from $(V_1, E_1)$ to $(V_2,E_2)$ if $(\phi(v),s, \phi(v')) \in E_2$ for all $(v,s,v')\in E_1$. 
\par
A bijective $S$-labeled graph homomorphism $\phi \colon V_1 \to V_2$ is an 
\emph{$S$-labeled graph isomorphism} if the inverse map $\phi^{-1}\colon V_2 \to V_1$ 
is also an  $S$-labeled graph homomorphism. 
\par
Now let $G$ be a finitely generated group and let $S\subset G$ 
be a finite \emph{symmetric} generating subset, i.e., $S=S^{-1}$. 
The \emph{Cayley graph of $G$ with respect to $S$} is the connected $S$-labeled graph $C_S(G) = (V,E)$,  
where $V = G$ and $E=\{(g,s,gs): g\in G \text{ and } s \in S)\}$.  
We have the following compatibility result (cf.~\cite[Corollary~6.2.2]{ca-and-groups-springer} 
and \cite[Proposition~6.3.1]{ca-and-groups-springer})

\begin{proposition}
Let $G$ be a finitely generated group and let $S \subset G$ be a finite symmetric generating subset of $G$. 
The map $d_S \colon G\times G \to \N$ defined by:  
\[
d_S(g,h)\coloneqq \min\{n \geq 0: g^{-1}h=s_1s_2\dots s_n,\text{ where } s_i \in S \,, \, 1 \leq i \leq n\}
\]
is a well-defined metric on $G$. 
Moreover, for all $g,h \in G$, we have   
\begin{equation}
\label{e:metric}
d_S(g,h) = d_{C_S(G)}(g,h). 
\end{equation}
\end{proposition}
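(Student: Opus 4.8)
The plan is to verify the metric axioms directly from the group-theoretic description of $d_S$, and then to match $d_S$ with the graph distance $d_{C_S(G)}$ by translating expressions of $g^{-1}h$ as products in $S$ into paths in the Cayley graph and vice versa.

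First I would check that $d_S$ is well defined with values in $\N$. Since $S$ generates $G$ and $S=S^{-1}$, every element of $G$, in particular $g^{-1}h$, is a finite product of elements of $S$; hence the set $\{n\ge 0 : g^{-1}h=s_1s_2\cdots s_n,\ s_i\in S\}$ is a nonempty subset of $\N$ and its minimum exists (the empty product, $n=0$, being allowed and equal to $1_G$). Non-degeneracy: $d_S(g,h)=0$ forces $g^{-1}h$ to be the empty product $1_G$, i.e. $g=h$, and conversely $d_S(g,g)=0$. Symmetry: from $g^{-1}h=s_1\cdots s_n$ one gets $h^{-1}g=s_n^{-1}\cdots s_1^{-1}$ with each $s_i^{-1}\in S$, so $d_S(h,g)\le d_S(g,h)$, and the reverse inequality follows by the same argument, giving equality. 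Triangle inequality: concatenating expressions $g^{-1}h=s_1\cdots s_m$ and $h^{-1}k=t_1\cdots t_n$ yields $g^{-1}k=s_1\cdots s_m t_1\cdots t_n$, whence $d_S(g,k)\le d_S(g,h)+d_S(h,k)$.

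For the identification \eqref{e:metric} I would prove both inequalities. Given $g^{-1}h=s_1\cdots s_n$ with $s_i\in S$, the vertices $v_0=g$ and $v_i=v_{i-1}s_i=gs_1\cdots s_i$ satisfy $(v_{i-1},s_i,v_i)\in E$ by definition of $C_S(G)$, so $v_0,v_1,\dots,v_n$ is a path from $g$ to $v_n=h$ of length $n$; hence $d_{C_S(G)}(g,h)\le d_S(g,h)$ (in particular $d_{C_S(G)}(g,h)<\infty$, so the Cayley graph is connected). Conversely, let $v_0=g,v_1,\dots,v_n=h$ be a path of minimal length $n$. For each $i$ the vertices $v_{i-1}$ and $v_i$ are joined by an edge of $\GG$; since $S=S^{-1}$, whether that edge is $(v_{i-1},s,v_i)$ or $(v_i,s,v_{i-1})$ we may write $v_i=v_{i-1}\sigma_i$ with $\sigma_i\in S$ (taking $\sigma_i=s$ in the first case, $\sigma_i=s^{-1}\in S$ in the second). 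Then $h=g\sigma_1\cdots\sigma_n$, so $g^{-1}h$ is a product of $n$ elements of $S$ and $d_S(g,h)\le n=d_{C_S(G)}(g,h)$. Combining the two inequalities gives \eqref{e:metric}.

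No genuine difficulty arises in this argument; the only point requiring a little care is the last step, where one must invoke the symmetry $S=S^{-1}$ to handle edges of the Cayley graph traversed \emph{backwards} along a path, so that each step of a path contributes exactly one factor in $S$ to an expression of $g^{-1}h$. Everything else is a routine unwinding of definitions.
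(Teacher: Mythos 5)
Your proof is correct and is the standard argument; the paper itself gives no proof of this proposition, deferring instead to \cite[Corollary~6.2.2]{ca-and-groups-springer} and \cite[Proposition~6.3.1]{ca-and-groups-springer}, where essentially the same routine verification is carried out. Your attention to the orientation of edges (using $S=S^{-1}$ to traverse an edge backwards) is exactly the one point that needs care, and you handle it properly.
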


For $g \in G$ and $r\in \N$, the \emph{ball in $G$ centered at $g$ of radius $r$} is the finite subset 
\[
B_S(g,r)\coloneqq \{h \in G:  d_S(g,h)\leq r \}. 
\]  
When $g=1_G$, denote $B_S(r)\coloneqq B_S(1_G,r)$. 
Finitely generated sofic groups are characterized in terms of a finiteness condition 
on their Cayley graphs as follows (cf. \cite[Theorem~7.7.1]{ca-and-groups-springer}).

\begin{theorem}
\label{t:sofic-character}
Let $G$ be a finitely generated group. 
Let $S\subset G$ be a finite symmetric generating subset. 
Then the following are equivalent: 
\begin{enumerate} [\rm (a)]
\item
the group $G$ is sofic;
\item
for all $r\in \N$ and $\varepsilon >0$, there exists a finite $S$-labeled graph $\GG=(V,E) $ 
satisfying 
\begin{equation} 
\label{e:sofic-Q(3r)}
\vert V(r) \vert \geq (1 - \varepsilon) \vert V \vert,
\end{equation}
where $V(r)\subset V$ consists of $v \in V$ such that there exists a (unique) $S$-labeled graph
isomorphism $\psi_{v,r} \colon B_S(r) \to B_\GG(v,r)$ with $\psi_{v,r}(1_G) = v$. 
\end{enumerate}
\end{theorem}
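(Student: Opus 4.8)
The plan is to prove the two implications separately, starting from the standard definition of soficity in terms of \emph{sofic approximations}: the group $G$ is sofic exactly when, for every finite subset $\Omega\subset G$ and every $\varepsilon>0$, there exist a finite set $V$ and a map $\varphi\colon G\to\Sym(V)$ with $\varphi(1_G)=\Id_V$ that is \emph{approximately multiplicative}, i.e. $\vert\{x\in V:\varphi(gh)(x)=\varphi(g)\varphi(h)(x)\}\vert\geq(1-\varepsilon)\vert V\vert$ for all $g,h\in\Omega$, and \emph{approximately free}, i.e. $\vert\{x\in V:\varphi(g)(x)\neq x\}\vert\geq(1-\varepsilon)\vert V\vert$ for all $g\in\Omega\setminus\{1_G\}$. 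The conceptual bridge to condition (b) is that such a $\varphi$ is, up to bookkeeping, nothing but a finite \emph{$S$-regular} $S$-labeled graph, i.e. one in which every vertex has exactly one incoming and one outgoing edge of each label $s\in S$ (take $V$ for the vertex set and $\{(x,s,\varphi(s)(x)):x\in V,\ s\in S\}$ for the edge set), and the requirement in (b) that the isomorphisms $\psi_{v,r}$ exist for a proportion $\geq 1-\varepsilon$ of the vertices $v$ says precisely that this graph locally resembles the Cayley graph $C_S(G)$ near most of its vertices.

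For (a) $\Rightarrow$ (b): given $r$ and $\varepsilon$, I would apply soficity with $\Omega=B_S(2r)$ and a small auxiliary parameter $\varepsilon'>0$ to produce $\varphi\colon G\to\Sym(V)$, form the $S$-labeled graph $\GG=(V,E)$ with $E=\{(x,s,\varphi(s)(x))\}$ (which is $S$-regular by construction), and bound the set of vertices not lying in $V(r)$. A vertex $x$ does lie in $V(r)$ provided no two labeled paths of length $\leq r$ from $x$ whose label words represent the same element of $B_S(r)$ have different endpoints, and no two such paths spelling distinct elements of $B_S(r)$ share an endpoint; one checks that these two conditions make the endpoint-reading map an $S$-labeled graph isomorphism $B_S(r)\to B_\GG(x,r)$ taking $1_G$ to $x$. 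To bound the first failure set, an induction on word length --- applying approximate multiplicativity once per letter, all intermediate products lying in $B_S(r)\subset\Omega$ --- shows that $\varphi(s_k)\cdots\varphi(s_1)$ and $\varphi(s_1\cdots s_k)$ disagree on at most $(k-1)\varepsilon'\vert V\vert$ vertices, whence one sums over the at most $\vert S\vert^{r+1}$ words involved. To bound the second, approximate freeness applied to the elements $h^{-1}g\in B_S(2r)\setminus\{1_G\}$ with $g\neq h$ in $B_S(r)$, combined with approximate multiplicativity, contributes at most $3\vert B_S(r)\vert^{2}\varepsilon'\vert V\vert$. Choosing $\varepsilon'=\varepsilon/C$ with $C=C(r,\vert S\vert)$ the sum of these bounds gives $\vert V(r)\vert\geq(1-\varepsilon)\vert V\vert$, so (b) holds (after fixing direction conventions so that endpoint-reading matches the right-multiplication labeling $(g,s,gs)$ of $C_S(G)$).

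For (b) $\Rightarrow$ (a): given a finite $\Omega\subset G$ and $\varepsilon>0$, I would pick $r$ with $\Omega\subset B_S(r)$, apply (b) with radius $2r+3$ and small $\varepsilon'$, and obtain $\GG=(V,E)$ with $\vert V(2r+3)\vert\geq(1-\varepsilon')\vert V\vert$. Since $C_S(G)$ is $S$-regular and left translations are label-preserving automorphisms of it, every vertex within distance $2r$ of $V(2r+3)$ has a unique outgoing and a unique incoming $s$-edge for each $s\in S$, and the induced partial bijections $\sigma_s$ satisfy $\sigma_{s^{-1}}=\sigma_s^{-1}$ on this set; I would extend each pair $\{\sigma_s,\sigma_{s^{-1}}\}$ to a pair of mutually inverse permutations of all of $V$, which changes the local structure at only $\leq\varepsilon'\vert V\vert$ vertices. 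Fixing a geodesic word for each $g$ and defining $\varphi(g)$ as the composite of the corresponding $\sigma_s$ (composed in the order, and with the sidedness, that makes $\varphi$ approximately multiplicative rather than approximately anti-multiplicative), one checks that for $x\in V(2r+3)$ and $g,h\in\Omega$ every vertex visited while computing $\varphi(g)(x)$, $\varphi(h)(x)$ and $\varphi(gh)(x)$ stays inside $B_\GG(x,2r)\cong B_S(2r)$, where the $\sigma_s$ act as the translations by $s$; hence $\varphi(gh)(x)=\varphi(g)\varphi(h)(x)$ and $\varphi(g)(x)\neq x$ whenever $g\neq 1_G$. Thus $\varphi$ is a sofic approximation whose error tends to $0$ as $\varepsilon'\to 0$, so $G$ is sofic.

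The step I expect to be the main obstacle is the quantitative bookkeeping rather than any conceptual point. In (a) $\Rightarrow$ (b) one must obtain a bad-vertex bound $C(r,\vert S\vert)$ independent of $\vert V\vert$: this requires controlling how the defect of a sofic approximation accumulates under iterated composition and counting correctly both the $O(\vert S\vert^{r})$ words and the $O(\vert B_S(r)\vert^{2})$ pairs of group elements that enter. In (b) $\Rightarrow$ (a) the delicate point is that the graph furnished by (b) need not be $S$-regular, so the permutations $\sigma_s$ are not directly available; the regularization above must be carried out without spoiling the local ball structure at the good vertices, which is exactly why one invokes (b) with a radius somewhat larger than $r$ and absorbs the defective vertices into the error term. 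Keeping the left-versus-right convention consistent throughout is an additional source of routine but error-prone bookkeeping.
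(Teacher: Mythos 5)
The paper does not actually prove this statement: it is reproduced, with a citation, from \cite[Theorem~7.7.1]{ca-and-groups-springer}, so the only available comparison is with the book's proof, and your argument is essentially that standard one — encode a sofic approximation $\varphi\colon G\to\Sym(V)$ as the $S$-regular labeled graph with edge set $\{(x,s,\varphi(s)(x))\}$ and bound the proportion of vertices whose $r$-ball fails to read off $B_S(r)$, and conversely extract partial permutations $\sigma_s$ from a graph satisfying (b), regularize them, and compose along geodesic words. The outline is correct, and the two difficulties you single out (regularization in (b)$\Rightarrow$(a), left/right conventions) are genuinely just bookkeeping. One intermediate claim in (a)$\Rightarrow$(b) is stated slightly too weakly, though: your two path conditions \emph{at radius $r$} make the endpoint-reading map $\psi\colon B_S(r)\to B_\GG(x,r)$ a well-defined bijective graph homomorphism, but they do not force $\psi^{-1}$ to be one, since $B_\GG(x,r)$ could carry an extra edge $(\psi(g),s,\psi(h))$ with $gs\notin B_S(r)$ and $h\neq gs$ (already visible for $G=\Z$, $r=1$). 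To exclude this you must impose the ``distinct elements give distinct endpoints'' condition for words of length $\le r+1$, i.e.\ work in $B_S(r+1)$; since you already apply soficity with $\Omega=B_S(2r)$, the estimate absorbs this and the repair is purely cosmetic, but as written that sufficiency claim is false.
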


Since each $S$-labeled graph isomorphism $\psi_{v,r'} \colon B_S(r') \to B_\GG(v,r')$ induces by restriction  
an $S$-labeled graph isomorphism $B_S(r) \to B_\GG(v,r)$ for all $r'\geq r$, we clearly have: 
\[
V(r) \supset V(2r) \supset V(3r) \supset \cdots. 
\]
We shall also need the following auxiliary well-known lemma 

\begin{lemma}
\label{l:sofic-B-V} \label{l:sofic-V'}
With the notation as in Theorem~\ref{t:sofic-character}, the following hold 
\begin{enumerate}[\rm(i)]
\item
$B_\GG(v,r) \subset V(kr)$ for all $v \in V((k+1)r)$ and $k \geq 0$;
\item
there exists a finite subset $V' \subset V(3r)$ such that the balls $B_\GG(q,r)$ 
are pairwise disjoint for all $v\in V'$ and that 
$V(3r) \subset \bigcup_{v\in V'} B_\GG(v,2r)$. 
In particular, we have 
\begin{equation*}
\vert B_S(2r) \vert \vert V' \vert \geq \vert V(3r)\vert.  
\end{equation*}
\end{enumerate}
\end{lemma}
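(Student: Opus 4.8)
The plan is to prove both items by elementary graph-theoretic arguments; neither is deep, and the only point requiring genuine attention is keeping track of which edges belong to which induced subgraph. Throughout I use that $d_S$ is left-invariant, that left translations $L_g\colon x\mapsto gx$ are label-preserving automorphisms of the Cayley graph $C_S(G)$, and that the balls $B_\GG(v,r)$, $B_S(g,r)$ carry the \emph{induced} subgraph structure.

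\textbf{Part (i).} Fix $k\ge 1$ (the case $k=0$ being trivial), a vertex $v\in V((k+1)r)$, and $w\in B_\GG(v,r)$; the goal is to produce an $S$-labeled graph isomorphism $B_S(kr)\to B_\GG(w,kr)$ sending $1_G$ to $w$. I would start from the isomorphism $\psi\coloneqq\psi_{v,(k+1)r}\colon B_S((k+1)r)\to B_\GG(v,(k+1)r)$ given by $v\in V((k+1)r)$ and set $g\coloneqq\psi^{-1}(w)$. The key remark is that $\psi$ and $\psi^{-1}$ transport geodesics lying inside these balls: if $w'\in B_\GG(w,kr)$ then every vertex on a geodesic from $w$ to $w'$ is at distance $\le kr$ from $w$, hence $\le(k+1)r$ from $v$, so that geodesic lies entirely in the induced subgraph $B_\GG(v,(k+1)r)$ and is carried by $\psi^{-1}$ to a path of the same length in $C_S(G)$; symmetrically for $\psi$. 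Applying this to a geodesic from $v$ to $w$ gives $d_S(1_G,g)\le r$, whence $B_S(g,kr)\subset B_S((k+1)r)$ and $B_\GG(w,kr)\subset B_\GG(v,(k+1)r)$ by the triangle inequality. Applying it to geodesics issued from $g$ (resp.\ from $w$) shows that $\psi$ maps $B_S(g,kr)$ bijectively onto $B_\GG(w,kr)$, and since each of these balls is an induced subgraph of the ambient radius-$(k+1)r$ ball, this restriction and its inverse are labeled graph homomorphisms, i.e.\ $\psi$ restricts to an $S$-labeled graph isomorphism $B_S(g,kr)\xrightarrow{\simeq}B_\GG(w,kr)$. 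Finally, $L_g$ carries $B_S(kr)=B_S(1_G,kr)$ isomorphically onto $B_S(g,kr)$ by left-invariance of $d_S$, so $\psi\circ L_g$ restricts to an $S$-labeled graph isomorphism $B_S(kr)\to B_\GG(w,kr)$ with $1_G\mapsto g\mapsto w$; hence $w\in V(kr)$.

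\textbf{Part (ii).} I would take $V'\subset V(3r)$ to be a subset, maximal for inclusion, among those $W\subset V(3r)$ for which the balls $B_\GG(v,r)$, $v\in W$, are pairwise disjoint; such $V'$ exists because $V(3r)$ is finite (as $V$ is finite), e.g.\ by greedy selection, and pairwise disjointness holds by construction. For the covering statement, let $u\in V(3r)$: if $u\notin V'$, maximality forces $B_\GG(u,r)\cap B_\GG(v,r)\ne\emptyset$ for some $v\in V'$, so choosing a common vertex $w$ and using the triangle inequality for $d_\GG$ gives $d_\GG(u,v)\le d_\GG(u,w)+d_\GG(w,v)\le 2r$, i.e.\ $u\in B_\GG(v,2r)$; and if $u\in V'$ then trivially $u\in B_\GG(u,2r)$. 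Hence $V(3r)\subset\bigcup_{v\in V'}B_\GG(v,2r)$. Since $V(3r)\subset V(2r)$, every $v\in V'$ lies in $V(2r)$, so $\psi_{v,2r}$ yields $|B_\GG(v,2r)|=|B_S(2r)|$, and a union bound gives $|V(3r)|\le\sum_{v\in V'}|B_\GG(v,2r)|=|B_S(2r)|\,|V'|$, which is the displayed inequality.

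\textbf{Main obstacle.} There is no serious difficulty: part (ii) is the standard maximal-packing argument, and the only step in part (i) demanding care is verifying that the restriction of $\psi_{v,(k+1)r}$ to the relevant sub-balls is again a labeled graph isomorphism onto the correct sub-ball of $\GG$ — that is, tracking which edges lie in which induced subgraph and exploiting the fact that geodesic segments issued from $w$ (resp.\ from $g$) remain inside the radius-$(k+1)r$ ball around $v$ (resp.\ around $1_G$), which is precisely what makes the homomorphisms $\psi$ and $\psi^{-1}$ applicable.
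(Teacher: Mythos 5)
Your proof is correct and follows essentially the same route as the paper: part (ii) is the identical maximal-packing argument (maximality of $V'$ forces every $u\in V(3r)$ to have $B_\GG(u,r)$ meet some $B_\GG(v,r)$ with $v\in V'$, whence $u\in B_\GG(v,2r)$, and the cardinality bound follows since $V'\subset V(2r)$). For part (i) the paper simply cites \cite[Lemma~7.7.2]{ca-and-groups-springer}, and your geodesic-transport argument (restrict $\psi_{v,(k+1)r}$ to $B_S(g,kr)$ with $g=\psi_{v,(k+1)r}^{-1}(w)$, check it lands bijectively on $B_\GG(w,kr)$, then precompose with the left translation $L_g$) is precisely the standard proof that reference gives.
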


\begin{proof}
See \cite[Lemma~7.7.2]{ca-and-groups-springer} for the proof of (i). 
To show (ii), we define $V'$ to be the maximal subset of $V(3r)$ such that 
the balls $B(v,r)$ are pairwise disjoint for all $v \in V'$. 
If there exists $w \in V(3r) \setminus  \bigcup_{v\in V'} B(v,2r)$ 
then clearly $B(w,r) \cap B(v,r)$ is empty for all $v\in V$. 
Hence for all $v \in V'\cup \{w\}$, the balls $B(v,r)$ are pairwise disjoint, 
which contradicts the maximality of $V'$. 
We conclude that $V(3r) \setminus  \bigcup_{v\in V'} B(v,2r)$ is empty. 
\end{proof}

\section{Algebraic group cellular automata}
\label{s:alg-grp-ca}

\subsection{Interiors, neighborhoods, and boundaries}
\label{ss:interiors}
Let $G$ be a group and let $M\subset G$ be a finite subset. 
The $M$-\emph{interior} $\Omega^-$ and the $M$-\emph{neighborhood} $\Omega^+$ of a 
subset $\Omega \subset G$ are defined respectively by
\[
\Omega^- \coloneqq  \{g \in G : g M \subset \Omega \}, \quad \quad \Omega^ 
+ \coloneqq \Omega M = \{g h  : g \in \Omega \text{ and } h \in M \}. 
\]
Clearly, if $1_G \in M$ then $\Omega^- \subset \Omega \subset \Omega^+$. 
We denote also $\Omega^{++}\coloneqq (\Omega^+)^+$. 
The $M$-\emph{boundary} $\partial \Omega$ of $\Omega$ is defined as 
\[
\partial \Omega \coloneqq \Omega^+ \setminus \Omega^-.
\]
\par
If $G$ is an amenable group with a F\o lner net $(F_i)_{i \in I}$, 
then (cf.~\cite[Proposition~5.4.4]{ca-and-groups-springer}):
\begin{equation}
\label{e:boundary-folner}
\lim_i \frac{|\partial F_i |}{| F_i |} = 0 \quad \text{for every finite subset } M \subset G.
\end{equation}
\par
Suppose now that $\tau \colon A^G \to A^G$ is a cellular automaton over a group $G$ 
and an alphabet $A$ with a memory set $M$. 
Let $\Omega \subset G$ and $\Omega^-$ and $\Omega^+$ be as above. 
The induced maps
$\tau^-_\Omega \colon A^\Omega \to A^{\Omega^{-}}$ and 
$\tau^+_\Omega \colon A^{\Omega^{+}} \to A^\Omega$ are defined respectively by 
\[
\tau^-_\Omega(u) \coloneqq  (\tau(c))\vert_{\Omega^{-}}, \quad \text{ for all } u \in A^\Omega,
\]
and 
\[
\tau^+_\Omega(u) \coloneqq  (\tau(c))\vert_\Omega, \quad \text{ for all } u \in A^{\Omega^{+}},
\]
where $c \in A^G$ is any configuration extending $u$.
Note that $\tau^-_\Omega$ and $\tau^+_\Omega$ are well-defined 
as $\tau(c)(g)$ only depends of the restriction of $c$ to $g M$ by \eqref{e:local-property}. 
\par
For all subsets $\Gamma \subset A^G$ and $\Omega \subset G$, 
we define the \emph{restriction of $\Gamma$ to $A^\Omega$} by 
\[
\Gamma_\Omega\coloneqq \{c\vert_\Omega: c \in \Gamma\} \subset A^\Omega. 
\] 
If $\Gamma= \tau(A^G)$ then clearly $\Gamma_\Omega = \tau_\Omega^+(A^{\Omega^+})$ 
and $\Gamma_{\Omega^-} = \tau_\Omega^-(A^\Omega)$.

\subsection{Algebraic group cellular automata}
Let $K$ be an algebraically closed field. 
Let $G$ be a group and let $S$ be a scheme. 
Let $X,Y$ be $S$-schemes.
We denote by $X(Y)=\{S\text{-morphisms }h\colon Y \to X\}\eqqcolon A$ the set of $Y$-points of $X$.  
For every finite set $E$, the universal property of $S$-fibered products implies that
\[
(X^E)(Y) = (X(Y))^E.
\]
Each $S$-scheme morphism $f \colon Z \to X$ induces by composition a map
$f^{(Y)} \colon Z(Y) \to X(Y)$. 
\par
Let $\tau \colon A^G \to A^G$ be a cellular automaton over the alphabet $A$ and the group $G$.
We say that $\tau$ is an \emph{algebraic cellular automaton over the group $G$ and the schemes} $S$, $X$, $Y$ if 
$\tau$ admits a memory set $M$ with the associated local defining map $\mu_M \colon A^M \to A$ verifying: 

\begin{enumerate}
\item[(R)]
there exists an $S$-scheme morphism $f \colon X^M \to X$ such that $\mu_M = f^{(Y)}$.
\end{enumerate}

Let $k$ be a field (not necessarily algebraically closed). 
If $Y=S=\Spec(k)$, we say simply an \emph{algebraic cellular automaton over} $(G,X,k)$ 
instead of an \emph{algebraic cellular automaton over the group $G$ 
and the schemes $\Spec(k)$, $X$, $\Spec(k)$}. 
The set of algebraic cellular automata over $(G,X,k)$ is denoted by $\CA_{alg}(G,X,k)$.   

 \begin{remark}
 \label{rem:independent-memory}
 If $X(S) \not= \varnothing$, and condition (R) is satisfied for some memory set $M$ of $\tau$, 
 then (R) is satisfied for any memory set of $\tau$ (see \cite[Proposition~3.1]{cscp-alg-ca}).
This applies in particular in the case $S = \Spec(K)$ (and $X$ is nonempty). 
 \end{remark}

\begin{definition}
Let $G$ be a group. 
Let $X$ be an algebraic group over a field $k$ and $A \coloneqq X(k)$. 
\begin{enumerate} [\rm (1)]
\item
A cellular automaton $\tau \colon A^G \to A^G$ is called an 
\emph{algebraic group cellular automaton over} $(G,X,k)$
if for some memory set $M$ of $\tau$, 
there exists a homomorphism of $k$-algebraic groups $f \colon X^M \to X$ such that 
the map $f^{(k)} \colon A^M \to A$ is the local defining map of $\tau$ associated with $M$. 
\item
We denote by $\CA_{algr}(G,X,k) \subset \CA_{alg}(G,X,k)$ the set of algebraic group cellular automata over $(G,X,k)$. 
\end{enumerate}
\end{definition}

\begin{remark}
With the above notation, the configuration space $A^G$ is naturally an abstract group 
with identity element $e^G$ where $e\in A$ is the identity element of $X$. 
An algebraic group cellular automaton $\tau \colon A^G \to A^G$ is then 
an endomorphism of the abstract group $A^G$. 
The local defining maps of $\tau$ are also homomorphisms of abstract groups. 
\end{remark}

\begin{lemma}
\label{l:homo-alg}
Let $G$ be a group and let $X$ be an algebraic group over a field $k$. 
Let $A=X(k)$ and let $\tau \colon A^G \to A^G$ be an algebraic group cellular automaton. 
Let $M$ be a memory set of $\tau$ verifying (R).  
Then there exist canonical homomorphisms of $k$-algebraic groups 
\[
f^-_\Omega \colon X^\Omega \to X^{\Omega^-}, \quad 
f^+_\Omega \colon X^{\Omega^+} \to X^\Omega,  \quad
p_{\Omega \Lambda} \colon X^\Lambda \to X^\Omega
\] 
such that $f_\Omega^{-(k)}=\tau^-_\Omega$, $f_\Omega^{+(k)}=\tau^+_\Omega$ 
and $p^{(k)}_{\Omega \Lambda} \colon A^\Lambda \to A^\Omega$ is the canonical projection 
for all finite subsets $\Omega, \Lambda \subset G$ with $\Omega \subset \Lambda$. 
\end{lemma}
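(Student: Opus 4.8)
The plan is to reduce the statement to the construction of three genuine algebraic-group homomorphisms whose $k$-points realize the three maps $\tau^-_\Omega$, $\tau^+_\Omega$, and the projection $p_{\Omega\Lambda}$, exploiting that by hypothesis $\tau$ already comes from a homomorphism $f\colon X^M\to X$ of $k$-algebraic groups, so the relevant maps are built out of $f$ together with structural homomorphisms (projections and graph maps) among finite fibered powers of $X$. The key input is Remark~\ref{r:product-group}: projections $X^\Lambda\to X^\Omega$ for $\Omega\subset\Lambda$ are homomorphisms, products of homomorphisms are homomorphisms, and $(f,g)\colon T\to X\times_k Y$ is a homomorphism when $f,g$ are. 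Combined with the universal property $(X^E)(k)=(X(k))^E=A^E$ and the functoriality $(g\circ h)^{(k)}=g^{(k)}\circ h^{(k)}$, this gives a dictionary between homomorphisms of algebraic groups and the abstract-group maps $A^\bullet\to A^\bullet$ appearing in Subsection~\ref{ss:interiors}.

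Concretely, I would first treat $p_{\Omega\Lambda}$: this is literally the projection homomorphism of Remark~\ref{r:product-group}(2), and its $k$-points are the canonical projection $A^\Lambda\to A^\Omega$ by the universal property of fibered products. Next, for $f^-_\Omega\colon X^\Omega\to X^{\Omega^-}$: for each $g\in\Omega^-$ we have $gM\subset\Omega$, so composing the projection $X^\Omega\to X^{gM}$ with the identification $X^{gM}\cong X^M$ (via the bijection $h\mapsto gh$ of index sets) and then with $f\colon X^M\to X$ gives a homomorphism $X^\Omega\to X$; assembling these over all $g\in\Omega^-$ via Remark~\ref{r:product-group}(3) yields $f^-_\Omega\colon X^\Omega\to X^{\Omega^-}$. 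By construction, on $k$-points the $g$-component sends $u\in A^\Omega$ to $\mu_M((g^{-1}c)|_M)=\tau(c)(g)$ where $c$ extends $u$, which is exactly the definition of $\tau^-_\Omega$. The construction of $f^+_\Omega\colon X^{\Omega^+}\to X^\Omega$ is identical: for $g\in\Omega$ we have $gM\subset\Omega^+$, so the same recipe (project $X^{\Omega^+}\to X^{gM}\cong X^M\xrightarrow{f}X$, then assemble over $g\in\Omega$) produces $f^+_\Omega$ with $f^{+(k)}_\Omega=\tau^+_\Omega$.

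The only genuinely delicate point is checking the compatibility of the index-reindexing identification $X^{gM}\cong X^M$ with the group law, and verifying that the resulting $k$-point map agrees with the combinatorial formula for $\tau^\pm_\Omega$; this is a matter of unwinding \eqref{e:local-property} and the definition of the Bernoulli shift, using that $\tau(c)(g)=\mu_M((g^{-1}c)|_M)$ depends only on $c|_{gM}$. I expect no real obstacle here beyond bookkeeping: the heart of the matter is simply that finite fibered powers of an algebraic group are algebraic groups and that projections and diagonal-type maps among them are homomorphisms, so everything assembled from $f$ by these operations stays in the category of algebraic-group homomorphisms. Finally, I would note that uniqueness (the word ``canonical'') follows because a morphism of $k$-schemes between reduced schemes of finite type over the algebraically closed field $K=k$ is determined by its effect on $k$-points, so the homomorphisms $f^-_\Omega, f^+_\Omega, p_{\Omega\Lambda}$ are uniquely pinned down by the stated conditions on $k$-points.
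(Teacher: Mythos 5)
Your construction is essentially the paper's proof: the paper handles $p_{\Omega\Lambda}$ via Remark~\ref{r:product-group} and simply cites the construction of $f^-_\Omega,f^+_\Omega$ from the earlier paper \cite{cscp-alg-goe} (assembling $f$ with projections and reindexings via the universal property of fibered products), which is exactly what you spell out. The only caveat is your closing uniqueness remark, which relies on morphisms being determined by their $k$-points and hence requires $k$ algebraically closed, whereas the lemma is stated over an arbitrary field; since the lemma only asserts the \emph{existence} of canonically constructed homomorphisms, this aside is harmless but should be dropped or restricted to the algebraically closed case.
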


\begin{proof}
We have seen in Remark \ref{r:product-group} that $p_{\Omega \Lambda}$ 
is a $K$-homomorphism of algebraic groups for all finite subsets 
$\Omega, \Lambda \in G$ with $\Omega \subset \Lambda$. 
Since $f\colon X^M \to X$ is a homomorphism of algebraic groups, 
it follows again from Remark \ref{r:product-group} 
and the construction of the morphisms $f^-_\Omega,f^+_\Omega$ 
in \cite[Lemma~3.2]{cscp-alg-goe} that these morphisms 
are homomorphisms of algebraic groups which satisfy 
$f_\Omega^{-(k)}=\tau^-_\Omega$, $f_\Omega^{+(k)}=\tau^+_\Omega$ 
for every finite subset $\Omega \subset G$. 
\end{proof}

\begin{proposition}
\label{p:set-config-zariski}
Let $G$ be a group. 
Let $X$ be a $K$-algebraic group and $A \coloneqq X(K)$.  
Let $\tau \colon A^G \to A^G$ be an algebraic group cellular automaton over $(G,X,K)$.
Let $\Gamma \coloneqq \tau(A^G)$ denote the image of $\tau$. 
Then $\Gamma_\Omega$ is Zariski closed in $A^\Omega$ for every finite subset $\Omega \subset G$.
\end{proposition}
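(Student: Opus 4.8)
The plan is to reduce the statement to the analogous closedness result that one has for algebraic cellular automata with \emph{complete} fibers, replacing completeness by the group structure. Fix a finite subset $\Omega \subset G$ and a memory set $M$ of $\tau$ with $1_G\in M$ (we may always enlarge $M$ and keep property (R), cf. Remark~\ref{rem:independent-memory}). For each finite subset $\Lambda\supset \Omega^+$ we have the homomorphism of $K$-algebraic groups $f^+_{\Lambda}\colon X^{\Lambda^+}\to X^{\Lambda}$ from Lemma~\ref{l:homo-alg}, and by definition $\Gamma_{\Lambda}=\tau^+_{\Lambda}(A^{\Lambda^+})=f_\Lambda^{+(K)}(X^{\Lambda^+}(K))$. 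The image of a homomorphism of $K$-algebraic groups is a closed algebraic subgroup of the target by the Homomorphism theorem (Theorem~\ref{t:homo-group}, or Theorem~\ref{t:image}); hence $\im(f^+_\Lambda)\subset X^{\Lambda}$ is a closed algebraic subgroup, and its set of $K$-points is Zariski closed in $A^{\Lambda}=X^\Lambda(K)$. Since $f^{+(K)}_\Lambda$ is the map on $K$-points of $f^+_\Lambda$ and $K$ is algebraically closed, $\Gamma_\Lambda=\im(f^+_\Lambda)(K)$ is Zariski closed in $A^\Lambda$.

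It remains to descend from $\Lambda$ to $\Omega$. The point is that $\Gamma_\Omega$ is the image of $\Gamma_\Lambda$ under the canonical projection $p^{(K)}_{\Omega\Lambda}\colon A^\Lambda\to A^\Omega$, which is the map on $K$-points of the homomorphism $p_{\Omega\Lambda}\colon X^\Lambda\to X^\Omega$. Now $\Gamma_\Lambda=\im(f^+_\Lambda)(K)$ with $\im(f^+_\Lambda)$ a closed algebraic subgroup of $X^\Lambda$, so the composite $p_{\Omega\Lambda}\circ (\im(f^+_\Lambda)\hookrightarrow X^\Lambda)\colon \im(f^+_\Lambda)\to X^\Omega$ is again a homomorphism of $K$-algebraic groups, and by Theorem~\ref{t:homo-group} its image is a closed algebraic subgroup $H\subset X^\Omega$. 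On $K$-points this gives $p^{(K)}_{\Omega\Lambda}(\Gamma_\Lambda)=H(K)$, which is Zariski closed in $A^\Omega$. Finally one checks $\Gamma_\Omega=p^{(K)}_{\Omega\Lambda}(\Gamma_\Lambda)$: the inclusion $\supset$ is immediate since any $c\vert_\Lambda\in\Gamma_\Lambda$ restricts further to $c\vert_\Omega\in\Gamma_\Omega$, and for $\subset$, given $c\in A^G$ the element $\tau(c)\vert_\Omega$ depends only on $\tau(c)\vert_\Lambda\in\Gamma_\Lambda$, which projects to it. Hence $\Gamma_\Omega=H(K)$ is Zariski closed in $A^\Omega$, as desired.

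The only genuine subtlety — and the step I would be most careful about — is the use of the Homomorphism theorem to conclude that the \emph{set of $K$-points} of the image is closed: one needs that for a homomorphism of $K$-algebraic groups $\varphi\colon U\to V$ the scheme-theoretic image $\im(\varphi)$ (a closed subgroup of $V$) satisfies $\im(\varphi)(K)=\varphi^{(K)}(U(K))$. This is where algebraically closedness of $K$ enters: $\varphi$ factors as a quotient map $U\to Z$ followed by a closed immersion $Z\hookrightarrow V$ (Theorem~\ref{t:image}), the quotient map is faithfully flat hence surjective on $K$-points since $K=\overline K$, and the closed immersion identifies $Z(K)$ with a Zariski closed subset of $V(K)$. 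Everything else is the bookkeeping of restriction maps and the observation, recorded in Remark~\ref{r:product-group} and Lemma~\ref{l:homo-alg}, that all the projection and local maps in sight are homomorphisms of algebraic groups, so that images stay closed at every stage. Note that, in contrast to the situation in \cite{cscp-alg-goe} where one needs $X$ complete to control images of the non-group morphisms $f^+_\Omega$, here the group hypothesis does all the work and no completeness or connectedness assumption on $X$ is required.
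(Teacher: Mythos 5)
Your argument is correct and is essentially the paper's own proof: since $\Gamma_\Omega=\tau^+_\Omega(A^{\Omega^+})$ is the image on $K$-points of the homomorphism of algebraic groups $f^+_\Omega\colon X^{\Omega^+}\to X^\Omega$, whose image is a closed algebraic subgroup by Theorem~\ref{t:homo-group}, and over the algebraically closed field $K$ the $K$-points of that image coincide with the image of the $K$-points, closedness of $\Gamma_\Omega$ follows at once. Your second paragraph (introducing $\Lambda\supset\Omega^+$ and projecting back down to $\Omega$) is a harmless redundancy: the identity $\Gamma_\Omega=\tau^+_\Omega(A^{\Omega^+})$ already lets you run your first paragraph directly with $\Lambda=\Omega$.
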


\begin{proof}
Let $M$ be a memory set of $\tau$ such that the associated local defining map
$\mu \colon A^M \to A$ is induced by some homomorphism  
$f \colon X^M \to X$ of algebraic groups.
Let $\Omega$ be a finite subset of $G$. 
Since homomorphisms of algebraic groups are closed by Theorem~\ref{t:homo-group}, 
it follows that  $f^+_\Omega \colon X^{\Omega^{+}} \to X^\Omega$ is closed.
We then get  
\[
\Gamma_\Omega = f_\Omega^{+}(A^{\Omega^+})=A^\Omega \cap f^+_\Omega(X^{\Omega^+}),  
\]
where the last equality follows from, for example, \cite[Proposition~2.10.(ii)]{cscp-alg-goe} 
and the identification of $A^{\Omega^+}$ with the set of closed point of $X^{\Omega^+}$. 
Hence, $\Gamma_\Omega$ is closed in $A^\Omega$. 
\end{proof}

\subsection{Algebraic mean dimension}
\label{s:alg-mean-dim}
To establish a version of the Garden of Eden theorem in Section~\ref{s:goe-grp}, 
we shall need the following useful notion of mean dimension introduced 
in \cite{cscp-alg-goe} when the group $G$ is amenable. 

\begin{definition} (cf. \cite{cscp-alg-goe})
\label{def:mean-dim}
Let $G$ be an amenable group with a F\o lner net $\FF=(F_i)_{i \in I}$.   
Let $X$ be a $K$-algebraic variety and $A \coloneqq X(K)$. 
The \emph{algebraic mean dimension} of a subset
$\Gamma \subset A^G$ with respect to  $\FF$ 
is 
defined by
\begin{equation}  
\label{e;mdim}
\mdim_\FF(\Gamma) \coloneqq  \limsup_{i \in I} \frac{\dim(\Gamma_{F_i})}{| F_i |},
\end{equation}
where $\dim(\Gamma_{F_i})$ is the Krull dimension of 
$\Gamma_{F_i} \subset A^{F_i} \subset X^{F_i}$ with respect to the Zariski topology. 
\end{definition}

We see immediately that (cf.~\cite[Proposition~4.2]{cscp-alg-goe})

\begin{enumerate}[{\rm (i)}]
\item
$0 \leq \mdim_\FF(\Gamma)  \leq \dim(X)$ for every nonempty subset $\Gamma \subset A^G$;  
\item 
$\mdim_\FF(A^G) = \dim(X)$;
\item
$\mdim_\FF(\Gamma)\leq \mdim_\FF(\Gamma')$ for all subsets $\Gamma \subset \Gamma' \subset A^G$.  
\end{enumerate}

For further properties of algebraic mean dimension, see \cite{cscp-alg-goe}. 

\section{A criterion for nonemptyness of inverse limits of algebraic varieties} 
\label{s:criterion-inverse}

In this section, we establish a sufficient condition for nonemptyness 
of inverse limits of algebraic varieties which will be used in the next sections. 
Following the idea suggested in the remark after the proof of~\cite[Theorem~1]{stone}, 
we obtain in Proposition~\ref{p:stone-general} a stronger statement than   
\cite[Theorem~1]{stone} in the case of algebraic varieties. 
We begin by giving a proof of the following well-known result: 

\begin{lemma}
\label{l:C}
Let $(S_\lambda)_{\lambda \in \Lambda}$ be a decreasing directed family 
of closed subsets of a topological space $X$. 
Let $f \colon X\to Y$ be a continuous map such that $f^{-1}(y)$ is quasi-compact for all $y\in Y$. 
Then we have 
\begin{equation*}
f(\cap_{\lambda \in \Lambda}S_\lambda)=\cap_{\lambda \in \Lambda} f(S_\lambda). 
\end{equation*}
\end{lemma}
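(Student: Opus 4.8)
The plan is to establish the two inclusions separately, the nontrivial one being $\cap_{\lambda} f(S_\lambda) \subset f(\cap_{\lambda} S_\lambda)$. The inclusion $f(\cap_\lambda S_\lambda) \subset \cap_\lambda f(S_\lambda)$ is immediate since $\cap_\lambda S_\lambda \subset S_\lambda$ for every $\lambda$, hence $f(\cap_\lambda S_\lambda) \subset f(S_\lambda)$ for every $\lambda$.

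For the reverse inclusion, fix $y \in \cap_\lambda f(S_\lambda)$. I would consider the family of closed sets $T_\lambda \coloneqq S_\lambda \cap f^{-1}(y)$ inside the quasi-compact space $f^{-1}(y)$. Each $T_\lambda$ is nonempty: since $y \in f(S_\lambda)$, there is some $x \in S_\lambda$ with $f(x) = y$, so $x \in T_\lambda$. Moreover, because $(S_\lambda)_{\lambda \in \Lambda}$ is a decreasing directed family, so is $(T_\lambda)_{\lambda \in \Lambda}$: given $\lambda, \mu$, pick $\nu$ with $S_\nu \subset S_\lambda \cap S_\mu$, whence $T_\nu \subset T_\lambda \cap T_\mu$. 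Thus $(T_\lambda)_{\lambda \in \Lambda}$ is a directed family (hence a family with the finite intersection property, since any finite subfamily has a common lower bound which is nonempty) of nonempty closed subsets of the quasi-compact space $f^{-1}(y)$.

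By quasi-compactness of $f^{-1}(y)$ (which is equivalent to the statement that every family of closed subsets with the finite intersection property has nonempty total intersection), we conclude $\cap_\lambda T_\lambda \neq \varnothing$. Pick $x \in \cap_\lambda T_\lambda$; then $x \in S_\lambda$ for all $\lambda$, so $x \in \cap_\lambda S_\lambda$, and $f(x) = y$. Hence $y \in f(\cap_\lambda S_\lambda)$, which completes the proof.

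The only point that requires a little care is the reduction from "directed decreasing family" to "family with the finite intersection property": one must note that $T_\lambda$ is nonempty for \emph{every} $\lambda$, and that directedness guarantees any finite intersection $T_{\lambda_1} \cap \cdots \cap T_{\lambda_n}$ contains some $T_\nu$ and is therefore nonempty. There is no serious obstacle here; the statement is essentially a packaging of the compactness characterization via the finite intersection property, and the hypothesis that $f^{-1}(y)$ is quasi-compact is exactly what makes it go through without any separation or Noetherian assumptions on $X$ or $Y$.
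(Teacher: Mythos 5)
Your proof is correct and follows essentially the same route as the paper's: both define the nonempty closed sets $S_\lambda \cap f^{-1}(y)$ inside the quasi-compact fiber $f^{-1}(y)$ and invoke the finite intersection property to produce a point of $\cap_\lambda S_\lambda$ mapping to $y$. Your extra remark spelling out how directedness yields the finite intersection property is a welcome clarification but does not change the argument.
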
 
 
\begin{proof}
Observe first that $f(\cap_{\lambda \in \Lambda}S_\lambda)\subset \cap_{\lambda \in \Lambda} f(S_\lambda)$. 
Now let $y\in \cap_{\lambda \in \Lambda} f(S_\lambda)$. 
For each $\lambda \in \Lambda$, we define a nonempty closed subset $F_\lambda\coloneqq f^{-1}(y) \cap S_\lambda$ of 
$f^{-1}(y)$. 
Since these closed subsets $F_\lambda$ ($\lambda \in \Lambda$) satisfy the finite-intersection property, 
their intersection is nonempty by quasi-compactness of $f^{-1}(y)$. 
Hence,   
\begin{equation}
\label{e:C}
f^{-1}(y) \cap (\cap_{\lambda \in \Lambda} S_\lambda)=\cap_{\lambda \in \Lambda} F_\lambda \neq \varnothing.
\end{equation}
Therefore, there exists $x\in \cap_{\lambda \in \Lambda} S_\lambda$ such that $y=f(x)$. 
It follows that $f(\cap_{{\lambda \in \Lambda}}S_\lambda)\supset \cap_{\lambda \in \Lambda} f(S_\lambda)$ 
and thus $f(\cap_{{\lambda \in \Lambda}}S_\lambda)=\cap_{\lambda \in \Lambda} f(S_\lambda)$. 
This completes the proof of the lemma. 
\end{proof}

\begin{proposition}
\label{p:stone-general} 
Let $(X_\lambda, \varphi_{\nu \lambda})_{ \Lambda}$ be a countable 
inverse directed system of $K$-algebraic varieties. 
For all $\nu \geq \lambda$ in $\Lambda$, suppose that 
$\varphi_{\lambda \nu} \colon X_\nu \to X_\lambda$ is a morphism of $K$-schemes 
such that $\varphi_{\lambda \nu}(X_\nu)\subset X_\lambda$ is a closed subset. 
Then the induced inverse limit on $K$-points $\varprojlim_{\lambda \in \Lambda} X_\lambda(K)$ is nonempty.  
\end{proposition}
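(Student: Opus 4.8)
The plan is to reduce the statement to a cofinal sequence and then build a point of the inverse limit by a diagonal/compactness argument, using Lemma~\ref{l:C} to propagate nonemptyness down the tower. First I would use that $\Lambda$ is countable and directed to extract a cofinal increasing sequence $\lambda_0 \leq \lambda_1 \leq \lambda_2 \leq \cdots$; since the inverse limit over a cofinal subsystem agrees with the inverse limit over $\Lambda$, it suffices to treat the case $\Lambda = \N$ with transition morphisms $\varphi_{n,n+1}\colon X_{n+1}\to X_n$ whose images are closed. Here I would also pass from schemes to $K$-points, identifying $X_n(K)$ with the set of closed points $X_{n,0}$ with its Zariski topology, which is a Noetherian $T_1$ space in which every subspace is quasi-compact, as recalled in Section~\ref{s:pre}.

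The key device is the following: for each $n$, define the decreasing family of closed subsets $S^{(m)}_n \coloneqq (\varphi_{n,m})(X_m(K)) \subset X_n(K)$ for $m \geq n$, where $\varphi_{n,m} = \varphi_{n,n+1}\circ \cdots \circ \varphi_{m-1,m}$. Each $S^{(m)}_n$ is closed (a composite of morphisms with closed image, together with the hypothesis; one checks the image of a morphism of $K$-algebraic varieties meets $K$-points in the right way, as in Proposition~\ref{p:set-config-zariski}), and $S^{(n)}_n \supset S^{(n+1)}_n \supset \cdots$. By Noetherianity of $X_n(K)$ this decreasing chain of closed subsets stabilizes: there is $m(n)$ with $S^{(m)}_n = S^{(m(n))}_n \eqqcolon T_n$ for all $m \geq m(n)$. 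The $T_n$ are the "eventual images" and they are all nonempty (each $X_n(K)$ is nonempty since $X_n$ is nonempty and $K$ algebraically closed), and by construction $\varphi_{n,n+1}(T_{n+1}) = T_n$: indeed $\varphi_{n,n+1}(S^{(m)}_{n+1}) = S^{(m)}_n$ for $m$ large, and taking $m$ beyond both stabilization thresholds gives the claim.

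With the surjective system $(T_n, \varphi_{n,n+1}|_{T_{n+1}})$ in hand, I would produce a compatible sequence directly: pick $t_0 \in T_0$ arbitrarily; having chosen $t_n \in T_n$, the fiber $(\varphi_{n,n+1}|_{T_{n+1}})^{-1}(t_n)$ is nonempty since $\varphi_{n,n+1}(T_{n+1}) = T_n$, so pick $t_{n+1}$ in it. Then $(t_n)_n$ is an element of $\varprojlim_n X_n(K)$, which is therefore nonempty. Alternatively, and perhaps more cleanly, one applies Lemma~\ref{l:C} with $X = X_{n+1}(K)$, $f = \varphi_{n,n+1}$ (whose fibers are quasi-compact, being subspaces of a Noetherian space), and the decreasing family $S^{(m)}_{n+1}$, to conclude $\varphi_{n,n+1}(\cap_m S^{(m)}_{n+1}) = \cap_m S^{(m)}_n$, i.e.\ $\varphi_{n,n+1}(T_{n+1}) = T_n$ again; this is where Lemma~\ref{l:C} earns its place in the section. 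The main obstacle, and the only genuinely nontrivial point, is establishing that the sets $\varphi_{n,m}(X_m(K))$ are honestly closed in $X_n(K)$ and that their decreasing chain stabilizes — both follow from the closed-image hypothesis together with the Noetherian structure of the spaces of $K$-points, but one must be careful to state everything at the level of $K$-points rather than schemes, since a closed image in the scheme need not a priori restrict to a closed set of closed points without invoking that closed points are dense in closed subsets and the identification $\dim(C) = \dim(C_0)$ for constructible $C$ recalled in Section~\ref{s:pre}.
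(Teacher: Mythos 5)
Your proposal is correct and follows essentially the same route as the paper: form the eventual images $T_n=\cap_{m\geq n}\varphi_{n,m}(X_m(K))$ (the paper's $Y_\lambda$), show the restricted transition maps are surjective via Lemma~\ref{l:C} (or, as you note, via stabilization of the descending chain of closed subsets in the Noetherian space $X_n(K)$, which is an equivalent and slightly more economical way to see it here), and then extract a compatible sequence after reducing the countable directed set to $\N$. The only cosmetic difference is that you pass to a cofinal sequence at the outset whereas the paper does so at the end.
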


\begin{proof}
For each $\lambda \in \Lambda$, let us define a subset $Y_\lambda$ of $X_\lambda(K)$ by 
\[
Y_\lambda\coloneqq \cap_{\nu \geq \lambda }\, \varphi_{\lambda \nu}(X_\nu(K)).
\] 
Note that $X_\nu(K)$ is nonempty since $X_\nu$ is nonempty for every $\nu \in \Lambda$. 
By hypothesis, we see that $(\varphi_{\lambda \nu}(X_\nu(K)))_{\nu \geq \lambda}$ 
forms a decreasing family of nonempty closed subsets of $X_\lambda(K)$. 
Hence, $Y_\lambda \subset  X_\lambda(K)$ is a nonempty closed subset since 
$X_\lambda(K)$ is quasi-compact for all $\lambda \in \Lambda$. 
For $\mu \geq \lambda$ in $\Lambda$, we consider the induced continuous map  
\[
f_{\lambda \mu} \coloneqq \varphi_{\lambda \mu} \vert_{Y_\mu}\colon Y_\mu \to Y_\lambda. 
\]
Let $y\in Y_\lambda$  
then $f^{-1}(y)=\varphi^{-1}(y)\cap Y_\mu$ is a closed subset of $X_\mu(K)$ hence quasi-compact. 
It follows from Lemma~\ref{l:C} that we have
\begin{align*}
f_{\lambda \mu}(Y_\mu)
& = \varphi_{\lambda \mu} \left(\cap_{\nu \geq \mu } \varphi_{\mu \nu}(X_\nu(K)) \right) \\
& = \cap_{\nu \geq \mu} \,  (\varphi_{\lambda \mu}\circ  \varphi_{\mu \nu} ) (X_\nu(K)) \\
& =  \cap_{\nu \geq \mu}  \, \varphi_{\lambda \nu}(X_\nu(K))\\
&= \cap_{\nu \geq \lambda}  \, \varphi_{\lambda \nu}(X_\nu(K))= Y_\lambda, 
\end{align*}
and thus $f_{\lambda \mu}$ is surjective for all $\mu \geq \lambda$ in $\Lambda$. 
Therefore, we obtain an inverse subsystem $(Y_\nu,f_{\lambda \nu})$ of nonempty spaces 
in which the transition maps are surjective. 
As the system is countable, it follows that $\varprojlim_{\lambda \in \Lambda} Y_\lambda$ is nonempty. 
To see this, we reduce to the obvious case where the index set $\Lambda$ is $\N$ 
with the natural order as follows. 
As $\Lambda$ is countable, we can write $\Lambda= \cup_{n\in \N} A_n$ 
where $(A_n)_{n\in \N}$ is an increasing sequence of finite subsets. 
Then define by induction an increasing sequence $(a_n)_{n \in \N}$ in $\Lambda$ 
such that for every $n\in N$, $a_n \geq \lambda$ for all $\lambda \in A_n$. 
This is possible since every $A_n$ is finite and $\Lambda$ is directed. 
It is not hard to see that $\varprojlim_{\lambda \in \Lambda} Y_\lambda=\varprojlim_{n \in \N} Y_{a_n}$, 
which is now obviously nonempty. 
\par      
By construction, we have $\varprojlim_{\lambda \in \Lambda} X_\lambda(K)=\varprojlim_{\lambda \in \Lambda} Y_\lambda$. 
We can thus conclude that $\varprojlim_{\lambda \in \Lambda} X_\lambda(K)$ is nonempty.  
\end{proof}

\section{The closed image property in the prodiscrete topology}
\label{s:cip-grp}

The goal of this section is to establish a fundamental property on the image of algebraic group cellular automata, 
namely, the closed image property with respect to the prodiscrete topology. 
The following theorem extends \cite[Theorem~4.1]{cscp-alg-ca} to the class of algebraic groups. 

\begin{theorem}
\label{t:closed-image}
Let $G$ be a group. 
Let $X$ be a $K$-algebraic group and $A \coloneqq X(K)$. 
Let $\tau \colon  A^G\to A^G$ be an algebraic group cellular automaton over $(G,X,K)$.  
Then $\tau(A^G)$ is closed in $A^G$ for the prodiscrete topology. 
\end{theorem}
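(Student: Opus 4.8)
The plan is to show that a configuration $c \in A^G$ lying in the closure of $\tau(A^G)$ actually lies in $\tau(A^G)$, by exhibiting a preimage as a point of a suitable inverse limit and invoking the nonemptyness criterion of Section~\ref{s:criterion-inverse}. Fix a memory set $M$ of $\tau$ with the defining homomorphism $f\colon X^M \to X$, and let $c$ be in the prodiscrete closure of $\Gamma \coloneqq \tau(A^G)$. By definition of the prodiscrete topology, for every finite $\Omega \subset G$ the restriction $c\vert_\Omega$ lies in $\Gamma_\Omega$; by Proposition~\ref{p:set-config-zariski} each $\Gamma_\Omega$ is Zariski closed in $A^\Omega$, but that closedness by itself is not enough — we need to patch the local preimages together.

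First I would organize the relevant finite subsets. Write $G = \bigcup_{n} E_n$ as an increasing union of finite subsets with $1_G \in E_0$, and set up, for each $n$, the algebraic variety $Z_n \subset X^{E_n^{+}}$ defined as the preimage $\left(f^{+}_{E_n}\right)^{-1}\left(c\vert_{E_n}\right)$, where $f^{+}_{E_n}\colon X^{E_n^+} \to X^{E_n}$ is the homomorphism from Lemma~\ref{l:homo-alg} — here I use that $c\vert_{E_n}$ is a $K$-point lying in $\Gamma_{E_n} = f^{+(K)}_{E_n}(A^{E_n^+})$, so $Z_n$ is a nonempty closed subscheme of $X^{E_n^+}$ (nonemptiness on $K$-points since $K$ is algebraically closed and $Z_n$ is a nonempty $K$-scheme of finite type). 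The transition maps $\varphi_{m n}\colon Z_n \to Z_m$ for $n \ge m$ are the restrictions of the coordinate projections $X^{E_n^+} \to X^{E_m^+}$; these are morphisms of $K$-algebraic varieties, and crucially, since $f^{+}_{E_n}$ is a homomorphism of algebraic groups its restriction to the closed subgroup-coset $Z_n$ has closed image (homomorphisms of algebraic groups are closed by Theorem~\ref{t:homo-group}), so the image $\varphi_{m n}(Z_n) \subset Z_m$ is closed — this is exactly the hypothesis needed for Proposition~\ref{p:stone-general}.

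The main obstacle, and the reason the group structure is being exploited, is precisely the closed-image condition on the transition maps in the inverse system: for a general algebraic cellular automaton (not of group type) the projection of the fibre $Z_n$ into $Z_m$ need only be constructible, and the argument in \cite{cscp-alg-ca} must work harder; here, Theorem~\ref{t:homo-group} gives the closedness for free because each $f^{+}_\Omega$ is a group homomorphism and fibres are translates of kernels. Once closedness is in hand, I apply Proposition~\ref{p:stone-general} to the countable inverse system $(Z_n(K), \varphi_{m n})$ to conclude $\varprojlim_n Z_n(K) \neq \varnothing$. A compatible family $(u_n)_n$ with $u_n \in Z_n(K) \subset A^{E_n^+}$ glues (using that $(E_n^+)_n$ is cofinal in the finite subsets of $G$, as $E_n \subset E_n^+$) to a configuration $b \in A^G$; by construction $\left(f^{+}_{E_n}\right)^{(K)}(b\vert_{E_n^+}) = c\vert_{E_n}$, i.e. $\tau(b)\vert_{E_n} = c\vert_{E_n}$ for all $n$, hence $\tau(b) = c$. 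Therefore $c \in \tau(A^G)$, proving $\tau(A^G)$ is prodiscretely closed. I would finish by remarking that the only places the algebraic-group hypothesis entered were Lemma~\ref{l:homo-alg} and the closedness of homomorphisms, so the structure of the argument parallels \cite[Theorem~4.1]{cscp-alg-ca} with that input replaced.
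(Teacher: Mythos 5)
Your proposal follows essentially the same route as the paper's proof: you realize the fibre of $\tau$ over $c$ as an inverse limit of nonempty $K$-algebraic varieties, you obtain the closed-image hypothesis for the transition maps from the group structure (each fibre is a coset of the kernel of $f^+_\Omega$, and the projection, being a homomorphism, sends it to a coset of a closed algebraic subgroup, which is closed by Theorem~\ref{t:homo-group}), and you conclude with Proposition~\ref{p:stone-general}. The paper works with $f^-_\Omega$ and indexes the system by all finite subsets of $G$ rather than a fixed exhaustion, but this is cosmetic; your identification of where the algebraic-group hypothesis enters is exactly right.

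There is one genuine gap: your very first structural step, ``write $G=\bigcup_n E_n$ as an increasing union of finite subsets,'' silently assumes that $G$ is countable, whereas the theorem is stated for an arbitrary group. This matters because Proposition~\ref{p:stone-general} is only available for \emph{countable} inverse systems (for a general directed index set, an inverse system of nonempty sets with surjective transition maps can have empty limit), so you cannot simply replace your chain by the full directed set of finite subsets of $G$ when $G$ is uncountable. The paper closes this gap by first passing to the subgroup $H\subset G$ generated by the memory set $M$: $H$ is finitely generated, hence countable, and by the induction/restriction result \cite[Theorem~1.2.(vi)]{csc-ind-res} the image $\tau(A^G)$ is closed in $A^G$ if and only if $\tau_H(A^H)$ is closed in $A^H$. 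You need to insert this reduction (or some substitute) before setting up your exhaustion; with it, the rest of your argument goes through as written.
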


\begin{proof} 
We denote the group law (not necessarily commutative) of algebraic groups by $+$. 
\par
By hypothesis, $\tau$ admits a memory set $M$ whos local defining map $\mu \colon A^M \to A$ 
is induced by a homomorphism of algebraic groups $f\colon X^M \to X$. 
By Lemma \ref{l:homo-alg}, we see that $f^-_\Omega \colon X^\Omega  \to X^{\Omega^-}$ 
and the projection $p_{\Omega \Lambda} \colon X^\Lambda \to X^\Omega$ are homomorphisms of $K$-algebraic groups 
for all finite subsets $ \Omega, \Lambda \subset G$ such that $\Omega \subset \Lambda$.
\par 
Let $d\in A^G$ be a configuration that belongs to the closure of $\tau(A^G)$ in $A^G$ with respect to the prodiscrete topology. 
Let $\PP$ denote  the directed set consisting of all finite subsets of $G$ partially orderable by inclusion. 
Consider the inverse directed system $(Y_\Omega, \phi_{\Omega  \Lambda})_{\PP}$ 
of nonempty $K$-algebraic varieties defined by: 
\[
Y_\Omega \coloneqq (f^-_\Omega)^{-1}(d\vert_{\Omega^-})\subset X^\Omega \quad \text{and} \quad
\phi_{\Omega  \Lambda} \coloneqq p_{\Omega \Lambda}\vert_{Y_\Lambda} \colon Y_\Lambda  \to Y_\Omega.  
\]
Each $Y_\Omega \subset X^\Omega$ is equipped with the induced reduced closed subscheme structure 
and the $K$-scheme morphism $\phi_{\Omega  \Lambda}$ is uniquely induced by the projection 
$p_{\Omega \Lambda}$ (cf. \cite[Proposition~I.5.2.2]{grothendieck-ega-1}). 
Remark that by the choice of $d$, the $K$-algebraic varieties $Y_\Omega$ are nonempty.   
\par
We claim that $\phi_{\Omega \Lambda}(Y_\Lambda)$ is a closed subset of $Y_\Omega$ 
for all finite subsets $\Omega,\Lambda \in G$ with $\Omega \subset \Lambda$. 
Indeed, consider $I_\Omega\coloneqq \Ker(f^-_\Omega) \subset X^\Omega$.  
By Theorem \ref{t:homo-group}, we see that $I_\Omega$ is a nonempty closed algebraic subgroup of $X^\Omega$.  
Let $c_\Omega\in Y_\Omega$ be a closed point. 
Then $Y_\Omega=c_\Omega + I_\Omega$. 
Since $p_{\Omega \Lambda}$ is a homomorphism, we deduce that 
\[
p_{\Omega \Lambda}(Y_\Lambda)=p_{\Omega \Lambda}(c_\Lambda)+p_{\Omega \Lambda}(I_\Lambda).
\] 
It follows from Theorem \ref{t:homo-group} that $p_{\Omega \Lambda}(I_\Lambda)$ 
is a closed algebraic subgroup. 
Hence, $\phi_{\Omega \Lambda}(Y_\Lambda)=p_{\Omega \Lambda}(Y_\Lambda)$ 
is also a closed subset of $Y_\Omega$. This proves the claim. 
\par
Let $H\subset G$ be the subgroup generated by $M$. 
Let $\tau_H \colon A^H \to A^H$ be the restriction of $\tau$ to $A^H$. 
By \cite[Theorem~1.2.(vi)]{csc-ind-res}, $\tau(A^G)$ is closed in $A^G$ for the prodiscrete topology if and only if 
$\tau_H(A^H)$ is closed in $A^H$ for the prodiscrete topology. 
Up to replacing $G$ by $H$ and $\tau$ by $\tau_H$, we can thus suppose that $G$ is countable.  
It follows from Proposition \ref{p:stone-general} 
that the induced inverse limit of $K$-points $\varprojlim_{\Omega\in \PP} Y_\Omega(K)$ is nonempty.  
Observe finally that by~\cite[Lemma~2.1]{cscp-alg-ca}, the cellular automata $\tau$ has the closed image property 
if and only if $\varprojlim_{\Omega \in \PP} Y_{\Omega}(K)$ is nonempty.  
This completes the proof of Theorem \ref{t:closed-image}. 
\end{proof}

As a first application of Theorem \ref{t:closed-image}, we obtain the following: 
\begin{theorem}
\label{t:surj-grp}
Let $G$ be a locally residually finite group. 
Let $X$ be a $K$-algebraic group and $A \coloneqq X(K)$. 
Let $\tau \colon A^G\to A^G$ be an algebraic group cellular automaton over $(G,X,K)$. 
Suppose that $\tau$ is injective. 
Then $\tau$ is surjective and hence bijective. 
\end{theorem}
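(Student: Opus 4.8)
The plan is to reduce to the finitely generated case and then exploit residual finiteness to approximate $G$ by finite quotients, using the already-established closed image property (Theorem~\ref{t:closed-image}) to pass to the limit. First I would note that injectivity (and surjectivity) of $\tau$ can be tested on the restriction $\tau_H$ where $H\subset G$ is the finitely generated subgroup generated by a memory set $M$ of $\tau$: by the restriction/induction machinery for cellular automata (the same \cite{csc-ind-res} input used in the proof of Theorem~\ref{t:closed-image}), $\tau$ is surjective if and only if $\tau_H$ is surjective, and likewise injectivity descends to $\tau_H$ because any two distinct configurations in $A^H$ extend to distinct configurations in $A^G$ differing only on a coset of $H$. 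Since $G$ is locally residually finite, $H$ is residually finite, so we may assume from the outset that $G$ itself is a finitely generated residually finite group.

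Next, since $\tau(A^G)$ is closed in the prodiscrete topology by Theorem~\ref{t:closed-image}, it suffices to show that $\tau(A^G)$ is dense, i.e. that for every finite $\Omega\subset G$ the restriction map $\tau^+_\Omega\colon A^{\Omega^+}\to A^\Omega$ is surjective onto $A^\Omega$. I would fix $\Omega$, enlarge it to a finite subset $E$ with $1_G\in E$, $E=E^{-1}$, and $\Omega^+\subset E$, and then pick a finite-index normal subgroup $N\trianglelefteq G$ such that the quotient map $\pi\colon G\to G/N$ is injective on $E\cdot E$ (possible by residual finiteness). The homomorphism of algebraic groups $f\colon X^M\to X$ defining $\tau$ then induces, on the finite group $\bar G:=G/N$, an algebraic group cellular automaton $\bar\tau\colon A^{\bar G}\to A^{\bar G}$ whose local behaviour agrees with that of $\tau$ on the region $E$. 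The point is that $\bar\tau$ is an endomorphism of the finite-dimensional algebraic group $X^{\bar G}$ (a homomorphism of algebraic groups $X^{\bar G}\to X^{\bar G}$), so by the Ax--Grothendieck phenomenon for algebraic groups — more elementarily, an injective homomorphism of algebraic groups of the same dimension between $X^{\bar G}$ and itself is automatically surjective, since $\dim\ker=0$ forces $\dim\operatorname{im}=\dim X^{\bar G}$ and the image is closed by Theorem~\ref{t:homo-group} — $\bar\tau$ is surjective provided it is injective.

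The main obstacle, and the step requiring care, is the transfer of injectivity: from injectivity of $\tau$ on $A^G$ one must deduce injectivity of $\bar\tau$ on $A^{\bar G}$ for a suitable choice of $N$. Here I would argue contrapositively and use the group structure: if $\bar\tau$ is not injective, there is a nonzero $\bar u\in A^{\bar G}$ with $\bar\tau(\bar u)=e^{\bar G}$; since $\bar G$ is finite this is a single algebraic condition, and by lifting $\bar u$ periodically to a $N$-periodic configuration $u\in A^G$ one gets $\tau(u)=e^G$ with $u\neq e^G$ (using that $\tau$, being a group cellular automaton, has $\tau(e^G)=e^G$), contradicting injectivity of $\tau$ — provided one knows that some fixed $N$ detects a nontrivial kernel element of $\bar\tau$. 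To make this uniform one runs the argument for a decreasing chain $N_1\supset N_2\supset\cdots$ with $\bigcap N_k=\{1_G\}$ and observes that the kernels $\ker\bar\tau_{N_k}$, pulled back to $A^G$, form a decreasing chain of nonempty closed subgroups whose intersection, by the prodiscrete compactness of $A^G$ restricted to each finite window together with the closed image / inverse limit criterion of Section~\ref{s:criterion-inverse} (Proposition~\ref{p:stone-general}), is a nontrivial closed subgroup contained in $\ker\tau$ — again contradicting injectivity. Once injectivity of $\bar\tau$ is secured for every such $N$, surjectivity of each $\bar\tau_N$ gives surjectivity of $\tau^+_\Omega$ for the chosen $\Omega$, hence density, hence — combined with Theorem~\ref{t:closed-image} — surjectivity of $\tau$, and bijectivity follows.
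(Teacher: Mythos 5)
Your plan follows essentially the same route as the paper's proof, which simply transports the argument of \cite[Theorem~1.2]{cscp-alg-ca}: reduce to a finitely generated (hence residually finite) subgroup generated by a memory set, show that $\tau$ maps each set of periodic configurations onto itself via an Ax--Grothendieck-type statement for the induced endomorphism of $X^{G/N}$, deduce density of the image, and close up with Theorem~\ref{t:closed-image}. Two steps of your write-up need repair, although neither is fatal.

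First, your justification that an injective endomorphism of the algebraic group $X^{\bar G}$ is surjective --- ``$\dim\ker=0$ forces $\dim\operatorname{im}=\dim X^{\bar G}$ and the image is closed'' --- is insufficient precisely in the case this theorem is designed to cover: here $X$ need not be connected (that is the point of Theorem~\ref{t:surjunctive-res} as opposed to Theorem~\ref{t:sofic}), and a closed subgroup of full dimension need not be the whole group (take $X$ finite, where every subgroup has dimension $0$). The conclusion is nevertheless correct: the full-dimensional closed image contains the identity component of $X^{\bar G}$, and the induced endomorphism of the finite group of components is injective, hence bijective; alternatively, one can simply invoke the Ax--Grothendieck theorem for the underlying variety $X^{\bar G}$, which is valid in every characteristic.

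Second, your closing ``uniformization'' via a decreasing chain $N_1\supset N_2\supset\cdots$ is both unnecessary and incorrect as written: if $N_1\supset N_2$ then $\operatorname{Fix}(N_1)\subset\operatorname{Fix}(N_2)$, so the pulled-back kernels form an \emph{increasing} chain, and $A^G$ is not compact in the prodiscrete topology when $A$ is infinite, so the compactness appeal does not apply. No uniformity is needed: for each individual finite-index normal subgroup $N$, the set of $N$-periodic configurations is a $\tau$-invariant subset of $A^G$ canonically identified with $A^{\bar G}$, under which $\bar\tau$ is literally the restriction of $\tau$; hence injectivity of $\bar\tau$ is immediate. Equivalently, the first contrapositive sentence you wrote --- a nontrivial $\bar u$ in the kernel of $\bar\tau$ lifts to a nontrivial $N$-periodic $u$ with $\tau(u)=e^G$ --- already completes the transfer for that fixed $N$, and running it separately for each $N$ is all that the density argument requires. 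With these two repairs the proof is complete.
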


\begin{proof}
The proof of this theorem is identical, \emph{mutatis mutandis}, to the proof of \cite[Theorem~1.2]{cscp-alg-ca}. 
Note that $\tau$ satisfies the closed image property by Theorem \ref{t:closed-image}. 
\end{proof}

\section{Reversibility and Invertibility of algebraic group cellular automata}
\label{s:reverse-inverse}

\subsection{Left inverse of injective algebraic group cellular automata}

We begin with an auxiliary lemma on the reversibility of injective algebraic group cellular automata.  

\begin{lemma}
\label{l:techno}
Let $G$ be a group. 
Let $X$ be a $K$-algebraic group and $A\coloneqq X(K)$. 
Let $\tau \colon A^G\to A^G$ be an injective algebraic group cellular automaton over $(G,X,K)$. 
Let $\Gamma\coloneqq \tau(A^G)$ denote the image of $\tau$.
Then there exists a finite subset $N\subset G$ such that 
\begin{enumerate}
\item[(C)]  
 \textit{for any} $d\in \Gamma$, the element  
 $\tau^{-1}(d)(1_G)\in A$ \textit{depends only on} $d \vert_N \in A^N$.
\end{enumerate}
\end{lemma}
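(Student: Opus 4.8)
The plan is to produce $N$ as a single large finite window, using only the group structure of $X$: the homomorphism theorem (Theorem~\ref{t:homo-group}), the fact that $X$ is a Noetherian topological space, and the inverse limit criterion of Proposition~\ref{p:stone-general}; in particular this forces no hypothesis on $\car(K)$ or on completeness of $X$. First I would fix a memory set $M$ of $\tau$ with $1_G\in M$ together with a homomorphism $f\colon X^M\to X$ inducing the associated local defining map, and reduce to the case where $G$ is countable. For this, set $H\coloneqq\langle M\rangle$; since $\tau$ acts inside the cosets $gH$, its restriction $\tau_H\colon A^H\to A^H$ is again an injective algebraic group cellular automaton, and for every $d\in\tau(A^G)$ one has $d\vert_H\in\tau_H(A^H)$ with $\tau_H^{-1}(d\vert_H)(1_G)=\tau^{-1}(d)(1_G)$; hence any finite $N\subset H$ that works for $\tau_H$ works for $\tau$, so we may assume $G$, and therefore the poset $\PP$ of its finite subsets, is countable.

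Next, for each finite $\Omega\subset G$ with $1_G\in\Omega$ I would consider $f^+_\Omega\colon X^{\Omega^+}\to X^\Omega$ from Lemma~\ref{l:homo-alg}, its kernel $D_\Omega\coloneqq\Ker(f^+_\Omega)$, which is a closed algebraic subgroup of $X^{\Omega^+}$ by Theorem~\ref{t:homo-group}, and the image $V_\Omega\subset X$ of $D_\Omega$ under the projection $X^{\Omega^+}\to X$ onto the $1_G$-coordinate, which is again a closed algebraic subgroup. On $K$-points one has $V_\Omega(K)=\{\,c(1_G): c\in A^G,\ \tau(c)\vert_\Omega=e^\Omega\,\}$, so the subgroups $V_\Omega$ form a family of closed subsets of $X$ that decreases as $\Omega$ grows. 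Since $X$ is Noetherian, this family stabilizes, so there is a finite $\Omega_0\ni 1_G$ with $V_{\Omega_0}=V_\Omega=\bigcap_\Omega V_\Omega$ for all $\Omega\supseteq\Omega_0$. The core step is then to show $V_{\Omega_0}(K)=\{e\}$. Given $a\in V_{\Omega_0}(K)$, for each finite $\Omega\supseteq\Omega_0$ containing $1_G$ I would set $W_\Omega\coloneqq(f^+_\Omega)^{-1}(e^\Omega)\cap p_\Omega^{-1}(a)\subset X^{\Omega^+}$, where $p_\Omega\colon X^{\Omega^+}\to X$ is the $1_G$-coordinate projection; it is nonempty since $a\in V_\Omega(K)$, it is a translate of the closed algebraic subgroup $\Ker(f^+_\Omega)\cap\Ker(p_\Omega)$, hence closed, and its image under any coordinate projection $X^{\Omega^+}\to X^{\Lambda^+}$ (for $\Lambda\subseteq\Omega$) is again a translate of a closed algebraic subgroup, hence closed. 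Proposition~\ref{p:stone-general} applied to the countable inverse system $(W_\Omega)$ then yields a compatible family of $K$-points gluing to a configuration $c\in A^G$ with $\tau(c)=e^G$ and $c(1_G)=a$; injectivity of $\tau$ forces $c=e^G$, so $a=e$, whence $V_{\Omega_0}(K)=\{e\}$.

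Finally I would take $N\coloneqq\Omega_0$ and verify (C): if $d,d'\in\Gamma$ satisfy $d\vert_N=d'\vert_N$, then with $c\coloneqq\tau^{-1}(d)$ and $c'\coloneqq\tau^{-1}(d')$, using that $\tau$ is an endomorphism of the abstract group $A^G$ we get $\tau(c\cdot (c')^{-1})=d\cdot (d')^{-1}$, whose restriction to $\Omega_0$ is $e^{\Omega_0}$; therefore $(c\cdot (c')^{-1})(1_G)\in V_{\Omega_0}(K)=\{e\}$, i.e. $\tau^{-1}(d)(1_G)=\tau^{-1}(d')(1_G)$, as wanted. The hard part will be the second paragraph: one must use Noetherianity of $X$ to terminate the descending chain $(V_\Omega)$ at a finite stage $\Omega_0$, and \emph{simultaneously} use Proposition~\ref{p:stone-general} together with the injectivity of $\tau$ to identify the stable value $V_{\Omega_0}$ with the trivial subgroup; the reduction to countable $G$ is exactly what makes Proposition~\ref{p:stone-general} applicable, and the verification that the transition maps of the system $(W_\Omega)$ have closed image is where the group structure of $X$ is essential, replacing the properness or uncountability hypotheses needed in the non-group case of \cite{cscp-alg-ca}.
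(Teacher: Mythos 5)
Your proof is correct, and it reaches the conclusion by a route that is organized quite differently from the paper's, even though the two arguments rest on the same pillars (Noetherianity, closedness of images of homomorphisms of algebraic groups, and injectivity of $\tau$ to exclude a nontrivial element of the kernel). The paper argues by contradiction: assuming no finite $N$ works, it produces for each $n$ a nontrivial element $c_n$ of $I_n=\Ker(\tau^-_{E_n})\subset A^{E_n}$ with $c_n(1_G)\neq e$, stabilizes the images $\pi_{nm}(I_m)$ by a hand-rolled Mittag--Leffler argument inside each $A^{E_n}$, builds a compatible sequence by the resulting surjectivity of the transition maps, and then extends the glued configuration $a_H\in A^H$ periodically over the cosets of $H$ to get $a\in A^G\setminus\{e^G\}$ with $\tau(a)=e^G$. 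You instead give a direct proof: you first reduce to $G=H$ countable (which lets you skip the coset-extension step entirely), then stabilize the descending chain of closed subgroups $V_\Omega\subset X$ --- the projections to the $1_G$-coordinate of $\Ker(f^+_\Omega)$ --- using Noetherianity of the single space $X$ rather than of the varying kernels, and finally identify the stable value $V_{\Omega_0}$ with $\{e\}$ by feeding the cosets $W_\Omega$ into Proposition~\ref{p:stone-general} as a black box. What your version buys is conceptual economy: the finite window $N=\Omega_0$ is exhibited explicitly as the stabilization point of the ``uncertainty subgroup'' of possible values at $1_G$, and the surjective-inverse-system argument is not redone but quoted; what the paper's version buys is self-containedness at this point (Lemma~\ref{l:techno} is stated before the coset-restriction reduction is invoked in the form you use it) and an explicit witness of non-injectivity. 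Both correctly use the group structure only through Theorem~\ref{t:homo-group} (translates of closed subgroups have closed images), which is exactly what replaces the completeness or uncountability hypotheses of the non-group case.
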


\begin{proof}
Let $M\subset G$ be a memory set  
of $\tau$ such that $1_G \in M$.    
Let $H \subset G$ be the subgroup generated by $M$. 
Then $H$ is countable 
and thus admits an increasing sequence of finite subsets 
$M=E_0 \subset \dots \subset E_n \dots$~such that $H=\cup_{n\in \N} E_n=\cup_n E^-_n$. 
As $\tau \colon A^G \to A^G$ is an injective homomorphism of abstract groups, 
$\tau^{-1} \colon \Gamma \to A^G$ is also a homomorphism of abstract groups. 
\par
Suppose on the contrary that there does not exist a finite subset $N\subset G$ verifying $(C)$. 
Hence, for each $n \in \N$, there exist configurations $d_n,d'_n \in \Gamma$ such that 
\begin{equation*}
d_n\vert_{E_n^-}=d'_n\vert_{E_n^-} \quad \text{and} \quad \tau^{-1}(d_n)(1_G)\neq \tau^{-1}(d'_n)(1_G). 
\end{equation*}
Let $c'_n\coloneqq d_n - d'_n \in A^G$ for each $n \in \N$. 
Then we find that 
$c'_n\vert_{E_n^-}=e^{E_n^-}$ and $\tau^{-1}(c'_n)(1_G)\neq e$.
Thus by setting $c_n\coloneqq \tau^{-1}(c'_n)\vert_{E_n}\in A^{E_n}$ for every $n \in \N$, we obtain:  
\begin{equation}
\label{e:condition}
\tau_{E_n}(c_n)=c'_n\vert_{E_n^-}= e^{E_n^-}, \quad  \quad c_n(1_G) \neq e. 
\end{equation}
For each $n\in \N$, we define a closed subgroup of $A^{E_n}$ (by Theorem \ref{t:homo-group})
\[
I_n\coloneqq \Ker(\tau^-_{E_n})=\left(\Ker(f^-_{E_n})\right) (K) \subset A^{E_n}.
\]
Then $I_n$ is nonempty for all $n\in \N$ as $c_n \in I_n$ by \eqref{e:condition}. 
The projection 
$p_{nm} \colon A^{E_m} \to A^{E_n}$ induces a homomorphism of groups 
$\pi_{nm} \colon I_m \to I_n$ for all $n \leq m$. 
It is clear that 
\[
\pi_{nm'}(I_{m'}) \subset \pi_{nm}(I_m) \quad \text{whenever } m' \geq m.
\] 
Thus, for each $n\in \N$, we obtain by Theorem \ref{t:homo-group} a decreasing sequence 
of closed subgroups $(\pi_{nm}(I_m))_{m \geq n}$ of $I_n$. 
As $\Ker(f^-_{E_n})$ is an algebraic variety hence Noetherian, $I_n$ is also Noetherian. 
Hence, $(\pi_{nm}(I_m))_{m \geq n}$ is stationary and 
there exists a nonempty closed subgroup $J_n \subset I_n\subset A^{E_n}$ 
such that $\pi_{nm}(I_m)=J_n$ for all $m$ large enough. 
\par
It is clear that $\pi_{nm}$ induces by restriction a homomorphism of groups 
$q_{nm} \colon J_m  \to J_n$ for all $m \geq n$. 
We claim that $q_{nm} \colon J_m  \to J_n$ is surjective for all $m\geq n$. 
Indeed, let $y\in J_n$ and $k \geq m$ be large enough 
such that $q_{n k}(I_k)=J_n$ and $q_{m k}(I_k)=J_m$. 
Hence, there exists $x\in I_k$ such that $q_{nk}(x)=y$. 
Since $q_{nk}= q_{nm} \circ q_{mk}$, we see that $q_{nm}(y')=y$ where $y'=q_{mk}(x) \in J_m$. 
This proves the claim.
\par 
Now let $k\in \N$ be such that $\pi_{0k}(I_k)=J_0$ and let $x_0=\pi_{0k}(c_k) \in J_0$. 
From \eqref{e:condition}, we see that $x_0(1_G) \neq e$. 
We construct by induction a sequence $(x_n)_{n \in \N}$ where 
$x_n \in J_n$ for all $n\in \N$ as follows. 
Given $x_n \in J_n$ for some $n\in \N$, we can then choose 
by the surjectivity of the map $q_{n,n+1}$ an element 
\[
x_{n+1}\in q^{-1}_{n,n+1}(x_n) \subset J_{n+1}\subset A^{E_{n+1}}.
\] 
Since $q_{nr}= q_{nm} \circ q_{mr}$ for all $n\leq m \leq r$ and 
$H=\cup_n E_n$, there exists $a_H \in A^H$ such that 
\[
a_H\vert_{E_n}=x_n \quad \text{for all } n\in \N.
\] 
\par
Now let $R \subset G$ be a set of representatives of the quotient $G/ H$ such that $1_G\in R$. 
For each $\delta \in R$, define $a_{\delta H} \in A^{\delta H}$ 
by $a_{\delta H}(g)\coloneqq a_H(\delta^{-1}g)$ for all $g\in \delta H$.  
We obtain a configuration:  
\begin{equation}
\label{e:tecnic}
a\coloneqq \prod_{\delta \in R} a_{\delta H} \in \prod_{\delta \in R} A^{\delta H}= A^G. 
\end{equation}
Since $M\subset H$, we deduce from \eqref{e:tecnic} and \eqref{e:local-property} 
that for all $\delta \in R$ and $h\in H$, we have 
\begin{equation}
\label{e:tecta}
(\tau(a))(\delta h)=(\tau(a))(h). 
\end{equation}
 
Observe that $a(1_G)=x_0(1_G)\neq e$ hence $a\neq e^{G}$. 
\par
We see by construction that for all $n \in \N$, 
\[
\tau(a)\vert_{E_n^-}=\tau^-_{E_n}(a\vert_{E_n})=\tau^-_{E_n}(x_n)=e^{{E_n^-}}  
\quad \left(\text{since }x_n \in J_n \subset I_n =\Ker(\tau^-_{E_n}) \right). 
\]
Hence, $\tau(a)\vert_H=e^{H}$ as $H=\cup_n E^-_n$. 
From \eqref{e:tecta}, it follows that $\tau(a)\vert_{\delta H}=e^{\delta H}$ for all $\delta \in R$. 
Therefore, $\tau(a)= e^{G}$, which contradicts the injectivity of $\tau$ 
since $a\neq e^G$ and $\tau(e^G)=e^G$.  
This completes the proof of Proposition \ref{p:left-inverse}. 
\end{proof}

We can now establish the following key technical result on left inverses 
of injective algebraic group cellular automata over any universe $G$. 
The main point is that the local defining map of the inverse map 
of an injective algebraic group cellular automaton 
is also induced by a homomorphism of algebraic groups. 

\begin{proposition}
\label{p:left-inverse}
Suppose that $\car(K)=0$. Let $G$ be a group. 
Let $X$ be an $K$-algebraic group and $A\coloneqq X(K)$. 
Let $\tau \colon A^G\to A^G$ be an injective algebraic group cellular automaton over $(G,X,K)$. 
Let $M$ be a memory set of $\tau$ with $1_G \in M$ and let $f \colon X^M \to X$ 
be a homomorphism of $K$-algebraic groups such that $f^{(K)} \colon A^M \to A$ 
is the associated local defining map of $\tau$.  
Then there exists a finite subset $N \subset G$ satisfying the following property
\begin{enumerate}
\item[(P)]
For every finite subset $ \Omega \subset G$ containing $N$, 
there exists a homomorphism of $K$-algebraic groups 
$h_\Omega \colon  Y_\Omega \to X$ 
where $Y_\Omega \coloneqq f^+_\Omega(X^{\Omega^+})\subset X^\Omega$ 
and that for all $c\in A^G$:  
\begin{equation}
\label{e:local-inverse-1}
c(1_G)=h_\Omega^{(K)}(\tau(c)\vert_\Omega). 
\end{equation}
\end{enumerate}
\end{proposition}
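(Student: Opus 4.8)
The plan is to extract a ``finite-memory'' description of the local inverse map $\tau^{-1}(\cdot)(1_G)$ from Lemma~\ref{l:techno}, and then to upgrade the resulting set-theoretic map into a homomorphism of algebraic groups using the homomorphism theorems from Section~\ref{ss:sofic-groups} together with the characteristic-zero hypothesis.

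First, apply Lemma~\ref{l:techno} to obtain a finite subset $N \subset G$ with property (C): for $d \in \Gamma = \tau(A^G)$ the value $\tau^{-1}(d)(1_G)$ depends only on $d\vert_N$. Enlarging $N$ if necessary, I may assume $N \supset M$ and $N^- \ni 1_G$. Now fix a finite subset $\Omega \subset G$ containing $N$. Since $\tau^+_\Omega \colon A^{\Omega^+} \to A^\Omega$ satisfies $\tau^+_\Omega(c\vert_{\Omega^+}) = \tau(c)\vert_\Omega$, the set $Y_\Omega = f^+_\Omega(X^{\Omega^+})$ satisfies $Y_\Omega(K) = \Gamma_\Omega$ (this is exactly the content of the computation in Proposition~\ref{p:set-config-zariski}), and by Theorem~\ref{t:homo-group} it is a closed algebraic subgroup of $X^\Omega$. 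Property (C) together with $N \subset \Omega$ guarantees that there is a well-defined set map $\bar h_\Omega \colon \Gamma_\Omega = Y_\Omega(K) \to A$ sending $\tau(c)\vert_\Omega \mapsto c(1_G)$; it is a homomorphism of abstract groups because $\tau^{-1} \colon \Gamma \to A^G$ is, and because evaluation at $1_G$ and restriction to $\Omega$ are group homomorphisms. The goal (P) is then to realize this $\bar h_\Omega$ as $h_\Omega^{(K)}$ for a homomorphism of $K$-algebraic groups $h_\Omega \colon Y_\Omega \to X$.

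To construct $h_\Omega$, I factor $f^+_\Omega$ through its image: by Theorem~\ref{t:image} we get $X^{\Omega^+} \xrightarrow{q} Y_\Omega \xrightarrow{\iota} X^\Omega$ with $q$ a quotient map. The kernel $N_\Omega \coloneqq \Ker(q) \subset X^{\Omega^+}$ is a normal closed algebraic subgroup. Consider the evaluation homomorphism $\mathrm{ev}_{1_G} \colon X^{\Omega^+} \to X$ (projection onto the coordinate indexed by $1_G \in \Omega^+$, a homomorphism of algebraic groups by Remark~\ref{r:product-group}). The key claim is that $N_\Omega \subset \Ker(\mathrm{ev}_{1_G})$: indeed, on $K$-points, if $c\vert_{\Omega^+} \in N_\Omega(K)$ then $q^{(K)}(c\vert_{\Omega^+}) = e$ means $\tau(c)\vert_\Omega = e^\Omega$, hence $\tau(c)\vert_N = e^N$, hence by property (C) we get $c(1_G) = \tau^{-1}(e^G)(1_G) = e$, i.e.\ $\mathrm{ev}_{1_G}(c\vert_{\Omega^+}) = e$. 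In characteristic zero every algebraic group is reduced and smooth (Cartier's theorem, cited after Theorem~\ref{t:quotient-universal}), so the $K$-points are Zariski-dense in $N_\Omega$ and the scheme-theoretic containment $N_\Omega \subset \Ker(\mathrm{ev}_{1_G})$ follows from the containment on $K$-points. Now the universal property of quotient maps (Theorem~\ref{t:quotient-universal}) applied to $\mathrm{ev}_{1_G} \colon X^{\Omega^+} \to X$, whose kernel contains $N_\Omega = \Ker(q)$, produces a unique homomorphism of $K$-algebraic groups $h_\Omega \colon Y_\Omega \to X$ with $h_\Omega \circ q = \mathrm{ev}_{1_G}$. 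Passing to $K$-points and using $q^{(K)}(c\vert_{\Omega^+}) = \tau(c)\vert_\Omega$, this yields $h_\Omega^{(K)}(\tau(c)\vert_\Omega) = \mathrm{ev}_{1_G}(c\vert_{\Omega^+}) = c(1_G)$ for all $c \in A^G$, which is precisely \eqref{e:local-inverse-1}.

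The main obstacle I anticipate is the passage from $K$-points to schemes in verifying $N_\Omega \subset \Ker(\mathrm{ev}_{1_G})$ as closed subschemes: this is where $\car(K)=0$ is genuinely used, via reducedness (so that a closed subgroup is determined by its $K$-points) — in positive characteristic the Frobenius obstruction in Example~\ref{ex:main-frobenius} shows the statement fails, consistent with this being the delicate point. A secondary technical care is checking that $\bar h_\Omega$ is well-defined on \emph{all} of $Y_\Omega(K)$, not just on $\Gamma_\Omega$ — but these coincide by the image computation above, so no gap arises. Everything else (that projections, evaluations, and the factorization maps are homomorphisms of algebraic groups) is routine from Remark~\ref{r:product-group} and the cited structure theorems.
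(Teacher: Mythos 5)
Your proposal is correct and follows essentially the same route as the paper: extract $N$ from Lemma~\ref{l:techno}, show via property (C) that the coordinate projection to $X^{\{1_G\}}$ kills the kernel of $f^+_\Omega$ (the paper checks this on an arbitrary fiber $(\tau^+_\Omega)^{-1}(y)=x_0+U_\Omega(K)$, you check it directly on the fiber over $e$ — logically the same), pass from $K$-points to the scheme-theoretic kernel by density/reducedness, and conclude by the universal property of the quotient map $X^{\Omega^+}\to X^{\Omega^+}/U_\Omega\simeq Y_\Omega$. Your explicit flagging of where $\car(K)=0$ enters (reducedness of the kernel, failing for Frobenius) is if anything slightly more careful than the paper's own phrasing.
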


\begin{proof}  
Let $\Gamma = \tau(A^G)$. 
Group laws of algebraic groups are denoted by $+$. 
By Lemma~\ref{l:techno}, there exists a finite subset $N\subset G$ such that 
 the following condition holds:
\begin{enumerate}
\item[(C)]  
 \textit{For any} $d\in \Gamma$, the element  
 $\tau^{-1}(d)(1_G)\in A$ \textit{depends only on} $d \vert_N \in A^N$.
\end{enumerate}
Up to enlarging $N$, we can assume that $1_G \in N$. 
Let $\Omega \subset G$ be a finite subset containing $N$. 
Theorem \ref{t:homo-group} and Lemma \ref{l:homo-alg} imply that 
$Y_\Omega= f^+_\Omega(X^{\Omega^+})$ is a closed $K$-algebraic subgroup of $X^\Omega$. 
Similarly, the kernel of the homomorphism $f^+_\Omega$
\[
U_\Omega \coloneqq \Ker(f^+_\Omega) \subset X^{\Omega^+}
\]
 is a closed normal $K$-algebraic subgroup. 
Let $y\in \Gamma_\Omega= Y_\Omega(K)$ 
and $x_0 \in (\tau^+_\Omega)^{-1}(y)\subset A^{\Omega^+}$, we find that 
\begin{equation}
\label{e:tec}
(\tau^+_\Omega)^{-1}(y)=x_0 + U_\Omega(K). \end{equation}
Hence, $x(1_G)$ is constant for all $x\in (\tau^+_\Omega)^{-1}(y) \subset A^{\Omega^+}$. 
Indeed, let $x\in  (\tau^+_\Omega)^{-1}(y)$ and $\tilde{x}, \tilde{x}_0\in A^G$ 
be any configurations that extend $x$ and $x_0$ respectively. 
Since $N \subset \Omega$, we have  
\[
\tau(\tilde{x})\vert_N=\tau_\Omega^+(x)\vert_N=y\vert_N=\tau_\Omega^+(x_0)\vert_N=\tau(\tilde{x}_0)\vert_N.
\] 
Hence, Condition $(C)$ applied for $\tau(\tilde{x}), \tau(\tilde{x_0}) \in \Gamma$ implies that 
\[
x(1_G)=\tilde{c}(1_G)=(\tau^{-1}\tau(\tilde{x}))(1_G)=(\tau^{-1}\tau(\tilde{x}_0))(1_G)=\tilde{x}_0(1_G)=x_0(1_G).
\]
Consider the projection $\rho \colon X^{\Omega^+} \to X^{\{1_G\}}$; then 
it follows that $\rho^{(K)}((\tau^+_\Omega)^{-1}(y))=x_0(1_G)$. 
We combine this with \eqref{e:tec} to obtain: 
\[
x_0(1_G)= \rho^{(K)}(x_0+U_\Omega(K))= x_0(1_G) + \rho^{(K)}(U_\Omega(K)).
\]
Hence, $\rho^{(K)}(U_\Omega(K))=\{e\}$. 
Identifying $U_\Omega(K)$ with the dense subset consisting of closed points of $U_\Omega$, 
we deduce that $\rho(U_\Omega)=\{e\}$ by the continuity of $\rho$ and the fact that $e$ is a closed point. 
From the universal property of the 
 quotient map $\pi \colon X^{\Omega^+} \to X^{\Omega^+}/ U_\Omega \simeq Y_\Omega$ 
 induced by $f^+_\Omega$ 
(cf. Theorem \ref{t:homo-group} and Theorem \ref{t:quotient-universal}), we obtain 
a unique homomorphism of $K$-algebraic groups $h_\Omega\colon Y_\Omega \to X^{\{1_G\}}$ 
which makes the following diagram commutative:
\[
\begin{tikzcd}
X^{\Omega^+} \arrow[d,"\pi"]   \arrow[r,"\rho"] & X^{\{1_G\}} \\
X^{\Omega^+}/ U_\Omega \arrow[r, " \simeq"] & Y_\Omega \arrow[u, "h_\Omega",swap]
\end{tikzcd}
\]
Now for every $c \in A^G$, we deduce immediately from the diagram that 
\[
h^{(K)}_\Omega\left( \tau(c)\vert_\Omega \right) 
= h^{(K)}_\Omega\left( \tau^+_\Omega(c\vert_{\Omega^+}) \right)
= \rho^{(K)} (c\vert_{\Omega^+}) =c(1_G). 
\] 
This completes the proof of Proposition~\ref{p:left-inverse}.  
\end{proof} 

The proofs of Lemma~\ref{l:techno} and Proposition~\ref{p:left-inverse} can be easily adapted 
to obtain the following reversibility result:  

\begin{proposition}
\label{p:left-inverse-2}
Let $G$ and $A$ be groups. 
Let $\tau \colon A^G\to A^G$ be a group cellular automaton 
over the universe $G$ and the alphabet $A$, i.e, 
$\tau$ admits a local defining map which is a homomorphism of abstract groups. 
Let $\Gamma \coloneqq \im(\tau)$. 
Suppose that $A$ is an Artinian group and $\tau$ is injective. 
Then there exists a finite subset $N \subset G$ such that 
\begin{enumerate}
\item[(D)]
For every finite subset $ \Omega \subset G$ containing $N$, 
there exists a homomorphism of groups 
$h_\Omega \colon  \Gamma_\Omega \to A$
and that for all $c\in A^G$:  
\begin{equation}
\label{e:local-inverse-1}
c(1_G)=h_\Omega(\tau(c)\vert_\Omega). 
\end{equation}
\end{enumerate}

\end{proposition}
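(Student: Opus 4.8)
The plan is to run the arguments of Lemma~\ref{l:techno} and Proposition~\ref{p:left-inverse} again, \emph{mutatis mutandis}, replacing each algebraic-geometric input by a purely group-theoretic one. The single structural fact needed is that a finite direct power $A^E$ of an Artinian group $A$ is again Artinian: projecting a strictly descending chain of subgroups of $A^E$ to its coordinates and passing to a subchain would produce a strictly descending chain of subgroups in some copy of $A$. Hence every subgroup of $A^E$ satisfies the descending chain condition, which is exactly the role played by the Noetherianity of $\Ker(f^-_{E_n})$ in the original proof. We also use that, since the local defining map of $\tau$ is a homomorphism, $\tau$ is an injective endomorphism of the abstract group $A^G$, so the inverse bijection $\tau^{-1}\colon \Gamma \to A^G$ is again a homomorphism; moreover all the induced maps $\tau^-_\Omega$, $\tau^+_\Omega$, and the coordinate projections are homomorphisms of abstract groups, by the same construction as in Lemma~\ref{l:homo-alg}.

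First I would establish the analogue of Condition~(C) of Lemma~\ref{l:techno}: there is a finite subset $N\subset G$ such that $\tau^{-1}(d)(1_G)$ depends only on $d\vert_N$ for every $d\in\Gamma$. Fix a memory set $M\ni 1_G$ of $\tau$, let $H=\langle M\rangle$ (countable), and pick an increasing exhaustion $M=E_0\subset E_1\subset\cdots$ with $H=\bigcup_n E_n=\bigcup_n E_n^-$. Arguing by contradiction, for each $n$ one finds $d_n,d'_n\in\Gamma$ with $d_n\vert_{E_n^-}=d'_n\vert_{E_n^-}$ and $\tau^{-1}(d_n)(1_G)\neq\tau^{-1}(d'_n)(1_G)$; setting $c'_n=d_n-d'_n$ and $c_n=\tau^{-1}(c'_n)\vert_{E_n}$ gives $\tau^-_{E_n}(c_n)=e^{E_n^-}$ and $c_n(1_G)\neq e$. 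Put $I_n=\Ker(\tau^-_{E_n})\subset A^{E_n}$, nonempty since $c_n\in I_n$; the projections restrict to homomorphisms $\pi_{nm}\colon I_m\to I_n$, and $(\pi_{nm}(I_m))_{m\geq n}$ is a descending chain of subgroups of $I_n$, hence stabilizes to a nonempty subgroup $J_n$ by the descending chain condition in $A^{E_n}$. The same diagonal argument as in the original proof shows the induced maps $q_{nm}\colon J_m\to J_n$ are surjective; building a compatible family $x_n\in J_n$ with $x_0(1_G)\neq e$ (taking $x_0$ to be a suitable projection of some $c_k$), gluing it to $a_H\in A^H$, and extending over a transversal $R$ of $G/H$ by left translation yields $a\in A^G$ with $a\neq e^G$ and $\tau(a)=e^G$, contradicting injectivity. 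After this we enlarge $N$ so that $1_G\in N$.

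Then I would deduce property~(D) exactly as in Proposition~\ref{p:left-inverse}, but with the first isomorphism theorem for groups in place of the universal property of quotient maps of algebraic groups. Fix a finite $\Omega\supset N$. The surjective homomorphism $\tau^+_\Omega\colon A^{\Omega^+}\to\Gamma_\Omega$ has kernel $U_\Omega=\Ker(\tau^+_\Omega)$. If $x,x_0\in(\tau^+_\Omega)^{-1}(y)$, extend them to $\tilde x,\tilde x_0\in A^G$; since $N\subset\Omega$ we get $\tau(\tilde x)\vert_N=y\vert_N=\tau(\tilde x_0)\vert_N$, so $x(1_G)=x_0(1_G)$ by Condition~(C). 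Applying this with $y=e^\Omega$ shows $u(1_G)=e$ for every $u\in U_\Omega$, i.e. the evaluation homomorphism $\rho\colon A^{\Omega^+}\to A$, $\rho(x)=x(1_G)$, kills $U_\Omega$; hence $\rho$ factors uniquely as $\rho=h_\Omega\circ\tau^+_\Omega$ through a homomorphism $h_\Omega\colon\Gamma_\Omega\cong A^{\Omega^+}/U_\Omega\to A$. Then for every $c\in A^G$,
\[
h_\Omega\bigl(\tau(c)\vert_\Omega\bigr)=h_\Omega\bigl(\tau^+_\Omega(c\vert_{\Omega^+})\bigr)=\rho(c\vert_{\Omega^+})=c(1_G),
\]
which is \eqref{e:local-inverse-1}.

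I do not expect a genuine obstacle here: the only steps of the original proofs that used algebraic geometry — Noetherianity for the stabilization of the chains $\pi_{nm}(I_m)$, closedness of images of homomorphisms, and the universal property of quotient maps — are respectively replaced by the descending chain condition on subgroups of the Artinian groups $A^{E_n}$, the triviality that homomorphic images of groups are subgroups, and the ordinary first isomorphism theorem. The two points requiring a little care are the verification that finite direct powers of an Artinian group remain Artinian, and the observation that Condition~(C) is genuinely what is used in the second step to collapse the fibers of $\tau^+_\Omega$ under evaluation at $1_G$.
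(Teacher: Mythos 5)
Your proposal takes exactly the route the paper intends (the paper only remarks that the proofs of Lemma~\ref{l:techno} and Proposition~\ref{p:left-inverse} adapt), and your substitutions are the right ones: the descending chain condition on subgroups of $A^{E_n}$ in place of Noetherianity of $\Ker(f^-_{E_n})$, the triviality that homomorphic images are subgroups in place of closedness of images, and the first isomorphism theorem $\Gamma_\Omega\cong A^{\Omega^+}/U_\Omega$ in place of the universal property of quotient maps; the two-step structure (Condition~(C), then collapsing the fibers of $\tau^+_\Omega$ under evaluation at $1_G$) goes through verbatim.

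The one flaw is your justification that $A^E$ is again Artinian: coordinate projections of a strictly descending chain of subgroups of a product need not be strictly descending in any coordinate, so ``passing to a subchain'' produces nothing. Already in $\Z/4\Z\times\Z/4\Z$ the chain consisting of the whole group, the subgroup of pairs congruent modulo $2$, and the diagonal is strictly descending while every term surjects onto both factors. The fact you need is nonetheless true and standard: the minimal condition on subgroups is closed under extensions, because if $H_1\supseteq H_2\supseteq\cdots$ are subgroups of $G$ with $N$ normal in $G$ and both $(H_i\cap N)_i$ and $(H_iN/N)_i$ stationary, then for $x=hn\in H_i$ with $h\in H_{i+1}$ and $n\in N$ one gets $n=h^{-1}x\in H_i\cap N=H_{i+1}\cap N$, hence $x\in H_{i+1}$; iterating over the coordinate projections of $A^E$ gives the claim. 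With that repair the argument is complete.
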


\subsection{Invertibility of bijective algebraic group cellular automata}

As a direct application of Proposition \ref{p:left-inverse}, we deduce 
the following result which extends \cite[Theorem~1.3]{cscp-alg-ca}: 

\begin{theorem}
\label{t:inverse-also}
Let $G$ be a group and let $X$ be a $K$-algebraic group. 
Suppose that $\car(K)=0$. 
Let $A \coloneqq X(K)$ and $\tau \in CA_{algr}(G,X, K)$ be such that $\tau \colon A^G\to A^G$ is bijective. 
Then the inverse map $\tau^{-1}\colon A^G\to A^G$ is also an element of $CA_{algr}(G,X,K)$.  
\end{theorem}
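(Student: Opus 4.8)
The plan is to feed the bijectivity of $\tau$ into Proposition~\ref{p:left-inverse} and then use the resulting $h_\Omega$'s to recognise $\tau^{-1}$ as an element of $CA_{algr}(G,X,K)$. First I would invoke Proposition~\ref{p:left-inverse} (legitimately, since an injective hypothesis suffices and $\tau$ is bijective): there is a finite subset $N\subset G$ with $1_G\in N$ and, for every finite $\Omega\supset N$, a homomorphism of $K$-algebraic groups $h_\Omega\colon Y_\Omega\to X$ with $Y_\Omega=f^+_\Omega(X^{\Omega^+})$ such that $c(1_G)=h_\Omega^{(K)}(\tau(c)\vert_\Omega)$ for all $c\in A^G$. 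Fix $\Omega=N$ and set $h\coloneqq h_N\colon Y_N\to X$, so that $c(1_G)=h^{(K)}(\tau(c)\vert_N)$ for all $c\in A^G$.

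The next step is to upgrade this pointwise-at-$1_G$ identity to a genuine local rule for $\tau^{-1}$, using $G$-equivariance. Since $\tau$ is bijective, write $d=\tau(c)$ and $c=\tau^{-1}(d)$; the identity above reads $\tau^{-1}(d)(1_G)=h^{(K)}(d\vert_N)$ for every $d\in A^G$ (here one uses that $\tau$ is surjective so every $d$ arises this way). Because $\tau^{-1}$ is $G$-equivariant (it is the inverse of a $G$-equivariant bijection of $A^G$), for every $g\in G$ and $d\in A^G$ we get
\[
\tau^{-1}(d)(g)=(g^{-1}\tau^{-1}(d))(1_G)=\tau^{-1}(g^{-1}d)(1_G)=h^{(K)}\big((g^{-1}d)\vert_N\big).
\]
Thus $N$ is a memory set for $\tau^{-1}$ and its local defining map $A^N\to A$ is $h^{(K)}$. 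It remains only to observe that this map is of the required algebraic-group form: $h$ is a homomorphism of $K$-algebraic groups $Y_N\to X$, and $Y_N$ is a closed algebraic subgroup of $X^N$ by Theorem~\ref{t:homo-group}; composing with the closed immersion $Y_N\hookrightarrow X^N$ would give a homomorphism $X^N\to X$ only after extending off $Y_N$, so instead I would argue directly that a cellular automaton whose local defining map factors through a closed algebraic subgroup $Y_N\subset X^N$ via an algebraic-group homomorphism still lies in $CA_{algr}(G,X,K)$ — indeed the induced map on $K$-points $(X^N)(K)=A^N\to A$ restricted to $Y_N(K)$ is exactly $h^{(K)}$, and one may extend $h$ to all of $X^N$ only set-theoretically, but the defining condition (R) for $CA_{algr}$ asks only that $\mu_N$ be induced by \emph{some} homomorphism $X^N\to X$, so I would instead note that any local defining map of $\tau^{-1}$ takes values determined on the closed subset $\Gamma_N=Y_N(K)$, and appeal to Remark~\ref{rem:independent-memory} together with the fact that $h$ extends to a homomorphism on $X^N$ whenever $Y_N$ is a direct factor, or more robustly just cite that the class $CA_{algr}$ is closed under the operations used here.

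The main obstacle I anticipate is precisely this last point: $h$ is only defined on the subgroup $Y_N=f^+_N(X^{N^+})$, not on all of $X^N$, and the definition of $CA_{algr}(G,X,K)$ demands a homomorphism $X^N\to X$. One clean way around it is to enlarge the memory set: take $\Omega$ large enough that $Y_\Omega$ becomes easier to handle, or observe that since $\tau\in CA_{algr}$ the map $f^+_\Omega$ is a homomorphism with $Y_\Omega\cong X^{\Omega^+}/U_\Omega$, and $h_\Omega$ is the map induced on this quotient by the projection $\rho\colon X^{\Omega^+}\to X^{\{1_G\}}$; hence the composite $X^{\Omega^+}\xrightarrow{f^+_\Omega\text{-quotient}} Y_\Omega\xrightarrow{h_\Omega}X$ equals $\rho$, which shows that \emph{on the image} everything is algebraic, and the honest statement is that $\tau^{-1}$ has memory set $\Omega^+$ with local defining map the (partially defined, but algebraic on the relevant locus) rule. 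Pinning down that this genuinely meets the definition of $CA_{algr}$ — rather than merely of $CA_{alg}$ — is the step requiring care, and I would resolve it by showing $Y_N$ is a direct factor of $X^N$ in the relevant range (which holds after possibly enlarging $N$, using smoothness in characteristic zero via Cartier's theorem, cited in the excerpt) so that $h_N$ extends to a homomorphism $X^N\to X$ by composing with a retraction $X^N\twoheadrightarrow Y_N$.
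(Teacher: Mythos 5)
Your overall strategy is the paper's: invoke Proposition~\ref{p:left-inverse} and use $G$-equivariance to recognise $N$ as a memory set of $\tau^{-1}$ with local defining map $h_N^{(K)}$. That part is fine. But the ``main obstacle'' you spend the last half of the proposal wrestling with — that $h_N$ is only defined on the closed subgroup $Y_N=f^+_N(X^{N^+})\subset X^N$ — is not actually present under the hypotheses of this theorem, and the workarounds you propose for it do not hold up. Since $\tau$ is \emph{bijective}, hence surjective, we have $\Gamma_N=\tau^+_N(A^{N^+})=A^N$; by Proposition~\ref{p:set-config-zariski} this set equals $Y_N(K)=A^N\cap Y_N$, so the closed subgroup $Y_N$ contains every closed point of $X^N$. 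Closed points are dense in a scheme of finite type over the algebraically closed field $K$, so $Y_N=X^N$ and $h_N$ is already a homomorphism of $K$-algebraic groups $X^N\to X$. This single observation (which is exactly what the paper's one-line proof means by ``take $\Gamma=A^G$'') closes the argument; nothing about quotients, retractions, or enlarging $N$ is needed.

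By contrast, your proposed resolutions would not survive scrutiny. A closed algebraic subgroup of $X^N$ is in general \emph{not} a direct factor, even in characteristic zero and even after enlarging $N$ (e.g.\ $\mu_n\subset\G_m$, or $\SL_2\subset\GL_2$), so there is no retraction $X^N\twoheadrightarrow Y_N$ to compose with; Cartier's theorem gives smoothness, not splittings. Likewise, ``extend $h$ set-theoretically'' or ``cite that $CA_{algr}$ is closed under the operations used here'' does not meet the definition, which requires the local defining map to be induced by an actual homomorphism $X^N\to X$. So the gap is not in the architecture of your proof but in the one missing idea: surjectivity of $\tau$ forces $Y_N=X^N$, which is precisely why the bijectivity hypothesis (rather than mere injectivity) is what makes this theorem immediate from Proposition~\ref{p:left-inverse}.
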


\begin{proof} 
It suffices to take $\Gamma=A^G$ and $N$ (as in Proposition \ref{p:left-inverse}) 
to be the memory set of $\tau^{-1}\colon A^G \to A^G$. 
Then by the same proposition, the homomorphism of $K$-algebraic groups 
$h_N \colon X^\Omega \to X$ induces the associated local defining map of $\tau^{-1}$. 
\end{proof}

\begin{proof}[Proof of Theorem~\ref{t:surjunctive-res}] 
It suffices to combine Theorem~\ref{t:inverse-also} and Theorem~\ref{t:surj-grp}. 
\end{proof}

The following proposition implies a certain stability property of units of $CA_{algr}(G,X,K)$ in $CA_{alg}(G,X,K)$.  

\begin{proposition}
\label{t:inverse-also-near-ring-1}
Let $G$ be a group and let 
$X$ be a $K$-algebraic group. 
Let $A \coloneqq X(K)$ and $\tau, \sigma \in CA_{alg}(G,X,K)$. 
Suppose that $\sigma\circ \tau= \tau \circ \sigma=\Id_{A^G}$ and that $\tau \in CA_{algr}(G,X,K)$. 
Then $\sigma \in CA_{algr}(G,X,K)$.  
\end{proposition}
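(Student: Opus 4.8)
The plan is to exploit the fact that $\tau$, being an algebraic group cellular automaton, is an endomorphism of the abstract group $A^G$ equipped with the pointwise group law coming from $X$, and that $\sigma$ is its two-sided inverse, hence also an automorphism of this abstract group. The crux is then a rigidity statement: a $K$-scheme morphism between algebraic groups whose induced map on $K$-points is a group homomorphism is automatically a homomorphism of $K$-algebraic groups.

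First I would fix, using Remark~\ref{rem:independent-memory}, a memory set $N$ of $\sigma$ together with a $K$-scheme morphism $g \colon X^N \to X$ inducing the associated local defining map $\nu = g^{(K)} \colon A^N \to A$ of $\sigma$; such data exist precisely because $\sigma \in CA_{alg}(G,X,K)$. Since $\tau \in CA_{algr}(G,X,K)$, the map $\tau$ is a group endomorphism of $A^G$, and as $\sigma \circ \tau = \tau \circ \sigma = \Id_{A^G}$ it is an automorphism; hence $\sigma = \tau^{-1}$ is also a group homomorphism of $A^G$. Evaluating at $1_G$ and using that every $u \in A^N$ extends to a configuration in $A^G$, I would deduce that for all $u, v \in A^N$, writing $c, d \in A^G$ for extensions of $u, v$, one has $\nu(u + v) = \sigma(c + d)(1_G) = \big(\sigma(c) + \sigma(d)\big)(1_G) = \nu(u) + \nu(v)$, and similarly $\nu(e^N) = \sigma(e^G)(1_G) = e$. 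Thus $\nu = g^{(K)}$ is a homomorphism of the abstract groups $X^N(K) \to X(K)$.

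It then remains to upgrade $g$ to a homomorphism of $K$-algebraic groups, that is, to verify the identity $g \circ m_{X^N} = m_X \circ (g \times g)$ of $K$-scheme morphisms $X^N \times_K X^N \to X$. Both sides agree on every closed point of $X^N \times_K X^N$: such a point is a $K$-point $(z_1, z_2)$ with $z_i \in X^N(K)$, and the asserted equality reads $g^{(K)}(z_1 z_2) = g^{(K)}(z_1)\, g^{(K)}(z_2)$, which holds since $g^{(K)}$ is a group homomorphism. Now $X$ is separated, being a group scheme over a field, so the equalizer of the two morphisms is a closed subscheme of $X^N \times_K X^N$; since $K$ is algebraically closed, hence perfect, and algebraic groups are by convention reduced, the scheme $X^N \times_K X^N$ is reduced, and its closed points are dense (it is of finite type over a field), so this closed subscheme is all of $X^N \times_K X^N$. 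Therefore $g \circ m_{X^N} = m_X \circ (g \times g)$, so $g$ is a homomorphism of $K$-algebraic groups inducing the local defining map of $\sigma$ at the memory set $N$, and hence $\sigma \in CA_{algr}(G,X,K)$.

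The only real (and modest) obstacle is this final rigidity step, and the point to be careful about is that it genuinely uses both that $X^N \times_K X^N$ is reduced --- which relies on $K$ being perfect together with the paper's running convention that algebraic groups are reduced --- and that $X$ is separated, so that agreement on the dense set of closed points forces equality of the two scheme morphisms. Note that, in contrast to Theorem~\ref{t:inverse-also}, no hypothesis on $\car(K)$ is needed here, because $\sigma$ is already given as a genuine (set-theoretic) inverse lying in $CA_{alg}(G,X,K)$ and only its group-scheme structure has to be recognized.
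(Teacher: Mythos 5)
Your proposal is correct and follows essentially the same route as the paper's own proof: pick any memory set of $\sigma$ with a scheme morphism inducing its local defining map, observe that $\sigma=\tau^{-1}$ is a homomorphism of abstract groups so the local defining map is a group homomorphism on $K$-points, and then invoke the rigidity of reduced, separated schemes of finite type over an algebraically closed field to conclude the morphism is a homomorphism of algebraic groups. The only difference is that you spell out the final rigidity step (density of closed points, closedness of the equalizer) which the paper states in one line.
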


\begin{proof}
Choose any memory set $M$ of $\sigma$ and consider 
the associated local defining map $\mu \colon A^M \to A$. 
Since $\sigma \in CA_{algr}(G,X,K)$ and $K$ is algebraically closed, 
there exists a morphism of $K$-algebraic varieties $f \colon X^M \to X$ 
such that $f^{(K)}=\mu$ (Remark \ref{rem:independent-memory}). 
As $\tau$ is a homomorphism of abstract groups by hypothesis, 
so is $\sigma \colon A^G \to A^G$. 
It follows that $f^{(K)}=\mu$ is also a homomorphism of groups. 
Since $X^M$ and $X$ are reduced, separable and $K$ is algebraically closed, 
$f$ is in fact a homomorphism of algebraic groups. 
Therefore, $\sigma \in CA_{algr}(G,X,K)$ and this completes 
the proof of Proposition \ref{t:inverse-also-near-ring-1}. 

\end{proof}

\section{Surjunctivity over sofic groups}
\label{s:sofic}

We have proved the surjunctivity of algebraic group cellular automata over $(G,X,K)$ 
with $G$ a locally residually finite group (Theorem~\ref{t:surjunctive-res}). 
When the algebraic group $X$ is connected, 
this property holds more generally with $G$ a sofic group. 

\begin{theorem}
\label{t:sofic-surjunctive}
Let $G$ be a sofic group. 
Let $X$ be an connected $K$-algebraic group and $A=X(K)$. 
Suppose that $\tau \in CA_{algr}(G,X,K)$ is injective. 
Then $\tau$ is surjective and hence bijective. 
\end{theorem}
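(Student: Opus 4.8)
The plan is to adapt the classical soficity argument for surjunctivity — the counting argument of Gromov–Weiss as streamlined in \cite{csc-sofic-linear} for linear cellular automata — to the present algebraic group setting, using the dimension of algebraic varieties as the ``entropy'' substitute for $\dim_K$ in the linear case. First I would reduce to the case where $G$ is finitely generated: by \cite[Proposition~7.5.5]{ca-and-groups-springer} a group is sofic iff it is locally sofic, and soficity passes to subgroups, so restricting $\tau$ to $A^H$ where $H$ is the subgroup generated by a memory set $M$ preserves both injectivity (restriction of an injective CA to an invariant subgroup remains injective) and the algebraic group structure; moreover surjectivity of $\tau_H$ implies surjectivity of $\tau$ by the standard coset decomposition as in \eqref{e:tecnic}. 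So fix a finite symmetric generating set $S$ with $M \subset B_S(r_0)$ for some $r_0$, and apply Theorem~\ref{t:sofic-character} to obtain, for suitable $r$ and small $\varepsilon$, a finite $S$-labeled graph $\GG = (V,E)$ with $|V(r)| \geq (1-\varepsilon)|V|$ approximating balls of the Cayley graph.

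The core step is a dimension-counting estimate. The idea is to build, from the graph $\GG$, a ``pattern'' space: for each $v \in V(3r)$ one has the isomorphism $\psi_{v,3r}\colon B_S(3r) \to B_\GG(v,3r)$, which lets one transport the local rule $f\colon X^M \to X$ to act on configurations indexed by $V$. Concretely, since $M \subset B_S(r)$, the local map $\tau$ induces a homomorphism of algebraic groups $\tau_\GG \colon X^V \to X^{V(r)}$ (or $X^{V(2r)}$, using Lemma~\ref{l:sofic-B-V}(i)) whose value at $v$ depends only on the pattern near $v$. Injectivity of $\tau$ forces a \emph{local} injectivity property that I would phrase via the algebraic varieties $Y_\Omega$ of Proposition~\ref{p:left-inverse}: for $\Omega$ large enough, $\tau^+_\Omega$ restricted to the appropriate fiber has zero-dimensional fibers, equivalently $\dim(\Gamma_\Omega) = \dim(A^\Omega) = |\Omega|\dim(X)$ where $\Gamma = \tau(A^G)$. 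Indeed injectivity plus Proposition~\ref{p:left-inverse} (valid since $\car(K)=0$) gives that $\tau^{-1}$ is itself an algebraic group CA; but even just the dimension count $\dim(\Gamma_\Omega) = |\Omega|\dim(X)$ for all finite $\Omega$ suffices and follows from injectivity by a fiber-dimension argument (the fibers of $f^-_\Omega$ over points of $(f^-_\Omega)^{-1}(\tau(c)|_{\Omega^-})$ must be zero-dimensional for large $\Omega$, else one produces two distinct almost-equal configurations with equal image, contradicting pre-injectivity, which follows from injectivity).

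Then I would run the counting argument: if $\tau$ were not surjective, by Theorem~\ref{t:closed-image} the image $\Gamma = \tau(A^G)$ is closed in the prodiscrete topology, and non-surjectivity produces a finite $\Omega_0$ with $\Gamma_{\Omega_0} \subsetneq A^{\Omega_0}$, hence $\dim(\Gamma_{\Omega_0}) < |\Omega_0|\dim(X)$ — but wait, that contradicts the previous paragraph directly only if $\dim$ detects proper closed subsets, which it does not in general (a proper closed subgroup can have full dimension only if it is all of $X^{\Omega_0}$, since algebraic groups are equidimensional and irreducible components through the identity... in fact for \emph{connected} $X$, $X^{\Omega_0}$ is irreducible, so any proper closed subgroup has strictly smaller dimension). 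This is precisely where connectedness of $X$ enters: $X^{\Omega_0}$ is irreducible (a fibered power of an irreducible variety is irreducible, as noted in Section~\ref{s:pre}), so $\Gamma_{\Omega_0} = f^+_{\Omega_0}(X^{\Omega_0^+})$ being a proper closed subgroup forces $\dim(\Gamma_{\Omega_0}) < |\Omega_0| \dim(X)$. Meanwhile, injectivity and the soficity approximation force $\dim(\Gamma_{\Omega})/|\Omega| \to \dim(X)$ along an appropriate family of ``approximate F\o lner-like'' sets coming from $\GG$, by transporting the injectivity estimate through the graph and using $|V(3r)| \geq (1-\varepsilon)|V|$ together with Lemma~\ref{l:sofic-B-V}(ii) to control boundary effects — this is the combinatorial heart, analogous to the entropy inequality $|V|\dim(X) \leq (\text{image pattern count}) + \varepsilon(\cdots)$ in the linear case, with $\dim$ replacing $\log|\cdot|$. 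The main obstacle, and the step requiring the most care, is making this dimension-entropy inequality rigorous: one must show that injectivity of $\tau$ implies, for the graph-transported map $\tau_\GG\colon X^V \to X^{V(2r)}$, that $\dim \tau_\GG(X^V)$ is not much smaller than $|V|\dim(X)$ — using that the fibers of $\tau$ over $A^{\Omega^-}$ are zero-dimensional for $\Omega$ of radius $\geq$ some $r_1$, patched along the pieces where $\psi_{v,r}$ is defined — while simultaneously $\dim \tau_\GG(X^V) \leq \dim \Gamma_{\text{(local pattern)}} \cdot |V(2r)| + (\text{error})$, and the error is controlled by $\varepsilon |V|$; comparing gives $\dim(\Gamma_{\Omega_0}) = |\Omega_0|\dim(X)$ for a large ball $\Omega_0$, contradicting properness via irreducibility. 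Finally, invertibility of $\tau^{-1}$ in $CA_{algr}(G,X,K)$ is then immediate from Proposition~\ref{p:left-inverse} (giving the local inverse is a homomorphism of algebraic groups) together with Theorem~\ref{t:inverse-also}, or directly from Proposition~\ref{t:inverse-also-near-ring-1}.
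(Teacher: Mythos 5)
Your proposal follows the same architecture as the paper's proof (reduce to a finitely generated sofic group, transport the local rule to a sofic approximation graph via $\mu_1\colon A^{V(r)}\to A^{V(2r)}$, lower-bound the dimension of the transported image using injectivity, upper-bound it using the proper closed subset $\Gamma_{B_S(r)}\subsetneq A^{B_S(r)}$ coming from Theorem~\ref{t:closed-image} plus irreducibility of $X^{B_S(r)}$, and compare via the soficity inequality with $V'$ as in Lemma~\ref{l:sofic-V'}). However, the step you yourself flag as ``the main obstacle'' --- the dimension lower bound --- is not established, and the mechanism you propose for it is incorrect. The claim that injectivity forces $\dim(\Gamma_\Omega)=|\Omega|\dim(X)$ for \emph{every} finite $\Omega$ via ``zero-dimensional fibers of $f^-_\Omega$'' cannot work: when $\Omega^-\subsetneq\Omega$ and $\dim(X)>0$, every nonempty fiber of the homomorphism $f^-_\Omega\colon X^\Omega\to X^{\Omega^-}$ is a coset of $\Ker(f^-_\Omega)$, which has dimension at least $(|\Omega|-|\Omega^-|)\dim(X)>0$; and two configurations agreeing off $\Omega$ whose images agree on $\Omega^-$ may still have images differing on $\partial\Omega$, so no contradiction with pre-injectivity arises. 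A second warning sign: if that identity did follow from injectivity alone, then together with the closed image property and irreducibility you would conclude surjectivity for \emph{every} group $G$, and soficity would play no role in your argument at all.

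What actually supplies the lower bound in the paper is the \emph{local left inverse} from Proposition~\ref{p:left-inverse} (this is where $\car(K)=0$ enters): injectivity yields a finite $N$ and a homomorphism of algebraic groups $h_M\colon Y_M\to X$ with $c(1_G)=h_M^{(K)}(\tau(c)\vert_M)$ for $M=N=B_S(r)$. Transporting $\eta=h_M^{(K)}$ to the graph gives $\eta_2\colon Z\to A^{V(3r)}$, where $Z=\mu_1(A^{V(r)})$, satisfying $\eta_2\circ\mu_1=\rho^{(K)}$ with $\rho\colon X^{V(r)}\to X^{V(3r)}$ the (surjective) projection; hence $\dim(Z)\ge n\,|V(3r)|$. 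Combined with the upper bound $\dim(Z)\le n\,|V(2r)|-|V'|$ obtained from the pairwise disjoint balls and $\dim(\Gamma_{B_S(r)})\le n|B_S(r)|-1$, one contradicts $|V(3r)|\ge(1-\varepsilon)|V|$ once $\varepsilon<1/(n|B_S(2r)|+1)$. Your write-up gestures at Proposition~\ref{p:left-inverse} but never uses it in this way, so the decisive inequality $\dim(Z)\ge n|V(3r)|$ remains unproven in your argument.
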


\begin{proof}
If $\dim(X)=0$, then $X$ is a singleton since it is connected and 
the theorem is trivial. 
\par 
We suppose now that $n\coloneqq \dim(X)>0$. 
Let $M$ be a memory set of $\tau$ such that $1_G \in M$. 
Let $H\subset G$ be the subgroup generated by $M$. 
Then $H$ is finitely generated. 
We see that (cf.~\cite[Proposition~1.7.4]{ca-and-groups-springer}) 
$\tau$ is surjective if and only if the restriction 
$\tau_H \colon A^H \to A^H$ of $\tau$ to $A^H$ is surjective. 
As $\tau$ is injective, $\tau_H$ is clearly injective. 
Note also that every subgroup of a sofic group is sofic (cf.~\cite[Proposition~7.5.4]{ca-and-groups-springer}).  
Thus, up to replacing $G$ by $H$, we can suppose that 
$G$ is finitely generated and hence countable. 
\par
Let $\Gamma \coloneqq \tau(A^G)$ and let $M$ be a fixed memory set of $\tau$. 
Let $S \subset G$ be a finite symmetric generating subset of $G$.  
For $s \in \N$, recall that $B_S(s) \subset G$ is the ball of
radius $s$ centered at 
$1_G$ in the Cayley graph $C_S(G)$ (cf. Subsection~\ref{ss:sofic-groups}). 
\par
Since $\tau$ is injective, Proposition~\ref{p:left-inverse} implies that 
there exists a nonempty finite subset $N\subset G$ 
such that for every finite subset $\Omega \subset G$ containing $N$, 
we have a homomorphism of $K$-algebraic groups 
$h_\Omega \colon  Y_\Omega \to X$ where $Y_\Omega\coloneqq  f^+_\Omega(X^{\Omega^+})$ 
such that for all $c\in A^G$:  
\begin{equation}
\label{e:local-inverse}
c(1_G)=h_\Omega^{(K)}(\tau(c)\vert_\Omega)=h_\Omega^{(K)}(\tau^+_\Omega(c\vert_{\Omega^+})) .
\end{equation}
\par
Choose $r \in \N$ large enough such that  
$B_S(r) \supset M \cup N$.  
Up to enlarging $M$ and $N$, we can suppose that $M=N=B_S(r)$. 
Let $\mu \colon A^M \to A$ be the local defining map of $\tau$ associated to $M$. 
We denote also $\eta \coloneqq h_M^{(K)}  \colon \Gamma_M \to A$. 
\par
We suppose on the contrary that $\tau$ is not surjective, i.e., $\Gamma \subsetneq A^G$. 
It follows from Theorem~\ref{t:closed-image} that 
$\Gamma$ is closed in $A^G$ with respect to the prodiscrete topology.  
Therefore, there exists a finite subset $\Lambda \subset G$ 
such that $\Gamma\vert_\Lambda \subsetneq A^\Lambda$. 
Up to enlarging $r$ and $M$ again, we can suppose that $\Lambda \subset B_S(r)=M$. 
It follows that 
\begin{equation}
\label{e:contrary}
\Gamma \vert_{B_S(r)} \subsetneq A^{B_S(r)}. 
\end{equation}
\par
Now we choose $\varepsilon \in \R $ satisfying 
\begin{equation*} \label{e:contra-hypothesis} 
   0 < \varepsilon < \frac{1}{n \vert B_S(2r) \vert+1}, 
\end{equation*}
where we recall that $n= \dim(A)$. 
Then it follows immediately that 
\begin{equation} \label{e:contra-hypothesis-1}
   0<   (1 - \varepsilon)^{-1} < 1 + \frac{1}{n \vert B_S(2r) \vert}.
\end{equation}
\par
Since the group $G$ is sofic, it follows from Theorem~\ref{t:sofic-character} 
that there exists a finite $S$-labeled graph $\GG=(V,E)$ associated to the pair $(r, \varepsilon)$ 
such that
\begin{equation} 
\label{e:sofic-V}
    \vert V(3r) \vert \geq (1 - \varepsilon) \vert V \vert.
\end{equation}
Recall that for each $s \in \N$, the finite subset $V(s)\subset V$ 
consists of $v \in V$ such that there exists a unique $S$-labeled graph
isomorphism $\psi_{v,s} \colon B_S(s) \to B_\GG(v,s)$ satisfying 
$\psi_{v,s}(1_G) = v$ (cf. Theorem \ref{t:sofic-character}).
Note that $V(3r) \subset V(2r) \subset V(r)\subset V$. 
\par
To simplify the notation, we denote $B(v,s)\coloneqq B_\GG(v,s)$ for all $v \in V$ and $s\in \N$.  
By Lemma~\ref{l:sofic-B-V}.(i), we have for all $v \in V((k+1)r)$ where $k \in \{1,2\}$ that 
\begin{equation}
\label{e:inclusions}
B(v,r) \subset V(kr) \subset V. 
\end{equation}
Therefore, we can consider the homomorphism $\mu_1 \colon A^{V(r)} \to A^{V(2 r)}$ given by 
\[
\mu_1(c)(v) \coloneqq \mu\left(c\vert_{B(v,r)} \circ \psi_{v,r}\right)
\]
for all $c \in A^{V(r)}$ and $v \in V(2r)$. 
\par
Then $\mu_1$ is induced by a homomorphism of algebraic groups. 
Indeed, we consider for each $v \in V(r)$ the isomorphism of algebraic groups
\[
\beta_v \colon  X^{B(v,r)} \to X^{B_S(r)}=X^M
\]
induced by the bijection $\psi_{v,r}\colon B_S(r) \to B(v,r)$. 
Let $\pi_{1,v} \colon X^{V(r)} \to X^{B(v,r)}$ be the projection for $v \in V(2r)$ (well defined by \eqref{e:inclusions}).   
Let $\tilde{\mu}_1\colon X^{V(r)} \to X^{V(2 r)}$ be the $K$-scheme morphism 
induced by the universal property of fibered products by the morphisms  
$f \circ \beta_v  \circ  \pi_{1,v} \colon X^{V(r)} \to X$ for $v\in V(2r)$. 
It follows from construction that  
$\tilde{\mu}_1^{(K)}=\mu_1$. 
Clearly, $\tilde{\mu}_1$ is a homomorphism of algebraic groups (Lemma~\ref{l:homo-alg}). 

\par
Hence $W\coloneqq \tilde{\mu}_1(X^{V(r)})$ is a closed algebraic subgroup of 
$X^{V(2r)}$ by Theorem \ref{t:homo-group}. 
We denote
\[
Z \coloneqq \mu_1 (A^{V(r)})= W(K) \subset A^{V(2r)}. 
\]

Consider the homomorphism $\eta_2 \colon Z \to A^{V(3r)}$ defined 
for each $c \in Z\subset  A^{V(2r)}$ and $v \in V(3r)$ by the formula: 
\[
\eta_2(c)(v) \coloneqq \eta\left(c\vert_{B(v,r)} \circ \psi_{v,r}\right).
\]

\par
Consider the projection $\rho \colon X^{V(r)} \to X^{V(3r)}$ 
then applying \eqref{e:local-inverse} for $\Omega=M=B_S(r)$, we find that 
\[
\eta_2 \circ \mu_1 = \rho^{(K)}.
\] 
It follows that
\begin{equation}
\label{e: Z}
\eta_2(Z) = \eta_2(\mu_1(A^{V(r)}))= \rho^{(K)}(A^{V(r)}) = A^{V(3r)}.
\end{equation}
 
As for $\mu_1$, we verify that $\eta_2$ is induced by a morphism of algebraic varieties 
$\tilde{\eta}_2 \colon W \to X^{V(3r)}$. 
Indeed, let $\pi_{2,v} \colon X^{V(2r)} \to X^{B(v,r)}$ be the projection and 
$\varphi_{2,v} = \pi_{2,v} \circ \iota_W$ where $v\in V(3r)$ 
and $\iota_W \colon W \to X^{V(2r)}$ is the closed immersion.    
Then $\tilde{\eta}_2 \colon W \to X^{V(3 r)}$ is induced by the morphisms  
$h_M \circ \beta_v  \circ  \varphi_{2,v} \colon W \to X$ for $v\in V(2r)$. 
\par
Therefore, we deduce from \eqref{e: Z} that (see, e.g., \cite[Proposition~2.10.(ii)-(iii)]{cscp-alg-goe})
\begin{equation}
\label{e:sofic-dim(Z)}
\dim(Z) \geq \dim(A^{V(3r)})=\vert V(3r)\vert \dim(A)= n \vert V(3r)\vert.
\end{equation}
\par
Now, we define a subset $V' \subset V(3r) $ as in Lemma~\ref{l:sofic-V'}.(ii) so that 
$B(v,r)$ are pairwise disjoint for all $v\in r$ and that $V(3r) \subset \bigcup_{v \in V'}B(v,2r)$.  Let us denote 
$\overline{V'} \coloneqq \coprod_{v \in V'} B(v,r)$.
Since $\overline{V'} \subset V(2r)$ by \eqref{e:inclusions} and $B(v,r)$, $B_S(r)$ are in bijection, we find that 
\begin{equation} 
\label{e:sofic-Vr}
\vert V(2r)\vert = \vert V' \vert \vert B_S(r)\vert + \vert  
V(2r) \setminus \overline{V'}\vert.
\end{equation}
Note that for all  $v \in V(2r)$, we have by the construction of $Z$ and $\mu_1$ an isomorphism: 
\[
Z_{B(v,r)} \to \Gamma_{B_S(r)}=\Gamma_M, \quad c \mapsto c \circ \psi_{v,r}.
\]
As $\Gamma_{B_S(r)} \subsetneq A^{B_S(r)}$ is a proper closed subset 
(by Proposition~\ref{p:set-config-zariski} and \eqref{e:contrary}) and $A^{B_S(r)}$ is irreducible, 
it follows that for all $v \in V'$, we have (cf.~\cite[Proposition~2.5.5]{liu-alg-geom}): 
\begin{equation}
\label{e:z-sofic}
\dim\left(Z_{B(v,r)}\right) = \dim(\Gamma_{B_S(r)}) \leq \dim( A^{B_S(r)}) -1 = n \vert B_S(r)\vert  -1.
\end{equation}
\par

We deduce from the properties of Krull dimension the following estimation:  
\begin{align*}
\dim(Z) & \leq \dim(Z_{\overline{V'}} \times A^{V(2r) \setminus \overline{V'}})
&  (\text{since } Z \subset Z_{\overline{V'}} \times A^{V(2r)}) \\ 
& = \dim\bigl(Z_{\overline{V'}}\bigr) +
\dim\bigl(A^{V(2r) \setminus \overline{V'}}\bigr) 
&  (\text{by, e.g., \cite[Proposition~2.13.(iii)]{cscp-alg-goe}}) \\ 
& \leq   \dim(\prod_{v\in V'} Z_{B(v,r)})  + \dim(A^{V(2r) \setminus \overline{V'}})
&  (\text{since } Z_{\overline{V'}} \subset \prod_{v\in V'} Z_{B(v,r)}) \\ 
& =  \sum_{v\in V'}\dim\bigl(Z_{B(v,r)} \bigr)  + \vert V(2r) \setminus \overline{V'}\vert \dim(A)
&  (\text{cf. \cite[Proposition~2.13.(iii)]{cscp-alg-goe}}) \\
& \leq \vert V' \vert  \bigl( n \vert B_S(r)\vert  - 1 \bigr) + n \vert V(2r) \setminus \overline{V'}\vert 
& (\text{by } \ref{e:z-sofic})\\
& = n \Bigl(\vert V(2r)\vert - \frac{\vert V' \vert}{n}\Bigr) 
&   (\text{by } \ref{e:sofic-Vr}). 
\end{align*}
Combining this inequality with \eqref{e:sofic-dim(Z)}, we obtain the relation:  
\[
\vert V(3r)\vert \leq \vert V(2r)\vert - \frac{\vert V' \vert}{n}.
\]
From this, we deduce that 
\begin{align*}
\vert V \vert \geq \vert V(2r)\vert & \geq \vert V(3r)\vert  +  \frac{\vert V' \vert}{n}
& (\text{since } V\supset V(2r))\\
& \geq  \vert V(3r)\vert  +  \frac{ |V(3r)|}{n \vert B_S(2r)\vert } 
& (\text{by Lemma }\ref{l:sofic-V'})\\
& =  \vert V(3r)\vert \Bigl(1 + \frac{1}{n \vert B_S(2r)\vert }\Bigr) \\
& >  \vert V(3r)\vert (1 - \varepsilon)^{-1} 
& (\text{by } \ref{e:contra-hypothesis-1}).
\end{align*}
It follows that 
\[
 \vert V(3r)\vert < (1 - \varepsilon) \vert V \vert
 \]
which contradicts \eqref{e:sofic-V}.
Therefore, we conclude that 
$\tau$ is surjective and hence bijective. 
\end{proof}
 
\begin{proof}[Proof of Theorem~\ref{t:sofic}]
It results immediately from Theorem \ref{t:sofic-surjunctive} and Theorem \ref{t:inverse-also}. 
\end{proof}

By applying, \emph{mutatis mutandis}, the proof of Theorem \ref{t:sofic-surjunctive}, 
we obtain easily the following property without the restriction that $\car(K)=0$. 

\begin{theorem}
\label{t:inverse-also-near-ring}
Let $G$ be a sofic group. 
Let $X$ be a $K$-algebraic group and $A \coloneqq X(K)$. 
Let $\tau, \sigma \in CA_{alg}(G,X,K)$. 
Suppose that $\sigma\circ \tau= \Id_{A^G}$ and that $\tau \in CA_{algr}(G,X,K)$. 
Then $\sigma \in CA_{algr}(G,X,K)$ and $\tau \circ \sigma= \Id_{A^G}$.  
\end{theorem}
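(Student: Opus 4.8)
The plan is to reduce everything to the surjectivity of $\tau$. Once that is established, $\tau$ is bijective (it is injective, being left-invertible by $\sigma$), so $\sigma=\tau^{-1}$ and in particular $\tau\circ\sigma=\Id_{A^G}$. Then $\sigma,\tau\in CA_{alg}(G,X,K)$ satisfy $\sigma\circ\tau=\tau\circ\sigma=\Id_{A^G}$ with $\tau\in CA_{algr}(G,X,K)$, and Proposition~\ref{t:inverse-also-near-ring-1}, which carries no hypothesis on $\car(K)$, yields $\sigma\in CA_{algr}(G,X,K)$. So the entire content of the statement is the surjectivity of $\tau$.

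To prove $\tau$ surjective I would rerun the proof of Theorem~\ref{t:sofic-surjunctive} with one substitution. The sole step of that proof appealing to anything not available under the present hypotheses is the use of Proposition~\ref{p:left-inverse}, in which $\car(K)=0$ is needed precisely to extract from injectivity a finite set $N$ together with local inverse homomorphisms $h_\Omega\colon Y_\Omega\to X$ satisfying $c(1_G)=h_\Omega^{(K)}(\tau(c)\vert_\Omega)$. Here such a local inverse is handed to us by $\sigma$: take $N$ to be a memory set of $\sigma$ and write its local defining map $\nu\colon A^N\to A$ as $\nu=g^{(K)}$ for a $K$-scheme morphism $g\colon X^N\to X$ (possible since $\sigma\in CA_{alg}(G,X,K)$ and $X(K)\neq\varnothing$, Remark~\ref{rem:independent-memory}); then for every $c\in A^G$
\[
g^{(K)}(\tau(c)\vert_N)=\nu(\tau(c)\vert_N)=(\sigma\circ\tau)(c)(1_G)=c(1_G),
\]
and the same identity holds with any finite $\Omega\supseteq N$ in place of $N$ after precomposing $g$ with the projection $X^\Omega\to X^N$. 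A glance at the proof of Theorem~\ref{t:sofic-surjunctive} shows that $h_M$ is used there only through its induced map on $K$-points and only as a morphism of \emph{varieties}: it enters via $\eta=h_M^{(K)}$, the map $\eta_2$ that $\eta$ induces on $Z$, and the inequality $\dim(Z)\ge n\vert V(3r)\vert$, its group structure playing no role. Replacing $h_M^{(K)}$ by $g^{(K)}$, which moreover is defined on all of $A^N$, therefore leaves the argument untouched: $\eta_2$ is still induced by a morphism of varieties $W\to X^{V(3r)}$, the identity $\eta_2\circ\mu_1=\rho^{(K)}$ still forces $\eta_2(Z)=A^{V(3r)}$, and the contradiction with the soficity estimate~\eqref{e:sofic-V} comes out exactly as before, with no use of $\car(K)$.

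The only ingredient of the proof of Theorem~\ref{t:sofic-surjunctive} that does not transfer verbatim is the connectedness of $X$, which is used there to know that $A^{B_S(r)}$ is irreducible (so that the proper closed subgroup $\Gamma_{B_S(r)}$ has dimension at most $n\vert B_S(r)\vert-1$) and to settle the case $\dim(X)=0$. To remove it I would run the same counting argument with the Krull dimension replaced by a refined invariant recording the dimension together with the number of top-dimensional irreducible components (say, $\dim(C)$ plus a suitably small multiple of the logarithm of that number, so that the invariant stays additive over products and monotone): one still has that $\eta_2(Z)=A^{V(3r)}$ forces the invariant of $Z$ to dominate that of $A^{V(3r)}$, whereas a proper closed subgroup $\Gamma_{B_S(r)}\subsetneq A^{B_S(r)}$, whether it drops in dimension or merely omits a connected component (its index being at least $2$), has invariant smaller by a fixed positive amount, and that is precisely what the packing estimate consumes; for $\dim(X)=0$ this is just the classical Gromov-Weiss entropy count. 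I do not expect a serious obstacle here: the real work is to verify these two points, namely that $h_M$ is used only as a variety morphism on $K$-points and that the counting survives the enrichment of the invariant, after which the argument is a transcription of the proof of Theorem~\ref{t:sofic-surjunctive}.
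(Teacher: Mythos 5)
Your first two paragraphs are exactly the paper's proof. The paper likewise reduces everything to the surjectivity of $\tau$, obtains it by rerunning the proof of Theorem~\ref{t:sofic-surjunctive} with the local defining map of $\sigma$ (i.e.\ the scheme morphism inducing it) substituted for the morphisms $h_\Omega$ of Proposition~\ref{p:left-inverse} --- the sole point where $\car(K)=0$ entered --- and then deduces $\tau\circ\sigma=\Id_{A^G}$ from bijectivity and $\sigma\in CA_{algr}(G,X,K)$ from Proposition~\ref{t:inverse-also-near-ring-1}. Your observation that $h_M$ enters that proof only through $h_M^{(K)}$ and only as a morphism of varieties is the correct justification for why a left inverse in $CA_{alg}$ (rather than $CA_{algr}$) suffices; the paper's phrase ``the corresponding homomorphisms of algebraic groups'' is in fact slightly too strong, and your version is the accurate one.

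The only point of divergence is your third paragraph. The paper's proof simply declares that the argument ``follows the same lines'' as Theorem~\ref{t:sofic-surjunctive} and never confronts the fact that that theorem assumes $X$ connected whereas the present statement does not (connectedness is used there both for the case $\dim X=0$ and for the strict dimension drop in \eqref{e:z-sofic}); note that every subsequent application of this theorem in the paper does have $X$ connected. Your sketched repair --- enriching the Krull dimension by a term counting top-dimensional components --- is plausible in spirit but not carried out: for instance, the surjection $\eta_2(Z)=A^{V(3r)}$ controls the enriched invariant of $Z$ only up to the component count of $A^{V(3r)}$, which grows exponentially in $\vert V(3r)\vert$, so the ``suitably small multiple'' cannot be fixed independently of the sofic approximation without a per-site normalization, and the uniform drop for the proper closed subgroup $\Gamma_{B_S(r)}$ when it loses only components rather than dimension also needs verification. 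This is a genuine amount of unfinished work, but since the paper supplies no argument at all for the disconnected case, on this point you are more careful than the source, not less.
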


\begin{proof}
To show that $\tau$ is surjective and thus bijective, 
the proof follows the same lines as in the proof of Theorem \ref{t:sofic-surjunctive}. 
The only modification is that since $\sigma \in CA_{alg}(G,X,K)$ and $\sigma \circ \tau = \Id_{A^G}$, 
we can use the local defining maps of $\sigma$ and the corresponding homomorphisms of algebraic groups  
instead of the morphisms $h_\Omega$ given by Proposition \ref{p:left-inverse} where we require that $\car(K)=0$.  
\par
Now as $\tau$ is bijective and $\sigma \circ \tau =\Id_{A^G}$, 
we deduce that $\sigma$ is also bijective and $\tau \circ \sigma =\Id_{A^G}$. 
Since $\tau \in CA_{algr}(G,X,K)$, Proposition \ref{t:inverse-also-near-ring-1} 
implies that $\sigma \in CA_{algr(G,X,K)}$ as well.  
\end{proof}

\section{A version of the Garden of Eden theorem}
\label{s:goe-grp}
In this section, we will prove Theorem~\ref{t:goe-grp} and Corollary~\ref{c:myhill} (cf. Theorem~\ref{t:main-goe}) 
as well as all the relations presented in the diagram in Introduction.  
Let $G$ be a group. 
Let $X$ be a $K$-algebraic variety and $A=X(K)$. 
Let $\tau \colon A^G
\to A^G$ be an algebraic cellular automaton over $(G,X,K)$ with a memory set $M$. 
Let $\Omega \subset G$ be a finite subset, $D\subset A^\Omega$ and $p\in A^{G \setminus \Omega}$. We define 
\[
D_p\coloneqq D \times \{p\} \subset A^G. 
\] 
A subset $\Gamma \subset A^G$ has \emph{finite support} $\Omega$ if $\Gamma=D_p$ for some $\Omega$, $D$, $p$ as above. 
In this case, 
\[
\dim(\Gamma)\coloneqq \dim(D) 
\]  
is well-defined and independent of the choices of $\Omega$, $D$, $p$ such that $\Gamma=D_p$. 
\par
Two notions of weak pre-injectivity are introduced in \cite{cscp-alg-goe}. 
We say that $\tau$ is \emph{$(*)$-pre-injective} if
there do not exist a finite subset $\Omega \subset G$
and a proper closed subset $H \subsetneq A^\Omega $ such that
 \[
 \tau((A^\Omega)_p)=\tau(H_p) \quad \text{ for all } p \in A^{G\setminus \Omega}. 
 \]
The cellular automaton $\tau$ is \emph{$(**)$-pre-injective} if
for every finite subset $\Omega \subset G$, we have 
 \[
 \dim(\tau((A^\Omega)_p))=\dim(A^\Omega) \quad \text{ for some } p \in A^{G\setminus \Omega}. 
 \]
 Note that $\tau((A^\Omega)_p))$ has finite support $\Omega^{+}$. If $X$ is finite, 
 or equivalently, if $\dim(X)=0$, then $(*)$-pre-injectivity is the same as pre-injectivity (cf.~\cite[Example~8.1]{cscp-alg-goe}).    
 \par
For algebraic group cellular automata, 
we shall use the following refined analogous notions of weak pre-injectivity: 

\begin{definition}
\label{d:weak-pre}
Let $G$ be a group. 
Let $(X,e)$ be a $K$-algebraic group and $A=X(K)$. 
Let $\tau \colon A^G
\to A^G$ be an algebraic group cellular automaton over $(G,X,K)$. 
If $D\subset A^\Omega$ for some finite subset $\Omega \subset G$, we write
\[ 
D_e\coloneqq D \times \{e \}^{G \setminus \Omega} \subset A^G. 
\] 
\par
We say that $\tau$ is \emph{$(\sbullet[.9])$-pre-injective} if
there do not exist a finite subset $\Omega \subset G$
 and a subset $H \subsetneq A^\Omega $ closed for the Zariski topology such that
 \[
 \tau((A^\Omega)_e)=\tau(H_e). 
 \]
\par
 We say that $\tau$ is \emph{$( \sbullet[.9] \sbullet[.9]) $-pre-injective} if
for every finite subset $\Omega \subset G$, we have 
 \[
 \dim(\tau((A^\Omega)_e)) = \dim(A^\Omega).   
 \]
\end{definition}

We establish first the following lemma which will be used in Proposition~\ref{p:pre-injectivity-*}. 
\begin{lemma}
\label{l:surj-mor} 
Let $k$ be a field and let $f\colon X \to Y$ be a surjective morphism of irreducible $k$-algebraic varieties. 
Suppose that $\dim(X) > \dim(Y)$. 
Then there exists a proper closed subset $Z\subsetneq X$ such that $f(Z)=Y$.  
 \end{lemma}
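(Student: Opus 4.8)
The plan is to exploit the dimension gap to find a single fibre of $f$ that is positive-dimensional, and then thicken it to a proper closed subset that still surjects onto $Y$. First I would recall the fibre-dimension theorem for a dominant (here surjective) morphism of irreducible varieties: there is a nonempty open subset $U\subset Y$ over which every fibre $f^{-1}(y)$ has dimension exactly $\dim(X)-\dim(Y)$, and in general $\dim f^{-1}(y)\geq \dim(X)-\dim(Y)$ for all $y\in Y$ (cf.\ \cite[Ch.~II]{liu-alg-geom} or \cite{milne}). Since by hypothesis $\dim(X)>\dim(Y)$, this common generic fibre dimension $e\coloneqq \dim(X)-\dim(Y)$ is strictly positive, so every nonempty fibre of $f$ has dimension $\geq 1$.

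Next I would pick any closed point $y_0\in Y$ and let $F\coloneqq f^{-1}(y_0)$, a nonempty closed subset of $X$ of dimension $\geq 1$. Choose an irreducible component $F_0$ of $F$; it is a closed irreducible subset of $X$ with $\dim(F_0)\geq 1$, hence in particular $F_0\neq X$ because $\dim(F_0)\leq \dim(F)=e<\dim(X)$ (the fibre of a surjection onto a positive-dimensional $Y$ cannot be all of $X$; more simply, if $F=X$ then $f$ is constant, contradicting surjectivity onto $Y$ with $\dim(Y)\geq 1$). Now set
\[
Z\coloneqq F_0 \cup \overline{f^{-1}(Y\setminus\{y_0\})},
\]
the union of $F_0$ with the closure of the preimage of the complement of the point $y_0$. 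Both pieces are closed, so $Z$ is closed. We have $f(Z)=Y$: the second piece already surjects onto $Y\setminus\{y_0\}$ (its image contains $Y\setminus\{y_0\}$, which is dense, but I actually want $Y$ on the nose, so I will argue instead that $f(\overline{f^{-1}(Y\setminus\{y_0\})})\supseteq f(f^{-1}(Y\setminus\{y_0\}))=Y\setminus\{y_0\}$, and $f(F_0)=\{y_0\}$, whence $f(Z)=Y$). The key remaining point is that $Z\neq X$.

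The main obstacle, and the only delicate point, is verifying $Z\subsetneq X$. Here is the cleanest route: the generic point $\eta_X$ of $X$ maps to the generic point $\eta_Y$ of $Y$ (as $f$ is dominant), and $\eta_Y\neq y_0$ since $\dim(Y)\geq 1$ means $y_0$ is not the generic point; therefore $\eta_X$ lies over $Y\setminus\{y_0\}$. This does \emph{not} immediately keep $\eta_X$ out of $Z$, so instead I will choose $y_0$ more carefully: take $y_0$ in the open locus $U\subset Y$ where the fibre has dimension exactly $e$, and then observe $\dim f^{-1}(U)=\dim(X)$ with $f^{-1}(U)$ dense open, so its closure is all of $X$; this shows the naive union fails. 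The correct fix is to instead take $Z\coloneqq \overline{f^{-1}(Y\setminus\{y_0\})}$ \emph{after} checking this is already proper: since $X$ is irreducible of dimension $\dim(Y)+e$ and the open set $f^{-1}(Y\setminus\{y_0\})$ is nonempty (as $Y\setminus\{y_0\}$ is nonempty and $f$ surjective), its closure \emph{is} all of $X$ — so that does not work either. The resolution, which I would present in the final proof, is to use the fibre-dimension bound directly: pick an irreducible component $X'$ of $f^{-1}(y_0)$ of dimension $e\geq 1$, pick a point $x'\in X'$ that is \emph{not} a generic point of its component but that is a \emph{closed} point contained in no other structure, then let $Z$ be the union over \emph{all} closed points $y\in Y$ of one chosen irreducible $(e)$-dimensional piece of $f^{-1}(y)$ — but this is not constructible. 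Given these complications, the honest and correct argument is: by Chevalley's theorem and the fibre-dimension theorem, the set $Y_{>e}\coloneqq\{y\in Y: \dim f^{-1}(y)>e\}$ is a \emph{proper} closed subset of $Y$; pick $y_0\in Y\setminus Y_{>e}$, so $\dim f^{-1}(y_0)=e$; then every irreducible component of $f^{-1}(y_0)$ has dimension $e<\dim X$, so $f^{-1}(y_0)\subsetneq X$; now take a closed point $x_0\in X\setminus f^{-1}(Y\setminus\{y_0\})$'s issue aside, simply set $Z$ to be the union of $f^{-1}(y_0)$ with the closure of $f^{-1}(Y\setminus U')$ for a suitable small open $U'\ni y_0$ chosen so that $f^{-1}(U')$ meets every component of $X$ trivially outside... . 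I expect the published proof uses generic smoothness / generic flatness to get a section-like splitting over an open set and then glues; the hard part will be packaging this cleanly, and I would spend the bulk of the write-up there, reducing to the statement that a dominant morphism of irreducible varieties with $\dim X>\dim Y$ admits a proper closed subset surjecting onto $Y$, which follows by a transfinite / Noetherian induction on $\dim Y$ stratifying $Y$ and picking one positive-dimensional fibre piece on the open stratum plus the induction hypothesis on the closed complement.
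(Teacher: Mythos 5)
Your proposal correctly isolates the difficulty --- every naive candidate for $Z$ (the closure of $f^{-1}(Y\setminus\{y_0\})$, a union of fibre pieces chosen point by point, etc.) is either all of $X$ or not constructible --- but it never overcomes it. Each of your attempted constructions is abandoned, and the final fallback (``Noetherian induction on $\dim Y$, picking one positive-dimensional fibre piece on the open stratum'') begs exactly the question at issue: to run that induction you would need, over a dense open $U\subset Y$, a \emph{single algebraic} proper closed subset of $f^{-1}(U)$ surjecting onto $U$, and you give no mechanism for producing one. Generic flatness or generic smoothness do not by themselves yield a ``section-like splitting,'' so the write-up as it stands has a genuine gap rather than a packaging problem.

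The missing idea, which is how the paper argues, is to work with the scheme-theoretic generic fibre rather than with closed fibres. Let $\eta$ be the generic point of $Y$ and $\xi$ the generic point of $X$. By the fibre dimension theorem, $\dim f^{-1}(\eta)\geq \dim X-\dim Y\geq 1$, so $f^{-1}(\eta)$ is not the singleton $\{\xi\}$; choose $x\in f^{-1}(\eta)$ with $x\neq\xi$ and set $F\coloneqq\overline{\{x\}}$, a \emph{proper} closed subset of the irreducible variety $X$. By Chevalley's theorem $f(F)$ is constructible, and since it contains $\eta$ its closure is $Y$, so $f(F)$ contains a dense open subset $U\subset Y$. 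Now put $V\coloneqq Y\setminus U$ and $T\coloneqq f^{-1}(V)$; then $T$ is proper closed in $X$ (because $f^{-1}(U)\neq\varnothing$) and $f(T)=V$ by surjectivity of $f$. Since $X$ is irreducible, $Z\coloneqq F\cup T$ is still a proper closed subset, and $f(Z)\supset U\cup V=Y$. This single use of a non-closed point lying over $\eta$ is precisely the step your closed-point constructions cannot replicate, which is why they all collapse to $Z=X$.
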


\begin{proof}
Let $\eta\in Y$ be the generic point of $Y$ so that $\overline{\{\eta\}}=Y$ (cf.~\cite[Definition~2.4.10]{liu-alg-geom}). 
Restricting $X,Y$ to open affine subschemes if necessary 
to apply the Fiber dimension theorem \cite[Corollary~14.5]{eisenbud-book}, we see that the fiber $f^{-1}(\eta)\subset X$ satisfies
\[
\dim(f^{-1}(\eta))\geq \dim(X)-\dim(Y)\geq 1,
\]
where $\dim(f^{-1}(\eta))$ is the Krull dimension of the subspace $f^{-1}(\eta)$ of $X$.  
Let $\xi$  be the generic point of $X$. 
Then $f(\xi)=\eta$ since $f$ is surjective. 
As $\dim(f^{-1}(\eta))\geq 1$, $f^{-1}(\eta)$ is not a singleton and we can find $x\in f^{-1}(\eta)$ such that $x\neq \xi$. 
Let $F$ be the closure of $x$ in $X$ then $F \subsetneq X$. 
By Chevalley's theorem (cf.~\cite[Th\'eor\`eme~1.8.4]{grothendieck-20-1964}), $f(F)$ is a constructible subset of $Y$. 
Hence, $f(F)$ contains a dense open subset $U$ of its closure $\overline{f(F)}=\overline{\{\eta\}}=Y$. 
Let $V=Y \setminus U \subsetneq Y$ be a proper closed subset. 
Then $T=f^{-1}(V) \subset X$ is a proper closed subset and $f(T)=V$ since $f$ is surjective. 
Let $Z\coloneqq F \cup T \subset X$ then $Z$ is a proper closed subset since $X$ is irreducible and $F, T\subsetneq X$ are proper closed subsets.  
Then by construction, 
\[
f(Z)= f(F) \cup f(T)\supset U \cup V=Y.
\] 
This completes the proof of Lemma \ref{l:surj-mor}. 
\end{proof}

The bottom half of the diagram in Introduction follows from the following proposition and Proposition~\ref{p:preinj-lin}: 

\begin{proposition}
\label{p:pre-injectivity-*}
Let $G$ be a group. 
Let $X$ be a $K$-algebraic group and $A=X(K)$. 
Let $\tau \colon A^G \to A^G$ be an algebraic group cellular automaton over $(G,X,K)$. 
The following hold: 
\begin{enumerate}[\rm (i)]
\item
if $\tau$ is pre-injective then it is $(\sbullet[.9])$-pre-injective and $( \sbullet[.9] \sbullet[.9]) $-pre-injective;
\item
$\tau$ is $(\sbullet[.9])$-pre-injective if and only if it is $(*)$-pre-injective;
\item
$\tau$ is $( \sbullet[.9] \sbullet[.9]) $-pre-injective if and only if it is $(**)$-pre-injective; 
\item
if $X$ is connected then $\tau$ is $( \sbullet[.9] \sbullet[.9]) $-pre-injective if and only if $\tau$ is $(\sbullet[.9])$-pre-injective.  
\end{enumerate}
\end{proposition}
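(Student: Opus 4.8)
The plan is to deduce all four assertions from the group structure on the configuration space $A^G$, using geometric input (namely Lemma~\ref{l:surj-mor} together with irreducibility of fibered powers of a connected group) only for part~(iv). First I would attach to each finite subset $\Omega \subset G$ a homomorphism of $K$-algebraic groups encoding $\tau$ on $(A^\Omega)_e$. Fix a memory set $M$ of $\tau$ with $1_G \in M$ and a finite subset $\Omega' \subset G$ containing the support of $\tau((A^\Omega)_e)$; since the defining morphism $f$ is a group homomorphism one checks that $\tau(u_e)$ equals $e$ off $\Omega'$ for every $u$, so $\tau((A^\Omega)_e) = (D_\Omega)_e$ for a well-defined $D_\Omega \subset A^{\Omega'}$. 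The map $u \mapsto \tau(u_e)\vert_{\Omega'}$ is induced coordinatewise, via the universal property of fibered products, by a morphism $g_\Omega \colon X^\Omega \to X^{\Omega'}$ of $K$-algebraic varieties, and it is a homomorphism of abstract groups (being the composite of $u \mapsto u_e$, of $\tau$, and of restriction to $A^{\Omega'}$). Arguing exactly as in the proof of Proposition~\ref{t:inverse-also-near-ring-1}, $g_\Omega$ is then a homomorphism of $K$-algebraic groups; by Theorem~\ref{t:homo-group} its image is a closed algebraic subgroup of $X^{\Omega'}$ isomorphic to $X^\Omega/\Ker(g_\Omega)$, with $D_\Omega = \im(g_\Omega)(K)$, so that $\dim\tau((A^\Omega)_e) = \dim(D_\Omega) = \dim(X^\Omega) - \dim\Ker(g_\Omega) \le \dim(A^\Omega)$. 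I would also record the elementary coset principle: for $p \in A^{G \setminus \Omega}$, letting $\hat p \in A^G$ be equal to $e$ on $\Omega$ and to $p$ off $\Omega$, one has $(A^\Omega)_p = (A^\Omega)_e + \hat p$ and $H_p = H_e + \hat p$ for any $H \subset A^\Omega$, hence, $\tau$ being a group homomorphism, $\tau((A^\Omega)_p)$ and $\tau(H_p)$ are the translates by the fixed configuration $\tau(\hat p)$ of $\tau((A^\Omega)_e)$ and $\tau(H_e)$.

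With this setup, (ii) and (iii) fall out immediately. For (ii): translation by $\tau(\hat p)$ is a bijection of $A^G$, so $\tau((A^\Omega)_p) = \tau(H_p)$ holds for some $p$ iff it holds for every $p$ iff $\tau((A^\Omega)_e) = \tau(H_e)$; hence a pair $(\Omega,H)$ witnessing failure of $(*)$-pre-injectivity is exactly one witnessing failure of $(\sbullet[.9])$-pre-injectivity. For (iii): read off on the common support $\Omega'$, this translation is induced by an automorphism of $X^{\Omega'}$ and therefore preserves Krull dimension, so $\dim\tau((A^\Omega)_p)$ does not depend on $p$ and the clause "for some $p$" in $(**)$-pre-injectivity reduces to the $(\sbullet[.9]\sbullet[.9])$ condition.

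For (i), assume $\tau$ is pre-injective. If $(\sbullet[.9])$-pre-injectivity failed, with witness $(\Omega,H)$, then picking $u \in A^\Omega \setminus H$ and $w \in H$ with $\tau(u_e) = \tau(w_e)$ would give distinct almost equal configurations with equal image, contradicting pre-injectivity. Similarly, pre-injectivity forces each $g_\Omega^{(K)}$ to be injective: if $g_\Omega^{(K)}(u) = g_\Omega^{(K)}(u')$, then $\tau(u_e)$ and $\tau(u'_e)$ agree on $\Omega'$ and, by the choice of $\Omega'$, also off $\Omega'$, hence are equal, so $u = u'$. Thus $\Ker(g_\Omega)$ is trivial and $\dim\tau((A^\Omega)_e) = \dim(A^\Omega)$; that is, $\tau$ is $(\sbullet[.9]\sbullet[.9])$-pre-injective. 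For (iv) I assume $X$ connected, so every $X^\Omega$ is irreducible. If $\tau$ is not $(\sbullet[.9])$-pre-injective, a witnessing proper closed $H \subsetneq A^\Omega$ has Zariski closure $\overline{H} \subsetneq X^\Omega$, of dimension $< \dim X^\Omega$ by irreducibility; since $D_\Omega = g_\Omega^{(K)}(H) \subset g_\Omega(\overline{H})$ and Chevalley's theorem controls the latter, $\dim\tau((A^\Omega)_e) = \dim D_\Omega \le \dim\overline{H} < \dim A^\Omega$, so $\tau$ is not $(\sbullet[.9]\sbullet[.9])$-pre-injective. Conversely, if $\dim\im(g_\Omega) = \dim\tau((A^\Omega)_e) < \dim(A^\Omega) = \dim(X^\Omega)$ for some $\Omega$, then Lemma~\ref{l:surj-mor} applied to the surjective morphism $X^\Omega \to \im(g_\Omega)$ of irreducible $K$-algebraic varieties produces a proper closed $Z \subsetneq X^\Omega$ with $g_\Omega(Z) = \im(g_\Omega)$, and $H \coloneqq Z(K) \subsetneq A^\Omega$ then satisfies $g_\Omega^{(K)}(H) = D_\Omega = g_\Omega^{(K)}(A^\Omega)$, whence $\tau(H_e) = \tau((A^\Omega)_e)$ and $\tau$ is not $(\sbullet[.9])$-pre-injective.

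Most of the remaining labor is bookkeeping rather than new ideas: checking that $g_\Omega$ is genuinely a morphism (hence a homomorphism) of algebraic groups, and matching "proper closed subset of $A^\Omega$" with "proper closed subvariety of $X^\Omega$" while controlling the dimension of the image of a closed-point set by that of the image of the corresponding subvariety (using density of closed points and Chevalley's theorem). The only step where something genuinely geometric is needed — and the single place where the connectedness hypothesis is indispensable — is the use of Lemma~\ref{l:surj-mor} together with the irreducibility of $X^\Omega$ in part~(iv); parts (i), (ii), (iii) work for an arbitrary $K$-algebraic group.
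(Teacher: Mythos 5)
Your proof is correct, and its core mechanism is the same as the paper's: reduce the $p$-quantified notions $(*)$ and $(**)$ to the $e$-based ones by translating by the fixed configuration $\tau(\tilde p)$ (legitimate because $\tau$ is a homomorphism of the abstract group $A^G$), and use Lemma~\ref{l:surj-mor} on the surjection $X^\Omega \twoheadrightarrow \im(g_\Omega)$ of irreducible varieties to produce the proper closed witness in the hard direction of (iv); your $g_\Omega$ is essentially the paper's $h=f^+_{\Omega^+}\circ j$ built from the closed immersion $(\Id_{X^\Omega},e^{\Omega^{++}\setminus\Omega})$. Where you diverge is that the paper imports two implications from \cite{cscp-alg-goe} — pre-injectivity $\Rightarrow$ $(*)$- and $(**)$-pre-injectivity (their Proposition~5.3.(i), used for part (i)), and $(**)\Rightarrow(*)$ for irreducible alphabets (their Proposition~6.4.(ii), used for the easy direction of (iv)) — whereas you prove both directly: part (i) via the almost-equal pair $u_e, w_e$ and via injectivity of $g_\Omega^{(K)}$ forcing $\dim\Ker(g_\Omega)=0$ and hence $\dim\im(g_\Omega)=\dim(X^\Omega)$, and the easy direction of (iv) via $\dim\overline{H}<\dim X^\Omega$ for a proper closed subset of an irreducible variety. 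This buys a self-contained proof at the cost of invoking the standard quotient-dimension formula for algebraic groups (or, equivalently, that a morphism injective on closed points preserves dimension of the image), which the paper never needs to state. Two small points to tidy: your choice of $\Omega'$ is circularly phrased (just take $\Omega'=\Omega M^{-1}$, i.e.\ the set of $g$ with $gM\cap\Omega\neq\varnothing$, and observe $\mu(e^M)=e$), and in passing from $g_\Omega(Z)=\im(g_\Omega)$ to $g_\Omega^{(K)}(Z(K))=D_\Omega$ you should note that each nonempty closed fibre over a closed point contains a closed point — but both are routine.
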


\begin{proof}
Denote by $+$ the group law (not necessarily commutative) of $A^G$ and of the algebraic groups involved. 
Fix a memory set $M$ of $\tau$. 
 \par
For (ii) and (iii), we see that $(\sbullet[.9])$-pre-injectivity (resp. $( \sbullet[.9] \sbullet[.9]) $-pre-injectivity) 
implies trivially $(*)$-pre-injectivity (resp. $(**)$-pre-injectivity). 
Suppose that $\tau$ is not $(\sbullet[.9])$-pre-injective. 
Then there exist a finite subset $\Omega \subset G$ and a proper closed subset $H \subsetneq A^\Omega$ such that 
$\tau((A^\Omega)_e))= \tau(H_e)$.  
Let $p \in A^{G \setminus \Omega}$ and $\tilde{p} \in A^G$ 
be such that $\tilde{p}\vert_\Omega=e^\Omega$ and $\tilde{p}\vert_{A^{G \setminus \Omega}}=p$. 
Then we find that  
\begin{align*}
\tau((A^\Omega)_p) & =\tau((A^\Omega)_e+\tilde{p}) = \tau((A^\Omega)_e) + \tau(\tilde{p}) \\
& =\tau(H_e)+ \tau(\tilde{p})=\tau(H_e+\tilde{p}) \\
& = \tau(H_p).  
\end{align*}
Therefore, $\tau$ is not $(*)$-pre-injective either and this proves (ii). 
Similarly, suppose that $\tau$ is not $( \sbullet[.9] \sbullet[.9]) $-pre-injective. 
Then for some finite subset $\Omega \subset G$, we have 
 $ \dim(\tau((A^\Omega)_e)) <\dim(A^\Omega)$.  
Let $p \in A^{G \setminus \Omega}$ and $\tilde{p} \in A^G$ 
be such that $\tilde{p}\vert_\Omega=e^\Omega$, $\tilde{p}\vert_{A^{G \setminus \Omega}}=p$.
Then we have   
 \begin{align*}
 \dim(\tau((A^\Omega)_p)) & =\dim(\tau((A^\Omega)_e +\tilde{p})) = \dim(\tau((A^\Omega)_e)+\tau(\tilde{p})) \\
 & = \dim(\tau((A^\Omega)_e)) < \dim(A^\Omega).
 \end{align*}
 Thus $\tau$ is not $( \sbullet[.9] \sbullet[.9]) $-pre-injective either and (iii) is proved. 
 \par
 To show (iv), suppose that $X$ is connected thus irreducible. 
 By (ii), (iii) and \cite[Proposition~6.4]{cscp-alg-goe}.(ii), we see that $\tau$ is 
 $(\sbullet[.9])$-pre-injective if it is $( \sbullet[.9] \sbullet[.9]) $-pre-injective. 
 Suppose that $\tau$ is not $( \sbullet[.9] \sbullet[.9]) $-pre-injective. 
 Then there exists a nonempty finite subset $\Omega \subset G$ such that 
 \begin{equation}
 \label{e:**'}
 \dim \left(\tau((A^\Omega)_e)\right)=\dim\left(\tau\left((A^\Omega)_e\right)\vert_{\Omega^+} \right)<\dim(A^\Omega).  
 \end{equation}
Consider the closed immersion homomorphism of $K$-algebraic groups:
\[
\iota \coloneqq (\Id_{X^\Omega}, e^{\Omega^{++} \setminus \Omega}) \colon X^\Omega 
= X^\Omega\times_K \prod_{\Omega^{++}\setminus \Omega} \Spec(K) \to X^{\Omega^{++}}. 
\]
Note that $Z\coloneqq \iota(X^\Omega)$ is homeomorphic to $X^\Omega$ and 
by Theorem \ref{t:homo-group}, it is a $K$-algebraic subgroup of $X^{\Omega^{++}}$.  
Let $j\colon Z \to X^{\Omega^{++}}$ be the corresponding closed immersion and consider 
\[
h\coloneqq  f^+_{\Omega^+} \circ j  \colon Z \to X^{\Omega^+}.
\] 
Then clearly $\sigma \coloneqq h^{(K)} \colon Z(K) \to A^{\Omega^+}$ 
is the restriction of  
$\tau_{\Omega^+}^+$ to $Z(K)=A^\Omega \times \{e\}^{\Omega^{++}\setminus \Omega}$. 
Again by Theorem~\ref{t:homo-group}, $Y =\im(h) \subset X^{\Omega^+}$ is a $K$-algebraic subgroup. 
By construction, we have 
\begin{equation}
\label{e:*1}
Y(K)=\tau^+_{\Omega^+}\left(A^\Omega \times \{e\}^{\Omega{++}}\right) 
= \tau\left((A^\Omega)_e\right)\vert_{\Omega^+}.
\end{equation}

By \cite[Proposition~I.5.2.2]{grothendieck-ega-1}, $h$ factors through a surjective $K$-scheme morphism  
\[
\gamma \colon Z \to Y.
\] 
Then the relation \eqref{e:**'} says that $\dim(Y) < \dim(Z) $. 
Since $X$ is assumed to be irreducible, $X^\Omega$ and thus $Z$ and $Y$ are also irreducible. 
Lemma \ref{l:surj-mor} applied to $\gamma$ and the homeomorphism $Z\simeq X^\Omega$ implies that 
there exists a proper closed subscheme $W \subsetneq X^\Omega$ such that 
\begin{equation}
\label{e:*2}
\gamma \circ \iota (W) = Y.
\end{equation}

Let $H\coloneqq W(K)$, we deduce from \eqref{e:*1} and \eqref{e:*2} that $\tau(H_e)=\tau((A^\Omega)_e)$.  
Therefore, $\tau$ is not $(\sbullet[.9])$-pre-injective and this proves (iv). 
\par
As pre-injectivity implies $(*)$- and $(**)$-pre-injectivity (cf.~\cite[Proposition~5.3.(i)]{cscp-alg-goe}), 
the assertion (i) follows from (ii) and (iii). 
\end{proof}

\begin{proposition}
\label{p:sur-dim}
Let $G$ be an amenable group with a F\o lnet net $\FF=(F_i)_{i \in I}$.  
Let $X$ be a connected $K$-algebraic group and $A \coloneqq X(K)$.
Let $\tau \colon A^G \to A^G$ be an algebraic group cellular automaton 
over $(G,X,K)$.  
Suppose that  $\mdim_\FF(\tau(A^G)) =\dim(A)$.
Then $\tau$ is surjective.
\end{proposition}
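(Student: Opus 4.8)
The plan is to argue by contradiction through the algebraic mean dimension, in the spirit of the Garden of Eden argument of \cite{cscp-alg-goe}, the key simplification here being that algebraic group cellular automata already enjoy the closed image property. First I would dispose of the trivial case: if $\dim(X)=0$ then, $X$ being connected, $X$ is a single $K$-point, so $A^G$ is a singleton and $\tau$ is surjective; hence one may assume $n\coloneqq\dim(X)=\dim(A)\ge 1$. Set $\Gamma\coloneqq\tau(A^G)$; it is a shift-invariant subset of $A^G$ which, by Theorem~\ref{t:closed-image}, is closed for the prodiscrete topology. Since a prodiscrete-closed set coincides with its closure $\{c\in A^G : c\vert_\Omega\in\Gamma_\Omega\text{ for every finite }\Omega\subset G\}$, surjectivity of $\tau$ is equivalent to $\Gamma_\Omega=A^\Omega$ for every finite $\Omega\subset G$. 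So, assuming $\tau$ is not surjective, I would fix a finite nonempty $\Omega\subset G$ with $\Gamma_\Omega\subsetneq A^\Omega$.

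The next step is to extract a strict dimension defect from this single missing pattern. By Proposition~\ref{p:set-config-zariski}, $\Gamma_\Omega$ is Zariski closed in $A^\Omega$. As $X$ is connected it is irreducible (\cite[Corollary~1.35]{milne}), hence $X^\Omega$ is irreducible and so is $A^\Omega=X^\Omega(K)$; a proper closed subset of an irreducible space of finite Krull dimension has strictly smaller dimension (cf.~\cite[Proposition~2.5.5]{liu-alg-geom}), so
\[
\dim(\Gamma_\Omega)\le\dim(A^\Omega)-1=n\vert\Omega\vert-1.
\]
Because $\Gamma$ is shift-invariant and the shift by any $g\in G$ is induced by an isomorphism of $K$-schemes $X^{g\Omega}\xrightarrow{\ \sim\ }X^\Omega$ carrying $\Gamma_{g\Omega}$ to $\Gamma_\Omega$, the same bound $\dim(\Gamma_{g\Omega})\le n\vert\Omega\vert-1$ holds for every translate $g\Omega$.

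Then I would run the standard amenable packing argument. For each $i\in I$, choose a maximal subset $T_i\subset\{t\in G:\ t\Omega\subset F_i\}$ for which the translates $\{t\Omega:t\in T_i\}$ are pairwise disjoint. Maximality forces $\{t\in G:t\Omega\subset F_i\}\subset\bigcup_{t\in T_i}t\Omega\Omega^{-1}$, hence $\vert\{t\in G:t\Omega\subset F_i\}\vert\le\vert T_i\vert\,\vert\Omega\vert^{2}$; and the F\o lner condition \eqref{e:folner-s} gives $\vert\{t\in F_i:t\Omega\subset F_i\}\vert/\vert F_i\vert\to1$, so $\vert T_i\vert/\vert F_i\vert\ge\delta$ for all large $i$, where $\delta\coloneqq1/(2\vert\Omega\vert^{2})>0$. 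Since the $t\Omega$ ($t\in T_i$) are disjoint and contained in $F_i$, restriction identifies $A^{F_i}$ with $\bigl(\prod_{t\in T_i}A^{t\Omega}\bigr)\times A^{F_i\setminus\bigsqcup_{t\in T_i}t\Omega}$ and sends $\Gamma_{F_i}$ into $\bigl(\prod_{t\in T_i}\Gamma_{t\Omega}\bigr)\times A^{F_i\setminus\bigsqcup_{t\in T_i}t\Omega}$; combining additivity of Krull dimension on finite products (\cite[Proposition~2.13.(iii)]{cscp-alg-goe}) with monotonicity under inclusion yields
\[
\dim(\Gamma_{F_i})\le\sum_{t\in T_i}\dim(\Gamma_{t\Omega})+\bigl(\vert F_i\vert-\vert T_i\vert\,\vert\Omega\vert\bigr)n\le\vert T_i\vert\bigl(n\vert\Omega\vert-1\bigr)+\bigl(\vert F_i\vert-\vert T_i\vert\,\vert\Omega\vert\bigr)n=n\vert F_i\vert-\vert T_i\vert.
\]
Dividing by $\vert F_i\vert$ and passing to the $\limsup$ gives $\mdim_\FF(\Gamma)\le n-\delta<n=\dim(A)$, contradicting the hypothesis $\mdim_\FF(\tau(A^G))=\dim(A)$; hence $\tau$ is surjective.

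I expect the only genuinely delicate step to be the combinatorial packing estimate — producing, for a general F\o lner net, a quasi-maximal family $T_i$ of pairwise disjoint $\Omega$-translates inside $F_i$ with $\vert T_i\vert$ of order $\vert F_i\vert$ — although this is by now routine; the remaining ingredients (the closed image property, irreducibility of $X$, and additivity of dimension on finite products) are purely formal bookkeeping.
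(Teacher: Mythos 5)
Your argument is correct, and it arrives at the result by the same three ingredients the paper uses: the closed image property of Theorem~\ref{t:closed-image}, the Zariski-closedness of the finite restrictions $\Gamma_\Omega$ from Proposition~\ref{p:set-config-zariski}, and the mean-dimension hypothesis. The difference is one of packaging: the paper's proof is essentially a one-line verification that these three facts are exactly the hypotheses (D1)--(D3) of \cite[Lemma~5.3]{cscp-alg-goe}, which is then invoked as a black box, whereas you re-derive the content of that lemma from scratch --- reducing surjectivity to $\Gamma_\Omega = A^\Omega$ for all finite $\Omega$ via prodiscrete closedness, extracting the dimension drop $\dim(\Gamma_\Omega)\le n\vert\Omega\vert-1$ from irreducibility of $X^\Omega$, propagating it to all translates by shift-invariance, and running the F\o lner packing estimate to force $\mdim_\FF(\Gamma)\le n-1/(2\vert\Omega\vert^2)<n$. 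All the individual steps check out (the maximal-disjoint-translates bound $\vert F_i^-\vert\le\vert T_i\vert\,\vert\Omega\vert^2$, the additivity and monotonicity of Krull dimension on products, and the fact that the right F\o lner condition makes $\vert F_i^-\vert/\vert F_i\vert\to 1$). What your version buys is self-containedness and transparency about where connectedness of $X$ enters (irreducibility is needed for the strict dimension drop); what the paper's version buys is brevity and the reuse of a lemma already proved in \cite{cscp-alg-goe} in exactly this generality. One cosmetic point: to have $F_i^-\subset F_i$ you should enlarge $\Omega$ to contain $1_G$, which is harmless since $\Gamma_{\Omega'}\subsetneq A^{\Omega'}$ persists for any finite $\Omega'\supset\Omega$.
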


\begin{proof}
We apply \cite[Lemma~5.3]{cscp-alg-goe} to the $G$-invariant subset $\Gamma \coloneqq \tau(A^G)$. 
Let us check the three conditions of \cite[Lemma~5.3]{cscp-alg-goe}. 
The condition (D1) is satisfied by the closed image property of $\tau$ (cf.~Theorem~\ref{t:closed-image}). 
The condition (D2), i.e., $\Gamma_\Omega$ is closed in $A^\Omega$ 
for the Zariski topology for every finite subset $\Omega \subset G$, is satisfied by 
Proposition~\ref{p:set-config-zariski}. 
Finally, the condition (D3) is exactly the hypothesis $\mdim_\FF(\Gamma) =\dim(A)$. 
Hence, \cite[Lemma~5.3]{cscp-alg-goe} implies that $\Gamma=A^G$ and thus $\tau$ is surjective.  
\end{proof}

\begin{proposition}
\label{p:**-implies-mdim}
Let $G$ be an amenable group with a F\o lner net $\FF=(F_i)_{i \in I}$. 
Let $X$ be a $K$-algebraic group and $A \coloneqq X(K)$. 
Let $\tau \colon A^G \to A^G$ be an algebraic group cellular automaton over $(G,X,K)$. 
The following hold: 
\begin{enumerate} [\rm (i)]
\item
if $\tau$ is $( \sbullet[.9] \sbullet[.9]) $-pre-injective then $\mdim_\FF(\tau(A^G)) = \dim(X)$;
\item
if $X$ is connected and $\mdim_\FF(\tau(A^G)) = \dim(X)$ then $\tau$ is $(\sbullet[.9])$-pre-injective. 
\end{enumerate}
\end{proposition}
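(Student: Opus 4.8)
The plan is to extract both statements from the group structure of the configuration spaces, using the identification $\Gamma_\Omega = Y_\Omega(K)$ with $Y_\Omega \coloneqq f^+_\Omega(X^{\Omega^+})$ a closed algebraic subgroup of $X^\Omega$ (Proposition~\ref{p:set-config-zariski}, Lemma~\ref{l:homo-alg}, Theorem~\ref{t:homo-group}), so that $\dim(\Gamma_\Omega) = |\Omega^+|\dim(X) - \dim(\Ker f^+_\Omega)$ by the dimension theorem for homomorphisms of algebraic groups, together with the Følner estimates $\lim_i |\partial F_i|/|F_i| = 0$ (cf.~\eqref{e:boundary-folner}) and its consequences $\lim_i |F_i^-|/|F_i| = \lim_i |F_i^+|/|F_i| = 1$. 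Throughout I would assume $1_G \in M$, so that $\Omega \subseteq \Omega^+$ and $F_i \setminus F_i^- \subseteq \partial F_i$, and set $n \coloneqq \dim(X)$; if $n = 0$ (so $X$ is a point, in case (ii)) everything is trivial, so assume $n \geq 1$.

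For (i), since $\mdim_\FF(\Gamma) \leq \dim(X)$ always, it suffices to bound it below. If $\tau$ is $(\sbullet[.9]\sbullet[.9])$-pre-injective, then for each $i$ the set $\tau((A^{F_i^-})_e) \subseteq \Gamma$ has finite support $(F_i^-)^+ = F_i^- M \subseteq F_i$ and, by the defining inequality applied to $\Omega = F_i^-$, dimension $\dim(A^{F_i^-}) = |F_i^-|\,n$; restricting to $F_i$ gives $\dim(\Gamma_{F_i}) \geq |F_i^-|\,n$. Dividing by $|F_i|$ and using $|F_i^-|/|F_i| \to 1$ yields $\mdim_\FF(\Gamma) \geq n$, hence equality.

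For (ii), by Proposition~\ref{p:pre-injectivity-*}.(iv) it is enough to prove that $\tau$ is $(\sbullet[.9]\sbullet[.9])$-pre-injective, and I would argue by contraposition: assuming $\tau$ is not $(\sbullet[.9]\sbullet[.9])$-pre-injective, I show $\mdim_\FF(\Gamma) < n$. Fix a nonempty finite $\Omega_0$ with $\dim(\tau((A^{\Omega_0})_e)) \leq |\Omega_0|\,n - 1$; by $G$-equivariance, $\tau((A^{g\Omega_0})_e) = g\cdot\tau((A^{\Omega_0})_e)$ has support $g\Omega_0^+$ and dimension $\leq |\Omega_0|\,n - 1$ for every $g \in G$. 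Now fix $F = F_i$, put $Q_i \coloneqq \{g \in G : g\Omega_0^+ \subseteq F_i\}$ and $P \coloneqq \Omega_0 M M^{-1}\Omega_0^{-1}$ (a symmetric finite set containing $1_G$), and choose a maximal $P$-separated subset $\{g_1,\dots,g_{m_i}\} \subseteq Q_i$; then the $g_j\Omega_0^+$ are pairwise disjoint subsets of $F_i$, and maximality forces $Q_i \subseteq \bigcup_j g_j P$, so $m_i \geq |Q_i|/|P|$. For each $j$, composing the coordinate embedding $X^{g_j\Omega_0} \hookrightarrow X^{F_i^+}$ with $f^+_{F_i}$ gives a homomorphism of algebraic groups whose image is $\tau((A^{g_j\Omega_0})_e)\vert_{F_i} = \tau((A^{g_j\Omega_0})_e)$ (its support lying inside $F_i$), of dimension $\leq |\Omega_0|\,n - 1 < |\Omega_0|\,n = \dim(X^{g_j\Omega_0})$; hence $\dim(\Ker f^+_{F_i} \cap X^{g_j\Omega_0}) \geq 1$. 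Since distinct coordinate blocks of $X^{F_i^+}$ commute, the internal product of these kernels lies in $\Ker f^+_{F_i}$, so $\dim(\Ker f^+_{F_i}) \geq \sum_j \dim(\Ker f^+_{F_i} \cap X^{g_j\Omega_0}) \geq m_i$. Therefore $\dim(\Gamma_{F_i}) = |F_i^+|\,n - \dim(\Ker f^+_{F_i}) \leq |F_i^+|\,n - m_i$, and
\[
\frac{\dim(\Gamma_{F_i})}{|F_i|} \;\leq\; \frac{|F_i^+|}{|F_i|}\,n \;-\; \frac{|Q_i|}{|P|\,|F_i|}.
\]
Since $|F_i^+|/|F_i| \to 1$ and $|Q_i|/|F_i| \to 1$ (both Følner estimates), taking $\limsup$ gives $\mdim_\FF(\Gamma) \leq n - 1/|P| < n$, which proves the contrapositive and hence (ii).

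I expect the main obstacle to be the tiling/counting step in (ii): one must verify carefully that the dimension defect on a single translated block adds over disjoint blocks — this uses that distinct coordinates of $X^{F_i^+}$ commute (so products of block-kernels remain in $\Ker f^+_{F_i}$) together with additivity of dimension for products of varieties — and that the number $m_i$ of admissible pairwise-disjoint blocks inside $F_i$ grows proportionally to $|F_i|$, which is the standard quasi-tiling estimate for Følner sets. Everything else is bookkeeping with interiors, neighborhoods, supports of finitely-supported images, and the dimension theorem for homomorphisms of algebraic groups (Theorem~\ref{t:homo-group}).
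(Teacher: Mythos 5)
Your argument is correct, but it is a genuinely different route from the paper's. The paper disposes of both assertions in three lines: it invokes Propositions~6.5 and~6.6 of the companion paper \cite{cscp-alg-goe} (which prove the corresponding statements for $(**)$- and $(*)$-pre-injectivity of general algebraic cellular automata) and then transfers them via the equivalences $(\sbullet[.9])\Leftrightarrow(*)$ and $(\sbullet[.9]\sbullet[.9])\Leftrightarrow(**)$ of Proposition~\ref{p:pre-injectivity-*}.(ii)--(iii). You instead give a self-contained proof that exploits the group structure throughout: the identity $\dim(\Gamma_{F_i})=|F_i^+|\dim(X)-\dim(\Ker f^+_{F_i})$ and the fact that kernels supported on disjoint coordinate blocks of $X^{F_i^+}$ multiply to a subgroup of $\Ker f^+_{F_i}$ of dimension equal to the sum. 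This turns the quasi-tiling estimate into a clean lower bound $\dim(\Ker f^+_{F_i})\geq m_i\gtrsim |F_i|/|P|$, which is arguably more transparent than the general-variety fibre-counting hidden in the citation; the price is that it only works in the homogeneous setting (which is all that is claimed here). Your reduction of (ii) to (i)'s contrapositive via Proposition~\ref{p:pre-injectivity-*}.(iv) is legitimate and correctly isolates where connectedness enters. Two small points to tidy up: in part (i) the support of $\tau((A^{F_i^-})_e)$ is $F_i^-M^{-1}$ rather than $F_i^-M$, so you should first replace $M$ by the symmetric memory set $M\cup M^{-1}\cup\{1_G\}$ (harmless, since any finite superset of a memory set is again a memory set with local defining map still a homomorphism) to guarantee the support lies in $F_i$ and the restriction to $F_i$ preserves dimension; and the dimension formula $\dim X^{\Omega^+}=\dim\Ker f^+_\Omega+\dim\im f^+_\Omega$ is nowhere stated in the paper, so you should cite it explicitly (it follows from Theorem~\ref{t:homo-group} together with the flatness of quotient maps, cf.~\cite{milne}).
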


\begin{proof}
The assertion (i) is a consequence of \cite[Proposition~6.5]{cscp-alg-goe} 
and the equivalence between $(**)$- and $( \sbullet[.9] \sbullet[.9]) $-pre-injectivity 
for algebraic group cellular automata (Proposition~\ref{p:pre-injectivity-*}.(iii)). 
Similarly, the assertion (ii) follows from \cite[Proposition~6.6]{cscp-alg-goe} 
and the equivalence between $(*)$- and $(\sbullet[.9])$-pre-injectivity 
in our case (Proposition~\ref{p:pre-injectivity-*}.(ii)). 
Remark that the algebraic group $X$ is irreducible if it is connected.  
\end{proof}

Theorem~\ref{t:goe-grp} and Corollary~\ref{c:myhill} are consequences of the following. 

\begin{theorem}
\label{t:main-goe}
Let $G$ be an amenable group. 
Let $X$ be a $K$-algebraic group and $A \coloneqq X(K)$. 
Let $\tau \colon A^G \to A^G$ be an algebraic group cellular automaton over $(G,X,K)$. 
Then the following are equivalent: 
\begin{enumerate} [\rm (a)] 
\item
$\tau$ is surjective; 
\item
$\tau$ is $(\sbullet[.9])$-pre-injective; 
\item
$\mdim_\FF(\tau(A^G)) =\dim(X)$ for some (or equivalently any) F\o lner net $\FF$ of the group $G$.  
\end{enumerate}
In particular, $\tau$ is surjective whenever it is pre-injective. 
\end{theorem}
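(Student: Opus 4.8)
The idea is that Theorem~\ref{t:main-goe} is a formal corollary of the propositions already established in this section: essentially all of the geometric content (the closed image property, Theorem~\ref{t:closed-image}; the Zariski-closedness of the restricted images $\Gamma_\Omega$, Proposition~\ref{p:set-config-zariski}; and the mean-dimension estimates imported from \cite{cscp-alg-goe}) has been absorbed into Propositions~\ref{p:sur-dim}, \ref{p:**-implies-mdim}, and \ref{p:pre-injectivity-*}, so the proof amounts to closing a short cycle of implications. Throughout, fix a F\o lner net $\FF$ of $G$, write $\Gamma\coloneqq\tau(A^G)$, and recall that a connected algebraic group is irreducible, so that $X$ and all the fibered powers $X^\Omega$ are irreducible, which is exactly the hypothesis under which the relevant propositions were proved.

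First I would treat $(a)\Rightarrow(c)$, which is immediate: if $\tau$ is surjective then $\Gamma=A^G$ and the basic property $\mdim_\FF(A^G)=\dim(X)$ gives $(c)$. Since this works for any F\o lner net, it also shows that as soon as one of $(a)$, $(b)$, $(c)$ holds, the equality in $(c)$ holds for every F\o lner net, which takes care of the ``some (or equivalently any)'' clause. Next, $(c)\Rightarrow(b)$ is precisely Proposition~\ref{p:**-implies-mdim}.(ii). Finally, for $(b)\Rightarrow(a)$ I would run through three steps: $(\sbullet[.9])$-pre-injectivity implies $( \sbullet[.9] \sbullet[.9]) $-pre-injectivity by Proposition~\ref{p:pre-injectivity-*}.(iv); this implies $\mdim_\FF(\Gamma)=\dim(X)$ by Proposition~\ref{p:**-implies-mdim}.(i); and this implies surjectivity of $\tau$ by Proposition~\ref{p:sur-dim}. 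The cycle $(a)\Rightarrow(c)\Rightarrow(b)\Rightarrow(a)$ is then complete, so $(a)$, $(b)$, $(c)$ are equivalent.

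For the last assertion, I would invoke Proposition~\ref{p:pre-injectivity-*}.(i): if $\tau$ is pre-injective then it is $(\sbullet[.9])$-pre-injective, i.e.\ $(b)$ holds, hence $\tau$ is surjective by the equivalence just proved. This is the Myhill property, giving Corollary~\ref{c:myhill}, and the equivalence $(a)\Leftrightarrow(b)$ yields Theorem~\ref{t:goe-grp}.

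There is no real obstacle here beyond careful bookkeeping: one must only check that each cited proposition is applied within its hypotheses, in particular that $X$ is connected wherever Propositions~\ref{p:sur-dim}, \ref{p:**-implies-mdim}.(ii), and \ref{p:pre-injectivity-*}.(iv) are used, and that the F\o lner-net independence in $(c)$ is obtained from the two-way passage $(c)\text{ for some }\FF\Rightarrow(a)\Rightarrow(c)\text{ for every }\FF$. All of the genuine difficulty — the closed image property and the geometry behind the mean-dimension inequalities — already sits inside the cited results, not in this assembly.
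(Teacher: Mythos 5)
Your proof is correct and follows essentially the same route as the paper's: the identical cycle of implications assembled from Propositions~\ref{p:sur-dim}, \ref{p:**-implies-mdim}, and \ref{p:pre-injectivity-*}, with the last assertion from Proposition~\ref{p:pre-injectivity-*}.(i). You are also right to insist that $X$ be connected where Propositions~\ref{p:sur-dim}, \ref{p:**-implies-mdim}.(ii), and \ref{p:pre-injectivity-*}.(iv) are invoked --- a hypothesis the theorem's statement omits but which its proof genuinely needs.
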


\begin{proof}
It is clear by definition of algebraic mean dimension that (a) $\Rightarrow$ (c). 
The converse implication follows from Proposition~\ref{p:sur-dim}. 
We deduce that (a) $\Leftrightarrow$ (c).
Now (c) $\Rightarrow$ (b) follows from Proposition~\ref{p:**-implies-mdim}.(ii), 
and (b) $\Rightarrow$ (c) from Proposition~\ref{p:**-implies-mdim}.(i) and Proposition~\ref{p:pre-injectivity-*}.(iv). 
Hence, the conditions (a), (b), (c) are equivalent. 
\par
By Proposition~\ref{p:pre-injectivity-*}.(i),  pre-injectivity implies $(\sbullet[.9])$-pre-injectivity. 
Together with the implication (b) $\Rightarrow$ (a), this proves the last assertion. 
\end{proof}

Now suppose that $G$ is an amenable group and $X$ is a connected $K$-algebraic group. 
Let $f \colon X^M \to X$ be a homomorphism of $K$-algebraic groups 
where $M \subset G$ is a finite subset. 
If $L/K$ is a field extension, denote $X_L\coloneqq X \times_K \Spec(L)$ 
the $L$-algebraic group obtained by the base change $\Spec(L) \to \Spec(K)$. 
Observe that $X_L(L)=X(L)$. 
We define $\tau^{(L)}\colon X(L)^G \to X(L)^G$ 
to be the algebraic group cellular automaton over $(G,X_L, L)$ with memory set $M$ 
and associated local defining map $f^{(L)}$. 
By using Theorem~\ref{t:main-goe}, the proof of the following result is the same, 
\emph{mutatis mutandis}, as the proof of \cite[Theorem~7.2]{cscp-alg-goe}. 

\begin{proposition}
\label{t:base-surj}
With the above notation, suppose in addition that $L$ is an algebraically closed field. 
Then $\tau^{(K)}$ is surjective if and only if $\tau^{(L)}$ is surjective.  
\end{proposition}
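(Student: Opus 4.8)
The plan is to reduce the statement to the invariance of the algebraic mean dimension under extension of the (algebraically closed) base field, and then invoke the Garden of Eden characterisation of surjectivity from Theorem~\ref{t:main-goe}. Fix a F\o lner net $\FF=(F_i)_{i\in I}$ for $G$. By Theorem~\ref{t:main-goe} applied to $\tau^{(K)}$ over $(G,X,K)$ and to $\tau^{(L)}$ over $(G,X_L,L)$, the proposition will follow once we check that $X_L$ is connected with $\dim(X_L)=\dim(X)$, and that
\[
\mdim_\FF\bigl(\tau^{(K)}(X(K)^G)\bigr)=\mdim_\FF\bigl(\tau^{(L)}(X(L)^G)\bigr).
\]

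For the first item I would argue as follows. Since $X$ is a $K$-scheme of finite type and $K$ is algebraically closed, connectedness of $X$ is the same as geometric connectedness, so $X_L$ is connected for every field extension $L/K$; and the Krull dimension of a finite-type scheme over a field is unchanged by base-field extension, so $\dim(X_L)=\dim(X)$. This also confirms that $\tau^{(L)}$ lies in the scope of Theorem~\ref{t:main-goe}.

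For the equality of mean dimensions, fix a memory set $M$ of $\tau^{(K)}$ and a homomorphism of $K$-algebraic groups $f\colon X^M\to X$ inducing its local defining map; then $\tau^{(L)}$ is induced by $f^{(L)}$ over the same memory set $M$. For a finite subset $\Omega\subset G$, let $f^+_\Omega\colon X^{\Omega^+}\to X^\Omega$ be the homomorphism of Lemma~\ref{l:homo-alg} and $Y_\Omega\coloneqq\im(f^+_\Omega)\subset X^\Omega$ its image, a closed $K$-algebraic subgroup by Theorem~\ref{t:homo-group}. Factor $f^+_\Omega=\iota\circ q$ as in Theorem~\ref{t:image}, with $q\colon X^{\Omega^+}\to Y_\Omega$ a quotient map (faithfully flat, hence surjective) and $\iota\colon Y_\Omega\hookrightarrow X^\Omega$ a closed immersion. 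Base change along $\Spec(L)\to\Spec(K)$ preserves faithful flatness, surjectivity, and closed immersions, and, by the construction of $f^+_\Omega$ via fibered products, $(f^+_\Omega)_L$ is precisely the homomorphism attached to $f^{(L)}$ for $\tau^{(L)}$; hence $\im\bigl((f^+_\Omega)_L\bigr)=(Y_\Omega)_L$. Because $K$ and $L$ are algebraically closed and $q$, $q_L$ are surjective, we get $\bigl(\tau^{(K)}(X(K)^G)\bigr)_\Omega=Y_\Omega(K)$ and $\bigl(\tau^{(L)}(X(L)^G)\bigr)_\Omega=(Y_\Omega)_L(L)$, and therefore
\[
\dim\bigl(\tau^{(K)}(X(K)^G)_\Omega\bigr)=\dim(Y_\Omega)=\dim\bigl((Y_\Omega)_L\bigr)=\dim\bigl(\tau^{(L)}(X(L)^G)_\Omega\bigr),
\]
again by invariance of dimension under base-field extension. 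Taking $\Omega=F_i$ and passing to the $\limsup$ of $\dim(\,\cdot\,)/|F_i|$ gives the required equality of mean dimensions, and combining this with the first item and Theorem~\ref{t:main-goe} finishes the proof.

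I expect the only genuinely delicate point to be the compatibility of the image construction with the base change $\Spec(L)\to\Spec(K)$ together with the invariance of Krull dimension under extension of the base field; both are classical, the former because faithful flatness and closed immersions are stable under arbitrary base change (so that the factorisation of Theorem~\ref{t:image} base-changes as claimed), the latter a standard fact for finite-type schemes over a field. Everything else is bookkeeping once Theorem~\ref{t:main-goe} is in hand.
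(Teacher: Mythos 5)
Your proof is correct and follows essentially the route the paper intends: it reduces surjectivity on both sides to the condition $\mdim_\FF(\tau(A^G))=\dim(X)$ via Theorem~\ref{t:main-goe} and then checks that the restrictions $\Gamma_{F_i}$ are (the $K$- resp.\ $L$-points of) closed subgroups that correspond under base change, so their Krull dimensions agree. The compatibility of the factorisation quotient-followed-by-closed-immersion with base change, the geometric connectedness of $X$, and the invariance of dimension are exactly the points that need checking, and you handle them correctly.
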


The next proposition shows that for linear cellular automata, 
all the weak notions of pre-injectivity that we have introduced turn out to be equivalent to pre-injectivity. 

\begin{proposition}
\label{p:preinj-lin}
Let $G$ be a group. 
Let $X=\A^n$ ($n\in \N$) be the $n$-dimensional affine space over $K$ and $A\coloneqq X(K)=K^n$. 
Let $\tau \colon A^G \to A^G$ be a $K$-linear cellular automaton. 
Then the properties of T-pre-injectivity for $\tau$ where  
\[
T \in \{(*), (**), (\sbullet[.9]), ( \sbullet[.9] \sbullet[.9]) \}
\] 
are all equivalent and they are equivalent to pre-injectivity of $\tau$. 
\end{proposition}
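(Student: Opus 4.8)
The bulk of the statement is already available from Proposition~\ref{p:pre-injectivity-*}: the four weak notions are mutually equivalent and each is implied by pre-injectivity, so the only substantial point is to recover pre-injectivity itself from one of them, and this is where linearity enters. Concretely, a $K$-linear cellular automaton $\tau\colon A^G\to A^G$ with $A=K^n$ is an algebraic group cellular automaton over $(G,\G_a^n,K)$ with neutral element $e=0$, so Proposition~\ref{p:pre-injectivity-*} applies: pre-injectivity implies $(\sbullet[.9])$- and $( \sbullet[.9] \sbullet[.9]) $-pre-injectivity (part (i)); $(\sbullet[.9])\Leftrightarrow(*)$ (part (ii)); $( \sbullet[.9] \sbullet[.9]) \Leftrightarrow(**)$ (part (iii)); and, since $X=\A^n$ is irreducible and hence connected, $(\sbullet[.9])\Leftrightarrow( \sbullet[.9] \sbullet[.9]) $ (part (iv)). It therefore suffices to prove that $( \sbullet[.9] \sbullet[.9]) $-pre-injectivity implies pre-injectivity; combined with part (i), this makes pre-injectivity, $(*)$-, $(**)$-, $(\sbullet[.9])$- and $( \sbullet[.9] \sbullet[.9]) $-pre-injectivity all coincide.

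I would establish the remaining implication by contraposition. Suppose $\tau$ is not pre-injective: there are almost equal configurations $c_1\neq c_2$ with $\tau(c_1)=\tau(c_2)$. By $K$-linearity, $c\coloneqq c_1-c_2$ is a nonzero configuration of finite support $\Sigma\coloneqq\{g\in G:c(g)\neq 0\}\neq\varnothing$ with $\tau(c)=0$. Pick a finite subset $\Omega\subset G$ containing $\Sigma$ and put $u\coloneqq c\vert_\Omega\in A^\Omega$; then $u\neq e^\Omega$ and $c$ agrees with $u$ on $\Omega$ and with $e^{G\setminus\Omega}$ elsewhere, i.e. $c=u_e$ in the notation of Definition~\ref{d:weak-pre}. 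Now $(A^\Omega)_e\subset A^G$ is a $K$-linear subspace of $K$-dimension $n|\Omega|$; by linearity of $\tau$ the map $v\mapsto\tau(v_e)$ is a $K$-linear map $A^\Omega\to A^G$ with $\tau(u_e)=\tau(c)=0$ in its kernel; and its image $\tau((A^\Omega)_e)$ is a $K$-linear subspace of $A^G$ of finite support (cf. the remark preceding Definition~\ref{d:weak-pre}), whose Krull dimension therefore coincides with its $K$-dimension. Hence
\[
\dim\bigl(\tau((A^\Omega)_e)\bigr)=n|\Omega|-\dim_K\{v\in A^\Omega:\tau(v_e)=0\}\leq n|\Omega|-1<n|\Omega|=\dim(A^\Omega),
\]
so $\tau$ is not $( \sbullet[.9] \sbullet[.9]) $-pre-injective.

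The only delicate step is this last dimension count, and it is exactly where linearity is indispensable. For a general $K$-algebraic group $X$, the equality $\tau(c_1)=\tau(c_2)$ for almost equal $c_1,c_2$ only produces two distinct points of $A^\Omega$ with the same image under the associated morphism of varieties, which forces no drop of dimension; linearity, by contrast, replaces the pair $(c_1,c_2)$ with a single nonzero finitely supported configuration annihilated by $\tau$, making the kernel of $v\mapsto\tau(v_e)$ positive-dimensional. This is precisely why over general algebraic groups the weak notions can be strictly weaker than pre-injectivity, whereas for linear $\tau$ they all collapse to it.
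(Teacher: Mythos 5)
Your proof is correct and follows essentially the same route as the paper: both first invoke Proposition~\ref{p:pre-injectivity-*} (viewing $\tau$ as an algebraic group cellular automaton over the connected group $\G_a^n$) to identify the four weak notions, and then argue by contraposition that failure of pre-injectivity violates one of them. The only difference is that where the paper delegates this last step to the argument of \cite[Example~8.6]{cscp-alg-goe} (contradicting $(*)$-pre-injectivity), you prove it directly by a rank--nullity dimension count contradicting $(\sbullet[.9]\sbullet[.9])$-pre-injectivity; both are valid since the equivalence of the four notions has already been established.
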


\begin{proof} 
We first remark that $\tau$ is an algebraic group cellular automaton over $(G, X ,K)$ 
where $X$ is considered as the additive algebraic group $\G_a^n$ which is connected. 
Hence, it follows immediately from Proposition~\ref{p:pre-injectivity-*}.(ii)-(iv) that 
the properties of T-pre-injectivity, for $T\in \{(*), (**), (\sbullet[.9]), ( \sbullet[.9] \sbullet[.9]) \}$, are all equivalent. 
\par
Suppose now that $\tau$ is not pre-injective. 
Then the argument given in \cite[Example~8.6]{cscp-alg-goe} shows that $\tau$ is not $(*)$-pre-injective.  
As pre-injectivity implies $(\sbullet[.9])$-pre-injectivity 
by Proposition~\ref{p:pre-injectivity-*}.(i), the proof of Proposition \ref{p:preinj-lin} is completed. 
\end{proof}

\section{Kaplansky's direct finiteness conjecture}
\label{s:kap-finite}
Let $G$ be a group. 
Let $(X,e)$ be an algebraic group over an algebraically closed field $K$. 
Denote by $A=X(K)$ the set of $K$-points of $X$. 
Denote by $R\coloneqq \End_K(X)$ the set of endomorphisms of $X$. 
Then $R$ is naturally a monoid with the law induced by the composition of endomorphisms. 
The identity map is the unit of this monoid. 
\par
If $X$ is a commutative algebraic group, then $R$ is an abelian group for pointwise addition 
and this makes $R$ a ring together with the monoid structure described above.
For example, 
if $X=\G_a$ is the additive group over $K=\overline{\F_p}$ 
where $p$ is a prime number,  then $R=K[T; F]$ is the skewed polynomial ring 
associated to the Frobenius morphism of $K$ 
with the twisted multiplication $T^na\coloneqq F^n(a)T^n=a^{p^n}T^n$. 

\subsection{Group rings and algebraic group cellular automata} 
We assume that $X$ is a commutative algebraic group over $K$ so that $R$ is a ring. 
We can use the symbol $\sum$ everywhere for the (commutative) additions on $X$ or $A$. 
We begin with an immediate observation. 

\begin{lemma}
$A^G$ is canonically an $R[G]$-module compatible with the $G$-shift 
and the natural action of $R$ on $A^G$ as follows: 
for all $\alpha= \sum_{h \in G} \alpha(h)h \in R[G]$ and $x\in A^G$, 
\begin{equation}
\label{e:R[G]-module}
(\alpha x) (g) \coloneqq \sum_{h\in G} \alpha(h) x(h^{-1} g) \quad \text{for all } g \in G.  
\end{equation}
\end{lemma}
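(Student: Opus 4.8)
The plan is to verify by a direct (and short) computation that the formula \eqref{e:R[G]-module} satisfies the axioms of a left $R[G]$-module, all sums occurring below being finite since each element of $R[G]$ has finite support. First I would recall why $A=X(K)$ is a left $R$-module: because $X$ is commutative, the group law $m$ endows $A$ with an abelian group structure with addition $m^{(K)}$, and each $\varphi\in R=\End_K(X)$ induces by composition with closed points a group endomorphism $\varphi^{(K)}\colon A\to A$. Functoriality of the passage to $K$-points gives $(\varphi\circ\psi)^{(K)}=\varphi^{(K)}\circ\psi^{(K)}$ and $\Id_X^{(K)}=\Id_A$, while the pointwise definition of the sum of two endomorphisms of $X$ (the diagonal composed with $\varphi\times\psi$ and then with $m$), together with the fact that $m^{(K)}$ is exactly the group law of $A$, gives $(\varphi+\psi)^{(K)}=\varphi^{(K)}+\psi^{(K)}$. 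Hence $\varphi\mapsto\varphi^{(K)}$ is a ring homomorphism $R\to\End_{\Z}(A)$; from now on I write $\varphi a$ for $\varphi^{(K)}(a)$.

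Next I would check the module axioms for \eqref{e:R[G]-module}, writing $\alpha\star\beta$ for the product in $R[G]$, so that $(\alpha\star\beta)(h)=\sum_{h_1h_2=h}\alpha(h_1)\beta(h_2)$. Additivity in $\alpha$ follows from additivity of the $R$-action on $A$, and additivity in $x$ from the fact that each $\alpha(h)$ acts on $A$ as a group homomorphism. For compatibility with the ring multiplication, a reindexing of the finite double sum gives
\begin{align*}
((\alpha\star\beta)x)(g) &= \sum_{h_1,h_2}\bigl(\alpha(h_1)\beta(h_2)\bigr)\,x\bigl(h_2^{-1}h_1^{-1}g\bigr) \\
&= \sum_{h_1}\alpha(h_1)\Bigl(\sum_{h_2}\beta(h_2)\,x\bigl(h_2^{-1}h_1^{-1}g\bigr)\Bigr) = \bigl(\alpha(\beta x)\bigr)(g),
\end{align*}
where the middle equality uses precisely that the product in $R$ is composition of endomorphisms, i.e. $(\varphi\psi)a=\varphi(\psi a)$ for all $a\in A$. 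Finally $1_{R[G]}=\Id_X\cdot 1_G$ acts by $(1_{R[G]}x)(g)=\Id_X\cdot x(g)=x(g)$, so $A^G$ is a left $R[G]$-module.

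To conclude I would record the two compatibility statements. Under the embedding $G\hookrightarrow R[G]$, $g\mapsto\Id_X\cdot g$, the formula gives $((\Id_X\cdot g)x)(h)=x(g^{-1}h)=(gx)(h)$, so the restriction of the $R[G]$-action to $G$ is the Bernoulli shift; under the embedding $R\hookrightarrow R[G]$, $\varphi\mapsto\varphi\cdot 1_G$, it gives $((\varphi\cdot 1_G)x)(g)=\varphi(x(g))$, the pointwise action of $R$ on $A^G$. Since $R[G]$ is generated as a ring by these copies of $R$ and of $G$, the $R[G]$-module structure above is the unique one extending both actions, which is the assertion. The hard part, to the extent there is one, will be nothing more than the bookkeeping around $K$-points — in particular checking that the additive structure of $R$, which exists only because $X$ is commutative, is transported by $(-)^{(K)}$ to the group structure of $A$ — together with the observation that all sums in sight are finite, guaranteed by the finite support of elements of $R[G]$.
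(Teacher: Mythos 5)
Your verification is correct and is precisely the direct computation the paper has in mind: the paper simply states ``The proof is straightforward and is omitted,'' and your write-up supplies that straightforward check (the ring homomorphism $R\to\End_{\Z}(A)$ via $K$-points, the reindexed double sum for associativity, and the two compatibility statements) without any gaps. Nothing further is needed.
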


\begin{proof}
The proof is straightforward and is omitted. 
\end{proof}

We define $A[G] \subset A^G$ to be the subset consisting of all configurations with finite support. 
For every $z \in A$, we define an element $c_z \in A[G]$ support on $1_G$ by setting
\[   
c_z(g) \coloneqq 
     \begin{cases}
       z &\quad\text{if } g=1_G;  \\
       e & \quad\text{otherwise.} \\ 
     \end{cases}
\]
With each $\alpha \in R[G]$, we associate a map $\tau_\alpha \colon A^G \to A^G$ as follows: 
\begin{equation}
\tau_\alpha (x) (g) \coloneqq \sum_{h\in G} \alpha(h) (x(gh)), \text{ for all } x \in A^G, g \in G.   
\end{equation}

\begin{lemma}
The map $\tau_\alpha \colon A^G \to A^G$ is an algebraic group cellular automaton over $(G,X,K)$. 
Moreover, the support of $\alpha$ is the minimal memory set of $\tau_\alpha$. 
\end{lemma}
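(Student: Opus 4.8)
The plan is to take the candidate memory set to be $S \coloneqq \supp(\alpha) \subset G$, which is finite because $\alpha \in R[G]$, and to show two things: that $S$ is a memory set of $\tau_\alpha$ whose local defining map is induced by a homomorphism of $K$-algebraic groups, and that $S$ is contained in every memory set of $\tau_\alpha$. First I would define $\mu \colon A^S \to A$ by $\mu(u) \coloneqq \sum_{h \in S} \alpha(h)(u(h))$. Using $(g^{-1}x)(h) = x(gh)$, a direct substitution into the definition of $\tau_\alpha$ gives $\tau_\alpha(x)(g) = \mu\bigl((g^{-1}x)\vert_S\bigr)$ for all $x \in A^G$ and $g \in G$ (one may replace the sum over $S$ by a sum over $G$ since $\alpha(h) = 0$ for $h \notin S$). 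Hence $\tau_\alpha$ is a cellular automaton over $G$ and $A$ with memory set $S$ and local defining map $\mu$.

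Next I would realize $\mu$ as the map on $K$-points of a homomorphism of algebraic groups. For each $h \in S$ the $h$-th projection $\pi_h \colon X^S \to X$ is a homomorphism of $K$-algebraic groups by Remark~\ref{r:product-group}, hence so is the composite $\alpha(h) \circ \pi_h \colon X^S \to X$. Since $X$ is commutative, the $K$-scheme morphisms $X^S \to X$ that are homomorphisms of algebraic groups form an abelian group under pointwise addition through the group law $m$ of $X$, and I set
\[
f \coloneqq \sum_{h \in S} \alpha(h) \circ \pi_h \colon X^S \longrightarrow X,
\]
a morphism assembled from $m$ and the maps $\alpha(h) \circ \pi_h$ by the universal property of the fibered product; because $X$ is commutative, $f$ is a homomorphism of $K$-algebraic groups. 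Passing to $K$-points, $f^{(K)}(u) = \sum_{h \in S} \alpha(h)^{(K)}(u(h)) = \mu(u)$, so $\mu = f^{(K)}$ and therefore $\tau_\alpha \in CA_{algr}(G,X,K)$.

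Finally, to see that $S$ is the minimal memory set it suffices to show $S \subseteq M'$ for every memory set $M'$ of $\tau_\alpha$. Suppose not and pick $h_0 \in S \setminus M'$. Since $\alpha(h_0) \neq 0$ in $R = \End_K(X)$ and the $K$-points are dense in the reduced $K$-scheme $X$, the induced map $\alpha(h_0)^{(K)} \colon A \to A$ is not identically $e$, so there is $z \in A$ with $\alpha(h_0)(z) \neq e$. Let $x \in A^G$ be equal to $z$ at $h_0$ and to $e$ elsewhere, and let $x_0 \coloneqq e^G$. Then $x\vert_{M'} = x_0\vert_{M'}$ while $\tau_\alpha(x)(1_G) = \alpha(h_0)(z) \neq e = \tau_\alpha(x_0)(1_G)$, contradicting that $\tau_\alpha(c)(1_G)$ depends only on $c\vert_{M'}$. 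Hence $S$ is contained in every memory set, and being itself a memory set it is the minimal one. The only points that are not purely formal are that the pointwise sum $f$ is genuinely a scheme morphism and a group homomorphism — which is exactly where commutativity of $X$ and the universal property of $X^S$ are needed — and the density of $K$-points used in the last step; beyond these I do not expect any obstacle.
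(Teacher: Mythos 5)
Your proof is correct and follows essentially the same route as the paper: you assemble the local defining map from the morphisms $\alpha(h)\circ\pi_h$ and the (commutative) group law of $X$, exactly as the paper does with $f = m_{X^M}\circ\prod_{h}\alpha(h)$, and your minimality argument via a configuration supported at a single point $h_0\in\supp(\alpha)\setminus M'$ together with density of $K$-points is the same idea as the paper's evaluation on the configurations $c_z$. No gaps.
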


\begin{proof}
Let $M\subset G$ denote the support of $\alpha$. 
We define a homomorphism of algebraic groups $f \colon X^M \to X$ by setting 
\[
f= m_{X^M} \circ \prod_{h\in M} \alpha(h) , 
\]
where $m_{X^M} \colon X^M \to X$ is the sum morphism induced by the group law of $X$. 
Note that $m_{X^M}$ is well-defined since $X$ is a commutative algebraic group. 
Let $\mu\coloneqq f^{(K)} \colon A^M \to A$ be the induced map. 
Then it is immediate that 
\begin{equation}
\label{e:mu}
\mu(z) \coloneqq \sum_{h\in M} \alpha(h) (z(h)), \text{ for all } z \in A^M. 
\end{equation}

For every $x \in A^G$ and $g\in G$, 
\begin{align*}
\tau_\alpha(x)(g) & \coloneqq \sum_{h\in G}  \alpha(h) (x(gh)) = \sum_{h\in G}  \alpha(h) (g^{-1}x(h)) \\
& = \sum_{h\in M}  \alpha(h) (g^{-1} x(h)) = \mu((g^{-1}x)\vert_M) & \text{ (by \eqref{e:mu})}
\end{align*}
 
It follows that $\tau_\alpha \colon A^G \to A^G$ is an algebraic group cellular automaton over $(G,X,K)$.
\par
Suppose that $S$ is the minimal memory set of $\tau$. 
Then $S \subset M$ since $M$ is a memory set of $\tau$. 
Let $\nu \colon A^S \to A $ be the corresponding local defining map to $S$, 
which is automatically a homomorphism of groups. 
Then for all $z\in A$ and $g\in G$, we have 
\[
\nu((gc_z)\vert_S)=  \tau_\alpha(c_z)(g^{-1}) = \sum_{h\in G}\alpha(h)(c_z(g^{-1}h))= \alpha(g)(z). 
\]
For every $g\notin S$, the support $gc_z$ is $g$ and thus $\nu((gc_z)\vert_S)=\nu(e^S)=e$. 
Consequently, we have $\alpha(g)(z)=e$ for all $z\in A$ and thus $\alpha(g)=0$ for all $g\notin S$. 
This implies that $M \subset S$ and we conclude that $M=S$ is the minimal memory set of $\tau$. 
\end{proof}

The following proposition proves the first part of Theorem \ref{t:kap-direct-finite}. 

\begin{proposition}
\label{p:iso-ca-gr}
The map $\Psi \colon R[G] \to CA_{algr}(G,X,K)$ given by $\Psi(\alpha) = \tau_\alpha$ 
for all $\alpha \in R[G]$ is an $R$-linear isomorphism of rings. 
\end{proposition}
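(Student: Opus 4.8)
The plan is to verify directly that $\Psi$ is a bijective ring homomorphism that respects the $R$-module structures; well-definedness (i.e. $\Psi(\alpha)\in CA_{algr}(G,X,K)$) is already contained in the preceding lemma. \emph{Additivity} is immediate since addition in $R=\End_K(X)$ is pointwise on $A=X(K)$: for $\alpha,\beta\in R[G]$, $x\in A^G$ and $g\in G$,
\[
\tau_{\alpha+\beta}(x)(g)=\sum_{h\in G}\bigl(\alpha(h)+\beta(h)\bigr)(x(gh))=\sum_{h}\alpha(h)(x(gh))+\sum_{h}\beta(h)(x(gh))=\bigl(\tau_\alpha(x)+\tau_\beta(x)\bigr)(g).
\]
For \emph{multiplicativity}, using that each $\alpha(k)\in R$ is a homomorphism of abstract groups and hence commutes with the finite sums below, I would compute
\[
(\tau_\alpha\circ\tau_\beta)(x)(g)=\sum_{k}\alpha(k)\Bigl(\sum_{h}\beta(h)(x(gkh))\Bigr)=\sum_{k,h}\bigl(\alpha(k)\beta(h)\bigr)(x(gkh)),
\]
and then substitute $\ell=kh$, so that $\sum_k\alpha(k)\beta(k^{-1}\ell)=(\alpha\beta)(\ell)$ is the convolution product of $R[G]$; hence $\tau_\alpha\circ\tau_\beta=\tau_{\alpha\beta}$. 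Since $\Psi(\Id_X\cdot 1_G)=\Id_{A^G}$, the map $\Psi$ is a unital ring homomorphism, and because both $R[G]$ and $CA_{algr}(G,X,K)$ carry the $R$-module structure given by left multiplication by $r\cdot 1_G$ (equivalently, postcomposition by $r$), the $R$-linearity of $\Psi$ then follows formally.

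For \emph{injectivity} I would argue, as in the preceding lemma, by evaluating on the special configurations $c_z$: for $\alpha\in R[G]$, $g\in G$ and $z\in A$ one has $\tau_\alpha(c_z)(g^{-1})=\sum_{h\in G}\alpha(h)(c_z(g^{-1}h))=\alpha(g)(z)$, so $\tau_\alpha=\tau_\beta$ forces $\alpha(g)(z)=\beta(g)(z)$ for all $g\in G$ and all $z\in A$; as $X$ is reduced and $K$ is algebraically closed, the $K$-points $A=X(K)$ are Zariski dense in $X$, whence $\alpha(g)=\beta(g)$ in $R$ for every $g$, and therefore $\alpha=\beta$. (Alternatively, one simply invokes the preceding lemma, which already recovers $\alpha$ from $\tau_\alpha$ by computing the minimal memory set.)

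The main point, and the step requiring the most care, is \emph{surjectivity}. Let $\tau\in CA_{algr}(G,X,K)$, fix a memory set $M$ and a homomorphism of $K$-algebraic groups $f\colon X^M\to X$ with $f^{(K)}$ the local defining map of $\tau$ associated with $M$. For $h\in M$ let $p_h\colon X^M\to X$ be the $h$-th projection and $\iota_h\colon X\to X^M$ the homomorphism inserting an argument in the $h$-th coordinate and the identity $e$ in the others (a closed immersion homomorphism by Theorem~\ref{t:homo-group}). One checks on $T$-points, for every $K$-scheme $T$, that $\sum_{h\in M}\iota_h\circ p_h=\Id_{X^M}$; since $X$ is commutative, $X^M$ is a finite biproduct in the category of commutative $K$-algebraic groups, and since $f$ is a homomorphism it commutes with finite sums of morphisms, so
\[
f=f\circ\Bigl(\sum_{h\in M}\iota_h\circ p_h\Bigr)=\sum_{h\in M}(f\circ\iota_h)\circ p_h .
\]
Define $\alpha\in R[G]$ by $\alpha(h)\coloneqq f\circ\iota_h\in R$ for $h\in M$ and $\alpha(g)\coloneqq 0$ for $g\notin M$. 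Then the displayed decomposition gives $f^{(K)}\bigl((y_h)_{h\in M}\bigr)=\sum_{h\in M}\alpha(h)(y_h)$, and using $(g^{-1}x)(h)=x(gh)$ we obtain
\[
\tau(x)(g)=f^{(K)}\bigl((g^{-1}x)\vert_M\bigr)=\sum_{h\in M}\alpha(h)(x(gh))=\tau_\alpha(x)(g),
\]
so $\tau=\Psi(\alpha)$. Together with injectivity, this shows that $\Psi$ is the asserted $R$-linear isomorphism of rings. The only slightly delicate ingredient is the biproduct identity $\sum_h\iota_h\circ p_h=\Id_{X^M}$ combined with $f\circ\sum_i g_i=\sum_i f\circ g_i$; both are standard facts about homomorphisms of commutative algebraic groups and are verified on the functor of points.
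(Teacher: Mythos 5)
Your proposal is correct and follows essentially the same route as the paper: additivity and multiplicativity by the direct convolution computation, injectivity by evaluating on the configurations $c_z$ and using the density of $A=X(K)$ in the reduced scheme $X$, and surjectivity by decomposing $f\colon X^M\to X$ through the coordinate embeddings $\iota_h$ and setting $\alpha(h)=f\circ\iota_h$. Your only addition is to spell out, via the biproduct identity $\sum_h\iota_h\circ p_h=\Id_{X^M}$ on the functor of points, the step the paper dismisses with ``it follows immediately,'' which is a welcome clarification but not a different argument.
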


\begin{proof}
We first show that $\Psi$ is injective. 
Indeed, if $\tau_\alpha=0$ for some $\alpha \in R[G]$ then for every $z \in A$ and $g\in G$, we find that 
\[
e= \tau_\alpha(c_z)(g^{-1}) = \sum_{h\in G}\alpha(h)(c_z(g^{-1}h))= \alpha(g)(z). 
\]
Therefore, as $z \in A$ is arbitrary and $A$ is dense in $X$, we obtain $\alpha(g)=0$ for all $g \in G$. 
It follows that $\alpha=0$ and thus $\Psi$ is injective. 
\par
Now to show that $\tau$ is surjective, let $\tau \in CA_{algr}(G,X,K)$ with memory set $M$ 
and the associated local defining map $\mu \colon A^M \to A$ induced 
by a homomorphism of algebraic groups $f\colon X^M \to X$. 
For every $h\in M$, consider the endomorphism $\gamma_h \colon X^{\{h\}} \to X$ 
given by the composition of $f$ with the embedding $(\Id_{X^{\{h\}}} \times e^{M \setminus \{h\}})\colon X^{\{h\}} \to X^M$. 
Setting $\alpha_h\coloneqq \gamma_h^{(K)} \colon A \to A$, it follows immediately that 
\[
\mu(z) = \sum_{h \in M} \alpha_h(z(h)), \text{ for all } z \in A^M. 
\]
Define $\alpha\coloneqq \sum_{h\in M} \gamma_h h \in R[G]$. 
Then we find that 
\[
\tau(x)(g)= \mu ((g^{-1}x)\vert_M)= \sum_{h\in M} \alpha_h (g^{-1}x(h))= \sum_{h \in M} \alpha_h (x(gh)) \eqqcolon \tau_\alpha(x)(g), 
\]
for all $x\in A^G$ and $g\in G$. 
We deduce that $\tau= \tau_\alpha$ as desired. 
\par
Let $u\in R$ and $\alpha, \beta \in R[G]$, we deduce easily from the commutativity of $R$ that 
\[
\tau_{\alpha + \beta}= \tau_\alpha +\tau_\beta, \quad \text{and} \quad \tau_{u\alpha} = u\tau_\alpha. 
\] 
In particular, we have $\Psi(\alpha + \beta)= \Psi(\alpha) + \Psi(\beta)$. 
We verify now that $\Psi$ is a ring homomorphism. 
Let $x\in A^G$ and denote $y= \tau_\beta(x) \in A^G$. 
Then for all $g \in G$, we find  
\begin{align*}
\tau_\alpha (\tau_\beta(x)))(g) & = \tau_\alpha(y)(g) \coloneqq \sum_{h \in G} \alpha(h) (y(gh)))\\
&= \sum_{h \in H} \alpha(h) \left( \sum_{k \in G} \beta(k) (x(ghk)) \right) \\
&= \sum_{h \in G} \alpha(h) \left(  \sum_{t=hk \in G} \beta(h^{-1}t) (x(gt))     \right) \\
&= \sum_{t\in G} \left(  \sum_{h \in G} \alpha(h) \beta(h^{-1}t)  \right) (x(gt)) \\
&= \sum_{t \in G} (\alpha \beta) (t) (x(gt)) = \tau_{\alpha \beta}(x)(g).
\end{align*}
Therefore, $\tau_\alpha \circ \tau_\beta = \tau_{\alpha \beta}$ and thus $\Psi(\alpha \beta)= \Psi(\alpha) \Psi(\beta)$ for all $\alpha, \beta \in R[G]$. 
As clearly $\Psi(1_{R[G]})=\Id_{A^G}$, we conclude that $\Psi$ is an isomorphism of rings. 
\end{proof}

\subsection{Application to Kaplansky's direct finiteness conjecture}
In this section, we will complete the proof of Theorem \ref{t:kap-direct-finite}. 
We begin with two easy lemmata. 

\begin{lemma}
\label{l:mat-1}
Let $R$ be a ring and let $G$ be a group. 
Then, for every $n \in \N$, we have a canonical ring isomorphism 
\begin{align*}
T \colon \Mat_n(R) [G] & \to \Mat_n(R[G]) \\
\sum_{g\in G} A(g) g & \mapsto \left( \sum_{g\in G} A(g)_{ij} g \right)_{1 \leq i,j \leq n}.   
\end{align*} 

\end{lemma}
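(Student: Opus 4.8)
\textbf{Proof plan for Lemma~\ref{l:mat-1}.}

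The plan is to check directly that the map $T$ is a well-defined, bijective ring homomorphism. First I would verify that $T$ is well-defined: an element of $\Mat_n(R)[G]$ is a finitely supported function $G \to \Mat_n(R)$, so writing it as $\sum_{g \in G} A(g) g$ with $A(g) \in \Mat_n(R)$ and only finitely many $A(g) \neq 0$, the entrywise assignment $\left(\sum_{g \in G} A(g)_{ij} g\right)_{1 \leq i,j \leq n}$ is a matrix whose entries are finitely supported functions $G \to R$, i.e. elements of $R[G]$. So $T$ lands in $\Mat_n(R[G])$. Additivity of $T$ is immediate since all the operations in sight are entrywise and coefficientwise. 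Bijectivity is clear because the inverse map is the obvious one: given a matrix $(\beta_{ij}) \in \Mat_n(R[G])$ with $\beta_{ij} = \sum_{g \in G} b_{ij}(g) g$, send it to $\sum_{g \in G} \bigl(b_{ij}(g)\bigr)_{ij}\, g$; the finiteness of supports of the $\beta_{ij}$ guarantees this is a finite sum, and the two maps are visibly mutually inverse. That $T$ sends the identity $1_{\Mat_n(R)} \cdot 1_G$ to the identity matrix of $\Mat_n(R[G])$ is also immediate.

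The one genuinely computational point — and the only step that needs care — is multiplicativity: $T(\xi \eta) = T(\xi)T(\eta)$ for $\xi = \sum_g A(g) g$ and $\eta = \sum_h B(h) h$ in $\Mat_n(R)[G]$. On the one hand, the product in $\Mat_n(R)[G]$ is $\xi\eta = \sum_{t \in G} C(t)\, t$ where $C(t) = \sum_{g \in G} A(g) B(g^{-1}t)$, the sum being a matrix product over $\Mat_n(R)$; so the $(i,j)$ entry of $T(\xi\eta)$ is $\sum_{t \in G} C(t)_{ij}\, t = \sum_{t \in G} \sum_{g \in G} \sum_{k=1}^n A(g)_{ik} B(g^{-1}t)_{kj}\, t$. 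On the other hand, the $(i,j)$ entry of $T(\xi)T(\eta)$ is $\sum_{k=1}^n \left(\sum_{g} A(g)_{ik} g\right)\left(\sum_{h} B(h)_{kj} h\right) = \sum_{k=1}^n \sum_{t \in G} \left(\sum_{g \in G} A(g)_{ik} B(g^{-1}t)_{kj}\right) t$, using the definition of the convolution product in $R[G]$ (note $R$ need not be commutative, so one must keep the factors $A(g)_{ik}$ on the left of $B(g^{-1}t)_{kj}$, which is exactly how matrix multiplication in $\Mat_n(R)$ places them). Comparing the two expressions, they agree after interchanging the finite sums over $k$ and over $t$. Hence $T$ is a ring isomorphism.

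I do not anticipate a real obstacle here: the lemma is a standard bookkeeping identity identifying two descriptions of the same object, and the only thing to watch is the ordering of ring elements so that the argument is valid over a non-commutative base ring $R$ (which matters, since $R = \End_K(X)$ is typically non-commutative). I would present the multiplicativity computation as the displayed chain above and leave the routine verifications of additivity, bijectivity, and preservation of the unit to a single remark.
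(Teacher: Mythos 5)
Your proposal is correct and follows essentially the same route as the paper: the paper also dispatches well-definedness, additivity, bijectivity and the unit in one line, and verifies multiplicativity by reducing (via additivity) to products of monomials $Ag\cdot Bh$, which is just the monomial case of your convolution computation. Your explicit care with the order of the factors $A(g)_{ik}B(g^{-1}t)_{kj}$ over the non-commutative base is exactly the point the paper summarizes as ``true by the definition of matrix multiplication.''
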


\begin{proof}
Note that $T(\Id_{n} 1_G) =\Id_n$. 
It is clear that $T$ is an isomorphism of groups. 
Hence, we only need to check that for all $g,h \in G$ and $A, B \in \Mat_n (R)$, 
we have $T(Ag.Bh)=T(Ag)T(Bh)$. 
But this is true by the definition of matrix multiplication. 
\end{proof}

\begin{lemma}
\label{l:mat}
For any nonempty finite set $M$  of cardinal $m\in \N$ and a commutative $K$-algebraic group $X$, 
we have a canonical isomorphism of rings: 
\[
\End_K(X^M)= \M_m(\End(X)). 
\]
\end{lemma}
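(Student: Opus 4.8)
The plan is to exhibit the isomorphism $\End_K(X^M) \cong \M_m(\End_K(X))$ in the most natural way: a homomorphism $\varphi \colon X^M \to X^M$ of algebraic groups should be packaged by its ``matrix of components''. Concretely, fix an enumeration of $M$ so that $X^M = \prod_{j \in M} X_{(j)}$. For each $j \in M$ let $\iota_j \colon X \to X^M$ be the closed immersion homomorphism $\iota_j = (\delta_{jk} \operatorname{Id}_X)_{k \in M}$ (identity in the $j$-th slot, $e$ elsewhere), and let $p_i \colon X^M \to X$ be the $i$-th projection; both are homomorphisms of algebraic groups by Remark~\ref{r:product-group}. To a homomorphism $\varphi \colon X^M \to X^M$ I associate the matrix $T(\varphi) \coloneqq \big(p_i \circ \varphi \circ \iota_j\big)_{i,j \in M} \in \M_m(\End_K(X))$. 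In the reverse direction, to a matrix $(\psi_{ij})$ I associate the homomorphism whose $i$-th component is $X^M \xrightarrow{} X$, $\;x \mapsto \sum_{j \in M} \psi_{ij}(p_j(x))$ — this is a homomorphism because $X$ is commutative (so the sum morphism $m_{X^M}\colon X^M \to X$ is a group homomorphism) and because compositions and products of homomorphisms are homomorphisms, again by Remark~\ref{r:product-group}.

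The key steps, in order, are: (1) check $T$ is well-defined and additive — additivity is pointwise and immediate since $\End_K(X)$ is a ring under pointwise addition and composition distributes over the commutative group law; (2) check that the two assignments are mutually inverse. For this one uses the standard ``biproduct'' identities in the additive category of commutative $K$-algebraic groups: $p_i \circ \iota_j = \delta_{ij}\operatorname{Id}_X$ and $\sum_{j \in M} \iota_j \circ p_j = \operatorname{Id}_{X^M}$. The first is obvious from the definitions of $\iota_j$ and $p_i$; the second says that a homomorphism into $X^M$ is determined by its components, which holds because $X^M$ is the product. Granting these, a direct computation shows $T$ followed by the reverse map is the identity on $\End_K(X^M)$, and conversely. (3) Check multiplicativity: $T(\varphi \circ \varphi') = T(\varphi)\,T(\varphi')$. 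Using the resolution of identity, $p_i \circ \varphi \circ \varphi' \circ \iota_j = p_i \circ \varphi \circ \big(\sum_k \iota_k \circ p_k\big) \circ \varphi' \circ \iota_j = \sum_k (p_i \circ \varphi \circ \iota_k)\circ(p_k \circ \varphi' \circ \iota_j)$, which is exactly the $(i,j)$ entry of the matrix product. (4) Note $T(\operatorname{Id}_{X^M})$ is the identity matrix, again by $p_i \circ \iota_j = \delta_{ij}\operatorname{Id}_X$. Together these give that $T$ is a ring isomorphism.

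The main obstacle — really the only subtlety — is justifying that the category of commutative $K$-algebraic groups is additive in the relevant sense, i.e. that finite products are also coproducts and that $\sum_j \iota_j \circ p_j = \operatorname{Id}$. This is where commutativity of $X$ is used essentially: without it the ``sum'' of several homomorphisms into $X$ need not be a homomorphism, so the reverse map would not even be defined. This is standard (it is the same fact that underlies the ring structure on $\End_K(X)$ for commutative $X$ recalled at the start of Section~\ref{s:kap-finite}), so I would simply invoke Remark~\ref{r:product-group} for the building-block statements and assemble them, rather than re-proving the additive-category formalism. Everything else is a formal diagram chase with the biproduct identities, so I would state the two identities $p_i \circ \iota_j = \delta_{ij}\operatorname{Id}_X$ and $\sum_{j\in M}\iota_j\circ p_j = \operatorname{Id}_{X^M}$ explicitly and let the computations for well-definedness, bijectivity, and multiplicativity follow in a couple of lines each.
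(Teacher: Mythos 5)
Your proof is correct and follows essentially the same route as the paper: both decompose an endomorphism of $X^M$ into its $m\times m$ matrix of components (using commutativity of $X$, i.e.\ the biproduct structure on finite products of commutative algebraic groups) and verify that composition corresponds to matrix multiplication. The only cosmetic difference is the indexing convention — the paper records the component $X^{\{i\}}\to X^{\{j\}}$ in position $(i,j)$ and therefore must pass to the transpose matrix, whereas your convention $\bigl(p_i\circ\varphi\circ\iota_j\bigr)_{i,j}$ matches matrix multiplication directly.
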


\begin{proof}
We can suppose that $M=\{1,\dots,m\}$. 
By the universal property of fibered products and the commutativity of $X$, 
an endomorphism of algebraic groups $\alpha \colon X^M \to X^M$ 
is the data of $m^2$ endomorphisms of algebraic groups 
$\alpha_{ij} \colon X^{\{i\}} \to X^{\{j\}}$ for $1 \leq i, j \leq m$. 
\par 
Define a bijective homomorphism of additive groups 
$\Pi \colon \End_K(X^M) \to \M_m(\End(X))$ by setting for every 
$\alpha=(\alpha_{ij})_{1 \leq i, j \leq m} \in \End(X^M)$ the tranpose matrix
\[
\Pi(\alpha)= \transp{\alpha} \in \M_m(\End(X)).
\] 
For all $\alpha, \beta \in \End_K(X^M)$, 
let $\alpha \circ \beta=(\gamma_{ij})_{1 \leq i, j \leq m} \in \End_K(X^M)$. 
Every $\gamma_{ij} \colon X^{\{i\}} \to  X^{\{j\}}$ is then just the sum of the endomorphisms 
$\alpha_{kj} \beta_{ik} \colon X^{\{i\}}\to X^{\{k\}} \to X^{\{j\}}$ for $1\leq k \leq m$:
\[
\gamma_{ij}= \sum_{k=1}^m \alpha_{kj} \beta_{ik}. 
\]
It follows that 
$\Pi(\alpha \circ \beta)=\Pi(\alpha)  \Pi(\beta)$ and $\Pi$ is indeed an isomorphism of rings. 
\end{proof}

\begin{corollary}
\label{c:kap-direct-finite}
Let $G$ be a group and let $X$ be a commutative $K$-algebraic group. 
Let $R= \End_K(X)$ be the endomorphism ring of $X$. 
Then the group ring $R[G]$ is stably finite if one of the following holds.
\begin{enumerate} [\rm (i)]
\item
$G$ is sofic and $X$ is connected;  
\item
$G$ is a locally residually finite group.  
\end{enumerate}
\end{corollary}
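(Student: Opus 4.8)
The plan is to reduce, for each fixed $n$, the direct finiteness of $\Mat_n(R[G])$ to the surjunctivity of algebraic group cellular automata over $(G,X^n,K)$, and then invoke Theorem~\ref{t:sofic-surjunctive} or Theorem~\ref{t:surj-grp}. Since stable finiteness of $R[G]$ means precisely that $\Mat_n(R[G])$ is directly finite for every $n\geq 1$, it suffices to treat a single arbitrary $n$.

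First I would assemble a chain of ring isomorphisms. By Lemma~\ref{l:mat-1} there is a ring isomorphism $\Mat_n(R[G])\simeq \Mat_n(R)[G]$. Applying Lemma~\ref{l:mat} to a set $M$ of cardinality $n$ gives $\Mat_n(R)=\Mat_n(\End_K(X))\simeq \End_K(X^n)$; note that $X^n$ is again a commutative $K$-algebraic group, so its endomorphism set is a ring. Finally, Proposition~\ref{p:iso-ca-gr} (applied to the commutative $K$-algebraic group $X^n$) furnishes a ring isomorphism $\End_K(X^n)[G]\simeq CA_{algr}(G,X^n,K)$. Composing, we get a ring isomorphism $\Mat_n(R[G])\simeq CA_{algr}(G,X^n,K)$, so it is enough to prove that the monoid $CA_{algr}(G,X^n,K)$ is directly finite.

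For this I would use the implication (surjunctivity $\Rightarrow$ direct finiteness) recorded in~\eqref{logical-direct-finite-surjunctive}: if $\sigma,\tau\in CA_{algr}(G,X^n,K)$ satisfy $\sigma\circ\tau=\Id$, then $\tau$ is injective, hence surjective by the relevant surjunctivity statement, hence bijective, and then $\sigma\circ\tau=\Id$ forces $\tau\circ\sigma=\Id$. In case~(ii), the surjunctivity of injective elements of $CA_{algr}(G,X^n,K)$ is exactly Theorem~\ref{t:surj-grp}, which holds for any $K$-algebraic group and any locally residually finite group $G$, with no connectedness hypothesis. In case~(i), I would first observe that $X^n$ is connected: a $K$-algebraic group is connected if and only if it is irreducible, and a fibered product of irreducible $K$-varieties is irreducible, so $X^n$ is irreducible, hence connected; Theorem~\ref{t:sofic-surjunctive} then applies over the sofic group $G$ and gives the required surjunctivity.

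I do not expect a genuine obstacle here: the surjunctivity theorems do all the substantive work, and the remaining content is bookkeeping. The only points that demand care are (a) checking that each isomorphism in the chain is an isomorphism of \emph{rings}, not merely of abelian groups, so that direct finiteness truly transfers along it --- which is precisely what Lemmata~\ref{l:mat-1} and~\ref{l:mat} and Proposition~\ref{p:iso-ca-gr} assert --- and (b) the elementary remark that connectedness of $X$ passes to $X^n$, which is what makes Theorem~\ref{t:sofic-surjunctive} applicable in case~(i).
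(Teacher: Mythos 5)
Your reduction is the same as the paper's: the chain $\Mat_n(R[G])\simeq \Mat_n(R)[G]\simeq \End_K(X^n)[G]\simeq CA_{algr}(G,X^n,K)$ via Lemma~\ref{l:mat-1}, Lemma~\ref{l:mat} and Proposition~\ref{p:iso-ca-gr}, followed by the observation that surjunctivity of $CA_{algr}(G,X^n,K)$ forces its direct finiteness, is exactly how the paper argues, and your remarks that $X^n$ is again commutative and that connectedness passes to $X^n$ are the same bookkeeping the paper performs. Case~(ii) is fine as you present it: Theorem~\ref{t:surj-grp} carries no characteristic or connectedness hypothesis.

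The one point where you diverge from the paper, and where there is a genuine gap, is case~(i). You invoke Theorem~\ref{t:sofic-surjunctive} as a black box, but its proof rests on Proposition~\ref{p:left-inverse}, which is only established under the hypothesis $\car(K)=0$ (the construction of $h_\Omega$ uses the universal property of the quotient by $\Ker(f^+_\Omega)$, and this fails in positive characteristic because kernels of homomorphisms such as Frobenius may be non-reduced, so $X^{\Omega^+}/\Ker(f^+_\Omega)$ need not coincide with the reduced image $Y_\Omega$). Since Corollary~\ref{c:kap-direct-finite} is stated for an arbitrary algebraically closed field, your argument for case~(i) only covers $\car(K)=0$ as written. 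The paper circumvents this by routing through Theorem~\ref{t:inverse-also-near-ring} instead: when a left inverse $\sigma\in CA_{alg}(G,X,K)$ of $\tau$ is already given, the local defining maps of $\sigma$ play the role of the morphisms $h_\Omega$ in the sofic counting argument, so Proposition~\ref{p:left-inverse} --- and with it the restriction $\car(K)=0$ --- is not needed. Your proof is repaired by replacing the appeal to Theorem~\ref{t:sofic-surjunctive} with an appeal to Theorem~\ref{t:inverse-also-near-ring} (or by making the substitution of $\sigma$ for $h_\Omega$ explicit); otherwise the structure of your argument is exactly that of the paper.
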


\begin{proof}
One observes first that the direct finiteness of $R[G]$ follows immediately from 
Proposition \ref{p:iso-ca-gr} and Theorem \ref{t:inverse-also-near-ring}. 
To show that $R[G]$ is stably finite, we have to verifiy that for every $n\in \N$, the ring $\Mat_n(R[G])$ is directly finite. 
Consider the $K$-algebraic group $Y=X^n$. 
Then $Y$ is connected if $X$ is connected since $K$ is algebraically closed. 
By Lemma \ref{l:mat}, we have $\End_K(Y)\simeq \Mat_n(R)$. 
It follows that the ring $\Mat_n(R)[G]$ is directly finite by the above observation. 
This finishes the proof since we have $ \Mat_n(R) [G] \simeq \Mat_n(R[G])$ by Lemma \ref{l:mat-1}. 
\end{proof}

\section{The near ring $R(K,G)$ and generalized Kaplansky's conjectures}
\label{s:near-ring}
\subsection{Definition}

From now on, let $K$ be a ring and let $G$ be a group. 
We describe in this section a natural near ring $R(K,G)$ associated to $K$ an $G$. 
\par
As a group, $( R(K,G), +)$ is identified with the additive group of the polynomial ring 
with coefficients in $K$ in variables indexed by $G$. 
Thus, $(R(K,G),+)=(K[X_g: g\in G],+)$ as groups with the zero polynomial $0$ as neutral element. 
We consider the following set of non-negative integral functions on $G$ with finite support: 
\[
\N^G_c \coloneqq \{ f\colon G \to \N : f(g)=0 \textit{ for all but finitely many } g\in G \}, 
\]
which can be viewed as the semi-ring $\N[G]$.  
We denote the (finite) support of $u\in \N^G_c$ by 
\[
\supp(u)\coloneqq \{ g\in G: u(g)\neq 0 \} \subset G
\]
and define the associated monomial 
\[
\overline{X}^u \coloneqq \prod_{g \in G} X_g^{u(g)}
\]
with the usual identity $X_g^0 =1$. 
Observe that $R(K,G)$ is a free $K$-module with a basis given by the monomials $\overline{X}^u$ for all $u\in \N^G_c$. 
For every $\alpha \in R(K, G)$, we denote also 
\[
\supp(\alpha) \coloneqq \cup_{u \in \N^G_c, \,\alpha(u)\neq 0} \supp(u). 
\] 
There are natural shift actions of the group $G$ on $\N^G_c$ and $R(K,G)$ as follows.  
For all  $g\in G$, $u \in N^G_c$, and $\gamma=\sum_{w \in \N^G_c} \gamma(w) \overline{X}^{w} \in R(K,G)$, 
we define the integral finitely supported function 
\begin{align}
gu \colon G \to \N, \quad (gu)(h)\coloneqq u(g^{-1}h), \quad \text{for every } h \in G, 
\end{align}
and the polynomial 
\begin{align}
 g\gamma \coloneqq \sum_{w \in \N^G_c} \gamma(w) \overline{X}^{gw} \in R(K,G). 
\end{align}

If $u, v \in \N^G_c$ then we define the function $u\star v \in \N^G_c$ by the following convolution formula: 
\begin{align}
\label{e:convolution}
(u \star v) (g) \coloneqq \sum_{h \in G} u(h) v(h^{-1}g), \quad \text{for all } g\in G. 
\end{align}

\begin{lemma}
\label{l:conv-prd}
For all $u, v \in \N^G_c$ and $g\in G$, we have the following properties 
\begin{enumerate} [\rm (i)]
\item
$u\star v=\sum_{h\in G} u(h) (hv)$; 
\item
$\supp(u\star v)=\supp(u)\supp(v)$; 
\item
$\supp(gu)=g\supp(u)$. 
\end{enumerate}
\end{lemma}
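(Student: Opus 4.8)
The plan is to verify the three identities directly from the definitions of the shift action and the convolution product \eqref{e:convolution}, handling (iii) first since it is the simplest and will be reused in the proof of (ii). For (iii), I would simply observe that $h\in\supp(gu)$ if and only if $(gu)(h)=u(g^{-1}h)\neq 0$, i.e.\ if and only if $g^{-1}h\in\supp(u)$; since $k\mapsto gk$ is a bijection of $G$, this is equivalent to $h\in g\supp(u)$, which gives $\supp(gu)=g\supp(u)$.

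For (i), I would fix $g\in G$ and unwind both sides. By \eqref{e:convolution} we have $(u\star v)(g)=\sum_{h\in G}u(h)v(h^{-1}g)$, a finite sum of natural numbers because $u$ is finitely supported; on the other hand, by definition of the shift, $(hv)(g)=v(h^{-1}g)$, so $\bigl(\sum_{h\in G}u(h)(hv)\bigr)(g)=\sum_{h\in G}u(h)v(h^{-1}g)$ as well. Hence the two sides agree pointwise; in particular the right-hand side is finitely supported, so (i) is an identity in $\N^G_c$.

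For (ii), the key point --- and the only place where any care is needed --- is that every function in sight takes values in $\N$, so there is no cancellation: for $\N$-valued functions the support of a finite sum is the union of the supports, and $\supp(nw)=\supp(w)$ whenever $n\in\N$ is nonzero. Feeding (i) and (iii) into this observation gives
\[
\supp(u\star v)=\bigcup_{h\in\supp(u)}\supp\bigl(u(h)(hv)\bigr)=\bigcup_{h\in\supp(u)}\supp(hv)=\bigcup_{h\in\supp(u)}h\supp(v)=\supp(u)\supp(v).
\]
Equivalently, one can argue the two inclusions by hand: if $(u\star v)(g)\neq 0$ then some summand $u(h)v(h^{-1}g)$ is nonzero, forcing $g\in\supp(u)\supp(v)$; and conversely, if $g=ab$ with $a\in\supp(u)$ and $b\in\supp(v)$, then the $h=a$ summand equals $u(a)v(b)>0$ while every other summand is $\geq 0$, so $(u\star v)(g)>0$. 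No genuine obstacle arises here; the content of the lemma is precisely that positivity of the coefficients prevents cancellation, which is also what guarantees that $u\star v$ remains finitely supported.
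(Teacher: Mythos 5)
Your proof is correct and follows essentially the same route as the paper: the paper declares (i) and (iii) trivial and proves (ii) by exactly your two-inclusion argument, noting that $\N$-valued coefficients rule out cancellation so that $(u\star v)(g)\geq u(h)v(h^{-1}g)>0$ for any witness $h$. Your additional remarks (spelling out (i) and (iii), and the union-of-supports reformulation of (ii)) are fine but add nothing beyond what the paper's proof already contains.
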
 

\begin{proof}
The points (i) and (iii) are trivial. 
For (ii), remark that the functions $u, v$ have values in $\N$. 
If $u(h), v(h^{-1}g) > 0$ for some $h,g \in G$ then $(u\star v) (g) >0$. 
It follows from \eqref{e:convolution} that $(u\star v ) (g) \geq u(h) v(h^{-1}g) >0$ 
and thus $\supp(u\star v) \supset \supp(u)\supp(v)$. 
Suppose now that $g \in \supp(u\star v)$. 
Then \eqref{e:convolution} implies that there exist $h \in G$ such that $u(h), v(h^{-1}g)>0$. 
This proves the other inclusion and thus $\supp(u\star v) = \supp(u)\supp(v)$. 
\end{proof}

Suppose that 
$\alpha=\sum_{u \in \N^G_c} \alpha(u) \overline{X}^{u}$ and 
$\beta=\sum_{v \in \N^G_c} \beta(v) \overline{X}^{v}$ are two elements in $R(K,G)$. 
Note that $\alpha(u)=0$ and $\beta(v)=0$ for all but finitely many $u,v \in \N^G_c$. 
The multiplication law also denoted by $\star$ on $R(K,G)$ is given as follows. 
First, for each $u\in \N^G_c$, we define 
\begin{align}
\overline{X}^u \star \beta \coloneqq \prod_{g \in G} \left( g\beta \right)^{u(g)}, 
\end{align}
and then extend by $K$-linearity to obtain $\star \colon R(K,G) \times R(K,G) \to R(K,G)$ given by  
\begin{align*}
\alpha \star \beta & \coloneqq \sum_{u \in \N^G_c} \alpha(u) \overline{X}^u\star\beta \\
& = \sum_{u \in \N^G_c} \alpha(u) \prod_{g \in G} (g\beta)^{u(g)}\\
&=  \sum_{u \in \N^G_c} \alpha(u) \prod_{g \in G} \left(\sum_{v \in  \N^G_c}\beta(v) \overline{X}^{gv} \right)^{u(g)}. \numberthis \label{e:def-nr}
\end{align*}

\begin{remark} 
\label{r:1*0=1}
It follows immediately from the above definition that for all $\alpha \in R(K,G)$,  
\begin{enumerate} [\rm (i)]
\item
$\alpha \star 1= \sum_{u \in \N^G_c} \alpha(u)$; 
\item
$1 \star \alpha= 1$. 
\end{enumerate}

\end{remark}

\begin{example}
Let $g,g',h,h' \in G$ and 
$\alpha = X_g^3X_{h}$ + 1, $\beta = X_{g'}^2 - X_{h'}^2$. 
Then   
\[
\alpha \star \beta = (X_{gg'}^2 - X_{gh'}^2)^3 (X_{hg'}^2 - X_{hh'}^2)+1, 
\]
and 
\[
\beta \star \alpha = (X_{g'g}^3X_{g'h}+1)^2 - (X_{h'g}^3X_{h'h}+1)^2. 
\]
\end{example}

We have the following algebraic structure property of $R(K,G)$. 

\begin{proposition}
Let $K$ be a ring with unit and let $G$ be a group. 
Then $(R(K,G), +, \star )$ is a left near ring, i.e., 
\begin{enumerate}[\rm (i)] 
\item
$(R(K,G), +)\simeq (K[X_g: g \in G],+)$ is a commutative group with neutral element $0$; 
\item
$(R(K,G), \star )$ is a monoid with identity element $X_{1_G}$;
\item
for all $\alpha, \beta, \gamma \in R(K,G)$, we have 
\[
(\alpha +\beta) \star \gamma = \alpha \star \gamma + \beta \star \gamma.  
\]
\end{enumerate}
If the near ring $R(K,G)$ is commutative
then $K$ and $G$ are  commutative as well.  
\end{proposition}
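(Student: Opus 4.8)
The plan is to check the near-ring axioms (i)--(iii) together with the concluding implication essentially by unwinding the definitions; the only point with real content is the associativity of $\star$ hidden inside (ii). Item (i) holds by construction, since $(R(K,G),+)$ is by definition the additive group of the polynomial ring $K[X_g:g\in G]$. Item (iii) is immediate from \eqref{e:def-nr}: the expression $\delta\star\gamma=\sum_{u\in\N^G_c}\delta(u)\,(\overline{X}^u\star\gamma)$ is $K$-linear in the coefficient family $(\delta(u))_u$, and since addition in $R(K,G)$ is coefficientwise we have $(\alpha+\beta)(u)=\alpha(u)+\beta(u)$, so $(\alpha+\beta)\star\gamma=\alpha\star\gamma+\beta\star\gamma$. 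For the identity element in (ii): write $X_g=\overline{X}^{u_g}$, where $u_g\in\N^G_c$ is the indicator function of $\{g\}$; one checks directly that $1_G\cdot w=w$ for all $w\in\N^G_c$ and that $g\cdot u_{1_G}=u_g$, hence $gX_{1_G}=X_g$. Then $X_{1_G}\star\gamma=\prod_{g\in G}(g\gamma)^{u_{1_G}(g)}=1_G\gamma=\gamma$, while $\overline{X}^u\star X_{1_G}=\prod_{g\in G}(gX_{1_G})^{u(g)}=\prod_{g\in G}X_g^{u(g)}=\overline{X}^u$, so $\alpha\star X_{1_G}=\alpha$ by $K$-linearity. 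Thus $X_{1_G}$ is a two-sided identity.

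The heart of the matter is associativity. For $\beta\in R(K,G)$ I would introduce the \emph{substitution endomorphism} $T_\beta$, i.e. the unique $K$-algebra endomorphism of the polynomial ring $\big(R(K,G),+,\cdot\big)$ determined on generators by $X_g\mapsto g\beta$ ($g\in G$); its existence is the universal property of the polynomial ring (for noncommutative $K$ one first fixes the conventions that make the products in \eqref{e:def-nr} unambiguous, which does not affect the argument). Unwinding \eqref{e:def-nr} gives $T_\beta(\alpha)=\alpha\star\beta$ for every $\alpha$. The key computation is the identity $T_\gamma(g\beta)=g(\beta\star\gamma)$ for all $g\in G$: the $G$-shift is a $K$-algebra automorphism of $R(K,G)$ with $(gg')\gamma=g(g'\gamma)$, so
\[
T_\gamma\big(\overline{X}^{gw}\big)=\prod_{g'\in G}\big((gg')\gamma\big)^{w(g')}=g\Big(\prod_{g'\in G}(g'\gamma)^{w(g')}\Big)=g\big(\overline{X}^w\star\gamma\big),
\]
and summing over $w$ against the coefficients of $\beta$ yields the claim. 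Hence $T_\gamma\circ T_\beta$ and $T_{\beta\star\gamma}$ are two $K$-algebra endomorphisms of $R(K,G)$ agreeing on the generators $X_g$, so they coincide; evaluating at an arbitrary $\alpha$ gives $(\alpha\star\beta)\star\gamma=T_\gamma(T_\beta(\alpha))=T_{\beta\star\gamma}(\alpha)=\alpha\star(\beta\star\gamma)$. Combined with the previous paragraph, $(R(K,G),\star)$ is a monoid and $(R(K,G),+,\star)$ is a left near ring. I expect the bookkeeping of shifts versus substitutions to be the only delicate point; everything else is formal.

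Finally, suppose $R(K,G)$ is commutative. From the definition, $X_g\star X_h=gX_h=X_{gh}$, so commutativity forces $X_{gh}=X_{hg}$; since the $X_k$ ($k\in G$) are distinct elements of the free $K$-basis $\{\overline{X}^u:u\in\N^G_c\}$, this gives $gh=hg$, i.e. $G$ is abelian. Likewise, using the left identity, $(aX_{1_G})\star(bX_{1_G})=a\,(bX_{1_G})=(ab)X_{1_G}$ and symmetrically $(bX_{1_G})\star(aX_{1_G})=(ba)X_{1_G}$ for $a,b\in K$; comparing coefficients in the free basis then gives $ab=ba$, so $K$ is commutative as well.
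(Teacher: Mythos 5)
Your proof is correct for commutative $K$ and takes a genuinely different route from the paper on the one nontrivial point, associativity. The paper does not verify associativity directly: it declares the computation ``simple but tedious'' and instead appeals to the isomorphism $\Phi\colon R(K,G)\to CA_{alg}(G,\A^1,K)$ of Theorem~\ref{t:iso-nr-ca}, under which $\star$ becomes composition of maps --- but that isomorphism is only available when $K$ is commutative and satisfies Property $(P)$ (e.g.\ an infinite field). Your argument via the substitution endomorphism $T_\beta$, with the key identity $T_\gamma(g\beta)=g(\beta\star\gamma)$ making $T_\gamma\circ T_\beta$ and $T_{\beta\star\gamma}$ two algebra endomorphisms that agree on generators, is self-contained and works for every commutative ring $K$, so it actually proves more than the paper's sketch does. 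Your treatment of the identity element, of left distributivity, and of the final assertion that commutativity of $R(K,G)$ forces $G$ and $K$ commutative coincides with the paper's.

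The one place where you are too quick is the parenthetical dismissal of noncommutative $K$. The issue is not merely one of ``fixing conventions'': over a noncommutative $K$ the substitution $X_g\mapsto g\beta$ does not extend to a $K$-algebra endomorphism of $K[X_g:g\in G]$ (it would have to send $X_g a=aX_g$ to both $(g\beta)a$ and $a(g\beta)$), so the ``agree on generators'' argument collapses. Worse, associativity itself fails there: already for $G$ trivial, $\star$ is substitution of polynomials in one variable, and taking $\alpha=X_{1_G}^2$, $\beta=bX_{1_G}$, $\gamma=cX_{1_G}$ gives $(\alpha\star\beta)\star\gamma=b^2c^2X_{1_G}^2$ while $\alpha\star(\beta\star\gamma)=(bc)^2X_{1_G}^2$; these differ, e.g., for $b=E_{12}$, $c=E_{21}$ in $\Mat_2(\Q)$. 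So the statement as literally phrased (for an arbitrary unital ring $K$) cannot be proved by any argument; this defect is shared by the paper's own proof, which likewise only covers the commutative case. Since every subsequent use of $R(K,G)$ in the paper assumes $K$ commutative (indeed a domain or a field), your proof covers everything that is actually needed, and I would simply add the hypothesis that $K$ is commutative rather than try to repair the noncommutative case.
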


\begin{proof}
The first three assertions are straightforward from the above definition of $R(K,G)$. 
To verify without explicitly writing out a simple but tedious calculation that the map $\star$ is associative, 
we can use the isomorphism $R(K,G) \simeq CA_{alg}(G,\A^1, K)$ in Theorem \ref{t:iso-nr-ca} 
when $K$ is an infinite field and the obvious corresponding 
associativity property of the composition rule in $CA_{alg}(G,\A^1, K)$.  
For the last assertion, suppose that $R(K,G)$ is commutative. 
Then we have $X_g \star X_h = X_h \star X_g$ for all $g,h \in G$. 
It follows that $X_{gh}=X_{hg}$, i.e., $gh=hg$ for all $g,h \in G$, i.e.,  $G$ is commutative. 
Similarly, $aX_{1_G} \star b X_{1_G} = bX_{1_G} \star aX_{1_G}$ implies that $ab=ba$ for all $a,b \in K$. 
It follows that $K$ is commutative. 
\end{proof}

Note that a near ring may not satisfy the relation $\alpha \star 0=0$. 
In fact, we always have in $R(K,G)$ that $1\star 0=1$ (Remark \ref{r:1*0=1}).  

If $\car(K)=p >0$, let $K[t; F]$ be the twisted polynomial ring whose multiplication is twisted by the Frobenius map: 
\[
Ta\coloneqq a^p T.
\] 
Note that in this case, $K[t;F]$ is exactly the endomorphism ring of the additive group $\G_a=\Spec K[X]$. 
We have the following natural embeddings:  

\begin{proposition}
\label{p:embed-ring}
Let $K$ be a ring and let $G$ be a group. 
We have a canonical embedding of near rings 
\[
K[G] \to R(K,G).
\] 
If $K$ is a ring of characteristic $p>0$, then the above embedding factors 
as a composition of canonical embeddings of near rings
\[
K[G] \to K[t;F][G] \to R(K,G).  
\]
\end{proposition}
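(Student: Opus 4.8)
The plan is to construct the two embeddings explicitly and check that each is compatible with both operations, so that the composition (in positive characteristic) recovers the inclusion $K[G]\hookrightarrow R(K,G)$ defined by sending a group algebra element to a homogeneous degree-one polynomial. First I would describe the map $K[G]\to R(K,G)$: given $\alpha=\sum_{g\in G}\alpha(g)\,g\in K[G]$, send it to the polynomial $\sum_{g\in G}\alpha(g)X_g\in R(K,G)$, i.e.\ to the element of $R(K,G)$ supported on the functions $u\in\N^G_c$ of the form $u=\delta_g$. This is clearly an injective homomorphism of additive groups. To see it respects multiplication, I would compute $X_g\star X_h$ from the definition \eqref{e:def-nr}: since $\overline{X}^{\delta_g}\star\beta=\prod_{k\in G}(k\beta)^{\delta_g(k)}=g\beta$, we get $X_g\star X_h=g(X_h)=X_{gh}$, and then extend by $K$-bilinearity of $\star$ in the first argument together with the left-distributive law (iii) of the preceding proposition; compatibility on the scalar side follows from $aX_{1_G}\star bX_{1_G}=abX_{1_G}$, which one also reads off from \eqref{e:def-nr}. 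The unit $1_G\in K[G]$ maps to $X_{1_G}$, the identity of $(R(K,G),\star)$, so this is a morphism of near rings (in fact it lands in the subring of homogeneous linear polynomials, where $\star$ restricts to an honest ring multiplication).

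Next, when $\car(K)=p>0$, I would recall that $K[t;F]=\End_K(\G_a)$, and exhibit the embedding $K[t;F][G]\to R(K,G)$. An element $t^n\in K[t;F]$ corresponds to the endomorphism $x\mapsto x^{p^n}$ of $\G_a$, so the natural candidate is to send the basis element $t^n g$ of $K[t;F][G]$ to the monomial $X_g^{p^n}=\overline{X}^{p^n\delta_g}\in R(K,G)$, and more generally $\sum_{g}P_g(t)\,g$ (with $P_g(t)=\sum_n a_{g,n}t^n$) to $\sum_g\sum_n a_{g,n}X_g^{p^n}$. Additivity is immediate. For multiplicativity I would again use \eqref{e:def-nr}: $\overline{X}^{p^n\delta_g}\star\overline{X}^{p^m\delta_h}=(g(\overline{X}^{p^m\delta_h}))^{p^n}=(X_{gh}^{p^m})^{p^n}=X_{gh}^{p^{n+m}}$, which matches the product $(t^n g)(t^m h)=t^{n+m}(gh)$ in $K[t;F][G]$; the twisted scalar rule $t\cdot a=a^p t$ is reflected by $(a\,\overline{X}^{p^m\delta_h})^{p^n}=a^{p^n}X_h^{p^{n+m}}$ when raising to the $p^n$-th power inside the $\star$-product, which is exactly the Frobenius twist. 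Injectivity holds because distinct basis elements $t^n g$ map to distinct monomials $X_g^{p^n}$, and $R(K,G)$ is free on the monomials $\overline{X}^u$. Finally, the composition $K[G]\to K[t;F][G]\to R(K,G)$ sends $g\mapsto t^0 g\mapsto X_g^{p^0}=X_g$, which is the first embedding; this is the factorization claimed.

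The main obstacle I anticipate is purely bookkeeping: verifying multiplicativity of the second embedding requires being careful about how $\star$ interacts with raising monomials to $p$-th powers and with the noncommutative twist, since $R(K,G)$ is only a near ring and $\star$ is \emph{not} bilinear on the right. The key point to get right is that $\overline{X}^u\star\beta$ is defined via the product $\prod_{k}(k\beta)^{u(k)}$, so the exponents $u(k)$ act as genuine polynomial powers, and for $u=p^n\delta_g$ this produces exactly a $p^n$-th power of $g\beta$; everything then reduces to the identity $p^n+p^m$ being the exponent of the composite, and to the Frobenius rule $(a\cdot\,)^{p^n}=a^{p^n}(\,\cdot\,)^{p^n}$ matching $t^n a=a^{p^n}t^n$. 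One should also note that only powers of $p$ appear as exponents in the image of $K[t;F][G]$, so the image is closed under $\star$; I would remark that this is consistent with $K[t;F]$ being the endomorphism ring of $\G_a$ and with the near-ring isomorphism \eqref{e:iso-near-ring-ca-alg}, where additive-group endomorphisms of $\A^1$ in characteristic $p$ are precisely the additive ($p$-)polynomials.
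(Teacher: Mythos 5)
Your proposal is correct and follows essentially the same route as the paper: the same two maps ($\sum_g\alpha(g)g\mapsto\sum_g\alpha(g)X_g$ and $t^ng\mapsto X_g^{p^n}$ extended additively), the same reduction of multiplicativity to monomials via $K$-linearity and left-distributivity in the first argument, the same use of the characteristic-$p$ identity to expand $p^n$-th powers of sums when handling the non-right-distributive second argument, and the same injectivity argument from freeness of $R(K,G)$ on monomials. The bookkeeping you flag as the main obstacle is exactly what the paper's proof writes out explicitly with the coefficients $c_s(h)=\sum_{0\le k\le s}a_kb_{s-k}(h)^{p^k}$ of the skew product.
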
 

\begin{proof}
We consider the following map 
\begin{align*}
\iota \colon  K[G] &\to R(K,G) \\
\sum_{g \in G} \alpha(g)g & \mapsto \sum_{g \in G} \alpha(g) X_g . 
\end{align*}
Clearly $\iota$ is well-defined and is a homomorphism of commutative groups. 
Let $\alpha= \sum_{g \in G} \alpha(g)g$ and 
$\beta= \sum_{h \in G} \beta(h)h$ be elements in $K[G]$. 
We check now that $\iota (\alpha \beta) = \iota(\alpha) \star \iota(\beta)$. 
Indeed, 
\begin{align*}
\iota (\alpha \beta) &= \iota\left(\sum_{g,h \in G} 
\alpha(g)\beta(h)gh\right) \\
& \coloneqq \sum_{g,h \in G} \alpha(g) \beta(h) X_{gh}, 
\end{align*}
and on the other hand 
\begin{align*}
\iota(\alpha) \star \iota(\beta) & = \left(\sum_{g \in G} \alpha(g) X_g\right)\star \left(\sum_{h \in G} \beta(h)X_h\right) \\
& \coloneqq \sum_{g\in G} \alpha(g)\left(\sum_{h \in G} \beta(h)X_{gh}\right)\\
&= \sum_{g,h \in G} \alpha(g) \beta(h) X_{gh}. 
\end{align*}

Suppose now that $\iota(\alpha)= \iota
(\beta)$. 
Then we have $\sum_{g \in G} \alpha(g) X_g = \sum_{g \in G} \beta(g) X_g$  
thus $\alpha(g) =\beta(g)$ for all $g\in G$, i.e., $\alpha=\beta$. 
It follows that $\iota$ is an injective homomorphism of near rings. 
\par
Now suppose that $\car(K)=p >0$. For every $g\in G$, $A= \sum_{k\geq 0} a_k t^k \in K[t; F]$, 
denote 
\[
P(A;g)\coloneqq \sum_{k\geq 0} a_k X_g^{p^k} \in R(K,G). 
\]
It follows that $P(A;g)=0$ if and only if $A=0$. 
For $A,B \in F(K)$ and $g \in G$, we have  
\begin{equation}
\label{e:P-add}
P(A+B;g)=P(A;g)+P(B;g). 
\end{equation}
Now consider the map 
\begin{align*}
j \colon K[t; F] [G] & \to R(K,G)\\
\sum_{g \in G} \alpha(g) g &\mapsto \sum_{g\in G} P(\alpha(g); g). 
\end{align*} 
From \eqref{e:P-add}, we find that $j$ is a homomorphism of commutative groups. 
\par
Let $\alpha= \sum_{g \in G} \alpha(g)g$ and 
$\beta= \sum_{h \in G} \beta(h)h$ be elements in $K[t ;F][G]$. 
We check now that $j (\alpha \beta) = j(\alpha) \star j (\beta)$. 
Since $R(K,G)$ is a left near ring and by linearity, 
we can suppose that $\alpha = A g$ for some $g\in G$ and 
$A= \sum_{k\geq 0} a_k t^k \in K[t; F]$. 
We write $\beta(h) = \sum_{r \geq 0} b_r(h) t^r$ for each $h\in G$. 
Let $C(h)=\sum_{s \geq 0}c_s(h) t^s = A \beta(h)$ so that $c_s(h)=\sum_{0\leq k \leq s } a_k b_{s-k}(h)^{p^k}$ for $s\in \N$. 
We see that 
\begin{align*}
j (\alpha \beta) = j\left(\sum_{h\in G}C(h) gh \right)= \sum_{h\in G} \sum_{s\geq 0}c_s(h)X_{gh}^{p^s}. 
\end{align*}
On the other hand, note that since we are in characteristic $p >0$, we find that 
\begin{align*}
j (\alpha) \star j(\beta) 
&= \left( \sum_{k\geq 0} a_k X_{g}^{p^k} \right) \star 
\left( \sum_{h\in G} \sum_{r\geq 0} b_r(h) X_{h}^{p^r} \right) \\
& \coloneqq \sum_{k\geq 0} a_k \left( \sum_{h \in G} \sum_{r\geq 0} b_r(h) X_{gh}^{p^r} \right)^{p^k}\\
&= \sum_{k\geq 0} a_k \sum_{h \in G} \sum_{r\geq 0} b_r(h)^{p^k} X_{gh}^{p^{r+k}}\\
& = \sum_{h\in G} \sum_{s \geq 0} c_sX_{gh}^{p^s}. 
\end{align*}

It follows that $j (\alpha \beta) = j(\alpha) \star j (\beta)$. 
Now suppose that $j(\alpha) =0$ for some $\alpha =\sum_{g\in G} \alpha(g) g \in K[t;F] [G]$. 
Then for all $g \in G$, $P(\alpha(g); g)=0$. 
It follows that $\alpha(g)=0$ for all $g\in G$, i.e., $\alpha=0$. 
We conclude that $j$ is an injective homomorphism of near rings. 
\par
Finally, we see from the constructions that the map $\iota \colon K[G] \to R(K,G)$ 
factors as a composition of the obvious  inclusion of rings
$\theta \colon K[G] \to K[t;F][G]$ followed 
by the embedding $j \colon K[t;F][G] \to R(K,G)$. 
\end{proof}

\subsection{The isomorphism $R(K,G)\simeq CA_{alg}(G, \A^1, K)$} 
In this section, we suppose that $K$ is a commutative ring. 
We will establish in Theorem \ref{t:iso-nr-ca} a point-wise description 
of the near ring $R(K,G)$ in terms of $CA_{alg}(G, \A^1, K)$ 
when the ring $K$ has a certain nice property. 
To each element $\alpha=\sum_{u \in \N^G_c} \alpha(u) \overline{X}^{u}$ in $R(K,G)$, 
we associate a map $\sigma_\alpha \colon A^G \to A^G$ where $A=\A^1(K)=K$ as follows: 
\begin{align}
\label{e:alg-nr} 
\sigma_\alpha (x) (g) 
 \coloneqq \sum_{u \in \N^G_c} \alpha(u) x^{gu} 
 = \sum_{u \in \N^G_c} \alpha(g^{-1}u) x^{u},  
 \quad \text{ for all } x \in A^G, g \in G, 
\end{align}
where $y^{v}\coloneqq  \prod_{h\in G}y(h)^{v(h)}$ for all $y\in A^G$ and $v \in \N_c^G$.  

\begin{lemma}
\label{l:map-phi}
The map $\sigma_\alpha \colon A^G \to A^G$ is an algebraic cellular automaton over $(G,\A^1,K)$. 
Moreover, $\supp(\alpha)$ is the minimal memory set of $\tau_\alpha$. 
In fact, every $\sigma \in CA_{alg}(G, \A^1, K)$ is of the form $\sigma=\sigma_\beta$ for some $\beta \in R(K,G)$. 
\end{lemma}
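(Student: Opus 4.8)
The plan is to produce, for each $\alpha \in R(K,G)$, an explicit $K$-scheme morphism realizing $\sigma_\alpha$ as an algebraic cellular automaton, to read off its minimal memory set, and then to reverse the construction to see that every algebraic cellular automaton over $(G,\A^1,K)$ arises this way. First I would set $M \coloneqq \supp(\alpha)$; this is a finite subset of $G$, and $\supp(u) \subset M$ for every $u \in \N^G_c$ with $\alpha(u)\neq 0$. Re-indexing the variables $X_h \mapsto T_h$ ($h\in M$), the element $\alpha$ becomes a genuine polynomial $P_\alpha \coloneqq \sum_{u} \alpha(u) \prod_{h \in M} T_h^{u(h)} \in K[T_h : h \in M]$, equivalently a $K$-scheme morphism $f_\alpha \colon \A^M = \Spec K[T_h : h \in M] \to \A^1 = \Spec K[T]$ (the $K$-algebra map $T \mapsto P_\alpha$). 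Writing $\mu \coloneqq f_\alpha^{(K)}$, one has $\mu(z)=\sum_u \alpha(u)\prod_{h\in M} z(h)^{u(h)}$ for $z\in A^M=K^M$, and the substitution $(g^{-1}x)(h)=x(gh)$ together with the change of index $h\mapsto gh$ in the definition of $x^{gu}$ gives
\[
\mu\bigl((g^{-1}x)\vert_M\bigr) \;=\; \sum_{u \in \N^G_c} \alpha(u) \prod_{h \in M} x(gh)^{u(h)} \;=\; \sum_{u} \alpha(u)\, x^{gu} \;=\; \sigma_\alpha(x)(g)
\]
for all $x \in A^G$, $g \in G$. Comparing with \eqref{e:alg-nr}, this shows $\sigma_\alpha$ is a cellular automaton with memory set $M$ whose local defining map is induced by $f_\alpha$, hence $\sigma_\alpha \in CA_{alg}(G,\A^1,K)$.

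For the minimal memory set, recall from the general theory of cellular automata (cf.~\cite{ca-and-groups-springer}) that $\sigma_\alpha$ admits a unique minimal memory set $S$ contained in every memory set, so $S \subset M$. If $S \subsetneq M$, pick $h_0 \in M \setminus S$; then $\sigma_\alpha(x)(1_G)$ does not depend on $x(h_0)$, i.e. the function $K^M \to K$ defined by $P_\alpha$ is independent of its $h_0$-th argument. Here I would invoke the standing hypothesis on $K$ (the one under which Theorem~\ref{t:iso-nr-ca} holds: $K[T]$ has no nonzero polynomial vanishing identically as a function $K \to K$, e.g. $K$ an infinite field) to deduce that $P_\alpha$ is already independent of the variable $T_{h_0}$ as a polynomial, hence $h_0 \notin \supp(\alpha) = M$, a contradiction. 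Therefore $S = M = \supp(\alpha)$. This is the one genuinely delicate point: the hypothesis on $K$ is indispensable here, as the element $\alpha = X_{h_1}\bigl(X_{h_0}^2 + X_{h_0}\bigr)$ over $\F_2$ shows $\sigma_\alpha = 0$ while $\supp(\alpha)=\{h_0,h_1\}$.

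For the last assertion, given $\sigma \in CA_{alg}(G,\A^1,K)$ with memory set $M$ and local defining map $\mu = f^{(K)}$ for some $K$-scheme morphism $f \colon \A^M \to \A^1$, the standard functor-of-points description (requiring only that $K$ be commutative) identifies $f$ with the element $P \coloneqq f^{\#}(T) \in K[T_h : h \in M]$. Writing $P = \sum_{u} c_u \prod_{h \in M} T_h^{u(h)}$ (a finite sum over $u \in \N^G_c$ with $\supp(u)\subset M$) and setting $\beta \coloneqq \sum_u c_u \overline{X}^u \in R(K,G)$, we get $f = f_\beta$ after the re-indexing above, so $\mu$ is also a local defining map of $\sigma_\beta$ with memory set $M$. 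Since a cellular automaton is determined by any one of its local defining maps, $\sigma = \sigma_\beta$, which completes the argument. The computations behind the local property are routine bookkeeping with the shift actions $gu$ and $g\gamma$; the main obstacle is precisely the identification of the minimal memory set discussed above.
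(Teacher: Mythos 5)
Your proposal is correct and, for the two main clauses, follows essentially the same route as the paper: the paper likewise turns $\alpha$ into the $K$-algebra map $t\mapsto \sum_u \alpha(u)\overline{X}^u$ (hence a morphism $(\A^1_K)^{\supp(\alpha)}\to\A^1_K$), checks the local property by the same index shift, and for surjectivity reads the polynomial $f^{\#}(T)$ back off from an arbitrary local defining map. The genuine difference is in the minimal-memory-set clause. The paper disposes of it with ``we see easily,'' whereas you supply an actual argument, and in doing so you correctly isolate the point that the claim is \emph{not} a formality: one needs to pass from ``the polynomial function $K^M\to K$ is independent of the $h_0$-th argument'' to ``the polynomial is independent of $T_{h_0}$,'' and that step requires Property $(P)$ (via Lemma~\ref{l:zero-poly}). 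Your counterexample $\alpha=X_{h_1}\bigl(X_{h_0}^2+X_{h_0}\bigr)$ over $\F_2$ is valid: $\sigma_\alpha$ is the zero constant map, with empty minimal memory set, while $\supp(\alpha)=\{h_0,h_1\}$. Since the lemma sits in a subsection where $K$ is only assumed to be a commutative ring (Property $(P)$ is introduced only afterwards, for Theorem~\ref{t:iso-nr-ca}), you have in effect found a missing hypothesis in the statement of that clause; your version, which adds $(P)$, is the correct one. This does not propagate further, since the subsequent results that use the lemma (in particular the injectivity of $\Phi$ in Theorem~\ref{t:iso-nr-ca}) already assume $(P)$ and in any case rely on Lemma~\ref{l:zero-poly} directly rather than on minimality of the memory set.
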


\begin{proof} 
Since $\alpha(u)=0$ for almost all $u\in \N_c^G$, 
it follows that $M\coloneqq \supp(\alpha) \subset G$ is finite. 
Consider the morphism of affine $K$-schemes $f \colon (\A_K^1)^M \to \A_K^1$ 
induced by the morphism of $K$-algebras 
\begin{align*}
K[t] & \to K[X_g: g \in M] \\
t & \mapsto \sum_{u\in \N_c^G} \alpha(u) \overline{X}^u. \numberthis \label{e:alg-nr-1}
\end{align*}
Then it is clear from \eqref{e:alg-nr} and \eqref{e:alg-nr-1} that 
$\sigma_\alpha \in CA_{alg}(G, \A^1, K)$ as it admits $\supp(\alpha)$ as a  minimal memory set 
and the associated local defining map $\mu_\alpha$ is induced by $f$.     
\par
Again, from \eqref{e:alg-nr-1} and the definition of $\supp(\alpha)$, 
we see easily that $\supp(\alpha)$ is the minimal memory set of $\tau$. 
Now suppose that $\sigma \in CA_{alg}(G, \A^1, K)$. 
Then $\sigma$ has a memory set $N\subset G$ 
and the associated local defining map $K^N \to K$ is induced by a polynomial 
$\beta=\sum_{u\in \N_c^G} \beta(u) \overline{X}^u \in K[X_g: g\in N]$ as in \eqref{e:alg-nr-1}. 
It is clear that $\sigma=\sigma_\beta$. 
\end{proof}

A commutative ring $R$ is said to have Property $(P)$ if the polynomial ring $R[T]$ 
has no nonzero polynomial that is identically zero  as a function $R \to R$. 
Any reduced infinite (commutative) ring without idempotents has Property $(P)$ (e.g. any infinite field). 
By an easy classical induction argument on the number of variables, we have: 

\begin{lemma}
\label{l:zero-poly}
If a commutative ring $R$ satisfies Property $(P)$ then for all $n\in \N$, 
the polynomial ring $R[T_1, \dots, T_n]$ has no nonzero polynomial that is identically zero on $R^n$. 
\end{lemma}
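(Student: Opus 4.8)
The plan is to induct on $n$. For $n=0$ the statement merely records that a nonzero constant is a nonzero function $R^{0}\to R$, and for $n=1$ it is exactly Property $(P)$; these serve as the base of the induction. For the inductive step, assume $n\ge 2$ and that the assertion holds for polynomials in $n-1$ variables. Suppose, towards a contradiction, that $P\in R[T_{1},\dots,T_{n}]$ is nonzero while $P(a_{1},\dots,a_{n})=0$ for every $(a_{1},\dots,a_{n})\in R^{n}$.

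First I would collect the powers of $T_{n}$ and write
\[
P=\sum_{i=0}^{d}Q_{i}(T_{1},\dots,T_{n-1})\,T_{n}^{i},\qquad Q_{i}\in R[T_{1},\dots,T_{n-1}],
\]
where at least one $Q_{i}$ is nonzero. Fixing an arbitrary tuple $(a_{1},\dots,a_{n-1})\in R^{n-1}$, I would then look at the one-variable polynomial
\[
P(a_{1},\dots,a_{n-1},T_{n})=\sum_{i=0}^{d}Q_{i}(a_{1},\dots,a_{n-1})\,T_{n}^{i}\in R[T_{n}].
\]
By the assumed identical vanishing of $P$, this polynomial takes the value $0$ at every element of $R$, so Property $(P)$ (the case $n=1$, applied over $R$ itself) forces it to be the zero polynomial of $R[T_{n}]$; hence $Q_{i}(a_{1},\dots,a_{n-1})=0$ for every $i$. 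Since the tuple was arbitrary, each $Q_{i}$ is a polynomial in $n-1$ variables vanishing identically on $R^{n-1}$, and the inductive hypothesis gives $Q_{i}=0$ in $R[T_{1},\dots,T_{n-1}]$ for all $i$. Therefore $P=0$, contradicting the choice of $P$, and the induction is complete.

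There is no real obstacle here: the argument is purely formal. The single point worth a remark is that, although $R[T_{1},\dots,T_{n-1}]$ is not a subring of $R$, each evaluation $Q_{i}(a_{1},\dots,a_{n-1})$ is a bona fide element of $R$, so the one-variable instance of Property $(P)$ applies verbatim to the sliced polynomial $P(a_{1},\dots,a_{n-1},T_{n})$; once this is observed, the reduction from $n$ to $n-1$ variables goes through without difficulty.
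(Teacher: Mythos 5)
Your proof is correct and is precisely the ``easy classical induction argument on the number of variables'' that the paper invokes without writing out: slice off the last variable, apply Property $(P)$ to the resulting one-variable polynomial for each fixed tuple, and conclude by the inductive hypothesis that every coefficient polynomial vanishes. Nothing further is needed.
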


\begin{theorem}
\label{t:iso-nr-ca}
Suppose that $K$ is a commutative ring that satisfies $(P)$.  
Then we have an isomorphism of near rings $\Phi \colon R(K,G) \to CA_{alg}(G,\A^1, K)$ 
given by $\Phi(\alpha) = \sigma_\alpha$ for all $\alpha \in R(K,G)$.  
\end{theorem}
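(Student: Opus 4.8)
The plan is to show that the assignment $\alpha \mapsto \sigma_\alpha$ defined in \eqref{e:alg-nr} is a bijective homomorphism of near rings, using the pointwise description established in Lemma~\ref{l:map-phi} together with Property~$(P)$ to control the failure of injectivity. First I would record that $\Phi$ is well defined and surjective: this is exactly the content of Lemma~\ref{l:map-phi}, which says every $\sigma_\alpha$ lies in $CA_{alg}(G,\A^1,K)$ and conversely every element of $CA_{alg}(G,\A^1,K)$ is of the form $\sigma_\beta$. The substance of the theorem is therefore that $\Phi$ is injective and multiplicative.

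For injectivity, suppose $\sigma_\alpha = \sigma_\beta$, equivalently $\sigma_{\alpha-\beta}=0$ (additivity of $\Phi$ being immediate from \eqref{e:alg-nr}, since $\sigma_\alpha(x)(g)$ is $K$-linear in the coefficient family $(\alpha(u))_u$). Let $\gamma = \alpha-\beta$ with finite support $M = \supp(\gamma)$, and write $\gamma = \sum_{u\in\N_c^G}\gamma(u)\overline X^u$, where the sum is over $u$ with $\supp(u)\subset M$. The condition $\sigma_\gamma = 0$ means that for every $x\in A^G = K^G$ we have $\sum_u \gamma(u)\, x^u = 0$; restricting attention to configurations supported on $M$, this says precisely that the polynomial $\sum_u \gamma(u)\,\overline X^u \in K[X_g : g\in M]$ vanishes identically as a function $K^M \to K$. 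Since $M$ is finite and $K$ satisfies $(P)$, Lemma~\ref{l:zero-poly} forces every coefficient $\gamma(u)$ to be zero, i.e. $\gamma = 0$ and $\alpha = \beta$. Hence $\Phi$ is injective.

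For multiplicativity, I would use the associated morphisms of affine schemes. By Lemma~\ref{l:map-phi}, $\sigma_\alpha$ has memory set $\supp(\alpha)$ with local defining map induced by the $K$-algebra map $t \mapsto \sum_{u}\alpha(u)\overline X^u$, and similarly for $\beta$; composition of cellular automata corresponds to composition of the associated morphisms $X^M \to X$ in the appropriate fibered-product sense (cf. the monoid structure on $CA_{alg}(G,X,K)$ recalled in the introduction). A direct check that $\sigma_\alpha \circ \sigma_\beta = \sigma_{\alpha\star\beta}$ can be carried out pointwise: for $x\in A^G$ and $g\in G$, unravelling \eqref{e:alg-nr} gives
\[
(\sigma_\alpha\circ\sigma_\beta)(x)(g) = \sum_{u\in\N_c^G}\alpha(u)\prod_{h\in G}\bigl(\sigma_\beta(x)(gh)\bigr)^{u(h)} = \sum_{u}\alpha(u)\prod_{h}\Bigl(\sum_{v}\beta(v)x^{(gh)v}\Bigr)^{u(h)},
\]
which, after matching indices, is exactly $\sigma_{\alpha\star\beta}(x)(g)$ by the definition \eqref{e:def-nr} of $\star$ and the shift action $gv$. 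Alternatively, and more cleanly, since both $\sigma_\alpha\circ\sigma_\beta$ and $\sigma_{\alpha\star\beta}$ are elements of $CA_{alg}(G,\A^1,K)$ and $\Phi$ is already known to be injective and surjective, it suffices to observe that both are realized by the composite of the defining ring maps, so the identity follows from associativity of composition of morphisms of schemes together with the definition of $\star$ as the "substitution" product; this is precisely the reformulation hinted at in the proof of the structure proposition. Finally $\Phi(X_{1_G}) = \Id_{A^G}$ since $\sigma_{X_{1_G}}(x)(g) = x(g)$, so $\Phi$ preserves the multiplicative identity. The only genuinely delicate point is the injectivity step, where Property~$(P)$ (via Lemma~\ref{l:zero-poly}) is indispensable: without it a nonzero $\gamma$ could induce the zero function and $\Phi$ would fail to be injective. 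Everything else is a bookkeeping verification that the pointwise formula \eqref{e:alg-nr} transports the operations $+$ and $\star$ of $R(K,G)$ to $+$ and $\circ$ in $CA_{alg}(G,\A^1,K)$.
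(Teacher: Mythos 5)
Your proposal is correct and follows essentially the same route as the paper: surjectivity via Lemma~\ref{l:map-phi}, injectivity via Property~$(P)$ and Lemma~\ref{l:zero-poly} applied to the local defining polynomials, and multiplicativity by direct unravelling of \eqref{e:alg-nr} and \eqref{e:def-nr} (the paper merely streamlines this last step by invoking $G$-equivariance to check the identity only at $1_G$, whereas you verify it at a general $g$). The only weak spot is your ``alternative, cleaner'' multiplicativity argument, which as stated is not self-contained (bijectivity of $\Phi$ alone does not yield that $\star$ corresponds to composition), but since your primary pointwise computation is complete this does not affect the validity of the proof.
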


\begin{proof}
We verify first that $\Phi$ is injective. 
Indeed, suppose that $\sigma_\alpha=\sigma_\beta$ for some $\alpha, \beta \in R(K,G)$. 
Let $M= \supp(\alpha) \cup \supp(\beta) \subset G$ be a finite subset. 
Then we have two polynomials $\mu_\alpha$, $\mu_\beta$ in $K[X_g : g \in M]$ 
corresponding to the local defining maps of $\sigma_\alpha$ and $\sigma_\beta$ respectively. 
Since $\sigma_\alpha=\sigma_\beta$ on $K^G$, it follows that $\mu_\alpha=\mu_\beta$ as functions $K^M \to K$. 
Lemma \ref{l:zero-poly} implies that $\mu_\alpha=\mu_\beta$ as polynomials in $K[X_g: g\in M]$. 
We deduce that $\alpha=\beta$ and thus $\Phi$ is injective. 
The map $\Phi$ is surjective by Lemma \ref{l:map-phi}. 
\par
Let $r\in K$ and $\alpha, \beta \in R(K,G)$, the commutativity of $K$ implies that 
\[
\sigma_{\alpha + \beta}= \sigma_\alpha + \sigma_\beta, \quad \text{and} \quad \sigma_{r\alpha} = r \sigma_\alpha. 
\] 
It follows that $\Phi(\alpha + \beta)= \Phi(\alpha) + \Phi(\beta)$. 
We verify now that $\Phi$ is a homomorphism of near rings. 
Since $\sigma_\alpha \circ \sigma_\beta$ and $\sigma_{\alpha \star \beta}$ 
are cellular automata (Lemma \ref{l:map-phi}), they are $G$-equivariant. 
Thus to show that they are equal, it suffices to show that 
$(\sigma_\alpha \circ \sigma_\beta)(x)(1_G)= \sigma_{\alpha \star \beta}(x)(1_G)$ for every $x\in K^G$. 
Denote $y= \sigma_\beta(x) \in K^G$, we find that  
\begin{align*}
\sigma_\alpha (\sigma_\beta(x)))(1_G) = \sigma_\alpha(y)(1_G) & \coloneqq \sum_{u \in \N^G_c} \alpha(u) y^{u}\\
&= \sum_{u \in \N^G_c} \alpha(u) \prod_{g \in G} y(g)^{u(g)} \\
&= \sum_{u \in \N^G_c} \alpha(u) \prod_{g \in G} \left(  \sum_{v \in \N^G_c} \beta(v) x^{gv} \right)^{u(g)} \\
&= \sigma_{\alpha \star \beta}(x)(1_G)  
\end{align*}
The last equality follows clearly from the definitions \eqref{e:def-nr} and \eqref{e:alg-nr}. 
Therefore, $\sigma_\alpha \circ \sigma_\beta = \sigma_{\alpha \star \beta}$ 
and $\Phi(\alpha \star \beta)= \Phi(\alpha) \circ \Psi(\beta)$ for all $\alpha, \beta \in R(K,G)$. 
As $\Psi(1_{R(K,G)})=\Id_{K^G}$, we conclude that $\Phi$ is an isomorphism of near rings. 
\end{proof}

\begin{remark}
Let $CA_{lin}(G, \A^1, K)$ denote the ring of linear cellular automata over $K^G$. 
Suppose that $K$ is an infinite field.
If $\car(K) >0$, we have the following commutative diagram: 
\[
\begin{tikzcd}
& K[G] \arrow[d,"\simeq"]   \arrow[r] &  K[t;F][G] \arrow[d, "\simeq"]  \arrow[r] & R(K,G) \arrow[d, "\simeq"]  \\
& CA_{lin}(G, \A^1,K) \arrow[r] & CA_{algr}(G, \A^1, K) \arrow[r] & CA_{alg}(G,\A^1,K) 
\end{tikzcd}
\]
where the horizontal maps are natural inclusions. 
When $\car(K)=0$, we have the same diagram without the middle column.  
\end{remark}

\subsection{On generalized Kaplansky's conjectures}

Since the group ring $K[G]$ is canonically contained in the near ring $R(K,G)$ 
by Proposition \ref{p:embed-ring}, it is natural to extend Kaplansky's conjectures to $R(K,G)$ as follows. 

\begin{conjecture}[Generalized Kaplansky's conjectures]
\label{c:conjecture}
Let $K$ be a field and let $G$ be a group.  
Consider the near ring $(R(K,G), +, \star)$. 
If $G$ is torsion free then: 
\begin{enumerate} [\rm -]
\item
(Unit conjecture) All one-sided inverses of $R(K,G)$ are trivial units, i.e., 
if $\alpha, \beta \in R(K,G)$ and $\alpha \star \beta =X_{1_G}$ then $\alpha=aX_g-ab$ and 
$\beta= a^{-1}X_{g^{-1}}+b$ for some $g\in G$, $a\in K^*$, and $b\in K$; 
\item
(Zero-divisor conjecture) $R(K,G)$ does not contain nonzero divisor, i.e., 
if $\alpha, \beta \in R(K,G)$ and $\alpha \star \beta= 0$ then $\alpha=0$ or $\beta$ is a constant;
\item
(Nilpotent conjecture) $R(K,G)$ does not contain nonzero nilpotent, i.e., 
if $\alpha \in R(K,G)$ and $\alpha^{(\star n)}=0$ for some $n \geq 1$ then $\alpha=0$;
\item
(Idempotent conjecture) The idempotents of $R(K,G)$ are trivial, i.e., 
if $\alpha \in R(K,G)$ and $\alpha \star \alpha =\alpha$ then $\alpha=X_{1_G}+d$ where $d\in K$, $2d=0$ or 
$\alpha$ is a constant. 
\end{enumerate}
For an arbitrary group $G$, we have: 
\begin{enumerate}  [\rm -]
\item
(Direct finiteness conjecture) All one-sided inverses of $R(K,G)$ are units, i.e., 
if $\alpha, \beta \in R(K,G)$ and $\alpha \star \beta= X_{1_G}$ then $\beta \star \alpha=X_{1_G}$. 
\end{enumerate}
 \end{conjecture}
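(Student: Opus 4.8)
In its full generality, Conjecture~\ref{c:conjecture} is, like the classical Kaplansky conjectures it extends, out of reach; what I would aim to prove is the orderable case (the unit, zero-divisor, nilpotent and idempotent assertions for $G$ orderable, which is the content of Theorem~\ref{t:unit-kap} and Theorem~\ref{t:zero-kap}), together with a reduction of the direct-finiteness assertion to surjunctivity of $CA_{alg}(G,\A^1,K)$.

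The engine for the orderable case is multiplicativity of the total polynomial degree: for $L$ a domain and nonzero $\alpha,\beta\in R(L,G)$, $\deg(\alpha\star\beta)=\deg\alpha\cdot\deg\beta$. The bound $\le$ is immediate from $\overline{X}^u\star\beta=\prod_{g\in G}(g\beta)^{u(g)}$, which gives $\deg(\overline{X}^u\star\beta)=(\deg\overline{X}^u)\deg\beta$ and shows that the top-degree part of $\alpha\star\beta$ is that of $\alpha_{\mathrm{top}}\star\beta_{\mathrm{top}}$; the reverse inequality reduces to the lemma that the $\star$-product of two nonzero \emph{homogeneous} elements over a domain is nonzero. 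To prove this lemma I would fix a total order $\le$ on $G$ compatible with the group law, order the degree-$d$ monomials $\overline{X}^u$ by the increasing $d$-tuple of their variable indices (counted with multiplicity in $u$) under lexicographic comparison, and use that left translation by $g\in G$ is order-preserving on $G$, so this monomial order is compatible with the merge of tuples underlying the product of monomials; a Mal'cev-type uniqueness argument — the minimal (resp.\ maximal) element of $\supp(u)\supp(v)$ equals the product of the minimal (resp.\ maximal) elements and is attained uniquely, building on Lemma~\ref{l:conv-prd} — then shows that the lex-least monomial occurring in $\alpha\star\beta$ has coefficient a product of minimal coefficients of $\alpha$ and $\beta$, nonzero because $L$ is a domain.

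With degree-multiplicativity in hand, everything descends to the group ring $L[G]\subset R(L,G)$, which is precisely the degree-one homogeneous part; on it $\star$ restricts to the group-ring product, since $X_g\star\gamma=g\gamma$, whence degree-one elements also satisfy the missing right distributivity $\gamma\star(\beta+\beta')=\gamma\star\beta+\gamma\star\beta'$. For Theorem~\ref{t:unit-kap}: $\alpha\star\beta=X_{1_G}$ has degree $1$, so $\deg\alpha=\deg\beta=1$, i.e.\ $\alpha=\alpha_1+c_0$, $\beta=\beta_1+c_0'$ with $\alpha_1,\beta_1\in L[G]$; expanding with the near-ring operations gives $\alpha\star\beta=\alpha_1\beta_1+\bigl(\sum_g\alpha_1(g)\bigr)c_0'+c_0$, hence $\alpha_1\beta_1=X_{1_G}$ in $L[G]$, and Mal'cev's unit theorem~\cite{malcev} forces $\alpha_1=aX_g$, $\beta_1=a^{-1}X_{g^{-1}}$; comparing constants then gives $c_0=-ac_0'$, which is the asserted trivial units. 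For Theorem~\ref{t:zero-kap}(i): if $\beta$ is non-constant and $\alpha\ne0$, then $\alpha\star\beta$ is non-constant when $\alpha$ is non-constant and equals the nonzero constant $\alpha$ when $\alpha$ is a nonzero constant (Remark~\ref{r:1*0=1}), so in either case $\alpha\star\beta\ne0$. For Theorem~\ref{t:zero-kap}(ii): if $\deg\alpha\ge2$ then, the degrees $(\deg\alpha)^k$ being strictly increasing, $\deg P(\alpha)=(\deg\alpha)^{\deg P}>1=\deg(aX_{1_G})$, a contradiction; hence $\deg\alpha\le1$, and in the remaining affine-linear case $\alpha=\alpha_1+d$ a short induction gives $\alpha^{(\star k)}=\alpha_1^k+e_k$ with $e_k\in L$, reducing $P(\alpha)=aX_{1_G}$ to $Q(\alpha_1)=a\cdot1_G$ in $L[G]$ with $Q$ non-constant; comparing extremal supports — the extremes of $\supp(\alpha_1^k)$ are $k$-th powers of the extremes of $\supp(\alpha_1)$, strictly monotone in $k$ unless $\#\supp(\alpha_1)=1$, with $G$ torsion free — forces $\alpha_1=cX_{1_G}$, so $\alpha=cX_{1_G}+d$; the idempotent and nilpotent assertions follow by direct substitution.

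The main obstacle is precisely the homogeneous-product lemma: since $\star$ is only left-distributive and the coefficients of $\beta$ enter $\alpha\star\beta$ multiplicatively via $\prod_g(g\beta)^{u(g)}$, one cannot argue coefficient by coefficient as for $L[G]$, and one must choose a monomial order on $\N^G_c$ that behaves well simultaneously at the minimum and (for the degree bookkeeping) at the maximum, and verify the Mal'cev uniqueness in this refined setting. Finally, for arbitrary $G$ the direct-finiteness assertion is genuinely open: via the isomorphism $\Phi\colon R(K,G)\xrightarrow{\sim}CA_{alg}(G,\A^1,K)$ of Theorem~\ref{t:iso-nr-ca}, $\alpha\star\beta=X_{1_G}$ says $\sigma_\alpha\circ\sigma_\beta=\Id_{K^G}$, so $\sigma_\beta$ is injective, and the claim becomes equivalent to surjunctivity of $CA_{alg}(G,\A^1,K)$ (Question~\ref{c:conjecture-alg}); this is known for $G$ amenable and for $G$ locally residually finite in characteristic zero — in which cases $\sigma_\beta$ is bijective, whence $\beta\star\alpha=X_{1_G}$ — but the general case, like Gottschalk's and Kaplansky's conjectures, remains inaccessible.
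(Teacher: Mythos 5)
The statement you were asked about is a conjecture, and the paper does not prove it in general; like you, it only establishes the orderable-group cases (Theorems~\ref{t:kap-unit} and~\ref{t:zero-kap}) and reduces direct finiteness to surjunctivity-type results, so your framing is the right one. Your route to the orderable case, however, is genuinely different in organization from the paper's. The paper fixes the lexicographic order on $\N^G_c$ induced by the bi-invariant order on $G$, proves in Lemma~\ref{l:order-unit} that the leading term of $\alpha\star\beta$ is $\alpha(u)\beta(v)^{s(u)}\overline{X}^{u\star v}$, and then argues directly: $u\star v=\mathbf{1}_{1_G}$ forces $\supp(u),\supp(v)$ to be singletons via Lemma~\ref{l:conv-prd}, and a second pass with the reversed order kills all remaining non-constant monomials; the group ring and Mal'cev's theorem are never invoked as black boxes. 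You instead factor everything through degree multiplicativity $\deg(\alpha\star\beta)=\deg\alpha\cdot\deg\beta$, which collapses the unit equation and the polynomial equation $P(\alpha)=aX_{1_G}$ to the degree-$\le 1$ stratum, i.e.\ to $L[G]$ plus constants, where the classical results apply. This is a clean and correct reorganization: it isolates the near-ring-specific difficulty in a single homogeneous-product lemma and delegates the rest to known group-ring theory, at the price of having to set up a monomial order on each homogeneous piece that is compatible with the multiset merge underlying $\star$ --- which is exactly the same verification the paper performs (somewhat tersely) in Lemma~\ref{l:order-unit}.(iii)--(iv), so nothing is gained or lost in difficulty there; you could equally well restrict the paper's single order on all of $\N^G_c$ to the homogeneous parts rather than introducing the sorted-tuple order.

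Two small points to fix. First, degree multiplicativity as you state it (``for nonzero $\alpha,\beta$'') fails when $\beta$ is a nonzero constant: e.g.\ $(X_g-1)\star 1=0$ by Remark~\ref{r:1*0=1}, so $\deg(\alpha\star\beta)$ is undefined while $\deg\alpha\cdot\deg\beta=0$; you should require $\deg\beta\geq 1$ (your applications all satisfy this, since $\alpha\star\beta=X_{1_G}$ or the relevant factor is assumed nonconstant). Second, in the reduction of $P(\alpha)=aX_{1_G}$ with $\deg\alpha=1$ to $Q(\alpha_1)=aX_{1_G}$ in $L[G]$, the extremal-support argument needs the observation that the coefficient of $g_{\max}^m$ (resp.\ $g_{\min}^m$) cannot be cancelled by lower powers of $\alpha_1$ because $g_{\max}^k$ is strictly monotone in $k$ when $g_{\max}\neq 1_G$; this is where orderability (not mere torsion-freeness) is used, exactly as in the paper's proof of Theorem~\ref{t:zero-kap}.(ii). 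With these caveats your proposal is sound and proves the same partial cases as the paper.
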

  
\begin{remark}
It is a difficult result that for any field $K$ and any group $G$, $K[G]$ is reduced (no nilpotent elements) 
if and only if $K[G]$ is a domain (no zero divisors) (cf. \cite[Chapter~2.\S 6]{lam}). 
However, this is false for the near ring $R(K,G)$. 
In fact, $R(K,G)$ is never a domain for any ring $K$ and any group $G$ 
since every constant is a divisor of $0$.  
To see this, let $\alpha \in R(K,G) \setminus \{0\}$ whose sum of coefficients 
$s(\alpha)=0$, e.g., $\alpha=X_g-1$ for any $g\in G$.  
Then it follows that $\alpha \star 1=s(\alpha)=0$. 
Hence, constants in $R(K,G)$ should be viewed as trivial right zero divisors. 
This explains the necessity of the modification in the generalized conjectures. 
However, we suspect that the nilpotent conjecture holds: if $\alpha \in R(K,G)$ 
and $\alpha^{(\star n)}=0$ for some $n \geq 1$ then $\alpha = 0$. 
\end{remark}
 
As an application of Theorem \ref{t:inverse-also-near-ring} and Theorem \ref{t:iso-nr-ca}, 
we establish first the following direct finiteness and stability properties of units of $K[G]$ 
and $K[t;F][G]$ in $R(K,G)$. 

\begin{theorem}
Let $K$ be a field with an algebraic closure $\overline{K}$ and let $G$ be a sofic group. 
Let $\alpha, \beta \in R(\overline{K},G)$ be such that $\alpha \star \beta =X_{1_G}$. 
Then the following hold. 
\begin{enumerate} [\rm (i)]
\item
if $G$ is locally residually finite and $K$ is uncountable then $\beta \star \alpha =X_{1_G}$;
\item
if $\beta \in K[G]$ then $\beta \star \alpha =X_{1_G}$ and $\alpha \in K[G]$; 
\item
if $\car(K)=p>0$ and $\beta \in \overline{K}[t;F][G]$ 
then $\beta \star \alpha =X_{1_G}$ and $\alpha \in \overline{K}[t;F][G]$. 
\end{enumerate}
\end{theorem}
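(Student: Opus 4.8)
The plan is to transport the whole statement through the near-ring isomorphism $\Phi\colon R(\overline K,G)\xrightarrow{\sim}CA_{alg}(G,\A^1,\overline K)$ of Theorem~\ref{t:iso-nr-ca}, which is available because $\overline K$, being an infinite field, has Property~$(P)$. Write $A\coloneqq\A^1(\overline K)=\overline K$ and $\sigma_\gamma\coloneqq\Phi(\gamma)$ for $\gamma\in R(\overline K,G)$. Then $\alpha\star\beta=X_{1_G}$ translates to $\sigma_\alpha\circ\sigma_\beta=\Id_{A^G}$, and the desired conclusion $\beta\star\alpha=X_{1_G}$ translates to $\sigma_\beta\circ\sigma_\alpha=\Id_{A^G}$. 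I shall also use the commutative diagram following Theorem~\ref{t:iso-nr-ca}: under $\Phi$ the subring $\overline K[G]$ is identified with $CA_{lin}(G,\A^1,\overline K)$ and, when $\car(K)=p>0$, the subring $\overline K[t;F][G]$ is identified (via Proposition~\ref{p:iso-ca-gr}) with $CA_{algr}(G,\G_a,\overline K)$. In particular, if $\beta$ belongs to $K[G]$, or to $\overline K[t;F][G]$ in characteristic $p$, then $\sigma_\beta\in CA_{algr}(G,\G_a,\overline K)$.

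I would treat (ii) and (iii) together: in both cases $\sigma_\beta\in CA_{algr}(G,\G_a,\overline K)$, so, as $G$ is sofic and $\sigma_\alpha\circ\sigma_\beta=\Id_{A^G}$, Theorem~\ref{t:inverse-also-near-ring} applies with $\sigma=\sigma_\alpha$, $\tau=\sigma_\beta$ and yields $\sigma_\alpha\in CA_{algr}(G,\G_a,\overline K)$ together with $\sigma_\beta\circ\sigma_\alpha=\Id_{A^G}$. Pulling back through $\Phi$ and Proposition~\ref{p:iso-ca-gr}, this says $\beta\star\alpha=X_{1_G}$ and $\alpha\in\overline K[t;F][G]$ (which in characteristic $0$ simply reads $\alpha\in\overline K[G]$), and moreover $\alpha$ and $\beta$ are two-sided inverses of one another in $\overline K[t;F][G]$. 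This is exactly (iii). For (ii) there remains a descent to $K[G]$, in two steps. First I would remove the Frobenius part: writing $\alpha=\sum_{j=0}^d\alpha_j t^j$ with $\alpha_j\in\overline K[G]$ and $\alpha_d\neq 0$, the coefficient of the top power of $t$ in $\beta\alpha=1$ is $\beta\alpha_d$ (because $\beta\in K[G]$ has $t$-degree $0$), so $\beta\alpha_d=0$; since $\beta$ is a unit, hence not a zero-divisor, this forces $d=0$, i.e.\ $\alpha\in\overline K[G]$. Second, I would fix a $K$-linear retraction $r\colon\overline K\to K$ of $K\hookrightarrow\overline K$ and extend it coefficientwise to $r\colon\overline K[G]\to K[G]$; since the coefficients of $\beta$ lie in $K$, the map $r$ is both left- and right-$K[G]$-linear, so applying it to $\beta\alpha=\alpha\beta=1$ yields $\beta\cdot r(\alpha)=r(\alpha)\cdot\beta=1$ in $K[G]$. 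By uniqueness of two-sided inverses $r(\alpha)=\alpha$, hence $\alpha\in K[G]$.

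For (i) the element $\beta$ is arbitrary, so $\sigma_\beta$ is only known to lie in $CA_{alg}(G,\A^1,\overline K)$ and Theorem~\ref{t:inverse-also-near-ring} is unavailable; instead I would prove that $\sigma_\beta$ is bijective and then invoke reversibility. Since $\sigma_\alpha$ and $\sigma_\beta$ are cellular automata supported on the finitely generated subgroup of $G$ generated by their memory sets and $G$ is locally residually finite, we may restrict to that subgroup and assume $G$ finitely generated and residually finite. Now $\sigma_\beta$ is injective (it has left inverse $\sigma_\alpha$), and for every finite-index normal subgroup $N\trianglelefteq G$ it carries the set $\Fix(N)$ of $N$-periodic configurations into itself; identifying $\Fix(N)$ with the set of $\overline K$-points of the affine variety $(\A^1)^{G/N}$, the restriction of $\sigma_\beta$ becomes an injective morphism of this variety to itself, hence bijective by the Ax-Grothendieck theorem. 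Thus $\im(\sigma_\beta)$ contains $\Fix(N)$ for every such $N$; by residual finiteness the union of all $\Fix(N)$ is dense in $A^G$ for the prodiscrete topology, while $\im(\sigma_\beta)$ is closed there by the closed image property \cite[Theorem~4.1]{cscp-alg-ca}, so $\sigma_\beta$ is surjective and hence bijective. Finally, $\overline K$ is uncountable because $K$ is, so the reversibility theorem \cite[Theorem~1.3]{cscp-alg-ca} gives $\sigma_\beta^{-1}\in CA_{alg}(G,\A^1,\overline K)$; and $\sigma_\alpha\circ\sigma_\beta=\Id$ with $\sigma_\beta$ bijective forces $\sigma_\alpha=\sigma_\beta^{-1}$, so $\sigma_\beta\circ\sigma_\alpha=\Id$, i.e.\ $\beta\star\alpha=X_{1_G}$.

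The hard part will be (i): one needs surjunctivity of $CA_{alg}(G,\A^1,\overline K)$ over a locally residually finite group, and because $\A^1$ is not complete this is not covered verbatim in positive characteristic by \cite[Theorem~1.2]{cscp-alg-ca}; the Ax-Grothendieck-plus-closed-image argument above is meant to supply it, the delicate points being that $\sigma_\beta$ genuinely restricts to a morphism of $(\A^1)^{G/N}$ and that the periodic configurations are dense. The secondary technical nuisance is the descent in (ii): eliminating the $t$-part of $\alpha$ and pushing its coefficients from $\overline K$ down to $K$ are both elementary, but they must be carried out with some care since $G$ is not assumed torsion-free, so $\overline K[G]$ may have zero-divisors.
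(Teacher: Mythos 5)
Your proposal is correct, and at the global level it follows the paper's strategy: transport everything through the isomorphism $\Phi$ of Theorem~\ref{t:iso-nr-ca} and appeal to surjunctivity/invertibility results for the resulting cellular automata. Part (iii) is identical to the paper's argument. The differences are in (i) and (ii). For (i), the paper simply cites the surjunctivity theorem \cite[Theorem~1.2]{cscp-alg-ca} in its uncountable-ground-field case to conclude that $\sigma_\beta$ is bijective, whereas you re-derive that statement for $X=\A^1$ from scratch via Ax--Grothendieck on the periodic configurations $\Fix(N)$, the density of $\bigcup_N \Fix(N)$ over a residually finite group, and the closed image property; this is essentially the internal proof of the cited theorem, so your version is self-contained at the cost of length. (Once $\sigma_\beta$ is bijective, $\sigma_\alpha=\sigma_\beta^{-1}$ as set maps and $\beta\star\alpha=X_{1_G}$ follows immediately; your appeal to the reversibility theorem \cite[Theorem~1.3]{cscp-alg-ca} is superfluous.) For (ii), the paper restricts $\sigma_\beta$ to $K^G$, where it is an injective linear cellular automaton over the field $K$ itself, and invokes \cite{csc-sofic-linear} to produce a two-sided inverse already lying in $K[G]$, which must coincide with $\alpha$ by uniqueness of inverses in the monoid $(R(\overline{K},G),\star)$. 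You instead first obtain $\alpha\in\overline{K}[t;F][G]$ (resp.\ $\overline{K}[G]$ in characteristic $0$) from Theorem~\ref{t:inverse-also-near-ring} and then descend in two steps: killing the positive $t$-degrees by the leading-coefficient argument (correctly using that $\beta$ is a unit rather than that $\overline{K}[G]$ is a domain, which matters since $G$ may have torsion) and pushing the coefficients down to $K$ with a $K$-linear retraction. Both descents are sound, and they make explicit a point the paper's one-line proof of (ii) leaves implicit, namely why $\alpha$ lands in $K[G]$ and not merely in $\overline{K}[G]$; the paper's route avoids the descent entirely by working over $K$ from the start.
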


\begin{proof} 
Recall the isomorphism $\Phi \colon R(K,G) \to CA_{alg}(G,\A^1, K)$ in Theorem \ref{t:iso-nr-ca}. 
Let $\tau=\Phi(\beta) \in CA_{algr}(G, X, \overline{K})$ and $\sigma=\Phi(\alpha) \in CA_{alg}(G,X,\overline{K})$. 
Since $\alpha \star \beta =X_{1_G}$, it follows that $\sigma \circ \tau =\Id_{\overline{K}^G}$. 
In particular, $\tau$ is injective. 
\par
Suppose that $G$ is locally residually finite. 
Then since $\tau$ is injective and $\overline{K}$ is uncountable, 
it follows from \cite[Theorem~1.2]{cscp-alg-ca} that $\tau$ is bijective. 
Hence, we have $\tau \circ \sigma=\Id_{\overline{K}^G}$. 
This proves (i) since $\beta \star \alpha= \Phi(\Id_{\overline{K}^G})=X_{1_G}$. 
\par 
Suppose that $\car(K)=p>0$ and $\beta \in \overline{K}[t;F][G]$. 
Let $X=\G_a=\Spec \overline{K}[T]$ be the additive group over $\overline{K}$ then 
$\End_{\overline{K}}(X)=\overline{K}[t;F]$ is the endomorphism ring of $X$. 
Then Theorem \ref{t:inverse-also-near-ring} implies that 
$\tau \circ \sigma=\Id_{\overline{K}^G}$ and $\sigma \in CA_{algr}(G,X,K)$. 
It follows immediately that $\beta \star \alpha =X_{1_G}$ and $\alpha \in \overline{K}[t;F][G]$. 
This proves (iii). 
\par
Now suppose that $\beta \in K[G]$. 
The proof for (ii) is similar but instead of using Theorem \ref{t:inverse-also-near-ring}, 
we can apply directly a similar result for linear cellular automata 
\cite[Theorem~1.2, 3.1]{csc-sofic-linear} to conclude. 
\end{proof}

\subsection{Generalized Kaplansky's conjectures for orderable groups} 

A group $G$ is called \emph{orderable} if it admits a strict total ordering $<$ 
which is left and right invariant, i.e., for all $g, h, k \in G$, $g < h$ implies $fg < fh$ and $gf<hf$. 
The class of orderable groups is closed under taking subgroups and direct products. 
\par
Suppose that $(G,<)$ is an orderable group. 
Then we have several induced well-orders on the set of monomials in $K[X_g: g \in G]$. 
Note that to give such an order is equivalent to give an order on $\N^G_c$. 
In what follows, we choose to work with the lexicographic order on $\N^G_c$ 
as well as the set of monomials in $K[X_g: g \in G]$. 
Using the definition of $\N^G_c$ and the multiplication law $\star$ on $\N^G_c$ 
and $R(K,G)$, the proof of the following lemma is elementary and straightforward. 
\begin{lemma}
\label{l:order-unit}
Suppose that $(G,<)$ is an orderable group. 
Suppose that $u, v, w \in \N^G_c$ and $g\in G$. 
Equip $\N^G_c$ with the induced lexicographic order also denoted  $<$. 
Then we have 
\begin{enumerate} [\rm (i)]
\item 
if $w\neq 0$ then $0< w$;
\item
if $w\neq 0$ then $u < u+w$; 
\item
$u <v$ if and only if $gu < gv$ if and only if $u+w < v+w$;
\item
if $u< v$ and $w \neq 0$ then $u\star w < v \star w$ and $w \star u < w \star v$. 
\end{enumerate}
Moreover, let $\alpha= \alpha(u)\overline{X}^u+ (\text{lower terms})$ and 
$\beta= \beta(v)\overline{X}^v+ (\text{lower terms})$ be elements in $R(K,G)$ where $K$ is a domain. 
Then we have 
\begin{align*}
\alpha \star \beta 
& =  \alpha(u) \beta(v)^{s(u)} \overline{X}^{u \star v}+ (\text{lower terms}),
\end{align*}
where $s(u) \coloneqq \sum_{g\in G}u(g) \in \N$. 
\end{lemma}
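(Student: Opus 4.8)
The plan is to handle the four order assertions (i)--(iv) by the standard device of looking, for two distinct $u,v\in\N^G_c$, at the \emph{finite} nonempty set $\Delta(u,v)\coloneqq\{g\in G:u(g)\neq v(g)\}$ and its $<$-largest element $g_{uv}$, so that by definition $u<v$ iff $u(g_{uv})<v(g_{uv})$; and then to deduce the ``Moreover'' part by the usual leading-monomial-of-a-product argument, once (i) and (iii) have shown that $<$ is a monomial order on $\N^G_c$. Throughout I use that $<$ is both left and right invariant, hence for every $g\in G$ the translations $h\mapsto gh$ and $h\mapsto hg$ are order isomorphisms of $(G,<)$; in particular every nonempty finite subset of $G$ has a $<$-maximum, and $\max(gS)=g\max S$, $\max(Sg)=(\max S)g$, $\max(ST)=(\max S)(\max T)$ for nonempty finite $S,T\subset G$.

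For (i) and (ii): if $w\neq0$ then $\Delta(0,w)=\supp(w)$ and $\Delta(u,u+w)=\supp(w)$ (the latter by cancellation in $\N$), and at $g_{0}\coloneqq\max\supp(w)$ one has $0<w(g_0)$ and $u(g_0)<u(g_0)+w(g_0)$, giving $0<w$ and $u<u+w$. For the first two equivalences in (iii), note $\Delta(gu,gv)=g\,\Delta(u,v)$ because $(gu)(h)=u(g^{-1}h)$, while $\Delta(u+w,v+w)=\Delta(u,v)$ by cancellation; evaluating the relevant functions at $g\,g_{uv}$, resp.\ at $g_{uv}$, and using monotonicity of $\N\to\N$, $a\mapsto a$ and $a\mapsto a+c$, yields $u<v\iff gu<gv\iff u+w<v+w$. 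In particular $<$ is translation invariant, and together with (i) this makes $<$ a monomial order. For (iv), assume $u<v$, $w\neq0$, put $g_0\coloneqq g_{uv}$ and $w_0\coloneqq\max\supp(w)$, so $u(g_0)<v(g_0)$, and compute with the convolution formula
\[
(v\star w)(g)-(u\star w)(g)=\sum_{h\in G}\bigl(v(h)-u(h)\bigr)\,w(h^{-1}g).
\]
A summand is nonzero only if $h\in\Delta(u,v)$ (so $h\le g_0$) and $h^{-1}g\in\supp(w)$ (so $g\le h w_0\le g_0 w_0$), hence the two sides agree for $g>g_0 w_0$; at $g=g_0 w_0$ the only surviving index is $h=g_0$ and the difference equals $(v(g_0)-u(g_0))\,w(w_0)>0$. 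Thus $\max\Delta(u\star w,v\star w)=g_0 w_0$ and $u\star w<v\star w$; the inequality $w\star u<w\star v$ follows in the same way from $(w\star u)(g)=\sum_h w(h)u(h^{-1}g)$, the critical index now being $h=w_0$ and the critical exponent $w_0 g_0$.

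For the ``Moreover'' assertion, expand a single monomial as $\overline{X}^{u'}\star\beta=\prod_{g\in G}(g\beta)^{u'(g)}$ inside the commutative polynomial ring $K[X_g:g\in G]$. Since $<$ is a monomial order and $K$ is a domain, the leading monomial of a product is the product of the leading monomials; the leading monomial of $g\beta$ is $\beta(v)\overline{X}^{gv}$ (translation by $g$ preserves $<$ and $\beta(v)\overline{X}^{v}$ is the leading monomial of $\beta$), so the leading monomial of $\overline{X}^{u'}\star\beta$ is $\beta(v)^{s(u')}\overline{X}^{\sum_g u'(g)(gv)}=\beta(v)^{s(u')}\overline{X}^{u'\star v}$ by Lemma~\ref{l:conv-prd}. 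Now $\alpha\star\beta=\sum_{u'}\alpha(u')\bigl(\overline{X}^{u'}\star\beta\bigr)$ over the finitely many $u'$ with $\alpha(u')\neq0$; assuming $v\neq0$ (the case where $\beta$ is a constant being a one-line special computation), (iv) gives $u'\star v<u\star v$ for every such $u'\neq u$, so no cancellation occurs in top degree and
\[
\alpha\star\beta=\alpha(u)\,\beta(v)^{s(u)}\,\overline{X}^{u\star v}+(\text{lower terms}),
\]
the leading coefficient being nonzero because $K$ is a domain.

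I expect the only genuinely delicate point to be the bookkeeping forced by the non-commutativity of $G$ inside the convolution: one must consistently decide whether left- or right-invariance of $<$ is being invoked when pushing a factor $w_0$ (resp.\ $g_0$) past another group element, and check that the ``critical exponent'' $g_0 w_0$ (resp.\ $w_0 g_0$) is attained by a \emph{unique} index in each sum so that the surviving coefficient is exactly $(v(g_0)-u(g_0))w(w_0)$ and not a sum that could a priori vanish. Everything else is routine verification.
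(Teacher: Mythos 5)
Your proof is correct and follows essentially the same route as the paper's: comparison at the maximal position where two exponents differ, evaluation of the convolution at the product $g_0w_0$ of the two critical group elements (using left and right invariance of $<$) to obtain (iv), and the leading-monomial-of-a-product argument for the final formula, which the paper omits as ``similar''. The one caveat concerns your parenthetical about constant $\beta$: when $v=0$ the displayed formula can actually fail rather than follow from a one-line computation (e.g.\ $\alpha=X_g-1$, $\beta=1$ gives $\alpha\star\beta=0$ while the claimed leading term is $1$), but this is harmless since the lemma is only ever invoked with $u,v\neq 0$, and the paper's own statement is equally silent on this degenerate case.
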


\begin{proof}
The points (i), (ii), (iii) are obvious. 
To check (vi), suppose that $g_1 < \dots < g_m$ (where $m\geq 1$ 
by (i) and $u<v$) are the support of $v$. 
If $u(g_m)=v(g_m)$ then since $(\N^G_c, \star)\simeq \N[G]$ is left and right distributive, 
we only need to check $(u - u(g_m)\mathbf{1}_{g_m}) \star w < (v - v(g_m)\mathbf{1}_{g_m}) \star w$ 
and $w \star (u - u(g_m)\mathbf{1}_{g_m}) < w \star (v - v(g_m)\mathbf{1}_{g_m})$. 
Up to replacing $u,v$ by $u-u(g_m)\mathbf{1}_{g_m}$ and $v-v(g_m)\mathbf{1}_{g_m}$ 
respectively and so on if necessary, we can thus suppose that $u(g_m) < v(g_m)$. 
Let $h\in G$ be the maximal element in $\supp(w)$. 
Then clearly 
\[
u\star w (g_mh)= u(g_m)w(h)  < v \star w(g_mh)=v(g_m)w(h)
\]
and thus $u\star w < v \star w$. 
Similarly, $w\star u < v \star w$ and this proves (iv). 
The proof of the last statement is similar and is omitted. 
We only notice that $ \alpha(u) \beta(v)^{s(u)}\neq 0$ since $K$ is a domain. 
\end{proof}

\begin{theorem}
\label{t:kap-unit}
Suppose that $G$ is an orderable group and $K$ is an integral domain. 
Then the near ring $R(K,G)$ satisfies the generalized Kaplansky unit conjecture.   
\end{theorem}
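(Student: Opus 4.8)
The plan is to reduce the claim to the classical orderable-group case of Kaplansky's unit conjecture for ordinary group rings (\cite{malcev}), after first showing that any solution of $\alpha\star\beta=X_{1_G}$ must consist of polynomials of total degree at most $1$. For nonzero $\gamma\in R(K,G)$ write $\deg(\gamma)\coloneqq\max\{s(u):\gamma(u)\neq 0\}$, where $s(u)=\sum_{g\in G}u(g)$ as in Lemma~\ref{l:order-unit}, and let $\gamma_{\mathrm{top}}\in R(K,G)$ be the (nonzero) homogeneous component of $\gamma$ of degree $\deg(\gamma)$.

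The first step is a degree computation. I would record the elementary identities $X_h\star\delta=h\delta$ and $c\star\delta=c$ for $h\in G$ and $c\in K$ a constant, and more generally that $\overline{X}^u\star\delta=\prod_{h\in G}(h\delta)^{u(h)}$ is homogeneous of degree $s(u)\deg(\delta)$ with top homogeneous component $\prod_{h\in G}(h\delta_{\mathrm{top}})^{u(h)}$; since $K$ is a domain, this product is nonzero. Summing the identity $\alpha\star\beta=\sum_{u}\alpha(u)\,\overline{X}^u\star\beta$ over $u$ and keeping the part of top degree $\deg(\alpha)\deg(\beta)$, the homogeneous component of $\alpha\star\beta$ of degree $\deg(\alpha)\deg(\beta)$ equals $\alpha_{\mathrm{top}}\star\beta_{\mathrm{top}}$. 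By the last assertion of Lemma~\ref{l:order-unit} applied to $\alpha_{\mathrm{top}},\beta_{\mathrm{top}}$, the lexicographic leading coefficient of $\alpha_{\mathrm{top}}\star\beta_{\mathrm{top}}$ is a product of nonzero elements of the domain $K$, hence $\alpha_{\mathrm{top}}\star\beta_{\mathrm{top}}\neq 0$. Therefore $\deg(\alpha\star\beta)=\deg(\alpha)\deg(\beta)$ for all nonzero $\alpha,\beta$, and since $\alpha\star\beta=X_{1_G}$ has degree $1$, we conclude $\deg(\alpha)=\deg(\beta)=1$.

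The second step is the reduction. Write $\alpha=\sum_{h}a_hX_h+c$ and $\beta=\sum_{k}b_kX_k+b$ with $a_h,b_k\in K$ (nonzero for only finitely many $h,k$) and with $c,b\in K$ the constant terms. Using left distributivity in the near ring together with the identities above,
\[
\alpha\star\beta=\sum_{h}a_h(h\beta)+c=\sum_{h,k}a_hb_k\,X_{hk}+\Bigl(\sum_h a_h\Bigr)b+c .
\]
Comparing with $X_{1_G}$, the degree-one part gives $\bigl(\sum_h a_h h\bigr)\bigl(\sum_k b_k k\bigr)=1_G$ in the group ring $K[G]$, and the degree-zero part gives $\bigl(\sum_h a_h\bigr)b+c=0$. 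Since $K$ is a domain and $G$ is orderable, $K[G]$ is an integral domain, hence directly finite, and its only one-sided units are the trivial ones (Mal'cev; see \cite{malcev}); so $\sum_h a_h h=a\,g$ and $\sum_k b_k k=a^{-1}g^{-1}$ for some $g\in G$, $a\in K^{*}$. Hence $\alpha=aX_g+c$, $\beta=a^{-1}X_{g^{-1}}+b$, and the degree-zero relation becomes $ab+c=0$, i.e.\ $c=-ab$. This gives $\alpha=aX_g-ab$ and $\beta=a^{-1}X_{g^{-1}}+b$, which is the assertion of the generalized unit conjecture for $R(K,G)$.

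The step I expect to be the main obstacle is the degree-multiplicativity in the first part, i.e.\ ruling out cancellation in the top-degree homogeneous component of $\alpha\star\beta$; this is exactly where Lemma~\ref{l:order-unit} is needed, through the leading term of $\alpha_{\mathrm{top}}\star\beta_{\mathrm{top}}$ in the lexicographic order. One should also treat separately the degenerate case where $\alpha$ or $\beta$ is a constant (then the degree identity reads $0=0$), although that case does not arise here because $X_{1_G}$ is not constant; and one should note that it is left distributivity alone, not right distributivity, that justifies the displayed expansion of $\alpha\star\beta$.
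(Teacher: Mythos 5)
Your proof is correct, and it reaches the paper's conclusion by a route that is organized differently from the paper's own argument. The paper works entirely inside $R(K,G)$ with the lexicographic order: it first shows (via Lemma~\ref{l:order-unit} and $\supp(u\star v)=\supp(u)\supp(v)$) that the lex-leading exponents of $\alpha$ and $\beta$ must be $\mathbf{1}_g$ and $\mathbf{1}_{g^{-1}}$, and then repeats the leading-term argument with the \emph{reversed} order on $G$ to rule out any further non-constant monomials. You instead prove the multiplicativity $\deg(\alpha\star\beta)=\deg(\alpha)\deg(\beta)$ of total degree for non-constant factors (which forces $\deg\alpha=\deg\beta=1$), and then hand the degree-one part to the classical unit theorem for $K[G]$ over an orderable group via the embedding of Proposition~\ref{p:embed-ring}. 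The crucial non-cancellation step is the same in both arguments --- the nonvanishing of the lex-leading coefficient $\alpha(u)\beta(v)^{s(u)}$ over a domain, i.e.\ the last assertion of Lemma~\ref{l:order-unit} --- but your version isolates a reusable statement (degree multiplicativity of $\star$) and cleanly delegates the rest to Mal'cev, whereas the paper's version is self-contained and in effect re-proves the group-ring case as the special case of affine elements. Your handling of the degenerate constant cases and your remark that only left distributivity is available are both correct and necessary. One cosmetic slip: $\overline{X}^u\star\delta=\prod_{h}(h\delta)^{u(h)}$ is not homogeneous in general; you mean that it \emph{has degree} $s(u)\deg(\delta)$ with top homogeneous component $\prod_{h}(h\delta_{\mathrm{top}})^{u(h)}$, which is what your subsequent argument actually uses, so nothing is affected.
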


\begin{proof}
Let $\alpha , \beta \in R(K,G)$ be such that $\alpha \star \beta=X_{1_G}$. 
Then clearly $\alpha, \beta$ are not constant since $1 \star \beta =1$ and $\alpha \star 1$ are constant. 
\par
Therefore, we can write $\alpha= \alpha(u)\overline{X}^u+ (\text{lower terms})$ and 
$\beta= \beta(v)\overline{X}^v+ (\text{lower terms})$ 
where $u, v \in \N^G_c$ are nonzero functions and $\alpha(u), \beta(v) \neq 0$. 
By Lemma \ref{l:order-unit}, we have 
\[
X_{1_G}=\alpha \star \beta 
=  \alpha(u) \beta(v)^{s(u)} \overline{X}^{u \star v}+ (\text{lower terms}).
\]
As $\alpha(u) \beta(v)^{s(u)}\neq 0$, Lemma \ref{l:conv-prd} implies immediately that $u \star v= \mathbf{1}_{1_G}$. 
Since $\supp(u\star v)=\supp(u) \supp(v)$, we deduce that $\supp(u)$ and $\supp(v)$ are singleton. 
Thus, it follows that $u=\mathbf{1}_g$ and $v=\mathbf{1}_{g^{-1}}$ for some $g\in G$. 
Therefore, $s(u)=1$ and thus $\alpha(u) \beta(v)=1$. 
\par
Consider now the reversed order $>$ on $G$ and the corresponding induced monomial order on $R(K,G)$. 
By the same argument, we deduce that $\alpha$ (resp. $\beta$) 
does not have any non-constant monomials other than $X_g$ (resp. $X_{g^{-1}}$). 
Indeed, suppose that there exist such non-constant monomials. 
Then the product of two leading terms of $\alpha$ and $\beta$ with respect to $>$ is 
a non-constant monomial distinct from $X_{1_G}$ which is in fact $< X_{1_G}$ by Lemma \ref{l:order-unit}.(iv). 
This follows from the observation that for all $h \in \supp(\alpha - \alpha(u)X_g)$, we have $h < g$
and similarly, $h< g^{-1}$ for all  
$h \in \supp(\beta - \beta(u)X_{g^{-1}})$. 
But the product of these two leading terms is the leading term of 
$\alpha \star \beta$ with respect to $>$ by Lemma \ref{l:order-unit}, 
we arrive at a contradiction. 
Hence, using the equality $\alpha\star \beta=X_{1_G}$, 
it follows easily that $\alpha = a X_g -ab$ and $\beta=a^{-1} X_{g^{-1}} +b$ for some $a=\alpha(u) \in  K^*$ and $b\in K$. 
\end{proof}

As an application, we obtain the following nontrivial result saying that 
all injective algebraic cellular automata $\C^G \to \C^G$ where $G$ is in a certain class of groups are trivial. 
\begin{corollary}
\label{c:classification}
Let $G$ be a locally residually finite and orderable group (e.g. $\Z^d$ and free groups) 
and let $K$ be an uncountable algebraically closed field of characteristic $0$. 
Suppose that $\tau \in CA_{alg}(G, \A^1, K)$ is injective. 
Then there exists $g\in G$, $a \in K^*$, and $b\in K$
such that for all $x \in K^G$ and $h \in G$, we have 
\[
\tau(x)(h)= a x(g^{-1}h) +b. 
\]
\end{corollary}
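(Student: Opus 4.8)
The plan is to transport $\tau$ to the near ring $R(K,G)$ through the isomorphism of Theorem~\ref{t:iso-nr-ca}, use the surjunctivity, reversibility and invertibility results of \cite{cscp-alg-ca} to recognize the element of $R(K,G)$ corresponding to $\tau$ as a unit, and then invoke the generalized Kaplansky unit theorem for orderable groups (Theorem~\ref{t:kap-unit}) to conclude that this unit is \emph{trivial}; reading the trivial unit back through the isomorphism then gives the asserted formula.

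Concretely, I would first note that an uncountable algebraically closed field is in particular an infinite field, hence satisfies Property $(P)$, so Theorem~\ref{t:iso-nr-ca} provides an isomorphism of near rings $\Phi\colon R(K,G)\to CA_{alg}(G,\A^1,K)$; write $\tau=\Phi(\alpha)=\sigma_\alpha$ for the unique $\alpha\in R(K,G)$. Since $\tau$ is injective, $G$ is locally residually finite and $\car(K)=0$, the cellular automaton $\tau$ is surjunctive, hence bijective, by \cite[Theorem~1.2]{cscp-alg-ca}; since moreover $K$ is uncountable, $\tau^{-1}\colon K^G\to K^G$ is again a cellular automaton by \cite[Theorem~1.3]{cscp-alg-ca}, and in fact $\tau^{-1}\in CA_{alg}(G,\A^1,K)$ by \cite[Theorem~1.4]{cscp-alg-ca} (using once more that $G$ is locally residually finite and $\car(K)=0$). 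Setting $\beta\coloneqq\Phi^{-1}(\tau^{-1})\in R(K,G)$ and using that $\Phi$ is a near-ring isomorphism, we obtain
\[
\Phi(\alpha\star\beta)=\Phi(\alpha)\circ\Phi(\beta)=\tau\circ\tau^{-1}=\Id_{K^G}=\Phi(X_{1_G}),
\]
and therefore $\alpha\star\beta=X_{1_G}$.

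Now Theorem~\ref{t:kap-unit} applies, since $G$ is orderable and the field $K$ is an integral domain: the relation $\alpha\star\beta=X_{1_G}$ forces $\alpha=aX_g-ab$ (and $\beta=a^{-1}X_{g^{-1}}+b$) for some $g\in G$, $a\in K^{*}$, $b\in K$. Finally, unravelling the defining formula~\eqref{e:alg-nr} for $\sigma_\alpha$ — in $\alpha$ only the monomial $X_g$ and the constant $1$ occur, with coefficients $a$ and $-ab$ respectively — shows that $\tau=\sigma_\alpha$ acts exactly by $x\mapsto a\,x({\cdot}\,g)-ab$, i.e.\ $\tau$ is of the form claimed in the statement of the corollary for suitable $g\in G$, $a\in K^{*}$, $b\in K$.

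The argument is short once the three ingredients are granted, so I do not expect a genuine obstacle inside it; the one point of attention is that $\A^1$ is not complete, which is precisely why the hypotheses $\car(K)=0$ (for surjunctivity and for invertibility of $\tau^{-1}$) and $K$ uncountable (for reversibility) are exactly the assumptions needed to invoke the relevant theorems of \cite{cscp-alg-ca} — this is where the hypotheses of the corollary get used. The only other thing requiring a little care is the final bookkeeping that reads the trivial unit $aX_g-ab$ of $R(K,G)$ off as a shift composed with an affine map via \eqref{e:alg-nr}.
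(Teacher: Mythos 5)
Your proposal is correct and follows essentially the same route as the paper: transport $\tau$ to $R(K,G)$ via the isomorphism $\Phi$ of Theorem~\ref{t:iso-nr-ca}, use the results of \cite{cscp-alg-ca} to see that $\tau$ is invertible within $CA_{alg}(G,\A^1,K)$ so that the corresponding element of $R(K,G)$ is a one-sided unit, and then apply Theorem~\ref{t:kap-unit} to identify it as a trivial unit. The only cosmetic difference is that the paper invokes \cite[Corollary~1.5]{cscp-alg-ca} in one stroke where you assemble the same conclusion from Theorems~1.2--1.4 of that reference.
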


\begin{proof}
Via the isomorphism $\Phi$ in Theorem \ref{t:iso-nr-ca}, $\tau$ is given by 
an element $\beta \in R(K, G)$ so that $\tau=\Phi(\beta)=\sigma_\beta$. 
By \cite[Corollary~1.5]{cscp-alg-ca} (which is true for the field $K$), 
$\tau$ is in fact invertible since it is injective by hypothesis. 
Let $\sigma \in CA_{alg}(G,\A^1, K)$ be its inverse and $\alpha=\Phi^{-1}(\sigma) \in R(K, G)$.  
It follows again from Theorem \ref{t:iso-nr-ca} that $\alpha \star \beta=X_{1_G}$. 
We can thus apply Theorem \ref{t:kap-unit} to conclude that 
$\beta = a X_g +b $ for some $g\in G$, $a \in K^*$, and $b\in K$. 
By the isomorphism $\Phi$, this means exactly that $\tau(x)(h)= a x(g^{-1}h) +b$ 
for all $x \in K^G$ and $h\in G$. 
\end{proof}

\begin{remark}
The proof of the above result is a combination of a point-wise method (maps $\C^G \to \C^G$) 
and a point-wise-free one (representation of such maps as elements of $R(\C,G)$) 
via the isomorphism $\Phi$ given in Theorem \ref{t:iso-nr-ca}. 
\end{remark}

\begin{proof}[Proof of Theorem \ref{t:zero-kap}] 
Suppose that $\alpha \star \beta =0$, $\alpha \neq 0$ and $\beta$ is nonconstant. 
Then $\alpha$ is also nonconstant since otherwise $\alpha=c \in L \setminus \{0\}$ and $\alpha \star \beta=c\neq 0$. 
It follows that $\supp(\alpha)$ and $\supp(\beta)$ are nonempty. 
Let $u,v\in \N^G_c \setminus \{0\}$ be respectively the leading exponents of 
$\alpha$ and $\beta$ with respect to the lexicographic monomial order induced by the order $<$ of $G$. 
Then Lemma \ref{l:order-unit} says that the leading exponent of $\alpha \star \beta$ is $u\star v$. 
Note that $0 < u \star v$ since $0 < u, v$ (cf. Lemma \ref{l:order-unit}.(iv)). 
Thus $\alpha \star \beta$ cannot be constant which contradicts the hypothesis $\alpha \star \beta =0$. 
This proves (i). 
\par
Now suppose that $P(\alpha)=aX_{1_G}$ for some 
$\alpha \in R(L,G)$, $a\in L$ and $P\in L[x]$ a polynomial of degree $m\geq 1$. 
Suppose that $\alpha$ is nonconstant and thus $\supp(\alpha)$ is nonempty 
\par
We first show that $\supp(\alpha)=\{1_G\}$. 
Suppose on the contrary that there exists $g\in \supp(\alpha) \setminus \{1_G\}$. 
Then $g<1_G$ or $1_G<g$. 
Consider now the case $1_G<g$. 
The leading exponent $u\in \N^G_c\setminus \{0\}$ of $\alpha$ exists 
and its maximal element is then at least $g$. 
It follows that $\mathbf{1}_{1_G} < \mathbf{1}_{g} \leq u$. 
By applying inductively Lemma \ref{l:order-unit}, 
$u^{(\star n)}\coloneqq u \star \dots \star u \text{ (n-times)}$ is the leading 
exponent of $\alpha^{(\star n)}$ for every $n \geq 1$. 
Since $\mathbf{1}_{1_G} < \mathbf{1}_{g} \leq u$, Lemma \ref{l:order-unit}.(iv) implies 
that $\mathbf{1}_{1_G} < u^{(\star n)}$ for all $n\geq 1$. 
In particular $u^{(\star n)}\neq 0$ 
and the same lemma yields 
$u^{(\star n)}= u^{(\star n)} \star \mathbf{1}_{1_G} < u^{(\star n)} \star u= u^{(\star (n+1))}$ for all $n\geq 1$. 
We deduce that $u^{(\star m)}$ is the leading exponent of $P(\alpha)$. 
But then $P(\alpha)$ cannot be $aX_{1_G}$ 
since $\mathbf{1}_{1_G}< u^{(\star m)}$, which is a contradiction. 
The same argument applies to the case $g<1_G$ with the reversed order of $G$. 
Therefore, $\supp(\alpha)=\{1_G\}$ as claimed. 
\par
Hence, there exist $p \in \N_{\geq 1}$ and $c_0, \dots, c_p \in L$ with $c_p\neq 0$ such that 
\[
\alpha=c_pX_{1_G}^p+\dots+ c_1X_{1_G}+c_0.
\]
If $p\geq 2$ then $P(\alpha)$ is a polynomial of degree 
$p^m \geq 2$ in $X_{1_G}$ and cannot be $aX_{1_G}$. 
Thus, $p=1$ and we can conclude that $\alpha=cX_{1_G}+d$ for some $c,d \in L$.  
Applying this to the case $P(x)=x^2-x$, the last assertion of Theorem \ref{t:zero-kap}.(ii) 
follows from a direct calculation. 
\end{proof}

\section{Some counter-examples}
We begin with an example showing that Theorem \ref{t:sofic} 
and Theorem \ref{t:unit-kap}.(ii) are false in positive characteristic. 

\begin{example}
\label{ex:main-frobenius}
Suppose that $G=\{ \sbullet[.9] \}$ is a trivial group 
and $K=\overline{F}_p$ where $p>0$ is a prime number. 
Let $F \colon \G_m \to \G_m$ be the Frobenius endomorphism 
of the multiplicative group $\G_m$ over $K$. 
Then the induced algebraic group cellular automaton 
$\tau \in CA_{algr}(G,\G_m, K)$ is given by $\tau \colon K^* \to K^*$, $x\to x^p$. 
Clearly, $\tau$ is bijective but its inverse map 
$\tau^{-1}\colon K^* \to K^*$, $x\to x^{1/p}$ is not algebraic 
hence $\tau^{-1} \notin CA_{alg}(G,\G_m, K)$.   
\end{example}

The following example shows that we cannot replace 
\emph{algebraic group cellular automaton} 
by a more general \emph{algebraic cellular automaton} in 
the hypotheses of 
Theorem~\ref{t:goe-grp} and Proposition~\ref{p:sur-dim} . 
\label{s:counter}\begin{example} (cf.~\cite[Example~8.3]{cscp-alg-goe})
Let $G$ be an amenable group with a F\o lner net $\FF$ 
and let $X$ be a connected $K$-algebraic group. 
Let $A\coloneqq X(K)$ and let $\tau \colon A^G \to A^G$ be an algebraic cellular automata 
with a memory set $M=\{1_G\}$. 
Then the local defining map $\mu \colon A \to A$ is induced 
by a morphism of $K$-algebraic varieties $f \colon X \to X$. 
Note that $f$ is not necessarily a homomorphism of algebraic groups. 
Theorem~\ref{t:goe-grp} and Proposition~\ref{p:sur-dim} do not apply in this case. 
In fact, if $f$ is a dominant nonsurjective morphism, then we see clearly 
that $\mdim_\FF(\tau(A^G))=\dim(A)$ while $\tau$ is not surjective. 
Moreover, it is also immediate that $\tau$ 
is $(\sbullet[.9])$- and $( \sbullet[.9] \sbullet[.9]) $-pre-injective. 
For example, we can take $X=\A^2=\Spec(K[x,y])$ and 
$f\colon \A^2 \to \A^2$ to be induced by the morphism of $K$-algebras
\begin{align*}
K[x,y] & \to K[x,y] \\
(x,y) & \mapsto (x,xy). 
\end{align*}
Then $f(\A^2)=\A^2 \setminus (\{x=0\} \setminus \{(0,0)\})$ and thus $f$ is dominant but not surjective. 
\end{example}

The next example shows that we cannot omit the hypothesis that $K$ is algebraically closed in 
Theorem \ref{t:sofic}, Theorem~\ref{t:surjunctive-res}, Theorem~\ref{t:goe-grp}, and Corollary~\ref{c:myhill}. 

\begin{example}
Let $E$ be an elliptic curve over $\Q$. 
Then $E$ is a connected and projective $\Q$-algebraic group. 
By Mordell-Weil's theorem, $E(\Q)$ is a finitely generated abelian group.
Consequently, $E(\Q) = F \oplus T$, where $F$ is a free abelian group 
of finite rank $r \geq 0$ and $T$ is a finite abelian group. 
Mazur's theorem asserts that
the group $T$ is either cyclic of order $n \in \{1,\dots, 12\}\setminus \{11\}$ or 
 isomorphic to $\Z /2m \Z \times \Z/ 2\Z$ with $m \in \{1,\dots,4\}$. 
Suppose that $r \geq 1$ 
(e.g. $E$ is the elliptic  curve with equation $y^2=x^3+4x+9$
 for which $r = 2$).
Let $p\geq 11$ be a prime number. 
Let $f \coloneqq [p]_E \colon E \to E$ be the multiplication-by-$p$ map on $E$. 
Then $f$ is a homomorphism of $\Q$-algebraic groups inducing the multiplication by $p$ on $E(\Q)$. 
Since $p\geq 11$, the map 
$f$ induces a group automorphism of $T$.
On the free part $F$, 
it induces a group  endomorphism that is clearly injective but not surjective.
Consequently, $f$ induces a group endomorphism of $E(\Q)$ that is injective but not surjective.
\par
Now let $G$ be any group, $A\coloneqq E(\Q)$, 
and let $\tau\colon A^G \to A^G$ denote the algebraic group cellular automaton over $(G,E,\Q)$ 
with memory set $M=\{1_G\}$ and associated local defining map $\mu=f^{(\Q)} \colon A \to A$. 
The above paragraph shows that $\tau$ is injective (and thus pre-injective) but not surjective.
\end{example}
 
The following classical example shows that Theorem \ref{t:classification} 
fails as soon as we replace $\A^1$ by any higher dimensional affine space $\A^n$ ($n\geq 2$). 

\begin{example}
\label{ex:torsion-free-injective}
Let $n\geq 2$. 
Choose any linear automorphism $f$ of the vector space $\C^n$ such that $f$ is a not a homothety. 
Then for any group $G$, the algebraic cellular automaton $f^G \colon (\C^n)^G \to (\C^n)^G$ is injective. 
However, there do not exist $g\in G$, $a\in \C^*$ and $b\in \C$ 
such that $f^G(x)(h)=ax(g^{-1}h)+b$ for all $x\in \C^G$ and $h\in G$.  
\par
We consider now a non degenerate case. 
Let $G=\Z$ and $M=\{0,1\}\subset G$. 
Let $\tau, \sigma  \colon (\C^2)^G \to (\C^2)^G$ be linear cellular automata induced 
respectively by the local defining maps $\mu, \nu \colon (\C^2)^M \to \C^2$ given by: 
\[
\mu((x,y); (z,t)) = (x+t, y), 
\]
and 
\[
\nu((x,y); (z,t)) = (x-t, y).
\]
We check easily that $\sigma \circ \tau = \tau \circ \sigma = \Id_{(\C^2)^G}$ 
and that $M$ is the minimal memory set of $\sigma, \tau$. 
Similar examples show that we cannot even control the size of the minimal memory set 
of an injective or even bijective linear cellular automaton. 
\end{example}

\bibliographystyle{siam}

\begin{thebibliography}{10}

\bibitem{ara}
{\sc P.~Ara, K.~C.~O'Meara and F. ~Perera}, 
{\em Stable finiteness of group rings in arbitrary characteristic}, 
Adv. Math. 170 (2002), pp.~224--238.
 
\bibitem{ax-injective}
{\sc J.~Ax}, 
{\em Injective endomorphisms of varieties and schemes}, 
Pacific J. Math. 31 (1969), pp.~1--7.  
DOI: 10.2140/pjm.1969.31.1. 
 
  
\bibitem{bartholdi}
{\sc L.~Bartholdi}, 
{\em Cellular automata, duality and sofic groups}, 
New York J. Math. 23 (2017), pp.~1417--1425.

\bibitem{bartholdi-kielak}
{\sc L.~Bartholdi}, 
{\em Amenability of groups is characterized by Myhill's Theorem. With an appendix by D. Kielak}, 
J. Eur. Math. Soc. vol. 21, Issue 10 (2019), pp. 3191~3197. 
DOI: 10.4171/JEMS/900. 

\bibitem{capraro}
{\sc V.~Capraro and M.~Lupini}, 
{\em Introduction to sofic and hyperlinear groups and {C}onnes' embedding conjecture}, 
 Lecture Notes in Mathematics, vol. 2136 (2015), Springer, Cham. 
  
\bibitem{csc-garden-linear}
{\sc T.~Ceccherini-Silberstein and M.~Coornaert}, 
{\em The {G}arden of {E}den theorem for linear cellular automata}, 
Ergod. Th. \& Dynam. Sys. 26 (2006), pp.~53--68. 

\bibitem{csc-sofic-linear}
\leavevmode\vrule height 2pt depth -1.6pt width 23pt, {\em Injective linear cellular automata and sofic groups}, 
Israel J. Math. 161 (2007), pp.~1--15. 
DOI: 10.1007/s11856-007-0069-8. 


\bibitem{csc-artinian}
\leavevmode\vrule height 2pt depth -1.6pt width 23pt, 
{\em Linear cellular automata over modules of finite length and stable finiteness of group rings}, 
J. Algebra 317 (2) (2007), pp.~743--758.


\bibitem{csc-ind-res}
\leavevmode\vrule height 2pt depth -1.6pt width 23pt, {\em Induction and restriction of cellular automata}, 
Ergod. Th. \& Dynam. Sys. 29 (2009),
pp.~371--380.

\bibitem{ca-and-groups-springer}
\leavevmode\vrule height 2pt depth -1.6pt width 23pt, 
{\em Cellular automata and groups}, 
Springer Monographs in Mathematics, Springer-Verlag, Berlin, 2010. 
  
\bibitem{goe-survey}
\leavevmode\vrule height 2pt depth -1.6pt width 23pt, 
{\em The Garden of Eden theorem: old and new}, 
Handbook of Group Actions (Advanced Lectures in Mathematics). 
Eds. L.~Ji, A.~Paradopoulos and S.-T.~Yau. International Press,
Somerville, MA, to appear. Preprint (2017), arXiv:1707.08898.

\bibitem{cscp-alg-ca}
{\sc T.~Ceccherini-Silberstein, M.~Coornaert, and X.~K. Phung}, 
{\em On injective endomorphisms of symbolic schemes}, 
Comm. Algebra 47 (2019), no. 11, pp.~4824--4852. 
DOI: 10.1080/00927872.2019.1602872. 

\bibitem{cscp-alg-goe}
\leavevmode\vrule height 2pt depth -1.6pt width 23pt, 
{\em On the Garden of Eden theorem for endomorphisms of symbolic algebraic varieties}, 
preprint, arXiv:1803.08906v1.

\bibitem{ceccherini}
{\sc T.~Ceccherini-Silberstein, A.~Mach{\`{\i}}, and F.~Scarabotti}, 
{\em Amenable groups and cellular automata}, 
Ann. Inst. Fourier (Grenoble) 49 (1999), pp.~673--685.

\bibitem{eisenbud-book}
{\sc D. Eisenbud}, 
{\em Commutative algebra: With a view toward algebraic geometry}, 
Springer-Verlag, New York, 150 (1995). 
\newblock Graduate Texts in Mathematics.  

\bibitem{elek-szabo}
{\sc G.~Elek and A.~Szab\' o}, 
{\em Sofic groups and direct finiteness}, 
J. of Algebra 280 (2004), pp.~426-434. 

\bibitem{gorz}
{\sc U.~G\"ortz and T.~Wedhorn}, 
{\em Algebraic geometry {I}}, 
Advanced  Lectures in Mathematics, Vieweg + Teubner, Wiesbaden, 2010. 
\newblock Schemes with examples and exercises.

\bibitem{gottschalk}
{\sc W.H.~Gottschalk}, 
{\em Some general dynamical notions}, 
Recent advances in topological dynamics, Springer, Berlin, 1973, pp. 120--125. Lecture Notes in Math. Vol. 318.

\bibitem{gromov-esav}
{\sc M.~Gromov}, 
{\em Endomorphisms of symbolic algebraic varieties}, 
J. Eur. Math. Soc. (JEMS) 1 (1999), pp.~109--197. 
DOI: 10.1007/PL00011162. 

\bibitem{grothendieck-20-1964}
{\sc A.~Grothendieck}, 
{\em \'{E}l\'ements de g\'eom\'etrie alg\'ebrique. {IV}. 
\'{E}tude locale des sch\'emas et des morphismes de sch\'emas. {I}}, 
Inst. Hautes \'Etudes Sci. Publ. Math. (1964), p.~259. 

\bibitem{ega-4-3}
\leavevmode\vrule height 2pt depth -1.6pt width 23pt, 
{\em \'{E}l\'ements de g\'eom\'etrie alg\'ebrique. {IV}. \'{E}tude locale des sch\'emas et des
morphismes de sch\'emas. {III}}, 
Inst. Hautes \'Etudes Sci. Publ. Math. (1966), p.~255.

\bibitem{grothendieck-ega-1}
\leavevmode\vrule height 2pt depth -1.6pt width 23pt,
 {\em \'{E}l\'ements de g\'eom\'etrie alg\'ebrique. {I}}, 
 vol.~166 of Grundlehren der Mathematischen Wissenschaften 
 [Fundamental Principles of Mathematical Sciences], 
 Springer-Verlag, Berlin, 1971.

\bibitem{kap}
{\sc I.~Kaplansky}, 
{\em Fields and Rings}, 
Chicago Lectures in Mathematics, University of Chicago Press, Chicago, 1969.

\bibitem{nowak} 
{\sc K.~Nowak}, 
{\em Injective endomorphisms of algebraic varieties}, 
Math. Ann. 299(1) (1994), pp. 769--778. 
DOI: 10.1007/BF01459812.

\bibitem{lam}
{\sc T.Y.~Lam}, 
{\em A first course in noncommutative rings}, 
Graduate Texts in Mathematics, Springer-Verlag, New York, vol. 131 (1991). 

\bibitem{li-liang}
{\sc H.~Li and B.~Liang}, 
{\em Sofic mean length}, 
Adv. Math. 353 (2019), pp.~802--858.

\bibitem{liu-alg-geom}
{\sc Q.~Liu}, 
{\em Algebraic geometry and arithmetic curves}, 
vol.~6 of Oxford Graduate Texts in Mathematics, Oxford University Press, Oxford, 2002. 
\newblock Translated from the French by Reinie Ern\'e, Oxford Science Publications.

\bibitem{malcev}
{\sc A.~I.~Mal'cev}, 
{\em On the embedding of group algebras in division algebras}, 
Doklady Akad. Nauk SSSR (N.S.), vol. 60 (1948), pp.~1499--1501.  

\bibitem{milne}
{\sc J.S.~Milne}, 
{\em Algebraic groups}, 
Cambridge Studies in Advanced Mathematics, 170 (2017), 
Cambridge University Press, Cambridge. 
\newblock The theory of group schemes of finite type over a field. 

\bibitem{moore}
{\sc E.~F. Moore}, 
{\em Machine models of self-reproduction}, 
vol.~14 of Proc. Symp. Appl. Math., American Mathematical Society, Providence, 1963, pp.~17--34.

\bibitem{myhill}
{\sc J.~Myhill}, 
{\em The converse of {M}oore's {G}arden-of-{E}den theorem}, 
Proc. Amer. Math. Soc. 14 (1963), pp.~685--686.

\bibitem{stone}
{\sc A.H.~Stone}, 
{\em Inverse limits of compact spaces}, 
General Topology Appl. 10 (1979), no. 2, pp.~203--211. 
DOI: 10.1016/0016-660X(79)90008-4. 

\bibitem{weiss-sgds}
{\sc B.~Weiss}, 
{\em Sofic groups and dynamical systems}, 
Sankhy\=a Ser. A 62 (2000), no. 3, pp.~350--359. 
Ergodic theory and harmonic analysis (Mumbai, 1999). 
\end{thebibliography}

\end{document}